\numberwithin{equation}{section}
\theoremstyle{plain}
\newtheorem{theorem}{Theorem}[section]
\newtheorem{corollary}{Corollary}[section]
\newtheorem{lemma}{Lemma}[section]
\newtheorem{proposition}{Proposition}[section]
\newtheorem{assumption}{Assumption}[section]
\theoremstyle{definition}
\newtheorem{definition}{Definition}[section]
\newtheorem{remark}{Remark}[section]
\newcommand{\R}{\mathbb{R}}
\renewcommand{\tilde}{\widetilde}
\renewcommand{\hat}{\widehat}
\DeclareMathOperator{\Var}{Var}
\DeclareMathOperator{\supp}{supp}
\newcommand{\bS}{\bm{S}}
\begin{document}

\title[]{Nonparametric regression for locally stationary random fields under stochastic sampling design}
\thanks{D. Kurisu is partially supported by JSPS KAKENHI Grant Numbers 17H02513 and 20K13468. The author would like to thank Paul Doukhan, Kengo Kato, Yasumasa Matsuda, Taisuke Otsu and Robert Stelzer for their helpful comments and suggestions.} 

\author[D. Kurisu]{Daisuke Kurisu}

\date{First version: April 16, 2020. This version: \today}

\address[D. Kurisu]{Graduate School of International Social Sciences, Yokohama National University\\
79-4 Tokiwadai, Hodogaya-ku, Yokohama 240-8501, Japan.
}
\email{kurisu-daisuke-jr@ynu.ac.jp}

\begin{abstract}
In this study, we develop an asymptotic theory of nonparametric regression for locally stationary random fields (LSRFs) $\{\bm{X}_{\bm{s}, A_{n}}: \bm{s} \in R_{n} \}$ in $\mathbb{R}^{p}$ observed at irregularly spaced locations in $R_{n} =[0,A_{n}]^{d} \subset \mathbb{R}^{d}$. We first derive the uniform convergence rate of general kernel estimators, followed by the asymptotic normality of an estimator for the mean function of the model. Moreover, we consider additive models to avoid the curse of dimensionality arising from the dependence of the convergence rate of estimators on the number of covariates. Subsequently, we derive the uniform convergence rate and joint asymptotic normality of the estimators for additive functions. We also introduce \textit{approximately $m_{n}$-dependent} RFs to provide examples of LSRFs. We find that these RFs include a wide class of L\'evy-driven moving average RFs. 
\medskip

\noindent
\textit{Keywords}: nonparametric regression, locally stationary random field, irregularly spaced data, additive model, L\'evy-driven moving average random field. 
\medskip

\end{abstract}


\maketitle


\section{Introduction}
In this study, we consider the following model: 
\begin{align}\label{NR-LSRF}
Y_{\bm{s}_{j}, A_{n}} &= m\left({\bm{s}_{j} \over A_{n}}, \bm{X}_{\bm{s}_{j}, A_{n}}\right) + \epsilon_{\bm{s}_{j},A_{n}},\ \bm{s}_{j} \in R_{n},\ j=1,\hdots,n,
\end{align}
where $E[\epsilon_{\bm{s}, A_{n}}|\bm{X}_{\bm{s},A_{n}}] = 0$ and $R_{n}= [0,A_{n}]^{d} \subset \mathbb{R}^{d}$ is a sampling region with $A_{n} \to \infty$ as $n \to \infty$. Here, $Y_{\bm{s}_{j}, A_{n}}$ and $\bm{X}_{\bm{s}_{j}, A_{n}}$ are random variables of dimensions $1$ and $p$, respectively. We assume that $\{\bm{X}_{\bm{s}, A_{n}} : \bm{s} \in R_{n}\}$ is a locally stationary random field on $R_{n}  \subset \mathbb{R}^{d}$ ($d \geq 2$). Locally stationary processes, as proposed by \cite{Da97}, are nonstationary time series that allow parameters of the time series to be time-dependent. They can be approximated by a stationary time series locally in time, which enables asymptotic theories to be established for the estimation of time-dependent characteristics. In time series analysis, locally stationary models are mainly considered in a parametric framework with time-varying coefficients. For example, we refer to \cite{DaNevo99}, \cite{DaSu06}, \cite{FrSaSu08}, \cite{HaLi10}, \cite{KoLi12}, \cite{Zh14} and \cite{Tr17}. Nonparametric methods for stationary and nonstationary time series models have also been developed. We refer to \cite{Ma96}, \cite{FaYa03} and \cite{Ha08} for stationary time series and \cite{Kr09}, \cite{ZhWu09}, \cite{Kr11}, \cite{Vo12}, \cite{ZhWu15}, \cite{Tr19}, \cite{Ku21a}, and \cite{Ku21b} for recent contributions on locally stationary (functional) time series. Furthermore, we refer to \cite{DaRiWu19} for general theory in the literature of locally stationary processes. 

Recently, statistical inference of spatial regression models for geostatistical data has drawn considerable attention in certain economic and scientific fields, such as spatial econometrics, ecology, and seismology. Nonparametric methods for spatial data have also been the focus of attention. Recent contributions include those by \cite{HaLuTr04}, \cite{MaSt08}, \cite{HaLuYu09}, \cite{Ro11}, \cite{Je12}, \cite{LuTj14}, \cite{Li16}, \cite{MaSeOu18}, and \cite{Ku19}, who studied nonparametric inference and estimation of mean functions of spatial regression models. Compared with the abovementioned studies regarding time series analysis, studies pertaining to nonparametric methods for locally stationary random fields are scarce even for nonparametric density estimation as well as nonparametric regression despite empirical interest in modeling spatially varying dependence structures. Moreover, for the analysis of spatial data on $\mathbb{R}^{d}$, it is generally assumed that the underlying model is a parametric Gaussian process. We refer to \cite{FuSiLiRu15} for the empirical motivation and discussion regarding the modeling of nonstationary random fields and \cite{StFu10} for discussion regarding the importance of nonparametric, nonstationary, and non-Gaussian spatial models. To bridge the gap between theory and practice, we first extend the concept of a locally stationary time series by \cite{Da97} to random fields and consider a locally stationary random field $\{\bm{X}_{\bm{s}, A_{n}}: \bm{s} \in R_{n}\}$ ($R_{n} \subset \mathbb{R}^{d}$) as a nonstationary random field that can be approximated by a stationary random field locally at each spatial point $\bm{s} \in R_{n}$. A detailed definition of locally stationary random fields is provided in Section \ref{Section-Settings}. Next, we provide a complete asymptotic theory for the nonparametric estimators of the model (\ref{NR-LSRF}). The model (\ref{NR-LSRF}) can be regarded as a natural extension of (i) nonparametric regression to the locally stationary time series considered in \cite{Vo12} and \cite{ZhWu15}, and (ii) nonparametric spatial regression investigated in \cite{HaLuTr04}, \cite{Ro11}, \cite{Je12}, \cite{Li16}, and \cite{Ku19} to locally stationary random fields, and (iii) linear regression models with spatially varying coefficients to a nonlinear framework. Therefore, the model includes a wide range of key models for nonstationary random fields.  

The objectives of this study are to (i) derive the uniform convergence rate of kernel estimators for the density function of $\bm{X}_{\bm{s},A_{n}}$ and the mean function $m$ in the model (\ref{NR-LSRF}) over compact sets; (ii) derive the asymptotic normality of the estimators at a specified point; and (iii) provide examples of locally stationary random fields on $\mathbb{R}^{d}$ with a detailed discussion of their properties. To attain the first and second objectives, we first derive the uniform convergence rate of the important general kernel estimators; the result is crucial for demonstrating our main results. As general estimators include a wide range of kernel-based estimators such as the Nadaraya-Watson estimators, the general results are of independent interest. Although these results are general, the estimators are affected by dimensionality because their convergence rate depends on the number of covariates. Hence, we consider additive models and derive the uniform convergence rate and joint asymptotic normality of kernel estimators for additive functions based on the backfitting method developed by \cite{MaLiNi99} and \cite{Vo12}. Our results are extensions of the results for time series in \cite{Vo12} to random fields with irregularly spaced observations, which include irregularly spaced time series as a special case. To the best of our knowledge, this is the first study where a complete asymptotic theory of density estimation, nonparametric regression and additive models for locally stationary random fields on $\mathbb{R}^{d}$ are developed with a detailed discussion on concrete examples of locally stationary random fields.

Technically, the proofs of our results are significantly different from those for equally distant time series (mixing sequence) or spatial data observed on lattice points, as our focus is on spatial dependence and irregularly spaced sampling points. In many scientific fields, such as ecology, meteorology, seismology, and spatial econometrics, sampling points are naturally irregular. In fact, measurement stations cannot be placed on a regular grid owing to physical constraints. The stochastic sampling design assumed in this study allows the sampling sites to have a possibly non-uniform density across the sampling region, thereby enabling the number of sampling sites to increase at  different rates with respect to the volume of the region $O(A_{n}^{d})$ . In this study, we work with both \textit{the pure increasing domain} case ($\lim_{n \to \infty}nA_{n}^{-d} = \kappa \in (0, \infty)$) and \textit{the mixed increasing domain} case ($\lim_{n \to \infty}nA_{n}^{-d} = \infty$). As recent contributions pertaining to the design of stochastic spatial sampling, we refer to \cite{Sh10} for time series and \cite{La03b}, \cite{LaZh06}, and \cite{BaLaNo15} 
for random fields. In \cite{LuTj14}, they show asymptotic normality of a kernel density estimator of a strictly stationary random field on $\mathbb{R}^{2}$ under non-stochastic irregularly spaced observations based on the technique developed in \cite{Bo82}. Our approach is quite different from theirs and their sampling framework seems not compatible to develop asymptotic theory for locally stationary random fields. More precisely, in the proofs of our main results, we combine the big and small block techniques of \cite{Be26} and the coupling technique for mixing sequences of \cite{Yu94} to construct a sequence of independent blocks in a finite sample size. However, their applications are nontrivial for addressing irregularly spaced data and spatial dependence simultaneously. Discussion on the mixing condition is provided in Section \ref{Section-Settings}.

To attain the third objective of our study, we discuss examples of locally stationary random fields on $\mathbb{R}^{d}$ that satisfy our mixing conditions and the other regularity conditions mentioned in Section \ref{Ex-LSRF}. For this, we introduce the concept of \textit{approximately $m_{n}$-dependent} locally stationary random fields ($m_{n} \to \infty$ as $n \to \infty$) and we extend continuous autoregressive and moving average (CARMA)-type random fields developed in \cite{BrMa17} and \cite{MaYu20} to locally stationary CARMA-type random fields. CARMA random fields are characterized by solutions of (fractional) stochastic partial differential equations (cf. \cite{Be19}) and are known as a rich class of models for spatial data (cf. \cite{BrMa17}, \cite{MaYa18} and \cite{MaYu20}). However, their mixing properties have not been investigated.  We can show that a wide class of L\'evy-driven moving average random fields, which includes the locally stationary CARMA-type random fields, comprise (approximately $m_{n}$-dependent) locally stationary random fields.  One of the key features of CARMA random fields is that they can represent non-Gaussian random fields as well as Gaussian random fields if the driving L\'evy random measures are purely non-Gaussian. On the other hand, the statistical models in most of existing papers for spatial data on $\mathbb{R}^{2}$ are based on Gaussian processes and for non-Gaussian processes, it is often difficult to check mixing conditions as considered in \cite{LaZh06} and \cite{BaLaNo15}, for example. Moreover, in many empirical applications, the assumption of Gaussianity in spatial models is not necessarily adequate (see \cite{StFu10} for example). As a result, this study also contributes to the flexible modeling of nonparametric, nonstationary and possibly non-Gaussian random fields on $\mathbb{R}^{d}$. Therefore, this paper not only extends the scope of empirical analysis for spatial and spatio-temporal data (discussed in Section 6) but also addresses open questions on the dependence structure of statistical models built on CARMA random fields.

The rest of this paper is organized as follows. In Section 2, we define locally stationary random fields and describe our sampling scheme to consider irregularly spaced data and mixing conditions for random fields. In Section 3, we present the uniform convergence rate of general kernel estimators as well as the estimators of the mean function $m$ in the model (\ref{NR-LSRF}) over compact sets. The asymptotic normality of the estimator of the mean function is also provided. In Section 4, we consider the additive models and provide the uniform convergence rate and joint asymptotic normality of the kernel estimators for the additive functions. In Section 5, we discuss examples of locally stationary random fields by introducing the concept of $m_{n}$-dependent random fields; we also provide examples of locally stationary L\'evy-driven moving average random fields. We also discuss some extensions of our results and applications to spatio-temporal data in Section 6. All the proofs are deferred to Appendix. 

\subsection{Notations}
For $\bm{x} = (x_{1},\hdots, x_{d})' \in \mathbb{R}^{d}$, let $|\bm{x}| = |x_{1}| + \cdots + |x_{d}|$ and $\|\bm{x}\| = (x_{1}^{2} + \cdots + x_{d}^{2})^{1/2}$ denote the $\ell^{1}$ and $\ell^{2}$ norm on $\mathbb{R}^{d}$. For $\bm{x} = (x_{1},\hdots, x_{d})', \bm{y} = (y_{1},\hdots, y_{d}) \in \mathbb{R}^{d}$, the notation $\bm{x} \leq \bm{y}$ means that $x_{j} \leq y_{j}$ for all $j=1,\hdots,d$. For any set $A \subset \mathbb{R}^{d}$, let $|A|$ denote the Lebesgue measure of $A$ and $[\![A]\!]$ denote the number of elements in $A$. For any positive sequences $a_{n}$ and $b_{n}$, we write $a_{n} \lesssim b_{n}$ if a constant $C >0$ independent of $n$ exists such that $a_{n} \leq Cb_{n}$ for all $n$,  $a_{n} \sim b_{n}$ if $a_{n} \lesssim b_{n}$ and $b_{n} \lesssim a_{n}$, and $a_{n} \ll b_{n}$ if $a_{n}/b_{n} \to 0$ as $n \to \infty$. For $a \in \R$ and $b > 0$, we use the shorthand notation $[a \pm b] = [a-b,a+b]$. We use the notation $\stackrel{d}{\to}$ to denote convergence in distribution. For random variables $X$ and $Y$, we write $X \stackrel{d}{=} Y$ if they have the same distribution. $N(\mu, \Sigma)$ denotes a (multivariate) normal distribution with mean $\mu$ and a covariance matrix $\Sigma$. Let $P_{\bm{S}}$ denote the joint probability distribution of the sequence of independent and identically distributed  (i.i.d.) random vectors $\{\bm{S}_{0,j}\}_{j \geq 1}$, and let $P_{\cdot|\bm{S}}$ denote the conditional probability distribution given $\{\bm{S}_{0,j}\}_{j \geq 1}$. Let $E_{\cdot|\bm{S}}$ and $\Var_{\cdot|\bm{S}}$ denote the conditional expectation and variance given $\{\bm{S}_{0,j}\}_{j \geq 1}$, respectively. $\mathcal{B}(\mathbb{R}^{d})$ denotes the Borel $\sigma$-field on $\mathbb{R}^{d}$.

\section{Settings}\label{Section-Settings}

In this section, we introduce locally stationary random fields on $\mathbb{R}^{d}$ that extend the concept of the locally stationary process on $\mathbb{R}$. Furthermore, we discuss the sampling design of irregularly spaced locations and the dependence structures for random fields.

\subsection{Local stationarity}
Intuitively, a random field $\{\bm{X}_{\bm{s}_{j}, A_{n}}: \bm{s} \in R_{n} \}$ ($A_{n} \to \infty$ as $n \to \infty$) is locally stationary if it behaves approximately stationary in local space. To ensure that it is locally stationary around each rescaled space point $\bm{u}$, a process $\{\bm{X}_{\bm{s}, A_{n}}\}$ can be approximated by a stationary random field $\{\bm{X}_{\bm{u}}(\bm{s}): \bm{s} \in \mathbb{R}^{d}\}$ stochastically. See \cite{DaSu06} for an example. This concept can be defined as follows. 

\begin{definition}\label{LSRF}
The process $\{\bm{X}_{\bm{s}, A_{n}} = (X_{\bm{s},A_{j}}^{1},\hdots,X_{\bm{s},A_{n}}^{p})': \bm{s} \in R_{n}\}$ is locally stationary if for each rescaled space point $\bm{u} \in [0,1]^{d}$, there exists an associated random field $\{\bm{X}_{\bm{u}}(\bm{s}) = (X_{\bm{u}}^{1}(\bm{s}),\hdots,X_{\bm{u}}^{p}(\bm{s}))': \bm{s} \in \mathbb{R}^{d}\}$ with the following properties: 
\begin{itemize}
\item[(i)] $\{\bm{X}_{\bm{u}}(\bm{s}): \bm{s} \in \mathbb{R}^{d}\}$ is strictly stationary with density $f_{\bm{X}_{\bm{u}}(\bm{s})}$.
\item[(ii)] It holds that 
\begin{align}\label{LSRF-ineq}
\|\bm{X}_{\bm{s},A_{n}} - \bm{X}_{\bm{u}}(\bm{s})\|_{1} \leq \left(\left\|{\bm{s} \over A_{n}} - \bm{u}\right\|_{2} + {1 \over A_{n}^{d}}\right)U_{\bm{s},A_{n}}(\bm{u})\ a.s.,
\end{align}
where $\{U_{\bm{s},A_{n}}(\bm{u})\}$ is a process of positive variables satisfying $E[(U_{\bm{s},A_{n}}(\bm{u}))^{\rho}]<C$ for some $\rho>0$, $C<\infty$ that is independent of $\bm{u}, \bm{s}$, and $A_{n}$, and where $\|\cdot\|_{1}$ and $\|\cdot\|_{2}$ denote arbitrary norms on $\mathbb{R}^{p}$ and $\mathbb{R}^{d}$, respectively. 
\end{itemize}
\end{definition}

Definition \ref{LSRF} is a natural extension of the concept of` local stationarity for time series introduced in \cite{Da97}. We discuss examples of locally stationary random fields in Section \ref{Ex-LSRF}. In particular, we demonstrate that a wide class of random fields, which includes locally stationary versions of L\'evy-driven moving average random fields, satisfy the Condition (\ref{LSRF-ineq}). We also refer to \cite{Pe18} and \cite{MaYa18} for other definitions of locally stationary spatial  and spatio-temporal processes by using their spectral representations, respectively.  

\subsection{Sampling design}
To account for irregularly spaced data, we consider the stochastic sampling design. First, we define the sampling region $R_{n}$. Let $\{A_{n}\}_{n \geq 1}$ be a sequence of positive numbers such that $A_{n} \to \infty$ as $n \to \infty$.  We consider the following set as the sampling region. 
\begin{align}\label{sampling-region}
R_{n} = [0, A_{n}]^{d}. 
\end{align}
Next, we introduce our (stochastic) sampling designs. Let $f_{\bm{S}}(\bm{s}_{0})$ be a continuous, everywhere positive probability density function on $R_{0} = [0,1]^{d}$, and let $\{\bm{S}_{0,j}\}_{j \geq 1}$ be a sequence of i.i.d. random vectors with probability density $f_{\bm{S}}(\bm{s}_{0})$ such that $\{\bm{S}_{0,j}\}_{j \geq 1}$ and $\{\bm{X}_{\bm{s},A}:\bm{s} \in R_{n} \}$ are defined on a common probability space $(\Omega, \mathcal{F}, P)$ and are independent. We assume that the sampling sites $\bm{s}_{1},\hdots, \bm{s}_{n}$ are obtained from realizations $\bm{s}_{0,1},\hdots, \bm{s}_{0,n}$ of random vectors $\bm{S}_{0,1},\hdots, \bm{S}_{0,n}$ by the following relation:
\[
\bm{s}_{j} = A_{n}\bm{s}_{0,j},\ j=1,\hdots, n. 
\]  
Herein, we assume that $nA_{n}^{-d} \to \infty$ as $n \to \infty$. We also assume the following conditions on the sampling scheme.

\begin{assumption}\label{Ass-S}
\begin{enumerate}
\item[(S1)] For any $\bm{\alpha} \in \mathbb{N}^{d}$ with $|\bm{\alpha}| = 1,2$, $\partial^{\bm{\alpha}}f_{\bm{S}}(\bm{s})$ exists and is continuous on $(0,1)^{d}$. 
\item[(S2)] $C_{0} \leq nA_{n}^{-d} \leq C_{1}n^{\eta_{1}}$ for some $0<C_{0} < C_{1}<\infty$ and small $\eta_{1} \in [0,1)$.
\item[(S3)] Let $A_{1,n}$ and $A_{2,n}$ be two positive numbers such that as $n \to \infty$, $A_{1,n}, A_{2,n} \to \infty$ and ${A_{1,n} \over A_{n}} + {A_{2,n} \over A_{1,n}} \leq C_{0}^{-1}n^{-\eta} \to 0$ for some $C_{0}>0$ and $\eta>0$.
\end{enumerate}
\end{assumption}

Condition (S2) implies that our sampling design allows both the pure increasing domain case ($\lim_{n \to \infty}nA_{n}^{-d} = \kappa \in (0,\infty)$) and the mixed increasing domain case ($\lim_{n \to \infty}nA_{n}^{-d} = \infty$). This implies that our study addresses the infill sampling criteria in the stochastic design case (cf. \cite{Cr93} and \cite{La03b}), which is of interest in geostatistical and environmental monitoring applications (cf. \cite{La03b} and \cite{LaKaCrHs99}). Moreover, in stochastic sampling design, the sampling density can be nonuniform. There is another approach for irregularly spaced sampling sites based on a homogeneous Poisson point process (cf. Chapter 8 in \cite{Cr93}). For a sample size $n$, the approach allows the sampling sites to have only a uniform distribution over the sampling region. Therefore, the stochastic sampling design in this study is flexible compared with that based on homogeneous Poisson point processes and is therefore useful for practical applications. It may be worthwhile to consider sampling designs based on point processes that allow nonuniform sampling sites such as Cox or Hawkes point processes; however, we did not consider them because we believe that our sampling design is sufficiently practical for numerous applications. Condition (S3) is considered to decompose the sampling region $R_{n}$ into \textit{big} and \textit{small blocks}. See also the proof of Proposition \ref{general-unif-rate} in Appendix A for details.

\begin{remark}
In practice, $A_{n}$ can be determined using the diameter of the sampling region. See \cite{HaPa94} and \cite{MaYa09} for examples. We can relax the assumption (\ref{sampling-region}) on $R_{n}$ to a more general situation, i.e., 
\[
R_{n} = \prod_{j=1}^{d}[0, A_{j,n}],
\]
where $A_{j,n}$ are sequences of positive constants with $A_{j,n} \to \infty$ as $n \to \infty$. To avoid complicated results, we assumed (\ref{sampling-region}). See also Section \ref{extension discuss} for further discussion on the general sampling region and extensions of our results to the general cases. 
\end{remark}

\subsection{Mixing condition}

Now we define $\beta$-mixing coefficients for a random field $\tilde{\bm{X}}$. Let $\sigma_{\tilde{\bm{X}}}(T) = \sigma(\{\tilde{\bm{X}}(\bm{s}): \bm{s} \in T\})$ be the $\sigma$-field generated by variables $\{\tilde{\bm{X}(}\bm{s}): \bm{s} \in T\}$, $T \subset \mathbb{R}^{d}$. For subsets $T_{1}$ and $T_{2}$ of $\mathbb{R}^{d}$, let 
\[
\bar{\beta}(T_{1}, T_{2}) = \sup {1 \over 2}\sum_{j=1}^{J}\sum_{k=1}^{K}|P(A_{j}\cap B_{k}) - P(A_{j})P(B_{k})|, 
\]
where the supremum is taken over all pairs of (finite) partitions $\{A_{1},\hdots,A_{J}\}$ and $\{B_{1},\hdots, B_{K}\}$ of $\mathbb{R}^{d}$ such that $A_{j} \in \sigma_{\tilde{\bm{X}}}(T_{1})$ and $B_{k} \in \sigma_{\tilde{\bm{X}}}(T_{2})$. Furthermore, let $d(T_{1},T_{2}) = \inf\{|\bm{x} - \bm{y}|: \bm{x} \in T_{1}, \bm{y} \in T_{2}\}$, where $|\bm{x}|= \sum_{j=1}^{d}|x_{j}|$ for $\bm{x} \in \mathbb{R}^{d}$, and let $\mathcal{R}(b)$ be the collection of all finite disjoint unions of cubes in $\mathbb{R}^{d}$ with a total volume not exceeding $b$. Subsequently, the $\beta$-mixing coefficients for the random field $\tilde{\bm{X}}$ can be defined as
\begin{align}\label{beta-mix-def}
\beta(a;b) = \sup \{\bar{\beta}(T_{1},T_{2}): d(T_{1},T_{2}) \geq a, T_{1}, T_{2} \in \mathcal{R}(b)\}. 
\end{align}
We assume that a non-increasing function $\beta_{1}$ with $\lim_{a \to \infty}\beta_{1}(a) = 0$ and a non-decreasing function $g_{1}$ exist such that the $\beta$-mixing coefficient $\beta(a;b)$ satisfies the following inequality: 
\[
\beta(a;b) \leq \beta_{1}(a)g_{1}(b),\ a>0, b>0,
\]
where $g_{1}$ may be unbounded for $d \geq 2$. 

In Section \ref{Ex-LSRF}, we can show that our results also hold for a locally stationary random field $\{\bm{X}_{\bm{s}, A_{n}}: \bm{s} \in R_{n}\}$ which is approximated by a $\beta$-mixing random field, i.e., 
\begin{align}
\bm{X}_{\bm{s},A_{n}} &= \tilde{\bm{X}}_{\bm{s},A_{n}} + \tilde{\bm{\epsilon}}_{\bm{s},A_{n}}, \label{beta-decomp-approx}
\end{align}
where $\tilde{\bm{X}}_{\bm{s},A_{n}}$ is a $\beta$-mixing random field with a $\beta$-mixing coefficient such that $\beta(a;b) = 0$ for $a \geq A_{2,n}$ (i.e. $A_{2,n}$-dependent) and $\tilde{\bm{\epsilon}}_{\bm{s},A_{n}}$ is a ``residual'' random field which is asymptotically negligible. See also Definition \ref{approx-m-def} in Section \ref{Ex-LSRF} for the meaning of asymptotic negligibility.

We can also define $\alpha$-mixing coefficients $\alpha(a;b)$ for a random field $\bm{X}$ similar to the way we defined $\beta(a;b)$ and it is known that $\beta$-mixing implies $\alpha$-mixing in general. However, it is difficult to compute $\beta$- (and even for $\alpha$-)mixing coefficients of a given random field on $\mathbb{R}^{d}$, which is a key different point from time series data. Moreover, almost all existing papers assume technical conditions on mixing coefficients which are difficult to check except for some Gaussian random fields, $m$-dependent random fields on $\mathbb{R}^{2}$ ($m>0$: fix) and linear random fields on $\mathbb{Z}^{2}$ (cf. \cite{HaLuTr04}, \cite{LaZh06}, \cite{LuTj14}, \cite{BaLaNo15} and references therein). In Section \ref{Ex-LSRF}, we introduce a concept of approximately $m_{n}$-dependent random fields. We can check that a wide class of random fields on $\mathbb{R}^{d}$ which includes non-Gaussian random fields as well as Gaussian random fields satisfies (\ref{beta-decomp-approx}) and we can show that our results hold for such a class of random fields. See Section \ref{LSLevyMARF} for details.

\begin{remark}
It is important to restrict the size of index sets $T_{1}$ and $T_{2}$ in the definition of $\beta(a;b)$. To see this, we define the $\beta$-mixing coefficients of a random field $\bm{X}$ as a natural extension of the $\beta$-mixing coefficients for the time series as follows: Let $\mathcal{O}_{1}$ and $\mathcal{O}_{2}$ be half-planes with boundaries $L_{1}$ and $L_{2}$, respectively. For each real number $a>0$, we define the following quantity
\begin{align*}
\beta(a) = \sup\left\{\bar{\beta}(\mathcal{O}_{1},\mathcal{O}_{2}) : d(\mathcal{O}_{1},\mathcal{O}_{2}) \geq a\right\},
\end{align*}
where $\sup$ is taken over all pairs of parallel lines $L_{1}$ and $L_{2}$ such that $d(L_{1},L_{2}) \geq a$. Subsequently, we obtain the following result:
\begin{theorem}[Theorem 1 in \cite{Br89}]
Suppose $\{\bm{X}(s): \bm{s} \in \mathbb{R}^{2}\}$ is a strictly stationary mixing random field, and $a>0$ is a real number. Then $\beta(a) = 1\ \text{or}\ 0$.
\end{theorem}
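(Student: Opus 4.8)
The plan is to prove the dichotomy one configuration at a time: for every admissible pair of half-planes $(\mathcal{O}_{1},\mathcal{O}_{2})$ whose boundary lines are parallel and at distance $\ge a$, I will show $\bar{\beta}(\mathcal{O}_{1},\mathcal{O}_{2})\in\{0,1\}$; since $\beta(a)$ is a supremum of such quantities, it then lies in $\{0,1\}$ as well. The conceptual crux --- and the reason $d\ge 2$ is indispensable --- is that in $\mathbb{R}^{2}$ a half-plane is invariant under a nontrivial one-parameter group of translations, namely the translations in the direction of its boundary line, whereas a half-line in $\mathbb{R}$ admits no such symmetry.

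Fix such a pair, let $\bm{v}_{0}$ be a unit vector parallel to the common boundary direction, and consider the one-parameter group $\{\tau_{t\bm{v}_{0}}\}_{t\in\mathbb{R}}$ of translations. Each $\tau_{t\bm{v}_{0}}$ maps $\mathcal{O}_{1}$ onto $\mathcal{O}_{1}$ and $\mathcal{O}_{2}$ onto $\mathcal{O}_{2}$, hence fixes $\mathcal{F}_{i}:=\sigma_{\bm{X}}(\mathcal{O}_{i})$ ($i=1,2$) as a family of events, and by strict stationarity it acts as a measure-preserving transformation of the underlying probability space. I would then introduce the two invariant probability measures that encode $\bar{\beta}$: the image law $\lambda$ of $P$ under $\omega\mapsto(\omega,\omega)$ on the product measurable space $(\Omega,\mathcal{F}_{1})\times(\Omega,\mathcal{F}_{2})$, and the product of marginals $\pi:=(P|_{\mathcal{F}_{1}})\otimes(P|_{\mathcal{F}_{2}})$. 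By the standard variational identity --- the supremum over finite partitions in the definition of $\bar{\beta}$ --- one has $\bar{\beta}(\mathcal{O}_{1},\mathcal{O}_{2})=\|\lambda-\pi\|_{TV}$, the total variation distance normalized to lie in $[0,1]$. Both $\lambda$ and $\pi$ are invariant under the diagonal action $\tau_{t\bm{v}_{0}}\times\tau_{t\bm{v}_{0}}$.

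The decisive step transfers the mixing hypothesis. Restricting the (assumed) mixing $\mathbb{R}^{2}$-translation action to the one-parameter subgroup $\{t\bm{v}_{0}\}$ still yields a mixing action, since $\|t\bm{v}_{0}\|\to\infty$ as $|t|\to\infty$; passing to the factor on $\mathcal{F}_{1}\vee\mathcal{F}_{2}$ preserves mixing, so $\lambda$ is mixing, in particular ergodic, for the diagonal action. Likewise the factors on $\mathcal{F}_{1}$ and on $\mathcal{F}_{2}$ are each mixing, and a product of mixing systems is mixing, so $\pi$ is ergodic for the diagonal action as well. I then invoke the elementary fact that two ergodic invariant probability measures for one and the same measurable action are either identical or mutually singular: if $\lambda=\pi$ then $\bar{\beta}(\mathcal{O}_{1},\mathcal{O}_{2})=0$, and if $\lambda\perp\pi$ then $\bar{\beta}(\mathcal{O}_{1},\mathcal{O}_{2})=1$. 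Taking the supremum over all admissible pairs of half-planes gives $\beta(a)\in\{0,1\}$.

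The geometric observation underpinning everything (the translation symmetry of half-planes) is immediate; the steps that require genuine care are the transfer steps --- that mixing of the $\mathbb{R}^{2}$-action descends to the relevant one-parameter subgroup, to the factor $\sigma$-fields $\mathcal{F}_{i}$ and $\mathcal{F}_{1}\vee\mathcal{F}_{2}$, and to the product measure $\pi$ --- together with the measure-theoretic bookkeeping needed to make ``ergodic invariant measures are equal or mutually singular'' applicable, e.g. reducing to a countably generated or standard Borel setting so that the ergodic decomposition is available. I expect the main obstacle to be precisely this last point: fixing the measurability framework and the exact meaning of ``mixing'' in the hypothesis (presumably $P(A\cap\tau_{\bm{v}}B)\to P(A)P(B)$ as $\|\bm{v}\|\to\infty$), and verifying carefully that the variational identity $\bar{\beta}(\mathcal{O}_{1},\mathcal{O}_{2})=\|\lambda-\pi\|_{TV}$ is consistent with the partition-supremum definition of $\bar{\beta}$ used here.
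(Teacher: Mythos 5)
Your argument is correct and is essentially the classical proof of this dichotomy from Bradley (1989); the paper itself only quotes the theorem and gives no proof, so there is nothing in the text to diverge from. The one worry you flag at the end is unnecessary: you do not need an ergodic decomposition or a standard Borel setting, since the mutual singularity of two distinct ergodic invariant measures for a single measure-preserving map follows directly from Birkhoff's theorem applied to a bounded function separating them (and mixing of the $\mathbb{R}^{2}$-action does give you ergodicity of the time-one map $\tau_{\bm{v}_{0}}$ and of the product system, which is exactly where the mixing hypothesis, rather than mere ergodicity, is indispensable).
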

This implies that if a random field $\bm{X}$ is $\beta$-mixing with respect to the $\beta$-mixing coefficient $\beta(\cdot)$ ($\lim_{a \to \infty}\beta(a)=0$), then the random field $\bm{X}$ is ``$m$''-dependent, i.e., $\beta(a)=0$ for some $a>m$, where $m$ is a fixed positive constant. For practical purposes, this is extremely restrictive. Therefore, we adopted the definition (\ref{beta-mix-def}) for $\beta$-mixing random fields. We refer to \cite{Br93}, \cite{Do94}, and \cite{DeDoLaLeLoPr07} for details regarding the mixing coefficients for random fields. 
\end{remark}

\subsection{Discussion on mixing conditions}\label{beta-mixing RF discuss}
It is often assumed that $\bm{X}$ is $\alpha$-mixing and blocking techniques are applied to construct asymptotically independent blocks of observations and to show asymptotic normality of estimators based on the convergence of characteristic functions (cf. Proposition 2.6 in \cite{FaYa03}). See also \cite{La03b}, \cite{LaZh06} and \cite{BaLaNo15} for example. \cite{Vo12} used an exponential inequality for an equidistant $\alpha$-mixing sequence (Theorem 2.1 in \cite{Li96}) to derive uniform convergence rates of kernel estimators for locally stationary time series. We can show pointwise convergence results under $\alpha$-mixing conditions without changing proofs. Precisely, the central limit theorems (Theorems 3.2 and 4.2) below hold under the same Assumptions by replacing conditions on $\beta$-mixing coefficient $\beta(a;b)$ with conditions on $\alpha$-mixing coefficient $\alpha(a;b)$ that is defined as follows: Let $\tilde{\bm{X}}$ be a random field on $\mathbb{R}^{d}$. For any two subsets $T_{1}$ and $T_{2}$ of $\mathbb{R}^{d}$, let $\tilde{\alpha}(T_{1},T_{2}) = \sup\{|P(A \cap B) - P(A)P(B)|: A \in \sigma_{\tilde{\bm{X}}}(T_{1}), B \in \sigma_{\tilde{\bm{X}}}(T_{2})\}$. Then the $\alpha$-mixing coefficient of the random field $\tilde{\bm{X}}$ is defined as 
\begin{align}\label{al-mix-coef}
\alpha(a;b) = \sup\{\tilde{\alpha}(T_{1},T_{2}): d(T_{1}, T_{2}) \geq a, T_{2}, T_{2} \in \mathcal{R}(b)\}.
\end{align}
As with the $\beta$-mixing coefficients, we also assume that a non-increasing function $\alpha_{1}$ with $\lim_{a \to \infty}\alpha_{1}(a) = 0$ and a non-decreasing function $g'_{1}$ exist such that the $\alpha$-mixing coefficient $\alpha(a;b)$ satisfies the following inequality: 
\[
\alpha(a;b) \leq \alpha_{1}(a)g'_{1}(b),\ a>0, b>0,
\]
where $g'_{1}$ may be unbounded for $d \geq 2$. We refer to \cite{LaZh06} and \cite{BaLaNo15} for the definition and discussion of $\alpha$-mixing coefficients for random fields on $\mathbb{R}^{d}$.

On the other hand, the proofs of main results in this paper are based on a general blocking technique designed for irregularly spaced sampling sites. Moreover, to derive the uniform convergence rates of the kernel estimators, we need to care about the effect of non-equidistant sampling sites when applying a maximal inequality and it requires additional work compared with the case that sampling sites are equidistant. Indeed, in place of  using results for (regularly spaced) stationary sequence, which cannot be applied to the analysis of irregularly spaced non-stationary data, we construct ``exactly'' independent blocks of observations and apply results for independent data to the independent blocks since there is no practical guidance for introducing an order to spatial points as opposed to time series. Precisely, we first reduce the dependent data to not asymptotically but exactly independent blocks in finite sample by using the key result in \cite{Yu94}(Corollary 2.7) which does not require any dependence structure and regularly spaced sampling sites. Then we apply the Bernstein's inequality for independent and possibly not identically distributed random variables to the independent blocks. It also should be noted that according to Remarks (ii) after the proof Lemma 4.1 in \cite{Yu94}, the result on construction of independent blocks for $\beta$-mixing sequence would not hold for $\alpha$-mixing sequences. Therefore, we work with $\beta$-mixing sequence for uniform estimation of regression functions.


As a recent contribution in econometrics, \cite{ChChKa19} extended high-dimensional central limit theorems for independent data developed by \cite{ChChKa13, ChChKa17} to possibly nonstationary $\beta$-mixing time series under a nonstochastic sampling design using the technique in \cite{Yu94}. See also Section \ref{more discuss mixing} for further discussion on the mixing conditions.

\section{Main results}\label{Section_Main}

In this section, we consider general kernel estimators and derive their uniform convergence rates. Based on the result, we derive the uniform convergence rate and asymptotic normality of an estimator for the mean function in the model (\ref{NR-LSRF}).

\subsection{Kernel estimation for regression functions}\label{section-kernel}

We consider the following kernel estimator for $m(\bm{u},\bm{x})$ in the model (\ref{NR-LSRF}): 
\begin{align}\label{m-func-est}
\hat{m}(\bm{u},\bm{x}) &= {\sum_{j=1}^{n}\bar{K}_{h}(\bm{u}-\bm{s}_{j}/A_{n})\prod_{\ell=1}^{p}K_{h}(x_{\ell} - X_{\bm{s}_{j},A_{n}}^{\ell})Y_{\bm{s}_{j},A_{n}} \over \sum_{j=1}^{n}\bar{K}_{h}(\bm{u}-\bm{s}_{j}/A_{n})\prod_{\ell=1}^{p}K_{h}(x_{\ell} - X_{\bm{s}_{j},A_{n}}^{\ell})}.
\end{align}
Here, $\bm{X}_{\bm{s},A_{n}} = (X_{\bm{s},A_{n}}^{1},\hdots, X_{\bm{s},A_{n}}^{p})'$, $\bm{x} = (x_{1},\hdots,x_{p})'$ and $\bm{u} = (u_{1},\hdots,u_{p})' \in \mathbb{R}^{p}$. $K$ denotes a one-dimensional kernel function, and we used the notations $K_{h}(v) = K(v/h)$, $\bar{K}_{h}(\bm{u}) = \bar{K}(\bm{u}/h)$ where $\bar{K}(\bm{u}) = \prod_{j=1}^{d}K(u_{j})$ and $\bm{u}/h = (u_{1}/h,\hdots, u_{d}/h)'$. 

Before we state the main results, we summarize the assumptions made for the model (\ref{NR-LSRF}) and kernel functions. These assumptions are standard;  similar assumptions are made in \cite{Vo12} and \cite{ZhWu15}.
\begin{assumption}\label{Ass-M}
\begin{enumerate}
\item[(M1)] The process $\{\bm{X}_{\bm{s},A_{n}}\}$ is locally stationary. Hence, for each space point $\bm{u} \in [0,1]^{d}$, a strictly stationary random field $\{\bm{X}_{\bm{u}}(\bm{s})\}$ exists such that 
\[
\|\bm{X}_{\bm{s},A_{n}} - \bm{X}_{\bm{u}}(\bm{s})\| \leq \left(\left\|{\bm{s} \over A_{n}} - \bm{u}\right\| + {1 \over A_{n}^{d}}\right)U_{\bm{s}, A_{n}}(\bm{u})\ \text{a.s.}
\]
with $E[(U_{\bm{s}, A_{n}}(\bm{u}))^{\rho}]\leq C$ for some $\rho>0$. 
\item[(M2)] The density $f(\bm{u}, \bm{x}) = f_{\bm{X}_{\bm{u}}(\bm{s})}(\bm{x})$ of the variable $\bm{X}_{\bm{u}}(\bm{s})$ is smooth in $\bm{u}$. In particular, $f(\bm{u}, \bm{x})$ is partially differentiable with respect to (w.r.t.) $\bm{u} \in (0,1)^{d}$ for each $\bm{x} \in \mathbb{R}^{p}$, and the derivatives $\partial_{u_{i}}f(\bm{u}, \bm{x}) = {\partial \over \partial u_{i}}f(\bm{u}, \bm{x})$, $1 \leq i \leq d$ are continuous. 
\item[(M3)] The $\beta$-mixing coefficients of the array $\{\bm{X}_{\bm{s}, A_{n}}, \epsilon_{\bm{s}, A_{n}}\}$ satisfy $\beta(a;b) \leq \beta_{1}(a)g_{1}(b)$ with $\beta_{1}(a) \to 0$ as $a \to \infty$.
\item[(M4)] $f(\bm{u}, \bm{x})$ is partially differentiable w.r.t. $\bm{x}$ for each $\bm{u} \in [0,1]^{d}$. The derivatives $\partial_{x_{i}}f(\bm{u}, \bm{x}) := {\partial \over \partial x_{i}}f(\bm{u}, \bm{x})$, $1\leq i \leq p$ are continuous. 
\item[(M5)] $m(\bm{u}, \bm{x})$ is twice continuously partially differentiable with first derivatives $\partial_{u_{i}}m(\bm{u},\bm{x}) = {\partial \over \partial u_{i}}m(\bm{u},\bm{x})$, $\partial_{x_{i}}m(\bm{u},\bm{x}) = {\partial \over \partial x_{i}}m(\bm{u},\bm{x})$ and second derivatives $\partial_{u_{i}u_{j}}^{2}m(\bm{u},\bm{x}) = {\partial^{2} \over \partial u_{i}\partial u_{j}}m(\bm{u},\bm{x})$, $\partial_{u
_{i}x_{j}}^{2}m(\bm{u},\bm{x}) = {\partial^{2} \over \partial u_{i} \partial x_{j}}m(\bm{u},\bm{x})$, $\partial_{x_{i}x_{j}}^{2}m(\bm{u},\bm{x}) = {\partial^{2} \over \partial x_{i}\partial x_{j}}m(\bm{u},\bm{x})$. 
\end{enumerate}
\end{assumption}

\begin{remark}\label{comment-(M1)}
We can verify that a wide class of random fields satisfies Condition (M1). Indeed, our results can be applied to a class of locally stationary L\'evy-driven moving average random fields that include non-stationary CARMA random fields and CARMA random fields are known as a rich class of models for spatial data (cf. \cite{BrMa17}, \cite{MaYa18} and \cite{MaYu20}). See also Section 5 for detailed discussion on the properties of locally stationary L\'evy-driven moving average random fields.
\end{remark}

\begin{assumption}\label{Ass-KB}
\begin{enumerate}
\item[(KB1)] The kernel $K$ is symmetric around zero, bounded, and has a compact support, i.e., $K(v) = 0$ for all $|v|>C_{1}$ for some $C_{1}<\infty$. Moreover, $K$ is Lipschitz continuous, i.e., $|K(v_{1}) - K(v_{2})| \leq C_{2}|v_{1} - v_{2}|$ for some $C_{2}<\infty$ and all $v_{1},v_{2} \in \mathbb{R}$. 
\item[(KB2)] The bandwidth $h$ is assumed to converge to zero at least at a polynomial rate, that is, there exists a small $\xi_{1}>0$ such that $h \leq Cn^{-\xi_{1}}$ for some constant $0<C<\infty$. 
\end{enumerate}
\end{assumption}

\subsection{Uniform convergence rates for general kernel estimators}

As a first step to study the asymptotic properties of estimators (\ref{m-func-est}), we analyze the following general kernel estimator: 
\begin{align}\label{genKernel-est}
\hat{\psi}(\bm{u},\bm{x}) &= {1 \over nh^{p+d}}\sum_{j=1}^{n}\bar{K}_{h}\left(\bm{u} - {\bm{s}_{j} \over A_{n}}\right)\prod_{\ell=1}^{p}K_{h}\left(x_{\ell} - X_{\bm{s}_{j},A_{n}}^{\ell}\right)W_{\bm{s}_{j},A_{n}},
\end{align}
where $\bar{K}(\bm{u}) = \prod_{j=1}^{d}K(u_{j})$ and $\{W_{\bm{s}_{j}, A_{n}}\}$ is an array of one-dimensional random variables. Many kernel estimators, such as Nadaraya--Watson estimators, can be represented by (\ref{genKernel-est}). In this study, we use the results with $W_{\bm{s}, A_{n}}=1$ and $W_{\bm{s}, A_{n}} = \epsilon_{\bm{s}, A_{n}}$. 

Next, we derived the uniform convergence rate of $\hat{\psi}(\bm{u}, \bm{x}) - E_{\cdot|\bm{S}}[\hat{\psi}(\bm{u},\bm{x})]$. We assumed the following for the components in (\ref{genKernel-est}). Similar assumptions are made in \cite{Ha08}, \cite{Kr09}, and \cite{Vo12}.

\begin{assumption}\label{Ass-U}
\begin{enumerate}
\item[(U1)] It holds that $E[|W_{\bm{s}, A_{n}}|^{\zeta}] \leq C$ for some $\zeta > 2$ and $C <\infty$. 
\item[(U2)] The $\beta$-mixing coefficients of the array $\{\bm{X}_{\bm{s}, A_{n}}, W_{\bm{s}, A_{n}}\}$ satisfy $\beta(a;b) \leq \beta_{1}(a)g_{1}(b)$ with $\beta_{1}(a) \to 0$ as $a \to \infty$. 
\item[(U3)] Let $f_{\bm{X}_{\bm{s}, A_{n}}}$ 
be the density of $\bm{X}_{\bm{s}, A_{n}}$. 
For any compact set $S_{c} \subset \mathbb{R}^{p}$, a constant $C = C(S_{c})$ exists such that 
\begin{align*}
\sup_{\bm{s},A_{n}}\sup_{\bm{x} \in S_{c}}f_{\bm{X}_{\bm{s}, A_{n}}}(\bm{x}) \leq C\ \text{and}\ \sup_{\bm{s}, A_{n}}\sup_{\bm{x} \in S_{c}}E[|W_{\bm{s}, A_{n}}|^{\zeta}|\bm{X}_{\bm{s}, A_{n}}=\bm{x}] \leq C.
\end{align*} 
Moreover, for all distinct $\bm{s}_{1}, \bm{s}_{2} \in R_{n}$, 
\begin{align*}
&\sup_{\bm{s}_{1},\bm{s}_{2},A_{n}}\sup_{\bm{x}_{1}, \bm{x}_{2} \in S_{c}}E[|W_{\bm{s}_{1}, A_{n}}||W_{\bm{s}_{2}, A_{n}}||\bm{X}_{\bm{s}_{1},A_{n}}= \bm{x}_{1}, \bm{X}_{\bm{s}_{2},A_{n}} = \bm{x}_{2}] \leq C.
\end{align*}
\end{enumerate}
\end{assumption}

Furthermore, we assume the following regularity conditions. 
\begin{assumption}\label{Ass-R}
Let $a_{n} = \sqrt{{\log n} \over nh^{d+p}}$. As $n \to \infty$, 
\begin{enumerate}
\item[(R1)] $h^{-(d+p)}a_{n}^{d+p}A_{n}^{d}A_{1,n}^{-d}\beta(A_{2,n};A_{n}^{d}) \to 0$ and $A_{1,n}^{d}A_{n}^{-d}nh^{d+p}(\log n) \to 0$, 
\item[(R2)] ${n^{1/2}h^{(d+p)/2} \over A_{1,n}^{d}n^{1/\zeta}} \geq C_{0}n^{\eta}$ for some $0<C_{0}<\infty$ and $\eta>0$,
\item[(R3)] $A_{n}^{dr}h^{p} \to \infty$, 
\end{enumerate}
where $\zeta$ is a positive constant that appears in Assumption \ref{Ass-U}.   
\end{assumption}

Discussions on the assumptions regarding the mixing condition in Assumptions \ref{Ass-R} and \ref{Ass-R(add)} (introduced in Section \ref{asy-prop-m}) are given in Section \ref{Ex-LSRF} and Appendix \ref{Appendix-proof}.

\begin{proposition}\label{general-unif-rate}
Let $S_{c}$ be a compact subset of $\mathbb{R}^{p}$. Then, under Assumptions \ref{Ass-S}, \ref{Ass-KB}, \ref{Ass-U} and \ref{Ass-R}, the following results hold for $P_{\bm{S}}$ almost surely:  
\begin{align*}
\sup_{\bm{u} \in [0,1]^{d}, \bm{x} \in S_{c}}|\hat{\psi}(\bm{u}, \bm{x}) - E_{\cdot|\bm{S}}[\hat{\psi}(\bm{u}, \bm{x})]| &= O_{P_{\cdot|\bm{S}}}\left(\sqrt{\log n  \over nh^{d+p}}\right).
\end{align*}
\end{proposition}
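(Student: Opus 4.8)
The plan is to establish the uniform rate by the standard chaining-plus-blocking route, but with the blocking adapted to irregular spatial sampling. \textbf{Step 1 (discretization).} Cover $[0,1]^d \times S_c$ by a grid of $M_n \sim (1/\delta_n)^{d+p}$ cells of side $\delta_n$, with $\delta_n$ a small polynomial in $n$ (e.g.\ $\delta_n \asymp n^{-C}$). Using the Lipschitz continuity of $K$ from (KB1) together with (KB2), the oscillation of $\hat\psi(\bm u,\bm x) - E_{\cdot|\bm S}[\hat\psi(\bm u,\bm x)]$ inside each cell is controlled by a factor of order $\delta_n/h^{d+p+1}$ times a sum of $|W_{\bm s_j,A_n}|$'s; by (U1) and a crude moment bound this discretization error is $o_{P_{\cdot|\bm S}}(a_n)$ for $\delta_n$ chosen small enough, and (R3) is used here to absorb residual factors. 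So it suffices to bound $\max_{k \le M_n} |\hat\psi(\bm u_k,\bm x_k) - E_{\cdot|\bm S}[\hat\psi(\bm u_k,\bm x_k)]|$, and since $\log M_n \lesssim \log n$, a union bound reduces everything to a tail bound for a single fixed $(\bm u,\bm x)$ of the form $\Pr_{\cdot|\bm S}(|\hat\psi - E_{\cdot|\bm S}\hat\psi| > \lambda a_n) \le C n^{-\gamma}$ with $\gamma$ arbitrarily large once $\lambda$ is large.

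\textbf{Step 2 (big/small block decomposition).} Partition the rescaled region $[0,1]^d$ — equivalently $R_n$ after scaling by $A_n$ — into big blocks of side $A_{1,n}$ separated by small (buffer) corridors of side $A_{2,n}$; Assumption (S3) guarantees $A_{1,n}/A_n \to 0$ and $A_{2,n}/A_{1,n} \to 0$ polynomially, so both the number of big blocks $\sim (A_n/A_{1,n})^d$ and the relative mass of the buffer region are controlled. Write $\hat\psi - E_{\cdot|\bm S}\hat\psi$ as the sum of its restriction to big blocks plus the restriction to buffers. The buffer contribution is handled by a direct moment computation: its conditional variance is of order $(A_{1,n}^d A_n^{-d})\cdot (nh^{d+p})^{-1}$ up to logs, which by the second part of (R1) is $o(a_n^2)$; combined with (U3) and a Markov/Rosenthal argument this piece is negligible on a $P_{\bm S}$-full set.

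\textbf{Step 3 (coupling to independent blocks + Bernstein).} For the big-block sum, apply Yu's coupling (Corollary 2.7 of \cite{Yu94}, exactly as flagged in the introduction) to replace the $\beta$-mixing big-block summands by independent blocks with the same marginals, at a total cost in total variation bounded by (number of big blocks)$\times \beta(A_{2,n};A_n^d)$; the first part of (R1) is precisely what makes this coupling error $o(a_n)$ (after rescaling by the normalizing factor $h^{-(d+p)}$). On the independent blocks I would apply Bernstein's inequality for independent, not-necessarily-identically-distributed, bounded-after-truncation variables: truncate $W_{\bm s_j,A_n}$ at level $n^{1/\zeta}$ (the discarded part is negligible by (U1)), bound the per-block variance proxy using (U3) and $f_{\bm X_{\bm s,A_n}}$-boundedness to get total variance $\lesssim a_n^2$, and bound the per-block range by $\lesssim A_{1,n}^d h^{-(d+p)} n^{1/\zeta}/n$. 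The Bernstein exponent is then $\lambda^2 \log n / (1 + \lambda/(n^{1/2}h^{(d+p)/2}/(A_{1,n}^d n^{1/\zeta})))$, and Assumption (R2) says precisely that the denominator correction is $\le 1$ up to a power of $n$, so the exponent dominates $\gamma \log n$ for $\lambda$ large; this yields the required $n^{-\gamma}$ tail. Finally, since these are $P_{\cdot|\bm S}$-statements holding for each $n$ with summable-in-$n$ exceptional probability (after choosing $\gamma>1$), Borel--Cantelli under $P_{\bm S}$ upgrades the conclusion to ``$P_{\bm S}$ almost surely.''

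\textbf{Main obstacle.} The delicate point is Step 2--3: ensuring the block geometry (sizes $A_{1,n}, A_{2,n}$ from (S3)), the coupling cost $\beta(A_{2,n};A_n^d)$, the truncation level $n^{1/\zeta}$, and the Bernstein variance/range all fit together so that (R1) and (R2) are exactly the inequalities that close the argument. Unlike the equidistant time-series case in \cite{Vo12}, one cannot invoke an off-the-shelf exponential inequality for stationary mixing sequences; the independent blocks are genuinely non-identically distributed (the number of sample points $\bm s_j$ falling in a given big block is random, governed by $f_{\bm S}$), so the maximal-inequality bookkeeping — in particular verifying that the conditional-on-$\bm S$ block counts are uniformly $O(A_{1,n}^d n A_n^{-d})$ on a $P_{\bm S}$-full set, which is where (S1), (S2) and a Bernstein bound for the i.i.d.\ design points enter — is the part requiring the most care.
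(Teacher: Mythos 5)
Your overall architecture is the paper's: truncate $W_{\bm{s},A_{n}}$ at a level of order $n^{1/\zeta}$ and dispose of the tail by (U1); cover $[0,1]^{d}\times S_{c}$ by polynomially many points and control the oscillation through the Lipschitz kernel; partition $R_{n}$ into big blocks of side $A_{1,n}$ and buffers of width $A_{2,n}$; couple the block sums to independent copies via Corollary 2.7 of Yu (1994) at total-variation cost $(A_{n}/A_{1,n})^{d}\beta(A_{2,n};A_{n}^{d})$, killed by the first part of (R1); and finish with Bernstein's inequality for independent, non-identically distributed blocks, with (R2) closing the exponent and the $P_{\bm{S}}$-a.s.\ block-count bounds for the i.i.d.\ design points supplying the range and variance proxies. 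All of that matches the paper's proof of Proposition \ref{general-unif-rate}.

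The step that does not go through as written is your treatment of the buffer contribution in Step 2. You propose to dismiss it by a conditional variance computation plus ``Markov/Rosenthal.'' But you yourself correctly reduced the problem in Step 1 to a union bound over $M_{n}\sim n^{C}$ grid points, so each fixed-point tail must be polynomially small of arbitrarily high order; Chebyshev with variance $o(a_{n}^{2})$ gives only an $O(1/\lambda^{2})$ tail per point, which the union bound destroys, and a Rosenthal bound of high enough order is not available for free because $W_{\bm{s},A_{n}}$ has only $\zeta$ moments and the truncated summands' higher moments grow with $n$. The paper instead treats the $2^{d}-1$ families of small blocks \emph{exactly} like the big blocks: for fixed $\bm{\epsilon}\neq\bm{\epsilon}_{0}$ the blocks $\Gamma_{n}(\bm{\ell};\bm{\epsilon})$ are mutually separated by at least $A_{2,n}$, so the same Yu coupling applies, and Bernstein on the resulting independent small-block sums yields the exponential tail; the smallness of the buffers enters only through the improved per-block variance bound $E_{\cdot|\bm{S}}[(Z_{A_{n}}^{(\bm{\ell};\bm{\epsilon})})^{2}]\lesssim A_{1,n}^{d-1}A_{2,n}(nA_{n}^{-d}+\log n)h^{d+p}$ and the ratio $A_{2,n}/A_{1,n}\to 0$. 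Two minor further points: (R3) plays no role in this proposition (it only controls the local-stationarity bias of $\hat{m}$ later), so your appeal to it in the discretization step is spurious; and the discretization error is not handled by a ``crude moment bound'' but by dominating the oscillation over a ball of radius $a_{n}h$ by $a_{n}$ times an auxiliary kernel sum $\bar{\psi}_{1}$, which is then itself controlled by the same blocking-plus-Bernstein argument.
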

\subsection{Asymptotic properties of $\hat{m}$}\label{asy-prop-m}

Next, we present our main results for the estimation of mean function $m$ for the model (\ref{NR-LSRF}).

\begin{theorem}\label{unif-rate-m}
Let $I_{h} = [C_{1}h, 1-C_{1}h]^{d}$ and let $S_{c}$ be a compact subset of $\mathbb{R}^{p}$. Suppose that $\inf_{\bm{u} \in [0,1]^{d}, \bm{x} \in S_{c}}f(\bm{u},\bm{x})>0$. Then, under Assumptions \ref{Ass-S}, \ref{Ass-M}, \ref{Ass-KB}, \ref{Ass-U} (with $W_{\bm{s}_{j},A_{n}}=1$ and $\epsilon_{\bm{s}_{j},A_{n}}$) and \ref{Ass-R}, the following result holds for $P_{\bm{S}}$ almost surely: 
\begin{align*}
\sup_{\bm{u} \in I_{h}, \bm{x} \in S_{c}}\left|\hat{m}(\bm{u},\bm{x}) - m(\bm{u},\bm{x})\right| &= O_{P_{\cdot|\bm{S}}}\left(\sqrt{{\log n  \over nh^{d+p}}} + h^{2} + {1 \over A_{n}^{dr}h^{p}}\right),
\end{align*}
where $r = \min\{1,\rho\}$.
\end{theorem}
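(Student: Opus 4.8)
The plan is to decompose the estimation error $\hat{m}(\bm{u},\bm{x}) - m(\bm{u},\bm{x})$ into a stochastic part and a bias part by writing $\hat{m}$ as a ratio of kernel averages. Introduce the notation $\hat{g}(\bm{u},\bm{x}) = \frac{1}{nh^{p+d}}\sum_{j=1}^{n}\bar{K}_{h}(\bm{u}-\bm{s}_{j}/A_{n})\prod_{\ell=1}^{p}K_{h}(x_{\ell}-X^{\ell}_{\bm{s}_{j},A_{n}})Y_{\bm{s}_{j},A_{n}}$ for the numerator and $\hat{f}(\bm{u},\bm{x})$ for the denominator (the case $W_{\bm{s},A_{n}}=1$ of $\hat{\psi}$), so that $\hat{m} = \hat{g}/\hat{f}$. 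Using the model equation (\ref{NR-LSRF}), split $\hat{g}$ further into a regression contribution (replacing $Y_{\bm{s}_{j},A_{n}}$ by $m(\bm{s}_{j}/A_{n},\bm{X}_{\bm{s}_{j},A_{n}})$) and a noise contribution $\hat{\psi}_{\epsilon}$ obtained by taking $W_{\bm{s},A_{n}} = \epsilon_{\bm{s},A_{n}}$ in (\ref{genKernel-est}). Then write, on the event that $\hat{f}$ is bounded away from zero,
\begin{align*}
\hat{m} - m = \frac{1}{\hat{f}}\Bigl[(\hat{g} - m\hat{f}) \Bigr] = \frac{1}{\hat{f}}\Bigl[\hat{\psi}_{\epsilon} + (\widehat{m f} - m\hat{f})\Bigr],
\end{align*}
where $\widehat{m f}$ denotes the kernel average of $m(\bm{s}_{j}/A_{n},\bm{X}_{\bm{s}_{j},A_{n}})$ against the product kernel.

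The first step is to control the denominator. Apply Proposition \ref{general-unif-rate} with $W_{\bm{s},A_{n}}=1$ to get $\sup_{\bm{u}\in[0,1]^d,\bm{x}\in S_c}|\hat{f} - E_{\cdot|\bm{S}}[\hat{f}]| = O_{P_{\cdot|\bm{S}}}(a_n)$, and separately show that $E_{\cdot|\bm{S}}[\hat{f}(\bm{u},\bm{x})]$ converges uniformly on $I_h\times S_c$ to $f(\bm{u},\bm{x})$ with error $O(h^2 + A_n^{-dr}h^{-p})$; here the restriction to $I_h$ removes boundary effects of $\bar K$, the $h^2$ term comes from a second-order Taylor expansion of $f(\cdot,\bm{x})$ in the first argument using (M2), and the $A_n^{-dr}h^{-p}$ term comes from replacing $\bm{X}_{\bm{s}_j,A_n}$ by the stationary approximation $\bm{X}_{\bm{u}}(\bm{s}_j)$ via the local-stationarity inequality (\ref{LSRF-ineq}), controlled using (U3), Markov's inequality on $U_{\bm{s},A_n}^{\rho}$, and $r=\min\{1,\rho\}$. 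Together with the assumed lower bound $\inf f > 0$ this shows $\hat f$ is uniformly bounded below (in probability) on $I_h\times S_c$, so $1/\hat f = O_{P_{\cdot|\bm{S}}}(1)$ there.

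The second step handles the noise term: apply Proposition \ref{general-unif-rate} with $W_{\bm{s},A_{n}} = \epsilon_{\bm{s},A_{n}}$ — Assumption \ref{Ass-U} holds for this choice by hypothesis, and $E_{\cdot|\bm{S}}[\hat\psi_\epsilon] = 0$ since $E[\epsilon_{\bm{s},A_n}\mid \bm{X}_{\bm{s},A_n}]=0$ — to obtain $\sup|\hat\psi_\epsilon| = O_{P_{\cdot|\bm{S}}}(a_n)$. The third step is the bias term $\widehat{mf} - m\hat f$. Write $m(\bm{s}_j/A_n,\bm{X}_{\bm{s}_j,A_n}) - m(\bm{u},\bm{x})$ and expand: the dependence on the rescaled location contributes, after integrating against $\bar K_h(\bm{u}-\cdot/A_n)$ and using symmetry of $K$ and (M5), a term of order $h^2$; the dependence on $\bm{x}$ through the kernel $K_h(x_\ell - X^\ell)$ localizes $X^\ell$ near $x_\ell$ and again yields $O(h^2)$ by a second-order expansion of $m$ and $f$ in $\bm x$ (using (M4), (M5)) with the symmetry of $K$ killing the first-order term; replacing $\bm X_{\bm{s}_j,A_n}$ by $\bm X_{\bm u}(\bm{s}_j)$ costs another $O(A_n^{-dr}h^{-p})$ as before; and the remaining stochastic fluctuation of $\widehat{mf}$ around its conditional mean is $O_{P_{\cdot|\bm{S}}}(a_n)$ by yet another application of Proposition \ref{general-unif-rate} with $W_{\bm{s},A_n} = m(\bm{s}/A_n,\bm{X}_{\bm{s},A_n})$, whose moment and mixing conditions follow from (M1), (M3), (M5) and boundedness of $m$ on the relevant compact set. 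Collecting the three pieces and dividing by $\hat f$ (bounded below) gives the claimed rate $a_n + h^2 + A_n^{-dr}h^{-p}$.

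The main obstacle I anticipate is the careful bookkeeping of the local-stationarity replacement error uniformly in $(\bm{u},\bm{x})$: one must pass from the a.s. bound $\|\bm X_{\bm{s},A_n} - \bm X_{\bm u}(\bm{s})\| \le (\|\bm{s}/A_n - \bm u\| + A_n^{-d})U_{\bm{s},A_n}(\bm u)$ to a bound on differences of the form $K_h(x_\ell - X^\ell_{\bm{s},A_n}) - K_h(x_\ell - X^\ell_{\bm u}(\bm{s}))$, which loses a factor $h^{-1}$ per coordinate from Lipschitz continuity of $K$ (KB1), hence the $h^{-p}$ in the error term, and then take conditional expectations using the moment bound $E[U^\rho] \le C$ together with the density bounds in (U3) — the exponent $r = \min\{1,\rho\}$ arises precisely because for $\rho < 1$ one can only extract a fractional power of the distance term via Jensen/Hölder. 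Ensuring all these estimates are uniform over the compact sets and hold $P_{\bm S}$-almost surely (not merely in $P$-probability), which is what lets us invoke Proposition \ref{general-unif-rate} in its stated conditional form, is the delicate part; the rest is a standard ratio-estimator Taylor-expansion argument.
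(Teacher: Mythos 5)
Your proposal is correct and follows essentially the same route as the paper: the same decomposition of $\hat{m}-m$ into a noise term (Proposition \ref{general-unif-rate} with $W_{\bm{s},A_{n}}=\epsilon_{\bm{s},A_{n}}$), a stochastic-fluctuation term for the kernel average of $m(\bm{s}_{j}/A_{n},\bm{X}_{\bm{s}_{j},A_{n}})$ minus $m\hat{f}$, a conditional-bias term analyzed by a second-order Taylor expansion combined with the local-stationarity replacement (telescoping the product kernel, Lipschitz continuity of $K$ giving the $h^{-p}$ loss, and the fractional moment of $U_{\bm{s},A_{n}}$ giving $r=\min\{1,\rho\}$), and a denominator bounded below via the uniform consistency of $\hat{f}$. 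The paper organizes the bias computation into four explicit terms $Q_{1},\dots,Q_{4}$ and evaluates $Q_{4}$ with its Lemmas \ref{Masry-thm2}--\ref{Masry-thm3}, but this is only a bookkeeping difference from what you describe.
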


The term $A_{n}^{-dr}h^{-p}$ in the convergence rate arises from the local stationarity of $\bm{X}_{\bm{s},A_{n}}$, i.e., the approximation error of $\bm{X}_{\bm{s},A_{n}}$ by a stationary random field $\bm{X}_{\bm{u}}(\bm{s})$. 

To demonstrate the asymptotic normality of the estimator, we additionally assume the following conditions: 
\begin{assumption}\label{Ass-U(add)}
\begin{enumerate}
\item[(Ua1)] (U1), (U2) and (U3) in Assumption \ref{Ass-U} hold. 
\item[(Ua2)] for all distinct $\bm{s}_{1}, \bm{s}_{2},\bm{s}_{3} \in R_{n}$, there exists a constant $C<\infty$ such that 
\begin{align*}
&\sup_{\bm{s}_{1},\bm{s}_{2}, \bm{s}_{3}, A_{n}}\sup_{\bm{x}_{1}, \bm{x}_{2} \bm{x}_{3} \in S_{c}}E[|W_{\bm{s}_{1}, A_{n}}||W_{\bm{s}_{2}, A_{n}}||W_{\bm{s}_{3}, A_{n}}||\bm{X}_{\bm{s}_{1},A_{n}}= \bm{x}_{1}, \bm{X}_{\bm{s}_{2},A_{n}} = \bm{x}_{2},\bm{X}_{\bm{s}_{3},A_{n}} = \bm{x}_{3}] \leq C.
\end{align*}
\end{enumerate}
\end{assumption}

\begin{assumption}\label{Ass-R(add)}
As $n \to \infty$, 
\begin{enumerate}
\item[(Ra1)] (R1) and (R2) in Assumption \ref{Ass-R} hold.
\item[(Ra2)] $A_{n}^{dr}h^{p+2}  \to \infty $.  
\item[(Ra3)] 
\[
\left({1 \over nh^{p+d}}\right)^{1/3}\left({A_{1,n} \over A_{n}}\right)^{2d/3}\left({A_{2,n} \over A_{1,n}}\right)^{2/3}g^{1/3}_{1}(A_{1,n}^{d})\sum_{k = 1}^{A_{n}/A_{1,n}}k^{d-1}\beta_{1}^{1/3}(kA_{1,n} + A_{2,n}) \to 0.
\] 
\end{enumerate}
\end{assumption}

The asymptotic normality of the kernel estimators can be established under $\alpha$-mixing that is weaker than $\beta$-mixing.  See also (\ref{al-mix-coef}) for the definition of $\alpha$-mixing coefficients for random fields. 
\begin{theorem}\label{general-CLT-m}
Suppose that $f(\bm{u}, \bm{x})>0$, $f_{\bm{S}}(\bm{u})>0$ and $\epsilon_{\bm{s}_{j},A_{n}} = \sigma\left({\bm{s}_{j} \over A_{n}}, \bm{x}\right)\epsilon_{j}$, where $\sigma(\cdot,\cdot)$ is continuous and $\{\epsilon_{j}\}_{j=1}^{n}$ is a sequence of i.i.d. random variables with mean zero and variance $1$. Moreover, suppose $nh^{d+p+4} \to c_{0}$ for a constant $c_{0}$. Then under Assumptions \ref{Ass-S}, \ref{Ass-M}, \ref{Ass-KB}, \ref{Ass-U(add)} and \ref{Ass-R(add)} with $W_{\bm{s}_{j},A_{n}}=1$ and $\epsilon_{\bm{s}_{j},A_{n}}$in Assumption 3.5, the following result holds for $P_{\bm{S}}$ almost surely: 
\begin{align*}
\sqrt{nh^{d+p}}(\hat{m}(\bm{u},\bm{x}) - m(\bm{u},\bm{x})) \stackrel{d}{\longrightarrow} N(B_{\bm{u},\bm{x}}, V_{\bm{u},\bm{x}}),
\end{align*}
where 
\begin{align*}
B_{\bm{u},\bm{x}} &= \sqrt{c_{0}}{\kappa_{2} \over 2}\left\{\sum_{i = 1}^{d}\left(2\partial_{u_{i}}m(\bm{u},\bm{x})\partial_{u_{i}}f(\bm{u},\bm{x}) + \partial_{u_{i}u_{i}}^{2}m(\bm{u},\bm{x})f(\bm{u},\bm{x})\right) \right. \\
&\left .\quad +  \sum_{k = 1}^{p}\left(2\partial_{x_{k}}m(\bm{u},\bm{x})\partial_{x_{k}}f(\bm{u},\bm{x}) + \partial_{x_{k}x_{k}}^{2}m(\bm{u},\bm{x})f(\bm{u},\bm{x})\right) \right\}
\end{align*}
and $V_{\bm{u},\bm{x}} = \kappa_{0}^{d+p}\sigma^{2}(\bm{u},\bm{x})/(f_{\bm{S}}(\bm{u})f(\bm{u},\bm{x}))$ with $\kappa_{0} = \int_{\mathbb{R}} K^{2}(x)dx$ and $\kappa_{2} = \int_{\mathbb{R}}x^{2}K(x)dx$. The same result holds true even if we replace the conditions on $\beta$-mixing coefficients in Assumptions \ref{Ass-M}, \ref{Ass-U(add)} and \ref{Ass-R(add)} with those on $\alpha$-mixing coefficients defined by (\ref{al-mix-coef}). 
\end{theorem}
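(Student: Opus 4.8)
The plan is to write $\hat m$ as a ratio of general kernel statistics, dispose of the denominator by Proposition \ref{general-unif-rate}, and reduce the claim to a central limit theorem for the numerator, which I would then split into a deterministic bias and a noise term treated by a big‑block/small‑block construction. Write $\hat m(\bm u,\bm x)=\hat\psi_{1}(\bm u,\bm x)/\hat\psi_{0}(\bm u,\bm x)$, where $\hat\psi_{0}$ and $\hat\psi_{1}$ are the statistic (\ref{genKernel-est}) with $W_{\bm s_{j},A_{n}}=1$ and $W_{\bm s_{j},A_{n}}=Y_{\bm s_{j},A_{n}}$ respectively. First I would show that $\hat\psi_{0}(\bm u,\bm x)\to f_{\bm S}(\bm u)f(\bm u,\bm x)$ in $P_{\cdot|\bm S}$-probability, $P_{\bm S}$-a.s.: Proposition \ref{general-unif-rate} kills the stochastic fluctuation, while $E_{\cdot|\bm S}[\hat\psi_{0}]$ is evaluated by the change of variables $\bm s_{j}=A_{n}\bm s_{0,j}$, Taylor expansion of $f_{\bm S}$ and of $f(\bm u,\cdot)$, the strong law of large numbers over the i.i.d.\ design $\{\bm S_{0,j}\}$, and the local‑stationarity bound (M1). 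By Slutsky it then suffices to prove
\[
N_{n}:=\sqrt{nh^{d+p}}\,\bigl(\hat\psi_{1}(\bm u,\bm x)-m(\bm u,\bm x)\,\hat\psi_{0}(\bm u,\bm x)\bigr)\ \stackrel{d}{\longrightarrow}\ N\!\bigl(f_{\bm S}(\bm u)f(\bm u,\bm x)B_{\bm u,\bm x},\ \kappa_{0}^{d+p}\sigma^{2}(\bm u,\bm x)f_{\bm S}(\bm u)f(\bm u,\bm x)\bigr),
\]
and then divide by $\hat\psi_{0}$. Using (\ref{NR-LSRF}) and $E[\epsilon_{\bm s,A_{n}}\mid\bm X_{\bm s,A_{n}}]=0$, decompose $N_{n}=B_{n}+V_{n}$, where $B_{n}$ carries the factor $m(\bm s_{j}/A_{n},\bm X_{\bm s_{j},A_{n}})-m(\bm u,\bm x)$ and $V_{n}$ carries $\epsilon_{\bm s_{j},A_{n}}$.

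For the bias term $B_{n}$, I would first replace $\bm X_{\bm s_{j},A_{n}}$ by its stationary approximation $\bm X_{\bm u}(\bm s_{j})$; on the support of $\bar K_{h}(\bm u-\cdot/A_{n})$ the induced error is of order $\sqrt{nh^{d+p}}\,A_{n}^{-dr}h^{-p}=\sqrt{nh^{d+p+4}}\,/(A_{n}^{dr}h^{p+2})\to0$ by (Ra2) together with $nh^{d+p+4}\to c_{0}$. A second‑order Taylor expansion of $m$ at $(\bm u,\bm x)$ (the first‑order and mixed second‑order terms integrating to zero by the symmetry of $K$), the law of large numbers over the design, and stationarity of $\bm X_{\bm u}(\cdot)$ then give $E_{\cdot|\bm S}[B_{n}]\to f_{\bm S}(\bm u)f(\bm u,\bm x)B_{\bm u,\bm x}$; centering of $B_{n}$ at its conditional mean is $o_{P}(1)$ by a variant of Proposition \ref{general-unif-rate} that exploits that the effective weights $m(\bm s_{j}/A_{n},\bm X_{\bm s_{j},A_{n}})-m(\bm u,\bm x)$ are $O(h)$ on the kernel supports. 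Hence $B_{n}\to f_{\bm S}(\bm u)f(\bm u,\bm x)B_{\bm u,\bm x}$ in probability.

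The heart of the argument is $V_{n}=(nh^{d+p})^{-1/2}\sum_{j=1}^{n}\zeta_{n,j}$ with $\zeta_{n,j}=\bar K_{h}(\bm u-\bm s_{j}/A_{n})\prod_{\ell=1}^{p}K_{h}(x_{\ell}-X_{\bm s_{j},A_{n}}^{\ell})\epsilon_{\bm s_{j},A_{n}}$. The asymptotic variance comes from the diagonal of $\Var_{\cdot|\bm S}(V_{n})$, namely $(nh^{d+p})^{-1}\sum_{j}\bar K_{h}^{2}(\cdot)\prod_{\ell}K_{h}^{2}(\cdot)\,\sigma^{2}(\bm s_{j}/A_{n},\cdot)$, which converges to $\kappa_{0}^{d+p}\sigma^{2}(\bm u,\bm x)f_{\bm S}(\bm u)f(\bm u,\bm x)$ by the same change of variables/LLN/local‑stationarity computation; the off‑diagonal terms are shown negligible by a mixing covariance inequality (with $\zeta$-th moments, hence the cube‑root factors) together with the decay built into (R1)--(R2) and (Ra3) and the shrinking kernel supports. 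For the limit law I would tile $R_{n}=[0,A_{n}]^{d}$ by ``big'' cubes of side $\asymp A_{1,n}$ separated by ``small'' corridors of width $\asymp A_{2,n}$, write $V_{n}=\sum_{k}\xi_{n,k}+(\text{small-block remainder})$, and note that the remainder has conditional variance $o(1)$ because the corridors have total volume $o(A_{n}^{d})$ (from $A_{2,n}/A_{1,n}\to0$ in (S3)). In the $\beta$-mixing case, Corollary 2.7 of \cite{Yu94} produces independent $\tilde\xi_{n,k}$ with the same marginal laws and $\sum_{k}P(\xi_{n,k}\neq\tilde\xi_{n,k})\le(\#\ \text{big blocks})\cdot\beta(A_{2,n};A_{n}^{d})\to0$ by (R1); in the $\alpha$-mixing case one instead telescopes characteristic functions and controls the errors by the $\alpha$-mixing covariance inequality, which is exactly where (Ra3) enters, the factor $k^{d-1}$ counting big blocks at mutual distance $\asymp kA_{1,n}$ and the $\beta_{1}^{1/3}$ (resp.\ $\alpha_{1}^{1/3}$) factor arising from a third‑moment covariance bound. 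Finally I would apply the Lyapunov CLT to the independent array $\{\tilde\xi_{n,k}\}$: the $(2+\delta)$-th moment control required for Lyapunov's ratio follows from (U1) (so $E|\epsilon_{j}|^{\zeta}<\infty$ for some $\zeta>2$), boundedness of $K$, and a count of design points per block; combined with the variance computation this yields $V_{n}\stackrel{d}{\longrightarrow}N(0,\kappa_{0}^{d+p}\sigma^{2}(\bm u,\bm x)f_{\bm S}(\bm u)f(\bm u,\bm x))$. Conditioning on $\{\bm S_{0,j}\}$ and invoking the strong law for the i.i.d.\ design makes all of the above hold $P_{\bm S}$-a.s., and combining $B_{n}$, $V_{n}$ and the denominator limit through Slutsky finishes the proof.

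The main obstacle is carrying out the block construction and the coupling/Lyapunov estimates simultaneously with an irregular, non‑uniform stochastic design: the number and positions of design points inside each block are themselves random, the blocks cannot be linearly ordered as in the time‑series case, the auxiliary bound $g_{1}$ in $\beta(a;b)\le\beta_{1}(a)g_{1}(b)$ is unbounded for $d\ge2$, and the kernel weights must be tracked through $\bm s_{j}=A_{n}\bm s_{0,j}$ uniformly over blocks. This is precisely what forces the interlocking conditions (S2)--(S3), (R1)--(R2) and (Ra1)--(Ra3), and managing them at once — rather than any single estimate — is the delicate part.
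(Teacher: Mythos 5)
Your proposal follows essentially the same route as the paper: the ratio is handled by Slutsky after showing the denominator tends to $f_{\bm{S}}(\bm{u})f(\bm{u},\bm{x})$, the numerator is split into the bias part carrying $m(\bm{s}_{j}/A_{n},\bm{X}_{\bm{s}_{j},A_{n}})-m(\bm{u},\bm{x})$ (evaluated via the stationary approximation, a second-order Taylor expansion, and the rate condition (Ra2) combined with $nh^{d+p+4}\to c_{0}$, exactly as in step (iii) of the proof of Theorem \ref{unif-rate-m}) and the noise part carrying $\epsilon_{\bm{s}_{j},A_{n}}$, whose variance is computed the same way and whose limit law is obtained by the identical big-block/small-block construction with Lemma \ref{asy-variance}, the coupling of \cite{Yu94} (resp.\ characteristic-function telescoping under $\alpha$-mixing, as in \cite{La03b}), and a Lyapunov CLT. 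Your write-up is, if anything, more explicit than the paper's own proof, which defers most of these details to the proof of Theorem \ref{unif-rate-m} and to \cite{La03b}.
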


The asymptotic variance $V_{\bm{u},\bm{x}}$ depends on the sampling density $f_{\bm{S}}(\bm{u})$ through $1/f_{S}(\bm{u})$. 
This dependence of variance on sampling density differs in stochastic and nonstochastic (or equidistant) sampling designs. Intuitively, the result implies that around ``hot-spots,'' where more sampling sites tend to be selected, the asymptotic variance of the estimator is smaller than that of the other spatial points. The same applies to Theorem \ref{CLT-m-add}. 

\begin{remark}\label{comment bandwidth-selection}
As shown in Theorem \ref{general-CLT-m}, the expressions of asymptotic bias and variance of the kernel estimator are very similar in structure to those from a standard random design for stationary time series and random fields. Therefore, we conjecture that the methods to choose the bandwidth in such a design can be adapted to our setting. In particular, using the formulas for the asymptotic bias and variance from Theorem \ref{general-CLT-m}, it should be possible to select the bandwidth via plug-in methods. 
\end{remark}

\begin{remark}\label{Remark-general-DP-condition}
Set $A_{n}^{d} = O(n^{1-\bar{\eta}_{1}})$ for some $\bar{\eta}_{1} \in [0,1)$, $A_{1,n} = O(A_{n}^{\gamma_{A_{1}}})$, $A_{2,n} = O(A_{n}^{\gamma_{A_{2}}})$ with $0< \gamma_{A_{2}}<\gamma_{A_{1}} < 1/3$ and $r = \min\{1,\rho\} = 1$. Assume that we can take a sufficiently large $\zeta>2$ such that ${2 \over \zeta}<(1-\bar{\eta}_{1})(1-3\gamma_{A_{1}})$. Then, Assumption \ref{Ass-R(add)} is satisfied for $d \geq 1$ and $p \geq 1$. See Remarks \ref{Rem-A4} and \ref{Rem-B1} for details. 
\end{remark}

\section{Additive models}\label{Section_Additive}

In the previous section, we considered general kernel estimators and discussed their asymptotic properties; however, the estimators are adversely affected by dimensionality. In particular, the convergence rate $O_{P_{\cdot|\bm{S}}}\left(\sqrt{\log n/nh^{d+p}}\right)$ of the estimators deteriorated as the dimension of the covariates $p$ increases. 
Hence, we consider additive models inspired by the idea presented in \cite{Vo12}, which is based on the smooth backfitting method developed by \cite{MaLiNi99}, and studied the asymptotic properties of the estimators of additive functions.

\subsection{Construction of estimators}

We place the following structural constraint on $m(\bm{u},\bm{x})$: 
\begin{align}
E[Y_{\bm{s},A_{n}}|\bm{X}_{\bm{s},A_{n}}=\bm{x}] &= m\left({\bm{s} \over A_{n}},\bm{x}\right) \nonumber \\
&= m_{0}\left({\bm{s} \over A_{n}}\right) + \sum_{\ell=1}^{p}m_{\ell}\left({\bm{s} \over A_{n}}, x_{\ell}\right). \label{AddNR-RF}
\end{align}
Model (\ref{AddNR-RF}) is a natural extension of the following linear regression models with spatially varying coefficients (cf. \cite{GeKiSiBa03}): 
\[
Y_{\bm{s},A_{n}} = \beta_{0}\left({\bm{s} \over A_{n}}\right) + \sum_{\ell=1}^{p}\beta_{\ell}\left({\bm{s} \over A_{n}}\right)X_{\bm{s},A_{n}}^{\ell} + \epsilon_{\bm{s},A_{n}}.
\]

\begin{remark}\label{comment spatial-coordinate}
In this paper, we focus on the circumvention of the curse of dimensionality that comes from the number of covariates since in some applications it seems not suitable to consider additive components over the spatial coordinates. For example, many environmental and climate data such as temperature and precipitation would have not spatial coordinate-wise but location specific features. Moreover, one of our motivation is extending the spatially varying linear regression models in \cite{GeKiSiBa03}, which is one of the influential papers in spatial statistics, to nonparametric, non-stationary and non-Gaussian settings. They apply their methods to the analysis of log selling price of single family homes and their regression model does not include additive components over spatial coordinates. \cite{BrMa17} also considers regression models that do not include additive components over spatial coordinates and estimating parameters of a CARMA random field with an application of their methods to land price data. Therefore, we believe our modeling would not be restrictive in many empirical applications.
\end{remark}

To identify the additive function of the model (\ref{AddNR-RF}) within a unit cube $[0,1]^{p}$, we impose the condition that 
\begin{align*}
\int m_{\ell}(\bm{u},x_{\ell})p_{\ell}(\bm{u},x_{\ell})dx_{\ell} = 0,\ \ell = 1,\hdots,p
\end{align*}
and all rescaled space points $\bm{u} \in [0,1]^{d}$. Here, the functions $p_{\ell}(\bm{u},x_{\ell}) = \int_{\mathbb{R}^{p}}p(\bm{u},\bm{x})d\bm{x}_{-\ell}$ are the marginals of the density
\begin{align*}
p(\bm{u},\bm{x}) &= {I(\bm{x} \in [0,1]^{p})f(\bm{u}, \bm{x}) \over P(X_{\bm{u}}(\bm{0}) \in [0,1]^{p})},
\end{align*}
where $f(\bm{u}, \cdot)$ is the density of the strictly stationary random field $\{X_{\bm{u}}(\bm{s})\}$. 

To estimate the functions $m_{0},\hdots,m_{p}$, we apply the strategy used in \cite{Vo12}, which is based on the smooth backfitting technique developed in \cite{MaLiNi99}. First, we introduce the auxiliary estimates.
\begin{align*}
\hat{p}(\bm{u},\bm{x}) &= {1 \over n_{[0,1]^{p}}}\sum_{j = 1}^{n}I(\bm{X}_{\bm{s}_{j},A_{n}} \in [0,1]^{p})\bar{K}_{h}\left(\bm{u}, {\bm{s}_{j} \over A_{n}}\right)\prod_{\ell = 1}^{p}K_{h}\left(x_{\ell}, X_{\bm{s}_{j}, A_{n}}^{\ell}\right),\\
\hat{m}(\bm{u}, \bm{x}) &= {1 \over n_{[0,1]^{p}}}\sum_{j = 1}^{n}I(\bm{X}_{\bm{s}_{j},A_{n}} \in [0,1]^{p})\bar{K}_{h}\left(\bm{u}, {\bm{s}_{j} \over A_{n}}\right)\prod_{\ell = 1}^{p}K_{h}\left(x_{\ell}, X_{\bm{s}_{j}, A_{n}}^{\ell}\right)Y_{\bm{s}_{j}, A_{n}}/\hat{p}(\bm{u}, \bm{x}),
\end{align*}
where $\bar{K}_{h}\left(\bm{u}, {\bm{s}_{j} \over A_{n}}\right) = \prod_{\ell = 1}^{d}K_{h}\left(u_{\ell}, {s_{j,\ell} \over A_{n}}\right)$, $\hat{p}(\bm{u}, \bm{x})$ is a kernel estimator for the function $f_{\bm{S}}(\bm{u})p(\bm{u},\bm{x})$, and $\hat{m}(\bm{u}, \bm{x})$ is a $(p+d)$-dimensional kernel estimator that estimates $m(\bm{u},\bm{x})$ for $\bm{x} \in [0,1]^{p}$. In the aforementioned definitions, 
\begin{align*}
n_{[0,1]^{p}} &= \sum_{j = 1}^{n}\bar{K}_{h}\left(\bm{u}, {\bm{s}_{j} \over A_{n}}\right)I(\bm{X}_{\bm{s}_{j}, A_{n}} \in [0,1]^{p})/\tilde{f}_{S}(\bm{u})
\end{align*}
is the number of observations in the unit cube $[0,1]^{p}$, where only space points close to $\bm{u}$ are considered. Furthermore, $\tilde{f}_{\bm{S}}(\bm{u}) = {1 \over n}\sum_{j=1}^{n}\bar{K}_{h}\left(\bm{u}, {\bm{s}_{j} \over A_{n}}\right)$, and 
\begin{align*}
K_{h}(v,w) &= I(v,w \in [0,1]){K_{h}(v-w) \over \int_{[0,1]}K_{h}(s-w)ds}
\end{align*}
is a modified kernel weight. This weight possesses the property that $\int_{[0, 1]}K_{h}(v,w)dv = 1$ for all $w \in [0,1]$, which is required to derive the asymptotic properties of the backfitting estimates. 

Given the estimators $\hat{p}$ and $\hat{m}$, we define the smooth backfitting estimates $\tilde{m}_{0}(\bm{u}), \tilde{m}_{1}(\bm{u},\cdot),\hdots,\tilde{m}_{d}(\bm{u},\cdot)$ of the functions $m_{0}(\bm{u}), m_{1}(\bm{u},\cdot),\hdots,m_{d}(\bm{u},\cdot)$ at the space point $\bm{u} \in [0,1]^{d}$ as the minimizer of the criterion
\begin{align}\label{Criterion-func}
\int_{[0,1]^{p}}\left\{\hat{m}(\bm{u},\bm{w}) - \left(g_{0} + \sum_{\ell=1}^{p}g_{\ell}(w_{\ell})\right)\right\}^{2}\hat{p}(\bm{u},\bm{w})d\bm{w},
\end{align}
where the minimization runs over all additive functions $g(\bm{x}) = g_{0} + \sum_{\ell=1}^{p}g_{\ell}(x^{\ell})$, whose components are normalized to satisfy 
\begin{align*}
\int g_{\ell}(w_{\ell})\hat{p}_{\ell}(\bm{u},w_{\ell})dw_{\ell} = 0,\ \ell = 1,\hdots,p.
\end{align*}
Here, $\hat{p}_{\ell}(\bm{u},x_{\ell}) = \int_{[0,1]^{p-1}}\hat{p}(\bm{u},\bm{x})d\bm{x}_{-\ell}$ is the marginal of the kernel density $\hat{p}(\bm{u},\cdot)$ at point $x_{\ell}$. According to (\ref{Criterion-func}), the estimate $\tilde{m}(\bm{u},\cdot) = \tilde{m}_{0}(\bm{u}) + \sum_{\ell = 1}^{p}\tilde{m}_{\ell}(\bm{u},\cdot)$ is an $L^{2}$-projection of the full dimensional kernel estimate $\hat{m}(\bm{u},\cdot)$ on to the subspace of additive functions, where the projection is performed with respect to the density estimate $\hat{p}(\bm{u},\cdot)$. 

By differentiation, we can demonstrate that the solution of (\ref{Criterion-func}) is characterized by the system of equations
\begin{align*}
\tilde{m}_{\ell}(\bm{u},x_{\ell}) &= \hat{m}_{\ell}(\bm{u},x_{\ell}) - \sum_{k \neq \ell}\int_{[0,1]}\tilde{m}_{k}(\bm{u},x_{k}){\hat{p}_{\ell,k}(\bm{u}, x_{\ell}, x_{k}) \over \hat{p}_{\ell}(\bm{u}, x_{\ell})}dx_{k} - \tilde{m}_{0}(\bm{u})
\end{align*}
together with 
\begin{align*}
\int \tilde{m}_{\ell}(\bm{u}, w_{\ell})\hat{p}_{\ell}(\bm{u},w_{\ell})dw_{\ell} = 0,\ \ell = 1,\hdots, p,
\end{align*}
where $\hat{p}_{\ell}$ and $\hat{p}_{\ell, k}$ are kernel density estimates, and $\hat{m}_{\ell}$ is a kernel estimator defined as
\begin{align*}
\hat{p}_{\ell}(\bm{u},x_{\ell}) &= {1 \over n_{[0,1]^{p}}}\sum_{j=1}^{n}I\left(\bm{X}_{\bm{s}_{j}, A_{n}} \in [0,1]^{p}\right)\bar{K}_{h}\left(\bm{u}, {\bm{s}_{j} \over A_{n}}\right)K_{h}\left(x_{\ell}, X_{\bm{s}_{j}, A_{n}}^{\ell}\right), \\
\hat{p}_{\ell, k}(\bm{u},x_{\ell}, x_{k}) &= {1 \over n_{[0,1]^{p}}}\sum_{j=1}^{n}I\left(\bm{X}_{\bm{s}_{j}, A_{n}} \in [0,1]^{p}\right)\bar{K}_{h}\left(\bm{u}, {\bm{s}_{j} \over A_{n}}\right)K_{h}\left(x_{\ell}, X_{\bm{s}_{j}, A_{n}}^{\ell}\right)K_{h}\left(x_{k}, X_{\bm{s}_{j}, A_{n}}^{k}\right), \\
\hat{m}_{\ell}(\bm{u},x_{\ell}) &= {1 \over n_{[0,1]^{p}}}\sum_{j=1}^{n}I\left(\bm{X}_{\bm{s}_{j}, A_{n}} \in [0,1]^{p}\right)\bar{K}_{h}\left(\bm{u}, {\bm{s}_{j} \over A_{n}}\right)K_{h}\left(x_{\ell}, X_{\bm{s}_{j}, A_{n}}^{\ell}\right)Y_{\bm{s}_{j}, A_{n}}/\hat{p}_{\ell}(\bm{u}, x_{\ell}). 
\end{align*}
Moreover, the estimate $\tilde{m}_{0}(\bm{u})$ of the model constant at space point $\bm{u}$ is expressed as 
\[
\tilde{m}_{0}(\bm{u}) =  {1 \over n_{[0,1]^{p}}}\sum_{j=1}^{n}I\left(\bm{X}_{\bm{s}_{j}, A_{n}} \in [0,1]^{p}\right)\bar{K}_{h}\left(\bm{u}, {\bm{s}_{j} \over A_{n}}\right)Y_{\bm{s}_{j}, A_{n}}/\bar{f}_{\bm{S}}(\bm{u}), 
\]
where $\bar{f}_{\bm{S}}(\bm{u}) = {1 \over n_{[0,1]^{p}}}\sum_{j=1}^{n}I\left(\bm{X}_{\bm{s}_{j}, A_{n}} \in [0,1]^{p}\right)\bar{K}_{h}\left(\bm{u}, {\bm{s}_{j} \over A_{n}}\right)$.

\subsection{Asymptotic properties of estimators}

Now we present the uniform convergence rate and joint asymptotic normality of the estimators. Before describing the results, we summarize the set of assumptions made.  

\begin{assumption}\label{Ass-Rb}
Let $a_{n} = \sqrt{{\log n} \over nh^{d+1}}$ and $r = \min\{\rho, 1\}$, where $\rho$ is a positive constant that appears in Assumption \ref{Ass-M}. As $n \to \infty$, 
\begin{enumerate}
\item[(Rb1)] $h^{-(d+1)}a_{n}^{d+1}A_{n}^{d}A_{1,n}^{-d}\beta(A_{2,n};A_{n}^{d}) \to 0$ and $A_{1,n}^{d}A_{n}^{-d}nh^{d+1}(\log n) \to 0$, 
\item[(Rb2)] ${n^{1/2}h^{(d+1)/2} \over A_{1,n}^{d}n^{1/\zeta}} \geq C_{0}n^{\eta}$ for some $C_{0}>0$ and $\eta>0$,
\item[(Rb3)] $A_{n}^{dr}h^{3} \to \infty$ and $A_{n}^{dr/(1+r)}h^{2} \to \infty$,
\item[(Rb4)] $nh^{d+4} \to \infty$. 
\item[(Rb5)] (Ra3) in Assumption \ref{Ass-R(add)} holds with $p=1$,
\end{enumerate}
where $\zeta$ is a positive constant that appears in Assumption \ref{Ass-U}.   
\end{assumption}

The following results are extension of Theorems 5.1 and 5.2 in \cite{Vo12} to locally stationary random fields with irregularly spaced observations. 

\begin{theorem}\label{unif-m-add}
Let $I_{h, 0} = [2C_{1}h, 1-2C_{1}h]$ and $I_{h} = [2C_{1}h, 1-2C_{1}h]^{d}$. Suppose that $\inf_{\bm{u} \in [0,1]^{d}, \bm{x} \in [0,1]^{p}}f(\bm{u},\bm{x})>0$ and $\inf_{\bm{u} \in [0,1]^{d}}f_{\bm{S}}(\bm{u})>0$. Then, under Assumptions \ref{Ass-S}, \ref{Ass-M}, \ref{Ass-KB}, \ref{Ass-U(add)} (with $W_{\bm{s}_{j},A_{n}}=1$ and $\epsilon_{\bm{s}_{j},A_{n}}$) and \ref{Ass-Rb}, the following result holds for $P_{\bm{S}}$ almost surely: 
\begin{align*}
\sup_{\bm{u} \in I_{h}, x_{\ell} \in I_{h,0}}\left|\tilde{m}_{\ell}(\bm{u}, x_{\ell}) - m_{\ell}(\bm{u}, x_{\ell})\right| = O_{P_{\cdot|\bm{S}}}\left(\sqrt{{\log n \over nh^{d+1}}} + h^{2}\right),\ \ell = 1,\hdots,p. 
\end{align*} 
\end{theorem}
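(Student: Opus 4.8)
The plan is to follow the proof of Theorem 5.1 in \cite{Vo12}, substituting our Proposition \ref{general-unif-rate} for the exponential inequality for equidistant mixing sequences used there, and tracking the extra approximation error produced by local stationarity and by the irregular stochastic sampling design. Everything is carried out conditionally on the design $\{\bm{S}_{0,j}\}$, so the rates below are $O_{P_{\cdot|\bm{S}}}(\cdot)$ statements holding for $P_{\bm{S}}$ almost every realization. \emph{Step 1 (pilot estimators).} I would first record uniform (sup-norm) rates for the pilot kernel estimators $\hat{p}$, $\hat{p}_{\ell}$, $\hat{p}_{\ell,k}$, $\hat{m}_{\ell}$ (numerators and ratios), $\tilde{m}_{0}$, $\tilde{f}_{\bm{S}}$ and $\bar{f}_{\bm{S}}$. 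Each centered stochastic part is bounded by Proposition \ref{general-unif-rate} with $W_{\bm{s},A_{n}}\equiv 1$ for the density-type objects and $W_{\bm{s},A_{n}}=\epsilon_{\bm{s},A_{n}}$ for the regression-type objects; the number of one-dimensional kernel factors is $d+1$ for $\hat{p}_{\ell}$ and $\hat{m}_{\ell}$, $d+2$ for $\hat{p}_{\ell,k}$, $d+p$ for $\hat{p}$ and $\hat{m}$, and $d$ for $\tilde{m}_{0}$, $\tilde{f}_{\bm{S}}$, $\bar{f}_{\bm{S}}$, which is why $\sqrt{\log n/(nh^{d+1})}$ is the leading stochastic term. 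For the mean (bias) parts I would argue as in the proof of Theorem \ref{unif-rate-m}: Assumptions \ref{Ass-M} and \ref{Ass-KB} give the $O(h^{2})$ contribution, while the local-stationarity inequality (\ref{LSRF-ineq}) produces an $O(A_{n}^{-dr}h^{-q})$ contribution with $q$ the relevant effective covariate dimension ($q\le p$), which under Assumption \ref{Ass-Rb}(Rb3) is dominated by $h^{2}$ wherever it enters; in particular $\hat{p}_{\ell,k}/\hat{p}_{\ell}\to p_{\ell,k}/p_{\ell}$ and $\hat{p}_{\ell}\to f_{\bm{S}}p_{\ell}$ uniformly over $\bm{u}\in I_{h}$ and the covariate arguments.

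\emph{Step 2 (backfitting as an operator equation).} Using the explicit characterization of the smooth backfitting solution stated just before the theorem, I would view $\tilde{\bm{m}}(\bm{u},\cdot)=(\tilde{m}_{1}(\bm{u},\cdot),\dots,\tilde{m}_{p}(\bm{u},\cdot))$ as the solution of $(I+\hat{\mathcal{T}}_{\bm{u}})\tilde{\bm{m}}(\bm{u},\cdot)=\hat{\bm{\mu}}(\bm{u},\cdot)$ on the space of centered additive functions (with the empirical identification constraints built in), where $\hat{\mathcal{T}}_{\bm{u}}$ is the integral operator with kernels $\hat{p}_{\ell,k}(\bm{u},\cdot,\cdot)/\hat{p}_{\ell}(\bm{u},\cdot)$ and $\hat{\bm{\mu}}$ has $\ell$th component $\hat{m}_{\ell}(\bm{u},\cdot)-\tilde{m}_{0}(\bm{u})$, and compare it with the population equation $(I+\mathcal{T}_{\bm{u}})\bm{m}(\bm{u},\cdot)=\bm{\mu}(\bm{u},\cdot)$, whose solution is the true $(m_{1}(\bm{u},\cdot),\dots,m_{p}(\bm{u},\cdot))$ because $m(\bm{u},\cdot)$ is additive and the identification constraints agree in the limit. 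As in \cite{MaLiNi99} and \cite{Vo12}, the uniform convergence of $\hat{p}_{\ell,k}/\hat{p}_{\ell}$ to $p_{\ell,k}/p_{\ell}$ obtained in Step 1 implies that, with conditional probability tending to one, $I+\hat{\mathcal{T}}_{\bm{u}}$ is invertible and $(I+\hat{\mathcal{T}}_{\bm{u}})^{-1}$ is uniformly bounded in the relevant sup-type norm, uniformly over $\bm{u}\in I_{h}$; then
\[
\tilde{\bm{m}}(\bm{u},\cdot)-\bm{m}(\bm{u},\cdot)=(I+\hat{\mathcal{T}}_{\bm{u}})^{-1}\bigl(\hat{\bm{\mu}}(\bm{u},\cdot)-\bm{\mu}(\bm{u},\cdot)\bigr)+(I+\hat{\mathcal{T}}_{\bm{u}})^{-1}\bigl(\mathcal{T}_{\bm{u}}-\hat{\mathcal{T}}_{\bm{u}}\bigr)\bm{m}(\bm{u},\cdot)
\]
reduces the problem to bounding the two inputs (the discrepancy between the empirical and population identification constraints contributes a term of the same order and is absorbed).

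\emph{Step 3 (the two inputs and the rate improvement).} The first input has $\ell$th component essentially $\hat{m}_{\ell}(\bm{u},x_{\ell})-E[Y_{\bm{s},A_{n}}|X_{\bm{s},A_{n}}^{\ell}=x_{\ell}]-(\tilde{m}_{0}(\bm{u})-m_{0}(\bm{u}))$ plus constraint-correction terms, all $O_{P_{\cdot|\bm{S}}}(\sqrt{\log n/(nh^{d+1})}+h^{2})$ by Step 1 (the leading term is the one-covariate marginal regression estimator $\hat{m}_{\ell}$). For the second input, $\sum_{k\neq\ell}\int_{[0,1]}m_{k}(\bm{u},x_{k})\{\hat{p}_{\ell,k}(\bm{u},x_{\ell},x_{k})/\hat{p}_{\ell}(\bm{u},x_{\ell})-p_{\ell,k}(\bm{u},x_{\ell},x_{k})/p_{\ell}(\bm{u},x_{\ell})\}\,dx_{k}$, the key point is that although $\hat{p}_{\ell,k}$ alone attains only the $(d+2)$-dimensional rate, integrating against the smooth function $m_{k}(\bm{u},\cdot)$ absorbs one covariate kernel factor, since $\int_{[0,1]}m_{k}(\bm{u},x_{k})K_{h}(x_{k},X_{\bm{s}_{j},A_{n}}^{k})\,dx_{k}=m_{k}(\bm{u},X_{\bm{s}_{j},A_{n}}^{k})+O(h^{2})$ for interior arguments; this turns the second input into a $(d+1)$-dimensional kernel average (over $x_{\ell}$ and $\bm{u}$), which is $O_{P_{\cdot|\bm{S}}}(\sqrt{\log n/(nh^{d+1})}+h^{2})$ again by Proposition \ref{general-unif-rate}, the accompanying local-stationarity error being $o(h^{2})$ by Assumption \ref{Ass-Rb}(Rb3). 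Feeding these bounds and the uniform boundedness of $(I+\hat{\mathcal{T}}_{\bm{u}})^{-1}$ into the display of Step 2 yields the claimed rate for $\tilde{m}_{\ell}-m_{\ell}$, uniformly over $\bm{u}\in I_{h}$ and $x_{\ell}\in I_{h,0}$, where the restriction to $I_{h}$ and $I_{h,0}$ discards the boundary effects of $\bar{K}_{h}$ exactly as in \cite{Vo12}.

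The step I expect to be the main obstacle is Step 2: transferring the contraction/invertibility analysis of the smooth backfitting operator from the i.i.d.\ regular-design setting to the present one while obtaining the bounds in the sup-norm (not merely in $L^{2}$) uniformly over $\bm{u}\in I_{h}$, and at the same time keeping the local-stationarity approximation errors under control. Concretely this involves expanding $(I+\hat{\mathcal{T}}_{\bm{u}})^{-1}=\sum_{j\ge0}(-\hat{\mathcal{T}}_{\bm{u}})^{j}$, showing the tail of the series is uniformly negligible on the relevant function class, and verifying---via Assumption \ref{Ass-Rb}, especially (Rb3)---that every error introduced by local stationarity and by replacing population densities with kernel estimates is small enough to leave the $\sqrt{\log n/(nh^{d+1})}+h^{2}$ rate intact.
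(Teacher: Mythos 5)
Your proposal is correct and follows essentially the same route as the paper, whose (omitted) proof defers to Theorem 5.1 of Vogt (2012) and the smooth-backfitting operator theory of Mammen, Linton and Nielsen (1999) by verifying their conditions (A1)--(A6), (A8), (A9) via Proposition \ref{general-unif-rate}, Theorem \ref{unif-rate-m} and the auxiliary Lemmas \ref{lemma-C1}--\ref{lemma-C5}. Your Steps 1--3 (uniform rates for the pilot estimators, invertibility of the empirical backfitting operator, and the reduction of the $\hat{p}_{\ell,k}$-integral terms to the $(d+1)$-dimensional rate by integrating against smooth additive components) are exactly the content of those lemmas and of the MLN/Vogt argument the paper invokes.
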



As is the case with general regression function, the asymptotic normality of the kernel estimators for additive functions can be established under $\alpha$-mixing.  
\begin{theorem}\label{CLT-m-add}
Suppose that $\inf_{\bm{u} \in [0,1]^{d}, \bm{x} \in [0,1]^{p}}f(\bm{u},\bm{x})>0$ and $\inf_{\bm{u} \in [0,1]^{d}}f_{\bm{S}}(\bm{u})>0$. Moreover, suppose that $\epsilon_{\bm{s}_{j},A_{n}}$ given $\bm{X}_{\bm{s}_{j},A_{n}}$ are i.i.d. random variables and continuous functions $\sigma_{\ell}(\bm{u}, x_{\ell})$, $1 \leq \ell \leq p$ exist such that $\sigma^{2}_{\ell}({\bm{s}_{j} \over A_{n}}, x_{\ell}) = E_{\cdot|\bm{S}}[\epsilon_{\bm{s}_{j},A_{n}}^{2}|X_{\bm{s}_{j},A_{n}}^{\ell} = x_{\ell}]$ and that $n_{[0,1]^{p}}h^{d+5} - c_{0} = o_{P_{\cdot|S}}(1)$ for a constant $c_{0}$, $P_{\bm{S}}$ almost surely. Then, under Assumptions \ref{Ass-S}, \ref{Ass-M}, \ref{Ass-KB}, \ref{Ass-U(add)}  and \ref{Ass-Rb} with $W_{\bm{s}_{j},A_{n}}=1$ and $\epsilon_{\bm{s}_{j},A_{n}}$ in Assumption \ref{Ass-U(add)}, for any $\bm{u} \in (0,1)^{d}$, $x_{1},\hdots, x_{p} \in (0,1)$, the following result holds for $P_{\bm{S}}$ almost surely:
\begin{align*}
\sqrt{n_{[0,1]^{p}}h^{d+1}}\left(
\begin{array}{c}
\tilde{m}_{1}(\bm{u},x_{1}) - m_{1}(\bm{u},x_{1}) \\
\vdots \\
\tilde{m}_{p}(\bm{u},x_{p}) - m_{p}(\bm{u},x_{p})
\end{array}
\right) &\stackrel{d}{\longrightarrow} N(B_{\bm{u},\bm{x}}, V_{\bm{u},\bm{x}}).
\end{align*}
Here, $V_{\bm{u},\bm{x}} = \text{diag}(v_{1}(\bm{u}, x_{1}),\hdots,v_{p}(\bm{u}, x_{p}))$ is a $p \times p$ diagonal matrix with 
\[
v_{\ell}(\bm{u}, x_{\ell}) =\kappa_{0}^{d+1}\sigma_{\ell}^{2}(\bm{u},x_{\ell})/(f_{\bm{S}}(\bm{u})p_{\ell}(\bm{u},x_{\ell})),
\] 
where $\kappa_{0} = \int_{\mathbb{R}}K^{2}(x)dx$. $B_{\bm{u},\bm{x}}$ has of the form 
\[
B_{\bm{u},\bm{x}} = \sqrt{c_{0}}(\beta_{1}(\bm{u},x_{1})-\gamma_{1}(\bm{u}),\hdots,\beta_{p}(\bm{u},x_{p})-\gamma_{p}(\bm{u}))'.
\]
The functions $\beta_{\ell}(\bm{u}, \cdot)$ are defined as the minimizer of the problem
\[
\int_{\mathbb{R}^{p}}\left\{\beta(\bm{u},\bm{x}) - \left(b_{0} + \sum_{\ell=1}^{p}b_{\ell}(x_{\ell})\right)\right\}^{2}p(\bm{u},\bm{x})d\bm{x},
\]
where the minimization runs over all additive functions $b(x) = b_{0} + \sum_{\ell=1}^{p}b_{\ell}(x_{\ell})$ with $\int_{\mathbb{R}}b_{\ell}(x_{\ell})p_{\ell}(\bm{u},x_{\ell})dx_{\ell} = 0$; the function $\beta(\bm{u},\bm{x})$ is provided in Lemma \ref{lemma-C4} in Appendix. $\gamma_{\ell}(\bm{u})$ can be characterized by the equation $\int_{\mathbb{R}}\alpha_{\ell}(\bm{u},x_{\ell})\hat{p}_{\ell}(\bm{u},x_{\ell})dx_{\ell} = h^{2}\gamma_{\ell}(\bm{u}) + o_{P_{\cdot|\bm{S}}}(h^{2})$, where $\alpha_{\ell}(\bm{u},x_{\ell})$ are defined in Lemma \ref{lemma-C4}. 

The same result holds true even if we replace the conditions on $\beta$-mixing coefficients in Assumptions \ref{Ass-M}, \ref{Ass-U(add)} and \ref{Ass-Rb} with those on $\alpha$-mixing coefficients defined by (\ref{al-mix-coef}). 
\end{theorem}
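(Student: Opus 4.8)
The plan is to follow the smooth backfitting strategy of \cite{Vo12} and reduce the joint asymptotic normality of $(\tilde m_1,\dots,\tilde m_p)$ to the asymptotic behavior of the pilot kernel estimators $\hat m_\ell$, $\hat p_\ell$, $\hat p_{\ell,k}$, exploiting that the backfitting operator is (asymptotically) a bounded linear projection onto the space of additive functions in $L^2(p(\bm u,\cdot))$. Concretely, I would first establish a stochastic expansion $\hat m(\bm u,\bm w) = m(\bm u,\bm w) + \hat\Delta(\bm u,\bm w)$ where $\hat\Delta$ collects a variance part (the weighted sum of $\epsilon_{\bm s_j,A_n}$) and a bias part of order $h^2$ plus the local-stationarity error $O_{P_{\cdot|\bm S}}(A_n^{-dr}h^{-p})$, which is negligible under (Rb3). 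Feeding this through the backfitting normal equations
\[
\tilde m_\ell(\bm u,x_\ell) = \hat m_\ell(\bm u,x_\ell) - \sum_{k\neq\ell}\int_{[0,1]}\tilde m_k(\bm u,x_k)\frac{\hat p_{\ell,k}(\bm u,x_\ell,x_k)}{\hat p_\ell(\bm u,x_\ell)}\,dx_k - \tilde m_0(\bm u),
\]
and using Theorem \ref{unif-m-add} to control the remainder uniformly, the components $\tilde m_\ell - m_\ell$ decompose into a deterministic bias term converging (after scaling by $\sqrt{n_{[0,1]^p}h^{d+1}}$ and using $n_{[0,1]^p}h^{d+5}\to c_0$) to $\sqrt{c_0}(\beta_\ell(\bm u,x_\ell)-\gamma_\ell(\bm u))$, and a stochastic term that is asymptotically a smooth-backfitting projection of the leading variance part of $\hat m$.

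The heart of the argument is the central limit theorem for the stochastic part. Writing the leading term of $\sqrt{n_{[0,1]^p}h^{d+1}}(\tilde m_\ell - m_\ell)$ as $\sum_{j=1}^n \omega_{n,\ell}(\bm s_j,\bm X_{\bm s_j,A_n})\epsilon_{\bm s_j,A_n}$ for suitable kernel-weight functions $\omega_{n,\ell}$ localized near $(\bm u,x_\ell)$, I would replace $\bm X_{\bm s_j,A_n}$ by its stationary approximation $\bm X_{\bm u}(\bm s_j)$ (error controlled by (M1) and (Rb3)), and then handle the resulting spatially dependent triangular array via the big-block/small-block decomposition of the region $R_n$ driven by Assumption \ref{Ass-S}(S3): partition $R_n$ into big blocks of side $A_{1,n}$ separated by corridors of width $A_{2,n}$, use Corollary 2.7 of \cite{Yu94} to couple the big-block contributions to an independent sequence with total-variation cost bounded by the $\beta$-mixing term in (Ra3), discard the small-block contributions as asymptotically negligible in $L^2$ (again (R1) and the volume ratio $A_{2,n}/A_{1,n}$), and apply the Lyapunov CLT (via Assumption \ref{Ass-U(add)}(Ua2) for third-moment control, or a Lindeberg argument under $\zeta>2$) to the independent blocks. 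Conditional on $\{\bm S_{0,j}\}$, a law-of-large-numbers computation of the conditional variance gives $V_{\bm u,\bm x} = \mathrm{diag}(v_\ell(\bm u,x_\ell))$ with $v_\ell = \kappa_0^{d+1}\sigma_\ell^2(\bm u,x_\ell)/(f_{\bm S}(\bm u)p_\ell(\bm u,x_\ell))$; the factor $1/f_{\bm S}(\bm u)$ enters precisely because $n_{[0,1]^p}$ counts roughly $n f_{\bm S}(\bm u) h^d$ effective observations near $\bm u$, and diagonality follows because the kernels in coordinates $x_\ell$ and $x_k$ have disjoint-in-the-limit supports after rescaling. The $\alpha$-mixing version requires only replacing the coupling step: instead of \cite{Yu94}, one bounds characteristic functions of block sums directly using the $\alpha$-mixing analogue of (Ra3), which suffices for the pointwise CLT even though it would not suffice for the uniform rates.

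The main obstacle I anticipate is not any single estimate but the bookkeeping needed to show that the backfitting projection is \emph{continuous enough} in the relevant norms that the CLT for the pilot estimator $\hat m$ transfers to $\tilde m_\ell$ without distorting the limit: one needs uniform (in $\bm u$) convergence of $\hat p_{\ell,k}/\hat p_\ell$ to $p_{\ell,k}/p_\ell$ with a rate fast enough to beat the $\sqrt{n h^{d+1}}$ scaling, which is exactly where Theorem \ref{unif-m-add} and Assumption \ref{Ass-Rb}(Rb1)--(Rb4) are consumed, and one must verify that the iterated backfitting operator $(I-\text{projection})^{-1}$ is bounded — this uses the strict positivity $\inf f(\bm u,\bm x)>0$, which guarantees the relevant Hilbert-space angle between the coordinate subspaces is bounded away from zero uniformly in $\bm u$. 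A secondary technical nuisance is that $n_{[0,1]^p}$ is itself random, so all scalings must be stated modulo the $o_{P_{\cdot|\bm S}}(1)$ fluctuation of $n_{[0,1]^p}h^{d+5}$ around $c_0$, handled by Slutsky's theorem after establishing the ratio $n_{[0,1]^p}/(n f_{\bm S}(\bm u)h^d) \to \text{const}$ in $P_{\cdot|\bm S}$-probability, $P_{\bm S}$-a.s.
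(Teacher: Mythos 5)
Your proposal follows essentially the same route as the paper: the paper's proof reduces the theorem to verifying conditions (A1)--(A6), (A8), (A9) of \cite{MaLiNi99} along the lines of Theorems 5.1--5.2 of \cite{Vo12}, using Proposition \ref{general-unif-rate}, Theorems \ref{unif-rate-m} and \ref{general-CLT-m}, and Lemmas \ref{lemma-C2}--\ref{lemma-C4} — i.e.\ exactly the decomposition into backfitting normal equations, a bias expansion yielding $\beta_\ell-\gamma_\ell$, and a blockwise coupling CLT for the stochastic part that you describe. Your write-up is a correct, somewhat more explicit account of that same argument, including the correct roles of (Rb1)--(Rb4), the Yu (1994) coupling, and the Slutsky step for the random normalization $n_{[0,1]^p}$.
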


\begin{remark}\label{Add-discuss}
Set $A_{n}^{d} = {n^{1-\bar{\eta}_{1}} \over (\log n)^{\bar{\eta}_{2}}}$ for some $\bar{\eta}_{1} \in [0,1)$ and $\bar{\eta}_{2} \geq 1$, $A_{1,n} = O(A_{n}^{\gamma_{A_{1}}})$ with $\gamma_{A_{1}} \in (0,1/3)$ and $r = 1$. Assume that we can take a sufficiently large $\zeta>2$. In this case, Assumption \ref{Ass-Rb} is satisfied for $d \geq 1$ and $p \geq 1$. 
\end{remark}

\section{Examples of locally stationary random fields on $\mathbb{R}^{d}$}\label{Ex-LSRF}
In this section, we present examples of locally stationary random fields. In particular, we discuss L\'evy-driven moving average (MA) random fields, which represent a wide class of random fields. One of the important features of this class is that it includes non-Gaussian random fields in addition to Gaussian random fields. Moreover, we provide sufficient conditions to demonstrate that a random field is locally stationary and can be approximated by a stationary L\'evy-driven MA random field. We focus on the case of $p=1$ to simplify our discussion. The results in this section can be extended to the multivariate case ($p \geq 2$). 
See Appendix \ref{Multi-LevyMA} herein for the definition of multivariate L\'evy-driven MA random fields on $\mathbb{R}^{d}$ and for a discussion on their properties. 

\subsection{L\'evy-driven MA random fields}\label{Section-MALevy}

\cite{BrMa17} generalized CARMA($p,q$) processes on $\mathbb{R}$ to CARMA ($p,q$) random fields on $\mathbb{R}^{d}$, which is a special class of L\'evy-driven MA random fields. See also \cite{Br00}, \cite{Br01}, \cite{MaSt07} and \cite{ScSt12} for examples of the CARMA process on $\mathbb{R}$ and \cite{MaYu20} for multivariate extension of CARMA random fields on $\mathbb{R}^{2}$ and their parametric inference. Let $L=\{L(A): A \in \mathcal{B}(\mathbb{R}^{d})\}$ be an infinitely divisible random measure on some probability space $(\Omega, \mathcal{A}, P)$, i.e., a random measure such that 
\begin{itemize}
\item[1.] for each sequence $(E_{m})_{m \in \mathbb{N}}$ of disjoint sets in $\mathcal{B}(\mathbb{R}^{d})$,
\begin{itemize}
\item[(a)] $L(\cup_{m=1}^{\infty}E_{m}) = \sum_{m=1}^{\infty}L(E_{m})$ a.s. whenever $\cup_{m=1}^{\infty}E_{m} \in \mathcal{B}(\mathbb{R}^{d})$,
\item[(b)] $(L(E_{m}))_{m \in \mathbb{N}}$ is a sequence of independent random variables. 
\end{itemize}
\item[2.] the random variable $L(A)$ has an infinitely divisible distribution for any $A \in \mathcal{B}(\mathbb{R}^{d})$. 
\end{itemize}
The characteristic function of $L(A)$ which will be denoted by $\varphi_{L(A)}(t)$, has a L\'evy--Khintchine representation of the form $\varphi_{L(A)}(t) = \exp\left(|A|\psi(t)\right)$ with 
\begin{align*}
\psi(t) &= it\gamma_{0} - {1 \over 2}t^{2}\sigma_{0} + \int_{\mathbb{R}}\left\{e^{itx}-1-itxI(x \in [-1,1])\right\}\nu_{0}(x)dx
\end{align*}
where $i = \sqrt{-1}$, $\gamma_{0} \in \mathbb{R}$, $0 \leq \sigma_{0} <\infty$, $\nu_{0}$ is a L\'evy density with $\int_{\mathbb{R}}\min\{1,x^{2}\}\nu_{0}(x)dx<\infty$, and $|A|$ is the Lebesgue measure of $A$. The triplet $(\gamma_{0}, \sigma_{0}, \nu_{0})$ is called the L\'evy characteristic of $L$; it uniquely determines the distribution of random measure $L$. We refer to \cite{Sa99} and \cite{Be96} as standard references on L\'evy processes and \cite{RaRo89} for details on the theory of infinitely divisible measures and fields. Let $a(z) = z^{p} + a_{1}z^{p-1} + \cdots+a_{p} = \prod_{i=1}^{p}(z - \lambda_{i})$ be a polynomial of degree $p$ with real coefficients and distinct negative zeros $\lambda_{1},\hdots,\lambda_{p}$, and let $b(z) = b_{0} + b_{1}z + \cdots + b_{q}z^{q} = \prod_{i=1}^{q}(z - \xi_{i})$ be a polynomial with real coefficients and real zeros $\xi_{1},\hdots, \xi_{q}$ such that $b_{q}=1$ and $0\leq q < p$ and $\lambda_{i}^{2} \neq \xi_{j}^{2}$ for all $i$ and $j$. Define $a(z) = \prod_{i=1}^{p}(z^{2} - \lambda_{i}^{2})$ and $b(z) = \prod_{i=1}^{q}(z^{2} - \xi_{i}^{2})$. A L\'evy-driven MA random field driven by an infinitely divisible random measure $L$, which we call L\'evy random measure, is defined by
\begin{align}\label{CARMA-RF}
X(\bm{s}) = \int_{\mathbb{R}^{d}}g(\bm{s}-\bm{v})L(d\bm{v})
\end{align}
for every $\bm{s} \in \mathbb{R}^{d}$. In particular, when $g(\cdot)$ is a kernel function of the form 
\begin{align}\label{CARMA_kernel_general}
g(\bm{s}) = \sum_{i=1}^{p}{b(\lambda_{i}) \over a'(\lambda_{i})}e^{\lambda_{i}\|\bm{s}\|},
\end{align}
where $a'$ denotes the derivative of the polynomial $a$, $X(\bm{s})$ is a univariate (isotropic) CARMA($p,q$) random field.

\begin{remark}[Connections to SPDEs]
\cite{Be19} characterizes (\ref{CARMA-RF}) as a solution of a (fractional) stochastic partial differential equation (SPDE). The author also extends the concept of CARMA($p,q$) random fields as a strictly stationary solution of an SPDE. The uniqueness of the solution is discussed in \cite{Be19b}.
\end{remark}

\subsection{Stationary distribution of L\'evy-driven MA random fields}

When the L\'evy-driven MA random field (\ref{CARMA-RF}) is strictly stationary, the characteristic function of the stationary distribution of $X$ is expressed as
\begin{align}\label{CF-CARMA}
\varphi_{X(\bm{0})}(t) &= E[e^{itX(\bm{0})}]= \exp\left(\int_{\mathbb{R}^{d}}H(tg(\bm{s}))d\bm{s}\right),
\end{align}
where 
\begin{align*}
\int_{\mathbb{R}^{d}}H(tg(\bm{s}))d\bm{s} &=  it\gamma_{1} - {1 \over 2}t^{2}\sigma_{1} + \int_{\mathbb{R}}\left\{e^{itx}-1-itxI(x \in [-1,1])\right\}\nu_{1}(x)dx,
\end{align*}
with 
\begin{align*}
\gamma_{1} &= \int_{\mathbb{R}^{d}}U(g(\bm{s}))d\bm{s},\ \sigma_{1} = \sigma_{0}\int_{\mathbb{R}^{d}}g^{2}(\bm{s})d\bm{s},\ \nu_{1}(x) = \int_{S_{g}}{1 \over |g(\bm{s})|}\nu_{0}\left({x \over g(\bm{s})}\right)d\bm{s}.
\end{align*}
Here, $S_{g} = \supp(g) = \{\bm{s} \in \mathbb{R}^{d}: g(\bm{s}) \neq 0\}$ denotes the support of $g$; the function $U$ is defined as follows:
\begin{align*}
U(u) &= u\left(\gamma_{0} + \int_{\mathbb{R}}x\left\{I(ux \in [-1,1]) - I(x \in [-1,1])\right\}\nu_{0}(x)dx\right).
\end{align*} 
The triplet $(\gamma_{1}, \sigma_{1}, \nu_{1})$ is again referred to as the L\'evy characteristic of $X(\bm{0})$ and determines the distribution of $X(\bm{0})$ uniquely. 
See \cite{KaRoSpWa19} for details. The representation of (\ref{CF-CARMA}) implies that the stationary distribution of $X(\bm{s})$ has a density function when the L\'evy random measure $L$ is Gaussian, i.e., $(\gamma_{0}, \sigma_{0}, \nu_{0}) = (\gamma_{0}, \sigma_{0}, 0)$. When $L$ is purely non-Gaussian, i.e., $(\gamma_{0}, \sigma_{0}, \nu_{0}) = (\gamma_{0},0,\nu_{0})$, we can demonstrate that $X(\bm{s})$ has a bounded continuous, infinitely differentiable density if 
\begin{align}\label{LS-Levy}
\nu_{0}(x) &= {c_{\beta} \over |x|^{1+\beta}}g_{0}(x),\ x \neq 0
\end{align} 
where $c_{\beta}>0$, $g_{0}$ is a positive, continuous, and bounded function on $\mathbb{R}$ with $\lim_{|x| \downarrow 0}g_{0}(x) = 1$ and $\beta \in (0,1)$. In fact, we can verify that a constant $\tilde{C}>0$ exists such that
\begin{align}\label{SC-density}
u^{2}\int_{\mathbb{R}^{d}}g^{2}(\bm{v})\int_{|x| \leq {1 \over |g(\bm{v})||u|}}x^{2}\nu_{0}(x)dxd\bm{v} \geq \tilde{C}|u|^{2-\alpha}
\end{align}  
for any $|u| \geq c_{0}>1$, where $\alpha = 2-\beta \in (0,2)$. This implies the existence of the density function of $X(\bm{s})$. See Lemma \ref{LS-density} for the proof of (\ref{SC-density}) and Lemma \ref{density-Levy-MARF} for the sufficient condition for the existence of the density of $\bm{X}(\bm{s})$ with a purely non-Gaussian L\'evy random measure in a multivariate case. A L\'evy process with a L\'evy density of the form (\ref{LS-Levy}) is called a locally stable L\'evy process and it can represent important L\'evy processes such as tempered stable and normal inverse Gaussian processes (cf. Section 2 in \cite{Ma19}, Section 5 in \cite{KaKu20} and \cite{KuKaSh21}).

\subsection{Locally stationary L\'evy-driven MA random fields}\label{LSLevyMARF}
Next, we present examples of locally stationary L\'evy-driven MA random fields.
Consider the following processes
\begin{align*}
X_{\bm{s},A_{n}} &= \int_{\mathbb{R}^{d}}g\left({\bm{s} \over A_{n}}, \|\bm{s} - \bm{v}\|\right)L(d\bm{v})\ \text{and}\ X_{\bm{u}}(\bm{s}) = \int_{\mathbb{R}^{d}}g\left(\bm{u}, \|\bm{s} - \bm{v}\|\right)L(d\bm{v}),
\end{align*}
where $g: [0,1]^{d}\times [0,\infty) \to \mathbb{R}$ is a bounded function such that $|g(\bm{u},\cdot) - g(\bm{v},\cdot)| \leq C\|\bm{u} - \bm{v}\|\bar{g}(\cdot)$ with $C<\infty$ and for any $\bm{u} \in [0,1]^{d}$, $\int_{\mathbb{R}^{d}}(|g(\bm{u},\|\bm{s}\|)| + |\bar{g}(\bm{s})|)d\bm{s}<\infty$ and $\int_{\mathbb{R}^{d}}(g^{2}(\bm{u},\|\bm{s}\|) +  \bar{g}^{2}(\bm{s}))d\bm{s}<\infty$. Notably, $X_{\bm{u}}(\bm{s})$ is a strictly stationary random field for each $\bm{u}$. If $E[|L(A)|^{q'}]<\infty$ for any $A \in \mathcal{B}(\mathbb{R}^{d})$ with a bounded Lebesgue measure $|A|$ and for $q' =1,2$, we have that
\[
E[X_{\bm{u}}(\bm{s})] = \mu_{0}\int_{\mathbb{R}^{d}}g\left(\bm{u}, \|\bm{s}\|\right)d\bm{s}\ \text{and}\ E\left[(\bm{X}_{\bm{u}}(\bm{s}))^{2}\right] = \bar{\sigma}_{0}^{2}\int_{\mathbb{R}^{d}}g^{2}\left(\bm{u}, \|\bm{s}\|\right)d\bm{s},
\]
where $\mu_{0} = -i\psi'(0)$ and $\bar{\sigma}_{0}^{2} = -\psi''(0)$. Let $m >0$. Define the function $\iota(\cdot : m) : [0,\infty) \to [0,1]$ as 
\[
\iota(x:m) = 
\begin{cases}
1 & \text{if $0\leq x \leq {m \over 2}$},\\
-{2 \over m}x+2 & \text{if ${m \over 2} < x \leq m$},\\
0 & \text{if $m<x$}
\end{cases}
\]
and consider the process $X_{\bm{u}}(\bm{s}:A_{2,n}) = \int_{\mathbb{R}^{d}}g\left(\bm{u}, \|\bm{s} - \bm{v}\|\right)\iota(\|\bm{s} - \bm{v}\|:A_{2,n})L(d\bm{v})$. Notably, $X_{\bm{u}}(\bm{s}:A_{2,n})$ is also an $2A_{2,n}$-dependent strictly stationary random field, i.e., the $\beta$-mixing coefficients $\beta(a;b) = \beta_{1}(a)g_{1}(b)$ of $X_{\bm{u}}(\bm{s}:A_{2,n})$ satisfy $\beta_{1}(a) = 0$ for $a \geq 2A_{2,n}$. Observe that 
\begin{align}
\left|X_{\bm{s},A_{n}} - X_{\bm{u}}(\bm{s}:A_{2,n})\right| &\leq \left|X_{\bm{s},A_{n}} - X_{\bm{u}}(\bm{s})\right| + \left|X_{\bm{u}}(\bm{s}) - X_{\bm{u}}(\bm{s}:A_{2,n})\right| \nonumber \\
&\leq \int_{\mathbb{R}^{d}}\left|g\left({\bm{s} \over A_{n}}, \|\bm{s} - \bm{v}\|\right) - g\left(\bm{u}, \|\bm{s} - \bm{v}\|\right)\right||L(d\bm{v})| \nonumber \\
&\quad + {1 \over A_{n}^{d}}\int_{\mathbb{R}^{d}}A_{n}^{d}|g\left(\bm{u}, \|\bm{s} - \bm{v}\|\right)|(1 - \iota(\|\bm{s} - \bm{v}\|:A_{2,n}))|L(d\bm{v})| \nonumber \\
&\leq \left\|{\bm{s} \over A_{n}} - \bm{u}\right\|\int_{\mathbb{R}^{d}}C\bar{g}(\|\bm{s} - \bm{v}\|)|L(d\bm{v})| \nonumber \\
&\quad  + {1 \over A_{n}^{d}}\int_{\mathbb{R}^{d}}A_{n}^{d}\underbrace{|g\left(\bm{u}, \|\bm{s} - \bm{v}\|\right)|(1 - \iota(\|\bm{s} - \bm{v}\|:A_{2,n}))}_{=: g_{\bm{u}}(\|\bm{s}-\bm{v}\|:A_{2,n})}|L(d\bm{v})| \nonumber \\
&\leq \left(\left\|{\bm{s} \over A_{n}} - \bm{u}\right\| + {1 \over A_{n}^{d}}\right)\int_{\mathbb{R}^{d}}(\underbrace{ C\bar{g}(\|\bm{s} - \bm{v}\|) +  A_{n}^{d}g_{\bm{u}}(\|\bm{s}-\bm{v}\|:A_{2,n})}_{=:g_{\bm{u},A_{n},A_{2,n}}(\|\bm{s} - \bm{v}\|)})|L(d\bm{v})| \nonumber \\
&=: \left(\left\|{\bm{s} \over A_{n}} - \bm{u}\right\| + {1 \over A_{n}^{d}}\right)U_{\bm{s},A_{n}}(\bm{u}). \label{m-dep-approx}
\end{align}
Here, we define $|L(A)|$ as the absolute value of the random variable $L(A)$ for any $A \in \mathcal{B}(\mathbb{R}^{d})$. If we assume that
\begin{align}\label{ass-decay}
\sup_{n \geq 1}\sup_{\bm{u} \in [0,1]^{d}}\int_{\mathbb{R}^{d}}(A_{n}^{d}g_{\bm{u}}(\|\bm{s}\|:A_{2,n}) + A_{n}^{2d}g_{\bm{u}}^{2}(\|\bm{s}\|:A_{2,n}))d\bm{s}<\infty,
\end{align}
we obtain
\begin{align}\label{U-moment}
E[U_{\bm{s}, A_{n}}^{2}(\bm{u})] &\leq  C\sup_{n \geq 1}\sup_{\bm{u} \in [0,1]^{d}}\left(\int_{\mathbb{R}^{d}}g_{\bm{u},A_{n},A_{2,n}}^{2}(\|\bm{s}\|)d\bm{s} + \left(\int_{\mathbb{R}^{d}}g_{\bm{u},A_{n},A_{2,n}}(\|\bm{s}\|)d\bm{s}\right)^{2}\right)<\infty.
\end{align}
See \cite{BrMa17} for details regarding the computation of the moments of L\'evy-driven MA random fields. Note that (\ref{m-dep-approx}) and (\ref{U-moment}) imply that $X_{\bm{s},A_{n}}$ is a locally stationary random field. 

\begin{remark}\label{remark-example}
We also note that (\ref{m-dep-approx}) and (\ref{U-moment}) imply that $X_{\bm{s},A_{n}}$ can be approximated by an $A_{2,n}$-dependent stationary random field under the condition (\ref{ass-decay}). In this case, the random field $X_{\bm{s},A_{n}}$ is an \textit{approximately $A_{2,n}$-dependent} locally stationary random field. Let $c_{1},\hdots, c_{p_{1}}$ be positive constants and $r_{1}(\cdot),\hdots, r_{p_{1}}(\cdot)$ be continuous functions on $[0,1]^{d}$ such that $|r_{k}(\bm{u}) - r_{k}(\bm{v})| \leq C\|\bm{u} - \bm{v}\|$, $1 \leq k \leq p_{1}$ with $C<\infty$. Consider the function
\begin{align}\label{exp-decay-kernel}
g(\bm{u},\|\bm{s}\|) &= \sum_{k=1}^{p_{1}}r_{k}\left(\bm{u}\right)e^{-c_{k}\|\bm{s}\|}.
\end{align}
We can demonstrate that a L\'evy-driven MA random field with the kernel function (\ref{exp-decay-kernel}) satisfies (\ref{m-dep-approx}); furthermore, locally stationary random fields include a wide class of CARMA-type random fields. For a discussion on general multivariate cases and examples that satisfy our regularity conditions, see also Appendix \ref{Multi-LevyMA} herein. 
\end{remark}

Next, we introduce the concept of approximately $m_{n}$-dependence for  locally stationary random fields. 
\begin{definition}[Approximately $m_{n}$-dependence for locally stationary random field]\label{approx-m-def}
Let $m_{n}$ be a sequence of positive constants with $m_{n} \to \infty$ as $n \to \infty$. We define a locally stationary random field $\bm{X}_{A_{n}} = \{\bm{X}_{\bm{s},A_{n}} = (X^{1}_{\bm{s},A_{n}},\hdots,X^{p}_{\bm{s},A_{n}})': \bm{s} \in R_{n}\}$ in $\mathbb{R}^{p}$ to be an approximately $m_{n}$-dependent random field if $\bm{X}_{A_{n}}$ can be represented as a sum of $m_{n}$-dependent random field $\{\bm{X}_{\bm{s},A_{n}:m_{n}}\}$ and the ``residual'' random field $\{\bm{\epsilon}_{\bm{s},A_{n}:m_{n}} = (\epsilon_{\bm{s},A_{n}:m_{n}}^{1},\hdots, \epsilon_{\bm{s},A_{n}:m_{n}}^{p})'\}$, which satisfies the following condition:  
\begin{enumerate}
\item[(Ma0)] For some $q>1$ such that ${q\zeta \over \zeta-1}$ is an even integer,
\begin{align*}
\max_{1 \leq j \leq n}\max_{1 \leq \ell \leq p}E_{\cdot|\bm{S}}\left[\left|\epsilon_{\bm{s}_{j},A_{n}:m_{n}}^{\ell}\right|^{{q\zeta \over \zeta-1}}\right]^{{\zeta-1 \over q\zeta}} \leq \gamma_{\epsilon}(m_{n}),\ \text{$P_{\bm{S}}$-a.s.}
\end{align*}
where $\zeta>2$ is the constant in Assumption \ref{Ass-U} and $\gamma_{\epsilon}(\cdot)$ is a decreasing function such that for some $\eta_{2} \in (0,1)$
\begin{align}\label{approx-decay}
{\gamma_{\epsilon}(m_{n})(n + A_{n}^{d}\log n) \over n^{(1-\eta_{2}) \over 2}h^{{d+p \over 2}+1}} \to 0\ \text{as $n \to \infty$}.
\end{align}
\end{enumerate}

\end{definition}

We can show that a univariate locally stationary CARMA-type random field with an exponential decay kernel such as (\ref{exp-decay-kernel}) and a L\'evy random measure that has finite moments up to the $q\zeta/(\zeta-1)$th moment, where $q\zeta/(\zeta-1)$ is an even integer, i.e., $E[|L([0,1]^{d})|^{q'}]<\infty$ for $1 \leq q' \leq q\zeta/(\zeta-1)$, satisfies Condition (Ma0) with
\begin{align}\label{CARMA-tail-decay}
\gamma_{\epsilon}(x) \leq C x^{{(d-1)(\zeta-1) \over q\zeta}}e^{-{c_{0}x \over 2}}
\end{align}
for a constant $C<\infty$, where $c_{0} = \min_{1 \leq \ell \leq p_{1}}c_{\ell}$. This implies (\ref{approx-decay}). In this case, the CARMA-type process is approximately $D(\log n)$-dependent for a sufficiently large $D>0$. See Lemma \ref{proof-CARMA-example} for the proof of (\ref{CARMA-tail-decay}). See also Appendix \ref{Multi-LevyMA} for a discussion on multivariate cases.

\begin{remark}
We introduced the notion of  approximately $m_{n}$-dependent random fields to give some examples of random fields that satisfy our assumptions. For this, it could be possible to adopt the $m$-dependence approximation technique in \cite{MaVoWu13} and \cite{Ma14}, for example. On the other hand, the proofs of our results are based on a general blocking technique designed for irregularly spaced sampling sites. Moreover, to derive the uniform convergence rate of the kernel estimator, we need to care about the effect of non-equidistant sampling sites when applying a maximal inequality, which requires additional work compared with the case that sampling sites are equidistant. See also Section \ref{beta-mixing RF discuss} for a discussion of mixing conditions. 
\end{remark}

Now we summarize our discussion in this subsection.
\begin{assumption}\label{Ass-M(add)}
\begin{enumerate}
\item[(Ma1)] The process $\{\bm{X}_{\bm{s},A_{n}}\}$ is an approximately $A_{2,n}$-dependent random field. Hence, $\bm{X}_{\bm{s},A_{n}}$ can be decomposed into a sum of $A_{2,n}$-dependent random fields $\{\bm{X}_{\bm{s},A_{n}:A_{2,n}}\}$ and residual random fields $\{\bm{\epsilon}_{\bm{s},A_{n}:A_{2,n}} \}$, which satisfies Condition (Ma0). 
\item[(Ma2)] The process $\{\bm{X}_{\bm{s},A_{n}}\}$ is an approximately $A_{2,n}$-dependent locally stationary random field. Therefore, for each space point $\bm{u} \in [0,1]^{d}$, an $A_{2,n}$-dependent strictly stationary random field $\{\bm{X}_{\bm{u}}(\bm{s}:A_{2,n})\}$ exists such that 
\[
\|\bm{X}_{\bm{s},A_{n}} - \bm{X}_{\bm{u}}(\bm{s}:A_{2,n})\| \leq \left(\left\|{\bm{s} \over A_{n}} - \bm{u}\right\| + {1 \over A_{n}^{d}}\right)U_{\bm{s}, A_{n}:A_{2,n}}(\bm{u})\ \text{a.s.}
\]
with $E[(U_{\bm{s}, A_{n}:A_{2,n}}(\bm{u}))^{\rho}]\leq C$ for some $\rho>0$.
\item[(Ma3)] (M2) and (M4) in Assumption \ref{Ass-M} hold by replacing $\bm{X}_{\bm{u}}(\bm{s})$ with $\bm{X}_{\bm{u}}(\bm{s}:A_{2,n})$.
\item[(Ma4)] (M3) and (M5) in Assumption \ref{Ass-M} hold.
\end{enumerate}
\end{assumption}

\begin{corollary}\label{col1}
Proposition \ref{general-unif-rate}, Theorems \ref{unif-rate-m}, \ref{general-CLT-m} holds even if we replace Assumption \ref{Ass-M} with Assumption \ref{Ass-M(add)}.
\end{corollary}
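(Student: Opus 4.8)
The plan is to show that the approximate $A_{2,n}$-dependence decomposition $\bm{X}_{\bm{s},A_{n}} = \bm{X}_{\bm{s},A_{n}:A_{2,n}} + \bm{\epsilon}_{\bm{s},A_{n}:A_{2,n}}$ allows us to reduce every statement to the case already covered by Proposition \ref{general-unif-rate} and Theorems \ref{unif-rate-m}, \ref{general-CLT-m}, at the cost of an asymptotically negligible remainder. First I would observe that Assumption \ref{Ass-M(add)} is tailored so that the ``main part'' $\bm{X}_{\bm{s},A_{n}:A_{2,n}}$ plays exactly the role that $\bm{X}_{\bm{s},A_{n}}$ plays under Assumption \ref{Ass-M}: by (Ma2) it is locally stationary with the stationary approximant $\bm{X}_{\bm{u}}(\bm{s}:A_{2,n})$, by (Ma3) the associated density is smooth in $\bm{u}$ and in $\bm{x}$, and by (Ma4) the mixing condition (M3) and the smoothness (M5) of $m$ hold verbatim. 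Since $\bm{X}_{\bm{s},A_{n}:A_{2,n}}$ is literally $A_{2,n}$-dependent, its $\beta$-mixing coefficient satisfies $\beta(a;b)=0$ for $a \geq A_{2,n}$, which makes the mixing-dependent parts of Assumptions \ref{Ass-R} and \ref{Ass-R(add)} (the terms involving $\beta(A_{2,n};A_{n}^{d})$ and $\beta_{1}$) trivially satisfied, so all hypotheses of the three original results hold for the process $\bm{X}_{\bm{s},A_{n}:A_{2,n}}$.

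Next I would quantify the effect of replacing $\bm{X}_{\bm{s},A_{n}}$ by $\bm{X}_{\bm{s},A_{n}:A_{2,n}}$ inside the kernel estimators. Both the general estimator $\hat{\psi}$ and the estimators $\hat{m}$, $\tilde{m}_{\ell}$ are built from products $\prod_{\ell}K_h(x_\ell - X^{\ell}_{\bm{s}_j,A_n})$ (times $W_{\bm{s}_j,A_n}$ or $Y_{\bm{s}_j,A_n}$). Using the Lipschitz continuity of $K$ from (KB1) and a telescoping bound on the product, the difference between the estimator evaluated at $\bm{X}_{\bm{s}_j,A_n}$ and at $\bm{X}_{\bm{s}_j,A_n:A_{2,n}}$ is controlled by $h^{-1}\sum_\ell |\epsilon^{\ell}_{\bm{s}_j,A_n:A_{2,n}}|$ against the remaining kernel factors. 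Taking $E_{\cdot|\bm{S}}$ and a Hölder inequality with exponents $\zeta/(\zeta-1)$ and $\zeta$ — the same split used in (Ma0) — lets one pull out the moment bound $\gamma_\epsilon(A_{2,n})$ on the residual and a uniformly bounded factor from (U3); summing over $j$ and dividing by $nh^{p+d}$ (or $n_{[0,1]^p}h^{d+1}$, which is of the same order by the density lower bounds) produces a bound of order $\gamma_\epsilon(A_{2,n})(n+A_n^d\log n)/(n^{(1-\eta_2)/2}h^{(d+p)/2+1})$ after accounting for the uniform supremum over $\bm{u},\bm{x}$ (the $A_n^d\log n$ term coming from the covering/chaining argument that already appears in the proof of Proposition \ref{general-unif-rate}). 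Condition (\ref{approx-decay}) in (Ma0) says precisely this quantity tends to zero, so the substitution error is $o_{P_{\cdot|\bm{S}}}(\sqrt{\log n/(nh^{d+p})})$, which is negligible relative to every rate in the three conclusions, and in particular $o_{P_{\cdot|\bm{S}}}((nh^{d+p})^{-1/2})$ for the CLT scaling.

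Finally I would assemble the argument: apply Proposition \ref{general-unif-rate} (resp. Theorems \ref{unif-rate-m}, \ref{general-CLT-m}) to the $A_{2,n}$-dependent process $\bm{X}_{\bm{s},A_{n}:A_{2,n}}$ to get the stated rate/normality for the ``clean'' estimator, then add back the residual contribution, which Step~2 shows is of smaller order; Slutsky's theorem handles the CLT case. One must also check that the denominators ($\hat{p}$, $n_{[0,1]^p}$, $\tilde f_{\bm{S}}$) are bounded away from zero with $P_{\cdot|\bm{S}}$-probability tending to one — this follows from the same uniform-convergence step applied with $W=1$ together with the assumed positivity of $f$ and $f_{\bm{S}}$ — so ratios of perturbed numerators and denominators behave as claimed. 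The main obstacle is the second step: one has to verify that the telescoping/Hölder bound on the replacement error is genuinely \emph{uniform} in $(\bm{u},\bm{x})$ over $[0,1]^d \times S_c$ (resp. over $I_h \times I_{h,0}$), which requires re-running the discretization argument from the proof of Proposition \ref{general-unif-rate} on the residual term and is exactly why the factor $n + A_n^d\log n$ (rather than just $n$) appears in (\ref{approx-decay}); once that uniformity is in hand, everything else is bookkeeping.
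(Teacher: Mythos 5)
Your proposal follows essentially the same route as the paper's own argument (given in Remark \ref{A2-depend-approx} of Appendix \ref{Appendix-proof}): decompose $\bm{X}_{\bm{s},A_{n}}$ into the exactly $A_{2,n}$-dependent part, for which $\tilde{\beta}(A_{2,n};A_{n}^{d})=0$ so the original proofs apply verbatim, and control the replacement error via the Lipschitz/telescoping bound on the kernel products, a H\"older split with exponents $\zeta/(\zeta-1)$ and $\zeta$, and Markov's inequality against condition (Ma0). The only small inaccuracy is attributing the factor $n+A_{n}^{d}\log n$ in (\ref{approx-decay}) to the covering argument over $(\bm{u},\bm{x})$; in the paper it arises from summing the almost-sure per-block point counts of Lemma \ref{n summands} over all $(A_{n}/A_{1,n})^{d}$ blocks, which does not affect the validity of the bound.
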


\begin{corollary}\label{col2}
Theorems \ref{unif-m-add} and \ref{CLT-m-add} hold even if we replace Assumption \ref{Ass-M} with Assumption \ref{Ass-M(add)} (with $p=1$ in (\ref{approx-decay})).
\end{corollary}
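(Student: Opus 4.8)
The plan is to deduce Corollary \ref{col2} from Theorems \ref{unif-m-add} and \ref{CLT-m-add} by replacing the observed field with its $A_{2,n}$-dependent approximation and showing that the replacement error is negligible at the relevant rate. Write $\bm{X}_{\bm{s},A_{n}} = \bm{X}_{\bm{s},A_{n}:A_{2,n}} + \bm{\epsilon}_{\bm{s},A_{n}:A_{2,n}}$ as in Assumption \ref{Ass-M(add)}, and let $\hat{p}^{\circ}$, $\hat{m}^{\circ}$, $\hat{p}_{\ell}^{\circ}$, $\hat{p}_{\ell,k}^{\circ}$, $\hat{m}_{\ell}^{\circ}$, $\tilde{m}_{\ell}^{\circ}$ denote the quantities of Section \ref{Section_Additive} obtained by replacing every occurrence of $X^{\ell}_{\bm{s}_{j},A_{n}}$ (and of the indicators $I(\bm{X}_{\bm{s}_{j},A_{n}}\in[0,1]^{p})$) by the corresponding $A_{2,n}$-dependent version. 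Since $\{\bm{X}_{\bm{s},A_{n}:A_{2,n}}\}$ is $A_{2,n}$-dependent (more precisely its $\beta$-mixing coefficient satisfies $\beta(a;b) = 0$ for $a \geq cA_{2,n}$ with an absolute constant $c$, cf. Remark \ref{remark-example}), one may take $\beta_{1}$ vanishing beyond $cA_{2,n}$, so the mixing parts of (Rb1), (Rb5) and of (Ra3) become trivial; meanwhile (Ma2) furnishes the local-stationarity property (M1) for the approximating field and (Ma3)--(Ma4) furnish (M2)--(M5). Hence Theorems \ref{unif-m-add} and \ref{CLT-m-add}, applied verbatim to $\{\bm{X}_{\bm{s},A_{n}:A_{2,n}}\}$, already yield the assertions with $\tilde{m}_{\ell}$ replaced by $\tilde{m}_{\ell}^{\circ}$.

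It then remains to show that $\sup_{\bm{u}\in I_{h},\,x_{\ell}\in I_{h,0}}|\tilde{m}_{\ell}(\bm{u},x_{\ell}) - \tilde{m}_{\ell}^{\circ}(\bm{u},x_{\ell})| = o_{P_{\cdot|\bm{S}}}(\sqrt{\log n/(nh^{d+1})} + h^{2})$ and that the pointwise difference is $o_{P_{\cdot|\bm{S}}}((n_{[0,1]^{p}}h^{d+1})^{-1/2})$, which transfers both conclusions. The elementary ingredients are the Lipschitz bound (KB1), giving $|K_{h}(x_{\ell} - X^{\ell}_{\bm{s}_{j},A_{n}}) - K_{h}(x_{\ell} - X^{\ell}_{\bm{s}_{j},A_{n}:A_{2,n}})| \leq C_{2}h^{-1}|\epsilon^{\ell}_{\bm{s}_{j},A_{n}:A_{2,n}}|$, together with the observation that the indicators $I(\bm{X}_{\bm{s}_{j},A_{n}}\in[0,1]^{p})$ and $I(\bm{X}_{\bm{s}_{j},A_{n}:A_{2,n}}\in[0,1]^{p})$ disagree only on the event that some coordinate lies within $|\bm{\epsilon}_{\bm{s}_{j},A_{n}:A_{2,n}}|$ of $\partial[0,1]^{p}$, whose conditional probability given $\bm{S}$ is $O(E_{\cdot|\bm{S}}[|\bm{\epsilon}_{\bm{s}_{j},A_{n}:A_{2,n}}|])$ by the density bounds in (M4)/(U3). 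Expanding products, summing over $j$, pairing the factor $Y_{\bm{s}_{j},A_{n}}$ (and the extra kernel factors in $\hat{p}_{\ell,k}$) with the residual via H\"older's inequality with exponents $\zeta/(\zeta-1)$ and $\zeta$ --- which is exactly why (Ma0) controls the $q\zeta/(\zeta-1)$-th conditional moment --- and invoking the bound $\gamma_{\epsilon}(A_{2,n})$ with the decay condition (\ref{approx-decay}) (now with $p=1$), each of $\hat{p}-\hat{p}^{\circ}$, $\hat{m}\,\hat{p}-\hat{m}^{\circ}\hat{p}^{\circ}$, $\hat{p}_{\ell}-\hat{p}_{\ell}^{\circ}$, $\hat{p}_{\ell,k}-\hat{p}_{\ell,k}^{\circ}$, $\hat{m}_{\ell}\hat{p}_{\ell}-\hat{m}_{\ell}^{\circ}\hat{p}_{\ell}^{\circ}$ is shown to be uniformly $o_{P_{\cdot|\bm{S}}}$ of the target rates. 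Because $\inf f > 0$ and $\inf f_{\bm{S}} > 0$, the denominators $\hat{p}_{\ell}$, $\hat{p}_{\ell}^{\circ}$ are bounded away from zero on $I_{h}\times I_{h,0}$ with $P_{\cdot|\bm{S}}$-probability tending to one, so $\hat{m}_{\ell}-\hat{m}_{\ell}^{\circ}$ and $\hat{m}-\hat{m}^{\circ}$ inherit the same bounds; finally, solving the smooth backfitting integral equations --- a bounded linear operation with uniformly bounded inverse on the relevant function spaces, exactly as in the proof of Theorem \ref{unif-m-add} --- propagates negligibility from $(\hat{m}_{\ell},\hat{p}_{\ell},\hat{p}_{\ell,k})$ to $\tilde{m}_{\ell}$.

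Two points constitute the main obstacle. First, one needs the \emph{uniform} (not merely pointwise) version of the residual estimate: controlling $\sup_{\bm{u},x_{\ell}}$ of the residual-weighted kernel sums requires a discretization/chaining step over $I_{h}\times I_{h,0}$ combined with a high-moment (Rosenthal--Bernstein type) inequality for sums of $A_{2,n}$-dependent summands, and this is where the requirement in (Ma0) that $q\zeta/(\zeta-1)$ be an even integer enters, since it permits the relevant moments to be expanded and bounded using the $A_{2,n}$-dependence structure. Second, one must check that the approximating field $\{\bm{X}_{\bm{s},A_{n}:A_{2,n}}\}$ itself satisfies the conditional-moment conditions (U3) and (Ua2) required by Theorems \ref{unif-m-add} and \ref{CLT-m-add}; this follows from the corresponding conditions on $\{\bm{X}_{\bm{s},A_{n}}\}$ and the negligibility of $\bm{\epsilon}_{\bm{s},A_{n}:A_{2,n}}$, but the argument passes through conditional densities and is the technically most delicate part. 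The remainder is a routine repetition of the steps in the proofs of Corollary \ref{col1} and Theorems \ref{unif-m-add}--\ref{CLT-m-add}.
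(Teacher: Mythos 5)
Your proposal follows essentially the same route as the paper: decompose $\bm{X}_{\bm{s},A_{n}} = \bm{X}_{\bm{s},A_{n}:A_{2,n}} + \bm{\epsilon}_{\bm{s},A_{n}:A_{2,n}}$, apply Theorems \ref{unif-m-add} and \ref{CLT-m-add} to the $A_{2,n}$-dependent field (for which $\tilde{\beta}(A_{2,n};A_{n}^{d})=0$ so the mixing parts of (Rb1), (Rb5) are automatic), and show the replacement error is negligible via the Lipschitz property of $K$, H\"older with exponents $\zeta/(\zeta-1)$ and $\zeta$, and the bound $\gamma_{\epsilon}(A_{2,n})$ under (\ref{approx-decay}); this is exactly the mechanism of Remark \ref{A2-depend-approx}, which the paper invokes implicitly for both corollaries. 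The one place where you diverge from (and overcomplicate relative to) the paper is the uniformity step: the paper does not need a separate chaining argument or a Rosenthal--Bernstein inequality for the residual sums, because the bound $|E_{\cdot|\bm{S}}[Z_{2,\bm{s}_{j},A_{n}}]| \lesssim h^{-1}\gamma_{\epsilon}(A_{2,n})$ obtained from the Lipschitz estimate is independent of the evaluation point $(\bm{u},x_{\ell})$, so a first-moment Markov bound suffices after the supremum has already been reduced to a finite net with the dominating kernel $K^{\ast}$ in the proof of Proposition \ref{general-unif-rate}; correspondingly, the requirement that $q\zeta/(\zeta-1)$ be an even integer is not used in the proof of the corollary at all, but only in Lemma \ref{proof-CARMA-example} to verify (Ma0) for the L\'evy-driven examples via explicit moment computations. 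Your additional care with the indicators $I(\bm{X}_{\bm{s}_{j},A_{n}} \in [0,1]^{p})$ and with transferring (U3)/(Ua2) to the approximating field addresses genuine details that the paper glosses over, and is a welcome strengthening rather than a deviation.
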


\begin{remark}
Let $\tilde{\beta}(a;b) = \tilde{\beta}_{1}(a)\tilde{g}_{1}(b)$ denote the $\beta$-mixing coefficients of $\bm{X}_{\bm{s},A_{n}:A_{2,n}}$. When the process $\{\bm{X}_{\bm{s},A_{n}}\}$ is an approximately $A_{2,n}$-dependent random field and an approximately $A_{2,n}$-dependent locally stationary random field, we can replace $\bm{X}_{\bm{s},A_{n}}$ with $\bm{X}_{\bm{s},A_{n}:A_{2,n}}$ in our analysis. In this case, because the $\beta$-mixing coefficients of $\bm{X}_{\bm{s},A_{n}:A_{2,n}}$ satisfy $\tilde{\beta}(A_{2,n};A_{n}^{d}) = \tilde{\beta}_{1}(A_{2,n})\tilde{g}_{1}(A_{n}^{d}) = 0$, conditions on $\beta$-mixing coefficients (R1), (Ra3), (Rb1) are satisfied automatically. See also Remark \ref{A2-depend-approx} in the Appendix. 
\end{remark}

\cite{BrMa17} developed CARMA random fields and they discuss in the introduction of their paper that even for CARMA random fields, which is a special class of L\'evy-driven moving average random fields, the random fields are already generates a much rich class of random fields. Therefore, L\'evy-driven moving average random fields are one of the most flexible and concrete class of random fields from both theoretical and practical point of view. Moreover, it is difficult to compute $\alpha$- or $\beta$-mixing coefficients for non-Gaussian random fields on $\mathbb{R}^{d}$ and there would be no general tools for the computation. The dependence structure of CARMA random fields on $\mathbb{R}^{d}$ have not been studied although there are some results on $\beta$-mixing properties of multivariate CARMA processes on $\mathbb{R}$ in \cite{ScSt12} for example. Indeed, existing papers on statistical analysis of CARMA random fields on $\mathbb{R}^{d}$ (cf. \cite{BrMa17} and \cite{MaYu20}) consider parametric estimation of second-order stationary CARMA random fields but their ($\alpha$- or $\beta$-) mixing properties are not addressed. Hence it is still an open question that CARMA random fields are $\beta$- (or $\alpha$-) mixing.   Then, our results in Section 5 are important contributions in the sense that our results in Sections 3 and 4 can be applied to L\'evy-driven moving average random fields that is a more flexible class of random fields than CARMA random fields by providing sufficient conditions for $m_{n}$-dependent approximation of locally stationary random fields and showing that locally stationary CARMA-type random fields can be approximated by $m_{n}$-dependent random fields.

\section{Discussion}
In this section we discuss some directions of possible extensions of our results and the model (\ref{NR-LSRF}) for applications to spatio-temporal data.

\subsection{Extensions}\label{extension discuss}

\subsubsection{On sampling region}

It is possible to extend the definition of the sampling region $R_{n}$ to a more general case that includes nonstandard shapes. For example, we can adopt the definition of sampling regions in \cite{La03b} as follows: First, we define the sampling region $R_{n}$. Let $R_{0}^{\ast} $ be an open connected subset of $(-2, 2]^{d}$ containing $[-1,1]^{d}$ and let $R_{0}$ be a Borel set satisfying $R_{0}^{\ast} \subset R_{0} \subset \overline{R}_{0}^{\ast}$, where for any set $S \subset \R^{d}$, $\overline{S}$ denotes its closure. Let $\{A_{n}\}_{n \geq 1}$ be a sequence of positive numbers such that $A_{n} \to \infty$ as $n \to \infty$ and consider $R_{n} = A_{n}R_{0}$ as a sampling region. We also assume that for any sequence of positive numbers $\{a_{n}\}_{n \geq 1}$ with $a_{n} \to 0$ as $n \to \infty$, the number of cubes of the form $a_{n}(\bm{i} +[0,1)^{d})$, $\bm{i} \in \mathbb{Z}^{d}$ with their lower left corner $a_{n}\bm{i}$ on the lattice $a_{n}\mathbb{Z}^{d}$ that intersect both $R_{0}$ and $R_{0}^{c}$ is $O(a_{n}^{-d+1})$ as $n \to \infty$. Moreover, let $f$ be a continuous, everywhere positive probability density function on $R_{0}$, and let $\{\bS_{0,i}\}_{i \geq 1}$ be a sequence of i.i.d. random vectors with density $f$. Assume that $\{\bS_{0,i}\}_{i \geq 1}$ and $\bm{X}_{\bm{s},A_{n}}$ are independent. 

The boundary condition on the prototype set $R_{0}$ holds in many practical situations, including many convex subsets in $\R^{d}$, such as spheres, ellipsoids, polyhedrons, as well as many nonconvex sets in $\R^{d}$. See also \cite{La03b} and Chapter 12 in \cite{La03a} for further discussion on the boundary condition. 

Under this setting on the sampling region, our results hold under the same assumptions and the proofs are the same. Furthermore, for any set $S \subset \mathbb{R}^{d}$, let $\text{int}(S) = S \backslash \overline{S}$ be the internal part of $S$ and for any $\delta>0$, let $S^{\delta} = \{\bm{s} \in \mathbb{R}^{d}: d(\bm{s}, S) \leq \delta\}$ where $d(\bm{s}, S) = \inf_{\bm{x} \in S}\|\bm{s} - \bm{x}\|$. Then by replacing $[0,1]^{d}$ and $(0,1)^{d}$ that appear in Assumptions \ref{Ass-S} and \ref{Ass-M} with $R_{0}$ and $\text{int}(R_{0})$, it is also possible to show uniform convergence results that correspond to Proposition \ref{general-unif-rate}, Theorem \ref{unif-rate-m} and Theorem \ref{unif-m-add} over $ (\bm{u}, \bm{x}) \in V_{0} \times S_{c}$, $V_{0,h} \times S_{c}$ and $V_{0,2h} \times I_{h,0}$, respectively. Here, $S_{c}$ is a compact subset of $\mathbb{R}^{p}$, and $V_{0,h}$, $V_{0,2h}$ and $V_{0}$ are compact subsets of $R_{0}$ such that $V_{0,2h} \subset V_{0,h}$ and $V_{0,2h}^{2C_{1}h} = V_{0,h}^{C_{1}h} = V_{0}$. We omit the proofs since they are almost the same as those of Proposition \ref{general-unif-rate}, Theorem \ref{unif-rate-m} and Theorem \ref{unif-m-add}.

\subsubsection{Constructing confidence bands for regression functions}

As an extension of Theorem \ref{general-CLT-m}, it is straightforward to show joint asymptotic normality of $\hat{m}$ over finite number of design points $\{(\bm{u}_{j}, \bm{x}_{j})\}_{j=1}^{L}$ and verify that $\hat{m}(\bm{u}_{j}, \bm{x}_{j})$ are asymptotically independent. Building on the result, it should be possible to construct simple confidence bands by plug-in methods and linear interpolations of  the following joint confidence intervals: Let $\xi_{1},\hdots, \xi_{L}$ be i.i.d. standard normal random variables, and let $q_{\tau}$ satisfy $P\left(\max_{1 \leq j \leq L}|\xi_{j}| > q_{\tau}\right) = \tau$ for $\tau \in (0,1)$ and take a bandwidth $h$ such that $h^{2} \ll 1/\sqrt{nh^{d+p}}$. 
Then, 
\[
C_{j}(1-\tau) = \left[\hat{m}(\bm{u}_{j}, \bm{x}_{j}) \pm \sqrt{{V_{\bm{u}_{j}, \bm{x}_{j}} \over nh^{d+p}}}q_{\tau}\right],\ j = 1,\hdots, L
\] 
are joint asymptotic $100(1-\tau)$\% confidence intervals of $m$. Here, $[a \pm b] = [a-b,a+b]$ for $a \in \R$ and $b > 0$. More generally, there could be two possible ways to construct confidence bands of the regression function. The first way is based on a Gumbel approximation as considered in \cite{ZhWu08}. The second way is based on intermediate (high-dimensional) Gaussian approximations as considered in \cite{HoLe12}. However, we believe that both approaches require additional substantial work and there would be no previous studies on the construction of uniform confidence bands for locally stationary time series or random fields. Therefore, we only explain the idea of both approaches formally to keep the tight focus of the present paper and leave the extension as a future research topic.  The idea of both approaches is as follows. Instead of constructing uniform confidence bands of the regression function over $I_{h}$ and $S_{c}$, we first consider a discretized version of $I_{h}$ and $S_{c}$ in Theorem \ref{unif-rate-m} such that $\{\bm{u}_{n,j}\}_{j=1}^{N_u}$, $\{\bm{x}_{n,k}\}_{k=1}^{N_x}$ where $\bm{u}_{n,j} \in I_{h}$ and $\bm{x}_{n,k} \in S_{c}$ and the design points $\{\bm{u}_{n,j}\}_{j=1}^{N_u}$, $\{\bm{x}_{n,k}\}_{k=1}^{N_x}$ are asymptotically dense in $I_{h}$ and $S_{c}$, i.e. $N_u, N_x \to \infty$, $\max_{1 \leq j_{1} \neq  j_{2} \leq N_u,}\|\bm{u}_{n,j_{1}} - \bm{u}_{n,j_{2}}\| \to 0$ and $\max_{1 \leq k_{1} \neq  k_{2} \leq N_x,}\|\bm{x}_{n,k_{1}} - \bm{x}_{n,k_{2}}\| \to 0$ as $n \to \infty$. Then we could construct ``asymptotically'' uniform confidence bands of the regression function by linear interpolations of joint confidence intervals over the design points combining Gumbel or intermediate Gaussian approximations.

\subsection{Further discussion on mixing conditions}\label{more discuss mixing}

Here we discuss further on the mixing conditions in this paper. See also Section \ref{beta-mixing RF discuss} for discussion on mixing conditions from another technical aspects. Consider the following decomposition of a random field $\bm{X}_{\bm{s},A_{n}}$:
\begin{align}\label{approx-beta-decomp-X-ep}
\bm{X}_{\bm{s}, A_{n}} = \tilde{\bm{X}}_{\bm{s}, A_{n}} + \tilde{\bm{\epsilon}}_{\bm{s}, A_{n}},
\end{align}
where $\tilde{\bm{X}}_{\bm{s}, A_{n}}$ is a locally stationary $\beta$-mixing (for uniform estimation) or $\alpha$-mixing (for asymptotic normality) random field that satisfies Assumptions \ref{Ass-M}, \ref{Ass-U(add)} and \ref{Ass-R(add)} (for general case) or Assumptions \ref{Ass-M}, \ref{Ass-U(add)} and \ref{Ass-Rb} (for additive models), and $\tilde{\bm{\epsilon}}_{\bm{s}, A_{n}} = (\tilde{\epsilon}^{1}_{\bm{s},A_{n}},\hdots,\tilde{\epsilon}^{p}_{\bm{s},A_{n}})'$ is a residual random field which is asymptotically negligible. The decomposition (\ref{approx-beta-decomp-X-ep}) implies that $\tilde{\bm{X}}_{\bm{s}, A_{n}}$ is not necessarily $m_{n}$-dependent. A similar argument in Remark \ref{A2-depend-approx} in Appendix \ref{Appendix-proof} yields that to show the asymptotic negligibility of $\tilde{\bm{\epsilon}}_{\bm{s}, A_{n}}$ for our results to hold, it is sufficient to verify the following condition: For some $q>1$ such that ${q\zeta \over \zeta-1}$ is an even integer,
\begin{align*}
\max_{1 \leq j \leq n}\max_{1 \leq \ell \leq p}E_{\cdot|\bm{S}}\left[\left|\tilde{\epsilon}_{\bm{s}_{j},A_{n}}^{\ell}\right|^{{q\zeta \over \zeta-1}}\right]^{{\zeta-1 \over q\zeta}} \leq \iota_{\tilde{\epsilon}}(n),\ \text{$P_{\bm{S}}$-a.s.}
\end{align*}
where $\zeta>2$ is the constant in Assumption \ref{Ass-U} and $\iota_{\tilde{\epsilon}}(\cdot)$ is a decreasing function such that for some $\eta_{2} \in (0,1)$
\begin{align}\label{approx-decay-Xe-beta}
{\iota_{\tilde{\epsilon}}(n)(n + A_{n}^{d}\log n) \over n^{(1-\eta_{2}) \over 2}h^{{d+p \over 2}+1}} \to 0\ \text{as $n \to \infty$}.
\end{align}
As far as (\ref{approx-decay-Xe-beta}) is satisfied, the random field $\bm{X}_{\bm{s}, A_{n}}$ itself is not necessarily $\beta$-mixing (or $\alpha$-mixing) in general and $\tilde{\bm{\epsilon}}_{\bm{s}, A_{n}}$ can accommodate complex dependence structures of $\bm{X}_{\bm{s}, A_{n}}$ that cannot be captured by $\beta$-mixing (or $\alpha$-mixing) random fields. As a special case, $\tilde{\bm{X}}_{\bm{s}, A_{n}}$ can be an approximately $m_{n}$-dependent locally stationary random field and in this case $\tilde{\bm{X}}_{\bm{s}, A_{n}}$ also can be decomposed into $m_{n}$-dependent random field $\tilde{\bm{X}}_{\bm{s}, A_{n}:m_{n}}$ and a residual random field $\tilde{\bm{\epsilon}}_{\bm{s}, A_{n}:m_{n}}$ that corresponds to the approximation error. Then $\bm{X}_{\bm{s}, A_{n}}$ can be represented as follows:
\[
\bm{X}_{\bm{s}, A_{n}} = \underbrace{\tilde{\bm{X}}_{\bm{s}, A_{n}:m_{n}}}_{\text{$m_{n}$-dependent}} + \underbrace{\tilde{\bm{\epsilon}}_{\bm{s}, A_{n}:m_{n}}}_{\text{approximation error}} + \underbrace{\tilde{\bm{\epsilon}}_{\bm{s}, A_{n}}}_{\text{complexly dependent}}.
\]  
The L\'evy-driven moving average random fields considered in Section 5 of the revised manuscript are examples when $\tilde{\bm{\epsilon}}_{\bm{s}, A_{n}} = \bm{0}$.

\subsection{Spatio-temporal modeling} 

Consider a spatially locally stationary spatio-temporal data $\{\bm{X}_{\bm{s}_{j},A_{n}}: \bm{s}_{j} \in R_{n}, \}$ where $\bm{X}_{\bm{s}_{j},A_{n}}=(X_{\bm{s}_{j},A_{n}}(t_{1}), \cdots, X_{\bm{s}_{j},A_{n}}(t_{p+1}))': \bm{s} \in R_{n}\}$, that is, at each location $\bm{s}_{j}$, $\bm{X}_{\bm{s}_{j},A_{n}}$ is a time series observed at time points $t_{1}<\hdots<t_{p+1}$ with $p \geq 1$ fixed. Our results can be applied to the following spatio-temporal model:
\begin{align*}
X_{\bm{s}_{j}, A_{n}}(t_{p+1}) &= m\left({\bm{s}_{j} \over A_{n}}, X_{\bm{s}_{j},A_{n}}(t_{1}), \cdots, X_{\bm{s}_{j},A_{n}}(t_{p})\right) + \epsilon_{\bm{s}_{j},A_{n}},\ j=1,\hdots, n.
\end{align*}
In our framework, the time points can be non-equidistant and the process can be temporally nonstationary since we consider finite number of time points. Additionally, our sampling design is related to that of \cite{AlRe19}. They study nonparametric estimation of a linear SPDE with a spatially-varying coefficient under the asymptotic regime with fixed time horizon and with the spatial resolution of the observations tending to zero. 

\subsection{More discussion on locally stationary L\'evy-driven MA random fields}

Our argument to show (\ref{m-dep-approx})-(\ref{U-moment}) is limited to non-Gaussian (or pure jump) MA random fields. However, the argument can be extended to the cases that allow the existence of non-trivial Gaussian part of the random measure $L$ as follows: Consider the following  L\'evy-driven MA random fields with possibly non-vanishing Gaussian part.
\begin{align*}
X_{\bm{s},A_{n}} &= \int_{\mathbb{R}^{d}}g\left({\bm{s} \over A_{n}}, \|\bm{s} - \bm{v}\|\right)L(d\bm{v})\ \text{and}\ X_{\bm{u}}(\bm{s}) = \int_{\mathbb{R}^{d}}g\left(\bm{u}, \|\bm{s} - \bm{v}\|\right)L(d\bm{v}),
\end{align*}
where $g: [0,1]^{d}\times [0,\infty) \to \mathbb{R}$ is a bounded function such that $|g(\bm{u},\cdot) - g(\bm{v},\cdot)| \leq C\|\bm{u} - \bm{v}\|\bar{g}(\cdot)$ with $C<\infty$ and for any $\bm{u} \in [0,1]^{d}$,
\[
\int_{\mathbb{R}^{d}}(|g(\bm{u},\|\bm{s}\|)| + |\bar{g}(\bm{s})|)d\bm{s}<\infty\ \text{and}\ \int_{\mathbb{R}^{d}}(g^{2}(\bm{u},\|\bm{s}\|) +  \bar{g}^{2}(\bm{s}))d\bm{s}<\infty.
\]
If $E[|L(A)|^{q'}]<\infty$ for any $A \in \mathcal{B}(\mathbb{R}^{d})$ with a bounded Lebesgue measure $|A|$ and for $q' =1,2$, we have that
\[
E[X_{\bm{u}}(\bm{s})] = \mu_{0}\int_{\mathbb{R}^{d}}g\left(\bm{u}, \|\bm{s}\|\right)d\bm{s}\ \text{and}\ E\left[(\bm{X}_{\bm{u}}(\bm{s}))^{2}\right] = \bar{\sigma}_{0}^{2}\int_{\mathbb{R}^{d}}g^{2}\left(\bm{u}, \|\bm{s}\|\right)d\bm{s},
\]
where $\mu_{0}\in \mathbb{R}$ and $\bar{\sigma}_{0}^{2} >0$. Let $m >0$. Define the function $\iota(\cdot : m) : [0,\infty) \to [0,1]$ as 
\[
\iota(x:m) = 
\begin{cases}
1 & \text{if $0\leq x \leq {m \over 2}$},\\
-{2 \over m}x+2 & \text{if ${m \over 2} < x \leq m$},\\
0 & \text{if $m<x$}
\end{cases}
\]
and consider the process $X_{\bm{u}}(\bm{s}:A_{2,n}) = \int_{\mathbb{R}^{d}}g\left(\bm{u}, \|\bm{s} - \bm{v}\|\right)\iota(\|\bm{s} - \bm{v}\|:A_{2,n})L(d\bm{v})$. 

Assume that $\|\bm{s}/A_n - \bm{u}\|>0$. Observe that 
\begin{align*}
\left|X_{\bm{s},A_{n}} - X_{\bm{u}}(\bm{s}:A_{2,n})\right| &\leq \left|X_{\bm{s},A_{n}} - X_{\bm{u}}(\bm{s})\right| + \left|X_{\bm{u}}(\bm{s}) - X_{\bm{u}}(\bm{s}:A_{2,n})\right| \nonumber \\
&= \left|\int_{\mathbb{R}^{d}}g\left({\bm{s} \over A_{n}}, \|\bm{s} - \bm{v}\|\right) - g\left(\bm{u}, \|\bm{s} - \bm{v}\|\right)L(d\bm{v})\right| \nonumber \\
&\quad + {1 \over A_{n}^{d}}\left|\underbrace{\int_{\mathbb{R}^{d}}A_{n}^{d}\overbrace{g\left(\bm{u}, \|\bm{s} - \bm{v}\|\right)(1 - \iota(\|\bm{s} - \bm{v}\|:A_{2,n}))}^{=: g_{\bm{u}}(\|\bm{s}-\bm{v}\|:A_{2,n})}L(d\bm{v})}_{=: U_{2,\bm{s},A_n}(\bm{u})}\right| \nonumber \\
&= \left\|{\bm{s} \over A_{n}} - \bm{u}\right\|\left|\underbrace{\int_{\mathbb{R}^{d}}{g\left({\bm{s} \over A_{n}}, \|\bm{s} - \bm{v}\|\right) - g\left(\bm{u}, \|\bm{s} - \bm{v}\|\right) \over \left\|{\bm{s} \over A_{n}} - \bm{u}\right\|}L(d\bm{v})}_{=:U_{1, \bm{s}, A_n}(\bm{u})}\right| + {1 \over A_n^d}|U_{2,\bm{s},A_n}(\bm{u})| \nonumber \\
&\leq \left(\left\|{\bm{s} \over A_{n}} - \bm{u}\right\| \! + \! {1 \over A_{n}^{d}}\right) \left\{\left|U_{1,\bm{s},A_n}(\bm{u})\right|   +\left|U_{2,\bm{s},A_n}(\bm{u}) \right| \right\}\nonumber \\
&=: \left(\left\|{\bm{s} \over A_{n}} - \bm{u}\right\| + {1 \over A_{n}^{d}}\right)U_{\bm{s},A_{n}}(\bm{u}). 
\end{align*}
Note that 
\begin{align*}
E\left[U_{1,\bm{s},A_n}(\bm{u})^2\right] &= \bar{\sigma}_0^2\int_{\mathbb{R}^{d}}\left({g\left({\bm{s} \over A_{n}}, \|\bm{z}\|\right) - g\left(\bm{u}, \|\bm{z}\|\right) \over \left\|{\bm{s} \over A_{n}} - \bm{u}\right\|}\right)^2d\bm{z} \leq C^2\bar{\sigma}^2_0\int_{\mathbb{R}^{d}}\bar{g}^2(\|\bm{z}\|)d\bm{z},\\
E\left[U_{2,\bm{s},A_n}(\bm{u})^2\right] &= \int_{\mathbb{R}^{d}}A_{n}^{2d}g_{\bm{u}}^2(\|\bm{z}\|:A_{2,n})d\bm{z}
\end{align*}
If we assume that
\begin{align*}
\sup_{n \geq 1}\sup_{\bm{u} \in [0,1]^{d}}\int_{\mathbb{R}^{d}}(A_{n}^{d}g_{\bm{u}}(\|\bm{s}\|:A_{2,n}) + A_{n}^{2d}g_{\bm{u}}^{2}(\|\bm{s}\|:A_{2,n}))d\bm{s}<\infty,
\end{align*}
then we obtain
\begin{align*}
E[U_{\bm{s}, A_{n}}^{2}(\bm{u})] &\leq  C\sup_{n \geq 1}\sup_{\bm{u} \in [0,1]^{d}}\!\left(\int_{\mathbb{R}^{d}}\!g_{\bm{u},A_{n},A_{2,n}}^{2}(\|\bm{s}\|)d\bm{s} + \left(\int_{\mathbb{R}^{d}}\!g_{\bm{u},A_{n},A_{2,n}}(\|\bm{s}\|)d\bm{s}\right)^{2}\right)\!<\infty.
\end{align*}

\newpage

\appendix

\section{Proofs}\label{Appendix-proof}

\subsection{Proofs for Section \ref{Section_Main}}

\begin{proof}[Proof of Proposition \ref{general-unif-rate}]
Define $B = \{(\bm{u},\bm{x}) \in \mathbb{R}^{d+p}: \bm{u} \in [0,1]^{d}, \bm{x} \in S_{c}\}$, $a_{n} = \sqrt{\log n/nh^{d+p}}$ and $\tau_{n} = \rho_{n}n^{1/\zeta}$ with $\rho_{n} = (\log n)^{\zeta_{0}}$ for some $\zeta_{0} >0$. Define 
\begin{align*}
\hat{\psi}_{1}(\bm{u},\bm{x}) &= {1 \over nh^{p+d}}\sum_{j=1}^{n}\bar{K}_{h}\left(\bm{u} - {\bm{s}_{j} \over A_{n}}\right)\prod_{\ell_{2}}^{p}K_{h}\left(x_{\ell_{2}} - X_{\bm{s}_{j},A_{n}}^{\ell_{2}}\right)W_{\bm{s}_{j},A_{n}}I(|W_{\bm{s}_{j}, A_{n}}| \leq \tau_{n}),\\
\hat{\psi}_{2}(\bm{u},\bm{x}) &= {1 \over nh^{p+d}}\sum_{j=1}^{n}\bar{K}_{h}\left(\bm{u} - {\bm{s}_{j} \over A_{n}}\right)\prod_{\ell_{2}}^{p}K_{h}\left(x_{\ell_{2}} - X_{\bm{s}_{j},A_{n}}^{\ell_{2}}\right)W_{\bm{s}_{j},A_{n}}I(|W_{\bm{s}_{j}, A_{n}}| > \tau_{n}).
\end{align*}
Note that $\hat{\psi}(\bm{u},\bm{x}) - E_{\cdot|\bm{S}}[\hat{\psi}(\bm{u},\bm{x})] = \hat{\psi}_{1}(\bm{u},\bm{x}) - E_{\cdot|\bm{S}}[\hat{\psi}_{1}(\bm{u},\bm{x})] + \hat{\psi}_{2}(\bm{u},\bm{x}) - E_{\cdot|\bm{S}}[\hat{\psi}_{2}(\bm{u},\bm{x})]$.

(Step1) First we consider the term $\hat{\psi}_{2}(\bm{u},\bm{x}) - E_{\cdot|\bm{S}}[\hat{\psi}_{2}(\bm{u},\bm{x})]$. 
\begin{align*}
P_{\cdot|\bm{S}}\left(\sup_{(\bm{u},\bm{x}) \in B}|\hat{\psi}_{2}(\bm{u},\bm{x})|>Ca_{n}\right) &\leq P_{\cdot|\bm{s}}\left(|W_{\bm{s}_{j}, A_{n}}| > \tau_{n}\ \text{for some $j=1,\hdots,n$}\right)\\
&\leq \tau_{n}^{-\zeta}\sum_{j=1}^{n}E_{\cdot|\bm{S}}[|W_{\bm{s}_{j}, A_{n}}|^{\zeta}] \leq Cn\tau_{n}^{-\zeta} = \rho_{n}^{-\zeta} \to 0\ P_{\bm{S}}-a.s.
\end{align*}

\begin{align*}
E_{\cdot|\bm{S}}\left[|\hat{\psi}_{2}(\bm{u},\bm{x})|\right] &\leq {1 \over nh^{d+p}}\sum_{j=1}^{n}\bar{K}_{h}\left(\bm{u} - {\bm{s}_{j} \over A_{n}}\right)\int_{\mathbb{R}^{d}}\prod_{\ell = 1}^{p}K_{h}(x_{\ell} - w_{\ell})\\
&\quad \times E_{\cdot|\bm{S}}[|W_{\bm{s}_{j}, A_{n}}|I(|W_{\bm{s}_{j}, A_{n}}|>\tau_{n})|X_{\bm{s}_{j},A_{n}} = \bm{w}]f_{\bm{X}_{\bm{s}_{j},A_{n}}}(\bm{w})d\bm{w}\\
&= {1 \over nh^{d}}\sum_{j=1}^{n}\bar{K}_{h}\left(\bm{u} - {\bm{s}_{j} \over A_{n}}\right)\int_{\mathbb{R}^{d}}\prod_{\ell = 1}^{p}K(\varphi_{\ell})\\
&\quad \times E_{\cdot|\bm{S}}[|W_{\bm{s}_{j}, A_{n}}|I(|W_{\bm{s}_{j}, A_{n}}|>\tau_{n})|X_{\bm{s}_{j},A_{n}} = \bm{x} - h\bm{\varphi}]f_{\bm{X}_{\bm{s}_{j},A_{n}}}(\bm{x} - h\bm{\varphi})d\bm{\varphi}\\
&= {1 \over nh^{d}}\sum_{j=1}^{n}\bar{K}_{h}\left(\bm{u} - {\bm{s}_{j} \over A_{n}}\right){1 \over \tau_{n}^{\zeta-1}}\int_{\mathbb{R}^{d}}\prod_{\ell = 1}^{p}K(\varphi_{\ell})\\
&\quad \times E_{\cdot|\bm{S}}[|W_{\bm{s}_{j}, A_{n}}|^{\zeta}I(|W_{\bm{s}_{j}, A_{n}}|>\tau_{n})|X_{\bm{s}_{j},A_{n}} = \bm{x} - h\bm{\varphi}]f_{\bm{X}_{\bm{s}_{j},A_{n}}}(\bm{x} - h\bm{\varphi})d\bm{\varphi}\\
&\leq {C \over \tau_{n}^{\zeta-1}}{1 \over nh^{d}}\sum_{j=1}^{n}\bar{K}_{h}\left(\bm{u} - {\bm{s}_{j} \over A_{n}}\right)\\
&= {C \over \tau_{n}^{\zeta-1}}\left(f_{\bm{S}}(\bm{u}) + O\left({\sqrt{\log n \over nh^{d}}} + h^{2}\right)\right) \leq {C \over \tau_{n}^{\zeta-1}} = C\rho_{n}^{-(\zeta-1)}n^{-(\zeta-1)/\zeta} \leq Ca_{n}\ P_{\bm{S}}-a.s.  
\end{align*}
In the last equation, we used Lemma \ref{Masry-thm}. As a result, 
\[
\sup_{(\bm{u},\bm{x}) \in B}|\hat{\psi}_{2}(\bm{u},\bm{x}) - E_{\cdot|\bm{S}}[\hat{\psi}_{2}(\bm{u},\bm{x})]| = O_{P_{\cdot|\bm{S}}}(a_{n}). 
\]

(Step2) Now we consider the term $\hat{\psi}_{1}(\bm{u},\bm{x}) - E_{\cdot|\bm{S}}[\hat{\psi}_{1}(\bm{u},\bm{x})]$. First we introduce some notations. For $\bm{\ell} = (\ell_{1}, \hdots, \ell_{d})' \in \mathbb{Z}^{d}$, let $\Gamma_{n}(\bm{\ell};\bm{0}) = (\bm{\ell} + (0,1]^{d})A_{3,n}$ where $A_{3,n} = A_{1,n} + A_{2,n}$ and divide $\Gamma_{n}(\bm{\ell};\bm{0})$ into $2^{d}$ hypercubes as follows: 
\[
\Gamma_{n}(\bm{\ell};\bm{\epsilon}) = \prod_{j=1}^{d}I_{j}(\epsilon_{j}),\ \bm{\epsilon} = (\epsilon_{1},\hdots, \epsilon_{d})' \in \{1,2\}^{d}, 
\]
where, for $1 \leq j \leq d$, 
\[
I_{j}(\epsilon_{j}) = 
\begin{cases}
(\ell_{j}\lambda_{3,n}, \ell_{j}\lambda_{3,n} + \lambda_{1,n}] & \text{if $\epsilon_{1} = 1$}, \\
(\ell_{j}\lambda_{3,n} + \lambda_{1,n}, (\ell_{j}+1)\lambda_{3,n}] & \text{if $\epsilon_{1} = 2$}.
\end{cases}
\]
Note that 
\begin{align}\label{partition volume}
|\Gamma_{n}(\bm{\ell};\bm{\epsilon})| = \lambda_{1,n}^{q(\bm{\epsilon})}\lambda_{2,n}^{d-q(\bm{\epsilon})}
\end{align}
for any $\bm{\ell} \in \mathbb{Z}^{d}$ and $\bm{\epsilon} \in \{1,2\}^{d}$, where $q(\bm{\epsilon}) = [\![\{1 \leq j \leq d: \epsilon_{j} = 1\}]\!]$.

Let $L_{n} = \{\bm{\ell} \in \mathbb{Z}^{d}: \Gamma_{n}(\bm{\ell},\bm{0}) \cap R_{n} \neq \emptyset\}$ denote the index set of all hypercubes $ \Gamma_{n}(\bm{\ell},\bm{0})$ that are contained in or boundary of $R_{n}$. Moreover, let $L_{1,n} = \{\bm{\ell} \in \mathbb{Z}^{d}: \Gamma_{n}(\bm{\ell},\bm{0}) \subset R_{n}\}$ denote the index set of all hypercubes $ \Gamma_{n}(\bm{\ell},\bm{0})$ that are contained in $R_{n}$, and let $L_{2,n} = \{\bm{\ell} \in \mathbb{Z}^{d}:  \Gamma_{n}(\bm{\ell},\bm{0}) \cap R_{n} \neq 0,  \Gamma_{n}(\bm{\ell},\bm{0}) \cap R_{n}^{c} \neq \emptyset \}$ be the index set of boundary hypercubes.

\begin{remark}
Let $\bm{\epsilon}_{0} = (1,\hdots, 1)'$. The partitions $\Gamma_{n}(\bm{\ell};\bm{\epsilon}_{0})$ correspond to ``big blocks'' and the partitions $\Gamma(\bm{\ell};\bm{\epsilon})$ for $\bm{\epsilon} \neq \bm{\epsilon}_{0}$ correspond to ``small blocks''. 
\end{remark}

Define 
\begin{align*}
Z'_{\bm{s},A_{n}}(\bm{u},\bm{x}) &= \bar{K}_{h}\left(\bm{u} - {\bm{s} \over A_{n}}\right)\left\{ \prod_{\ell=1}^{p}K_{h}\left(x_{\ell} - X_{\bm{s},A_{n}}^{\ell}\right)W_{\bm{s}_{j},A_{n}}I(|W_{\bm{s}, A_{n}}| \leq \tau_{n})\right. \\
&\left. \quad - E_{\cdot|\bm{S}}\left[\prod_{\ell=1}^{p}K_{h}\left(x_{\ell} - X_{\bm{s}_{j},A_{n}}^{\ell}\right)W_{\bm{s}_{j},A_{n}}I(|W_{\bm{s}_{j}, A_{n}}| \leq \tau_{n})\right] \right\}.
\end{align*}
Observe that
\begin{align}
&\sum_{j=1}^{n}Z'_{\bm{s}_{j},A_{n}}(\bm{u},\bm{x}) \nonumber  \\
&= \underbrace{\sum_{\ell \in L_{1,n}\cup L_{2,n}}Z_{A_{n}}^{'(\bm{\ell};\bm{\epsilon}_{0})}(\bm{u},\bm{x})}_{\text{big blocks}} + \underbrace{\sum_{\bm{\epsilon} \neq \bm{\epsilon}_{0}}\sum_{\ell \in L_{1,n}}Z_{A_{n}}^{'(\bm{\ell};\bm{\epsilon})}(\bm{u},\bm{x}) + \sum_{\bm{\epsilon} \neq \bm{\epsilon}_{0}}\sum_{\ell \in L_{2,n}}Z_{A_{n}}^{'(\bm{\ell};\bm{\epsilon})}(\bm{u},\bm{x})}_{\text{small blocks}}, \label{decomp1} 
\end{align}
where $Z^{'(\bm{\ell};\bm{\epsilon})}_{A_{n}}(\bm{u}, \bm{x}) = \sum_{j: \bm{s}_{j} \in \Gamma_{n}(\bm{\ell};\bm{\epsilon})}Z'_{\bm{s},A_{n}}(\bm{u},\bm{x})$. Let $\{\tilde{Z}^{'(\bm{\ell};\bm{\epsilon})}_{A_{n}}(\bm{u}, \bm{x})\}_{\bm{\ell} \in L_{1,n} \cup L_{2,n}}$ be independent random variables such that $\tilde{Z}^{'(\bm{\ell};\bm{\epsilon})}_{A_{n}}(\bm{u}, \bm{x}) \stackrel{d}{=}Z^{'(\bm{\ell};\bm{\epsilon})}_{A_{n}}(\bm{u}, \bm{x})$. Note that for distinct $\bm{\ell}_{1}$ and $\bm{\ell}_{2}$, $d(\Gamma_{n}(\bm{\ell}_{1};\bm{\epsilon}), \Gamma_{n}(\bm{\ell}_{2};\bm{\epsilon})) \geq A_{2,n}$. Applying Corollary 2.7 in \cite{Yu94} (Lemma \ref{indep_lemma} in this paper) with $m = \left({A_{n} \over A_{1,n}}\right)^{d}$ and $\beta(Q) = \beta(A_{2,n};A_{n}^{d})$, we have that 
\begin{align*}
&\sup_{t >0}\left|P_{\cdot|\bm{S}}\left(\left|\sum_{\bm{\ell} \in L_{1,n} \cup L_{2,n}}\!\!\!\!\!\!\! Z^{'(\bm{\ell};\bm{\epsilon}_{0})}_{A_{n}}(\bm{u}, \bm{x})\right|>t\right) - P_{\cdot|\bm{S}}\left(\left|\sum_{\bm{\ell} \in L_{1,n} \cup L_{2,n}}\!\!\!\!\!\!\! \tilde{Z}^{'(\bm{\ell};\bm{\epsilon}_{0})}_{A_{n}}(\bm{u}, \bm{x})\right|>t\right)\right|\\ 
&\quad \leq C\left({A_{n} \over A_{1,n}}\right)^{d}\!\!\! \beta(A_{2,n};A_{n}^{d})\ P_{\bm{S}}-a.s.
\end{align*} 
\begin{align*}
&\sup_{t >0}\left|P_{\cdot|\bm{S}}\left(\left|\sum_{\bm{\ell} \in L_{1,n} }Z^{'(\bm{\ell};\bm{\epsilon})}_{A_{n}}(\bm{u}, \bm{x})\right|>t\right) - P_{\cdot|\bm{S}}\left(\left|\sum_{\bm{\ell} \in L_{1,n}}\tilde{Z}^{'(\bm{\ell};\bm{\epsilon})}_{A_{n}}(\bm{u}, \bm{x})\right|>t\right)\right|\\ &\quad \leq C\left({A_{n} \over A_{1,n}}\right)^{d}\beta(A_{2,n};A_{n}^{d})\ P_{\bm{S}}-a.s.
\end{align*} 
\begin{align*}
&\sup_{t >0}\left|P_{\cdot|\bm{S}}\left(\left|\sum_{\bm{\ell} \in L_{2,n} }Z^{'(\bm{\ell};\bm{\epsilon})}_{A_{n}}(\bm{u}, \bm{x})\right|>t\right) - P_{\cdot|\bm{S}}\left(\left|\sum_{\bm{\ell} \in L_{2,n}}\tilde{Z}^{'(\bm{\ell};\bm{\epsilon})}_{A_{n}}(\bm{u}, \bm{x})\right|>t\right)\right|\\
& \leq C\left({A_{n} \over A_{1,n}}\right)^{d}\beta(A_{2,n};A_{n}^{d})\ P_{\bm{S}}-a.s.
\end{align*} 
Since $A_{n}^{d}A_{1,n}^{-d}\beta(A_{2,n};A_{n}^{d}) \to 0$ as $n \to \infty$, these results imply that
\begin{align*}
\sum_{\bm{\ell} \in L_{1,n} \cup L_{2,n}}Z^{'(\bm{\ell};\bm{\epsilon}_{0})}_{A_{n}}(\bm{u}, \bm{x}) &= O\left(\sum_{\bm{\ell} \in L_{1,n} \cup L_{2,n}}\tilde{Z}^{'(\bm{\ell};\bm{\epsilon}_{0})}_{A_{n}}(\bm{u}, \bm{x})\right)\ P_{\bm{S}}-a.s.\\
\sum_{\bm{\ell} \in L_{1,n}}Z^{'(\bm{\ell};\bm{\epsilon})}_{A_{n}}(\bm{u}, \bm{x}) &= O\left(\sum_{\bm{\ell} \in L_{1,n}}\tilde{Z}^{'(\bm{\ell};\bm{\epsilon})}_{A_{n}}(\bm{u}, \bm{x})\right)\ P_{\bm{S}}-a.s.\\
\sum_{\bm{\ell} \in L_{2,n}}Z^{'(\bm{\ell};\bm{\epsilon})}_{A_{n}}(\bm{u}, \bm{x}) &= O\left(\sum_{\bm{\ell} \in L_{2,n}}\tilde{Z}^{'(\bm{\ell};\bm{\epsilon})}_{A_{n}}(\bm{u}, \bm{x})\right)\ P_{\bm{S}}-a.s.
\end{align*}

Now we show $\sup_{(\bm{u},\bm{x}) \in B}\left|\hat{\psi}_{1}(\bm{u},\bm{x}) - E_{\cdot|\bm{S}}[\hat{\psi}_{1}(\bm{u},\bm{x})]\right| = O_{P_{\cdot|\bm{S}}}\left(a_{n}\right)$. Cover the region $B$ with $N \leq Ch^{-(d+p)}a_{n}^{-(d+p)}$ balls $B_{n} = \{(\bm{u}, \bm{x}) \in \mathbb{R}^{d+p}: \|(\bm{u}, \bm{x}) - (\bm{u}_{n}, \bm{x}_{n})\|_{\infty} \leq a_{n}h\}$ and use $(\bm{u}_{n}, \bm{x}_{n})$ to denote the mid point of $B_{n}$, where $\|\bm{x}_{1} - \bm{x}_{2}\|_{\infty}:= \max_{1 \leq j \leq d}|x_{1,j} - x_{2,j}|$. In addition, let $K^{\ast}(\bm{w},\bm{v}) = C\prod_{k=1}^{d}\tilde{K}(w_{k})\prod_{j=1}^{p}I(|v_{j}| \leq 2C_{1})$ for $(\bm{w}, \bm{v}) \in \mathbb{R}^{d+p}$, where $\tilde{K}$ satisfies Assumption \ref{Ass-KB} (KB1) with $\tilde{K}^{\ast}(\bm{w}, \bm{v}) \geq C_{0}\prod_{k=1}^{d}I(|w_{k}| \leq 2C_{1})\prod_{j=1}^{p}I(|v_{j}| \leq 2C_{1})$. Note that for $(\bm{u}, \bm{x}) \in B_{n}$ and sufficiently large n,
\begin{align*}
&\left|\bar{K}_{h}\left(\bm{u} - {\bm{s} \over A_{n}}\right)\prod_{\ell=1}^{p}K_{h}\left(x_{\ell} - X_{\bm{s},A_{n}}^{\ell}\right) - \bar{K}_{h}\left(\bm{u}_{n} - {\bm{s} \over A_{n}}\right)\prod_{\ell=1}^{p}K_{h}\left(x_{\ell,n} - X_{\bm{s},A_{n}}^{\ell}\right)\right|\\
&\quad \leq a_{n}K_{h}^{\ast}\left(\bm{u}_{n} - {\bm{s} \over A_{n}}, \bm{x}_{n} - \bm{X}_{\bm{s},A_{n}}\right)
\end{align*} 
with $K_{h}^{\ast}(v) = K^{\ast}(v/h)$. For $\bm{\ell} \in L_{1,n} \cup L_{2,n}$ and $\bm{\epsilon} \in \{1,2\}^{d}$, define $Z_{A_{n}}^{''(\bm{\ell};\bm{\epsilon})}(\bm{u},\bm{x})$ by replacing
\begin{align*}
\bar{K}_{h}\left(\bm{u} - {\bm{s} \over A_{n}}\right)\prod_{\ell_{2}}^{p}K_{h}\left(x_{\ell_{2}} - X_{\bm{s},A_{n}}^{\ell_{2}}\right)W_{\bm{s}_{j},A_{n}}I(|W_{\bm{s}, A_{n}}| \leq \tau_{n})
\end{align*}
with 
\begin{align*}
K_{h}^{\ast}\left(\bm{u}_{n} - {\bm{s}_{j} \over A_{n}}, \bm{x}_{n} - \bm{X}_{\bm{s}_{j},A_{n}}\right)|W_{\bm{s}_{j},A_{n}}|I(|W_{\bm{s}_{j}, A_{n}}| \leq \tau_{n})
\end{align*}
in the definition of $Z'_{\bm{s}_{j}, A_{n}}(\bm{u},\bm{x})$ and define
\begin{align*}
\bar{\psi}_{1}(\bm{u},\bm{x}) &= {1 \over nh^{p+d}}\sum_{j=1}^{n}K_{h}^{\ast}\left(\bm{u}_{n} - {\bm{s}_{j} \over A_{n}}, \bm{x}_{n} - \bm{X}_{\bm{s}_{j},A_{n}}\right)|W_{\bm{s}_{j},A_{n}}|I(|W_{\bm{s}_{j}, A_{n}}| \leq \tau_{n}).
\end{align*}  
Note that $E_{\cdot|\bm{S}}\left[\left|\bar{\psi}_{1}(\bm{u},\bm{x})\right|\right]\leq M<\infty$ for some sufficiently large $M$, $P_{\bm{S}}$-a.s. Then we obtain
\begin{align*}
&\sup_{(\bm{u},\bm{x}) \in B}\left|\hat{\psi}_{1}(\bm{u},\bm{x}) - E_{\cdot|\bm{S}}[\hat{\psi}_{1}(\bm{u},\bm{x})]\right|\\
&\leq \left|\hat{\psi}_{1}(\bm{u}_{n},\bm{x}_{n}) - E_{\cdot|\bm{S}}[\hat{\psi}_{1}(\bm{u}_{n},\bm{x}_{n})]\right| + a_{n}\left(\left|\bar{\psi}_{1}(\bm{u}_{n},\bm{x}_{n})\right| + E_{\cdot|\bm{S}}\left[\left|\bar{\psi}_{1}(\bm{u}_{n},\bm{x}_{n})\right|\right]\right)\\
&\leq \left|\hat{\psi}_{1}(\bm{u}_{n},\bm{x}_{n}) - E_{\cdot|\bm{S}}[\hat{\psi}_{1}(\bm{u}_{n},\bm{x}_{n})]\right| + \left|\bar{\psi}_{1}(\bm{u}_{n},\bm{x}_{n}) - E_{\cdot|\bm{S}}[\bar{\psi}_{1}(\bm{u}_{n},\bm{x}_{n})]\right| + 2Ma_{n}\\
&\leq \left|\sum_{\bm{\ell} \in L_{1,n} \cup L_{2,n}}Z^{'(\bm{\ell};\bm{\epsilon}_{0})}_{A_{n}}(\bm{u}_{n}, \bm{x}_{n})\right| + \sum_{\bm{\epsilon} \neq \bm{\epsilon}_{0}}\left|\sum_{\bm{\ell} \in L_{1,n}}Z^{'(\bm{\ell};\bm{\epsilon})}_{A_{n}}(\bm{u}_{n}, \bm{x}_{n})\right| + \sum_{\bm{\epsilon} \neq \bm{\epsilon}_{0}}\left|\sum_{\bm{\ell} \in L_{2,n}}Z^{'(\bm{\ell};\bm{\epsilon})}_{A_{n}}(\bm{u}_{n}, \bm{x}_{n})\right| \\
&+ \left|\sum_{\bm{\ell} \in L_{1,n} \cup L_{2,n}}Z^{''(\bm{\ell};\bm{\epsilon}_{0})}_{A_{n}}(\bm{u}_{n}, \bm{x}_{n})\right| + \sum_{\bm{\epsilon} \neq \bm{\epsilon}_{0}}\left|\sum_{\bm{\ell} \in L_{1,n}}Z^{''(\bm{\ell};\bm{\epsilon})}_{A_{n}}(\bm{u}_{n}, \bm{x}_{n})\right| + \sum_{\bm{\epsilon} \neq \bm{\epsilon}_{0}}\left|\sum_{\bm{\ell} \in L_{2,n}}Z^{''(\bm{\ell};\bm{\epsilon})}_{A_{n}}(\bm{u}_{n}, \bm{x}_{n})\right|\\
&+ 2Ma_{n}. 
\end{align*}
Moreover, let $\{\tilde{Z}_{A_{n}}^{''(\bm{\ell};\bm{\epsilon})}(\bm{u},\bm{x})\}_{\bm{\ell} \in L_{1,n}\cup L_{2,n}}$ be independent random variables where $\tilde{Z}_{A_{n}}^{''(\bm{\ell};\bm{\epsilon})}(\bm{u},\bm{x}) \stackrel{d}{=} Z_{A_{n}}^{''(\bm{\ell};\bm{\epsilon})}(\bm{u},\bm{x})$. Then applying Corollary 2.7 in \cite{Yu94} to $\{\tilde{Z}_{A_{n}}^{'(\bm{\ell};\bm{\epsilon})}(\bm{u},\bm{x})\}_{\bm{\ell} \in L_{1,n}\cup L_{2,n}}$ and \\
$\{\tilde{Z}_{A_{n}}^{''(\bm{\ell};\bm{\epsilon})}(\bm{u},\bm{x})\}_{\bm{\ell} \in L_{1,n}\cup L_{2,n}}$, we have that 
\begin{align*}
&P_{\cdot|\bm{S}}\left(\sup_{(\bm{u},\bm{x}) \in B}\left|\hat{\psi}_{1}(\bm{u},\bm{x}) - E_{\cdot|\bm{S}}[\hat{\psi}_{1}(\bm{u},\bm{x})]\right|>2^{d+1}Ma_{n}\right)\\
&\leq N\max_{1 \leq k \leq N}P_{\cdot|\bm{S}}\left(\sup_{(\bm{u},\bm{x}) \in B_{k}}\left|\hat{\psi}_{1}(\bm{u},\bm{x}) - E_{\cdot|\bm{S}}[\hat{\psi}_{1}(\bm{u},\bm{x})]\right|>2^{d+1}Ma_{n}\right)\\
&\leq \sum_{\bm{\epsilon}\in \{1,2\}^{d}}\hat{Q}_{n}(\bm{\epsilon}) + \sum_{\bm{\epsilon}\in \{1,2\}^{d}}\bar{Q}_{n}(\bm{\epsilon}) + 2^{d+1}N\left({A_{n} \over A_{1,n}}\right)^{d}\beta(A_{2,n};A_{n}^{d}),
\end{align*}
where 
\begin{align*}
\hat{Q}_{n}(\bm{\epsilon}_{0}) &= N\max_{1 \leq k \leq N}P_{\cdot|\bm{S}}\left(\left|\sum_{\bm{\ell} \in L_{1,n} \cup L_{2,n}}\tilde{Z}^{'(\bm{\ell};\bm{\epsilon}_{0})}_{A_{n}}(\bm{u}_{k}, \bm{x}_{k})\right|>Ma_{n}nh^{d+p}\right),\\
\bar{Q}_{n}(\bm{\epsilon}_{0}) &= N\max_{1 \leq k \leq N}P_{\cdot|\bm{S}}\left(\left|\sum_{\bm{\ell} \in L_{1,n} \cup L_{2,n}}\tilde{Z}^{''(\bm{\ell};\bm{\epsilon}_{0})}_{A_{n}}(\bm{u}_{k}, \bm{x}_{k})\right|>Ma_{n}nh^{d+p}\right),
\end{align*}
and for $\bm{\epsilon} \neq \bm{\epsilon}_{0}$, 
\begin{align*}
\hat{Q}_{n}(\bm{\epsilon}) &= N\max_{1 \leq k \leq N}P_{\cdot|\bm{S}}\left(\left|\sum_{\bm{\ell} \in L_{1,n} \cup L_{2,n}}\tilde{Z}^{'(\bm{\ell};\bm{\epsilon})}_{A_{n}}(\bm{u}_{k}, \bm{x}_{k})\right|>Ma_{n}nh^{d+p}\right),\\
\bar{Q}_{n}(\bm{\epsilon}) &= N\max_{1 \leq k \leq N}P_{\cdot|\bm{S}}\left(\left|\sum_{\bm{\ell} \in L_{1,n} \cup L_{2,n}}\tilde{Z}^{''(\bm{\ell};\bm{\epsilon})}_{A_{n}}(\bm{u}_{k}, \bm{x}_{k})\right|>Ma_{n}nh^{d+p}\right).
\end{align*}
Since the proof is similar, we restrict our attention to $\hat{Q}_{n}(\bm{\epsilon})$, $\bm{\epsilon} \neq \bm{\epsilon}_{0}$. Note that
\begin{align*}
P_{\cdot|\bm{S}}\left(\left|\sum_{\bm{\ell} \in L_{1,n} \cup L_{2,n}}\tilde{Z}^{'(\bm{\ell};\bm{\epsilon})}_{A_{n}}(\bm{u}_{k}, \bm{x}_{k})\right|>Ma_{n}nh^{d+p}\right) \leq 2P_{\cdot|\bm{S}}\left(\sum_{\bm{\ell} \in L_{1,n} \cup L_{2,n}}\tilde{Z}^{'(\bm{\ell};\bm{\epsilon})}_{A_{n}}(\bm{u}_{k}, \bm{x}_{k})>Ma_{n}nh^{d+p}\right)
\end{align*}
Since $\tilde{Z}^{'(\bm{\ell};\bm{\epsilon})}_{A_{n}}(\bm{u}_{k}, \bm{x}_{k})$ are zero-mean random variables with 
\begin{align}
\left|\tilde{Z}^{'(\bm{\ell};\bm{\epsilon})}_{A_{n}}(\bm{u}_{k}, \bm{x}_{k})\right| &\leq CA_{1,n}^{d-1}A_{2,n}(\log n)\tau_{n},\ P_{\bm{S}}-a.s.\ (\text{from Lemma \ref{n summands}})\nonumber \\
E_{\cdot|\bm{S}}\left[\left(\tilde{Z}^{'(\bm{\ell};\bm{\epsilon})}_{A_{n}}(\bm{u}_{k}, \bm{x}_{k})\right)^{2}\right] &\leq Ch^{d+p}A_{1,n}^{d-1}A_{2,n}(\log n),\ P_{\bm{S}}-a.s., \label{bound-L2}
\end{align}
Lemma \ref{Bernstein} yields that 
\begin{align*}
&P_{\cdot|\bm{S}}\left(\sum_{\bm{\ell} \in L_{1,n} \cup L_{2,n}}\tilde{Z}^{'(\bm{\ell};\bm{\epsilon})}_{A_{n}}(\bm{u}_{k}, \bm{x}_{k})>Ma_{n}nh^{d+p}\right)\\
&\quad \leq \exp \left(-{{Mnh^{d+p}\log n \over 2} \over \left({A_{n} \over A_{1,n}}\right)^{d}A_{1,n}^{d-1}A_{2,n}h^{d+p}(\log n) + {M^{1/2}n^{1/2}h^{(d+p)/2}(\log n)^{1/2}A_{1,n}^{d-1}A_{2,n}\tau_{n} \over 3}}\right).
\end{align*}
For (\ref{bound-L2}), see Lemma \ref{decomp}. Observe that 
\begin{align*}
{nh^{d+p}\log n \over \left({A_{n} \over A_{1,n}}\right)^{d}A_{1,n}^{d-1}A_{2,n}h^{d+p}(\log n)} &= nA_{n}^{-d}\left({A_{1,n} \over A_{2,n}}\right) \gtrsim {A_{1,n} \over A_{2,n}} \gtrsim n^{\eta},\\
{nh^{d+p}\log n \over n^{1/2}h^{(d+p)/2}(\log n)^{1/2}A_{1,n}^{d-1}A_{2,n}\tau_{n}} &= {n^{1/2}h^{(d+p)/2}(\log n)^{1/2} \over A_{1,n}^{d}\left({A_{2,n} \over A_{1,n}}\right)\rho_{n}n^{1/\zeta}} \geq C_{0}n^{\eta/2}.
\end{align*}
Taking $M>0$ sufficiently large, this shows the desired result. 
\end{proof}

\begin{remark}\label{RemarkA1}
Let $\eta_{1} \in [0,1), \gamma_{2}, \gamma_{A_{1}}, \gamma_{A_{2}} \in (0,1)$ with $\gamma_{A_{1}}>\gamma_{A_{2}}$. Define
\begin{align*}
A_{n}^{d} = n^{1-\eta_{1}}, nh^{p+d} = n^{\gamma_{2}}, A_{1,n} = A_{n}^{\gamma_{A_{1}}}, A_{2,n} = A_{n}^{\gamma_{A_{2}}}. 
\end{align*}
Note that
\begin{align*}
{n^{1/2}h^{(d+p)/2}(\log n)^{1/2} \over A_{1,n}^{d}\left({A_{2,n} \over A_{1,n}}\right)\rho_{n}n^{1/\zeta}} &\geq {n^{1/2}h^{(d+p)/2}(\log n)^{1/2} \over A_{1,n}^{d}\rho_{n}n^{1/\zeta}} =  {n^{\gamma_{2}/2}(\log n)^{1/2} \over \rho_{n}n^{(1-\eta_{1})\gamma_{A_{1}} + 1/\zeta}}\\
& = {(\log n)^{1/2} \over \rho_{n}}n^{{\gamma_{2} \over 2} - (1-\eta_{1})\gamma_{A_{1}} - {1 \over \zeta}} 
\end{align*}
For $n^{{\gamma_{2} \over 2} - (1-\eta_{1})\gamma_{A_{1}} - {1 \over \zeta}} \gtrsim n^{\eta}$ for some $\eta>0$, we need
\begin{align}\label{RC4}
(1-\eta_{1})\gamma_{A_{1}}  + {1 \over \zeta} < {\gamma_{2} \over 2}.
\end{align}
\end{remark}

\begin{remark}\label{A2-depend-approx}
Assume that $\bm{X}_{\bm{s},A_{n}}$ satisfies Conditions (Ma1) and (Ma2) in Assumption \ref{Ass-M(add)}. Observe that
\begin{align*}
Z_{A_{n}}^{(\bm{\ell};\bm{\epsilon})}(\bm{u},\bm{x}) &= Z_{1,A_{n}}^{(\bm{\ell};\bm{\epsilon})}(\bm{u},\bm{x}: A_{2,n}) + Z_{2,A_{n}}^{(\bm{\ell};\bm{\epsilon})}(\bm{u},\bm{x}: A_{2,n})\\
&= \sum_{j: \bm{s}_{j} \in \Gamma_{n}(\bm{\ell};\bm{\epsilon}) \cap R_{n}}Z_{1,\bm{s}_{j},A_{n}}(\bm{u},\bm{x}: A_{2,n}) + \sum_{j: \bm{s}_{j} \in \Gamma_{n}(\bm{\ell};\bm{\epsilon}) \cap R_{n}}Z_{2,\bm{s}_{j},A_{n}}(\bm{u},\bm{x}: A_{2,n}),
\end{align*}
where 
\begin{align*}
Z_{1,\bm{s}_{j},A_{n}}(\bm{u},\bm{x}: A_{2,n}) &= \bar{K}_{h}\left(\bm{u} - {\bm{s} \over A_{n}}\right)\left\{ \prod_{\ell=1}^{p}K_{h}\left(x_{\ell} - X_{\bm{s}_{j}, A_{n}:A_{2,n}}^{\ell}\right)W_{\bm{s},A_{n}} \right.\\
&\left. \quad \quad - E_{\cdot|\bm{S}}\left[\prod_{\ell=1}^{p}K_{h}\left(x_{\ell} - X_{\bm{s}_{j}, A_{n}:A_{2,n}}^{\ell}\right)W_{\bm{s},A_{n}}\right] \right\}
\end{align*}
and $Z_{2,\bm{s}_{j},A_{n}}(\bm{u},\bm{x}: A_{2,n}) = \bar{Z}_{\bm{s}_{j},A_{n}}(\bm{u},\bm{x}) - Z_{1,\bm{s}_{j},A_{n}}(\bm{u},\bm{x}: A_{2,n})$. Note that
\begin{align*}
&|E_{\cdot|\bm{S}}\left[Z_{2,\bm{s}_{j},A_{n}}(\bm{u},\bm{x}: A_{2,n})\right]|\\
&\leq 2\bar{K}_{h}\left(\bm{u} - {\bm{s}_{j} \over A_{n}}\right)E_{\cdot|\bm{S}}\left[\left|\prod_{\ell=1}^{p}K_{h}\left(x_{\ell} - X_{\bm{s}_{j}, A_{n}:A_{2,n}}^{\ell}\right) - \prod_{\ell=1}^{p}K_{h}\left(x_{\ell} - X_{\bm{s}, A_{n}}^{\ell}\right)\right||W_{\bm{s}_{j},A_{n}}|\right]\\
&\leq 2C\sum_{\ell=1}^{p}E_{\cdot|\bm{S}}\left[\left|K_{h}\left(x_{\ell} - X_{\bm{s}_{j}, A_{n}:A_{2,n}}^{\ell}\right) - K_{h}\left(x_{\ell} - X_{\bm{s}, A_{n}}^{\ell}\right)\right||W_{\bm{s}_{j}, A_{n}}|\right]\\
& \leq 2CpE_{\cdot|\bm{S}}\left[\max_{1 \leq \ell \leq p}\left|{X_{\bm{s}, A_{n}}^{\ell} - X_{\bm{s}_{j}, A_{n}:A_{2,n}}^{\ell} \over h}\right||W_{\bm{s}_{j}, A_{n}}|\right] = {2Cp \over h}E_{\cdot|\bm{S}}\left[\max_{1 \leq \ell \leq p}\left|\epsilon_{\bm{s}_{j},A_{n}:A_{2,n}}\right||W_{\bm{s}_{j}, A_{n}}|\right]\\
& \leq {2Cp \over h}E_{\cdot|\bm{S}}\left[\max_{1 \leq \ell \leq p}\left|\epsilon_{\bm{s}_{j},A_{n}:A_{2,n}}\right|^{{\zeta \over \zeta-1}}\right]^{1-1/\zeta}E_{\cdot|\bm{S}}\left[|W_{\bm{s}_{j}, A_{n}}|^{\zeta}\right]^{1/\zeta}\\
&\leq {2Cp^{1+1/q} \over h}\max_{1 \leq \ell \leq p}E_{\cdot|\bm{S}}\left[\left|\epsilon_{\bm{s}_{j},A_{n}:A_{2,n}}\right|^{{q\zeta \over \zeta-1}}\right]^{{\zeta-1 \over q\zeta}} \leq 2Cp^{1+1/q}h^{-1}\gamma_{\epsilon}(A_{2,n}).
\end{align*}
Here, we used Lipschitz continuity of $K$ in the third inequality and Lemma 2.2.2 in \cite{vaWe96} in the fifth inequality. Applying Markov's inequality and Lemma \ref{n summands}, this yields that 
\begin{align}\label{Z2-resid}
&P_{\cdot|\bm{S}}\left(\left|\sum_{\ell \in L_{1,n} \cup L_{2,n}}Z_{2,A_{n}}^{(\bm{\ell};\bm{\epsilon})}(\bm{u},\bm{x}: A_{2,n})\right| > \sqrt{n^{1-\eta_{2}}h^{d+p}}\right) \nonumber \\
&\leq {\sum_{\ell \in L_{1,n} \cup L_{2,n}}\sum_{j: \bm{s}_{j} \in \Gamma_{n}(\bm{\ell};\bm{\epsilon}) \cap R_{n}}E_{\cdot|\bm{S}}\left[\left|Z_{2,\bm{s}_{j},A_{n}}(\bm{u},\bm{x}: A_{2,n})\right|\right] \over \sqrt{n^{1-\eta_{2}}h^{d+p}}} \nonumber \\
&\leq C{(A_{n}/A_{1,n})^{d}A_{1,n}^{d}(nA_{n}^{-d} + \log n)\gamma_{\epsilon}(A_{2,n}) \over n^{{1-\eta_{2} \over 2}}h^{{d+p \over 2}+1}} \to 0\ \text{as $n \to \infty$}. 
\end{align}
Define $Z_{1,A_{n}}^{'(\bm{\ell;\bm{\epsilon}})}(\bm{u}, \bm{x})$ by replacing $X_{\bm{s}_{j},A_{n}}^{\ell}$ in definition of $Z_{A_{n}}^{'(\bm{\ell;\bm{\epsilon}})}(\bm{u}, \bm{x})$ with $X_{\bm{s}_{j},A_{n}:A_{2,n}}$ and define $Z_{2,A_{n}}^{'(\bm{\ell;\bm{\epsilon}})}(\bm{u}, \bm{x}) = Z_{A_{n}}^{'(\bm{\ell;\bm{\epsilon}})}(\bm{u}, \bm{x}) - Z_{1,A_{n}}^{'(\bm{\ell;\bm{\epsilon}})}(\bm{u}, \bm{x})$. Observe that
\begin{align*}
&P_{\cdot|\bm{S}}\left(\left|\sum_{\bm{\ell} \in L_{1,n} \cup L_{2,n}}Z^{'(\bm{\ell};\bm{\epsilon})}_{A_{n}}(\bm{u}, \bm{x})\right|>t\right)\\
&= P_{\cdot|\bm{S}}\left(\left\{\left|\sum_{\bm{\ell} \in L_{1,n} \cup L_{2,n}}Z^{'(\bm{\ell};\bm{\epsilon})}_{A_{n}}(\bm{u}, \bm{x})\right|>t\right\} \cap \left\{\left|\sum_{\bm{\ell} \in L_{1,n} \cup L_{2,n}}Z^{'(\bm{\ell};\bm{\epsilon})}_{2,A_{n}}(\bm{u}, \bm{x})\right|>t_{0}\right\}\right)\\
&\quad + P_{\cdot|\bm{S}}\left(\left\{\left|\sum_{\bm{\ell} \in L_{1,n} \cup L_{2,n}}Z^{'(\bm{\ell};\bm{\epsilon})}_{1,A_{n}}(\bm{u}, \bm{x}) + \sum_{\bm{\ell} \in L_{1,n} \cup L_{2,n}}Z^{'(\bm{\ell};\bm{\epsilon})}_{2,A_{n}}(\bm{u}, \bm{x})\right|>t\right\}  \cap \left\{\left|\sum_{\bm{\ell} \in L_{1,n} \cup L_{2,n}}Z^{'(\bm{\ell};\bm{\epsilon})}_{2,A_{n}}(\bm{u}, \bm{x})\right|\leq t_{0}\right\}\right)\\
&\leq P_{\cdot|\bm{S}}\left(\left|\sum_{\bm{\ell} \in L_{1,n} \cup L_{2,n}}Z^{'(\bm{\ell};\bm{\epsilon})}_{2,A_{n}}(\bm{u}, \bm{x})\right|>t_{0}\right) + P_{\cdot|\bm{S}}\left(\left|\sum_{\bm{\ell} \in L_{1,n} \cup L_{2,n}}Z^{'(\bm{\ell};\bm{\epsilon})}_{1,A_{n}}(\bm{u}, \bm{x})\right|>t - t_{0}\right)
\end{align*}
Set $t = Ma_{n}nh^{d+p}$ and $t_{0} = \sqrt{n^{1-\eta_{2}}h^{d+p}} \lesssim \sqrt{nh^{d+p}}$. Let $\tilde{\beta}(a;b)$ be the $\beta$-mixing coefficients of $X_{\bm{s},A_{n}:A_{2,n}}$. Applying (\ref{Z2-resid}) and Lemma \ref{indep_lemma} (Corollary 2.7 in \cite{Yu94}), we have that 
\begin{align*}
&P_{\cdot|\bm{S}}\left(\left|\sum_{\bm{\ell} \in L_{1,n} \cup L_{2,n}}Z^{'(\bm{\ell};\bm{\epsilon})}_{A_{n}}(\bm{u}, \bm{x})\right|>Ma_{n}nh^{d+p}\right) \leq P_{\cdot|\bm{S}}\left(\left|\sum_{\bm{\ell} \in L_{1,n} \cup L_{2,n}}Z^{'(\bm{\ell};\bm{\epsilon})}_{1,A_{n}}(\bm{u}, \bm{x})\right|>2Ma_{n}nh^{d+p}\right) + o(1)\\
&\leq P_{\cdot|\bm{S}}\left(\left|\sum_{\bm{\ell} \in L_{1,n} \cup L_{2,n}}\tilde{Z}^{'(\bm{\ell};\bm{\epsilon})}_{1,A_{n}}(\bm{u}, \bm{x})\right|>2Ma_{n}nh^{d+p}\right) + \left({A_{n} \over A_{1,n}}\right)^{d}\underbrace{\tilde{\beta}(A_{2,n};A_{n}^{d})}_{= 0} + o(1),
\end{align*}
where $\{\tilde{Z}^{'(\bm{\ell};\bm{\epsilon})}_{1,A_{n}}(\bm{u}, \bm{x})\}$ are independent random variables with $\tilde{Z}^{'(\bm{\ell};\bm{\epsilon})}_{1,A_{n}}(\bm{u}, \bm{x}) \stackrel{d}{=} Z^{'(\bm{\ell};\bm{\epsilon})}_{1,A_{n}}(\bm{u}, \bm{x})$. This implies that under (Ma1) and (Ma2), we can replace $\bm{X}_{\bm{s},A_{n}}$ with $\bm{X}_{\bm{s},A_{n}:A_{2,n}}$ in our analysis.
  
\end{remark}

\begin{proof}[Proof of Theorem \ref{unif-rate-m}]
Observe that 
\begin{align*}
\hat{m}(\bm{u}, \bm{x}) - m(\bm{u},\bm{x}) &= {1 \over \hat{f}(\bm{u}, \bm{x})}\left(\hat{g}_{1}(\bm{u},\bm{x}) + \hat{g}_{2}(\bm{u},\bm{x}) - m(\bm{u},\bm{x})\hat{f}(\bm{u},\bm{x})\right),
\end{align*}
where
\begin{align*}
\hat{f}(\bm{u},\bm{x}) &= {1 \over nh^{d+p}}\sum_{j=1}^{n}\bar{K}_{h}\left(\bm{u} - {\bm{s}_{j} \over A_{n}}\right)\prod_{\ell=1}^{p}K_{h}\left(x_{\ell} - X_{\bm{s}_{j},A_{n}}^{\ell}\right),\\
\hat{g}_{1}(\bm{u},\bm{x}) &= {1 \over nh^{d+p}}\sum_{j=1}^{n}\bar{K}_{h}\left(\bm{u} - {\bm{s}_{j} \over A_{n}}\right)\prod_{\ell=1}^{p}K_{h}\left(x_{\ell} - X_{\bm{s}_{j},A_{n}}^{\ell}\right)\epsilon_{\bm{s}_{j}, A_{n}},\\
\hat{g}_{2}(\bm{u},\bm{x}) &= {1 \over nh^{d+p}}\sum_{j=1}^{n}\bar{K}_{h}\left(\bm{u} - {\bm{s}_{j} \over A_{n}}\right)\prod_{\ell=1}^{p}K_{h}\left(x_{\ell} - X_{\bm{s}_{j},A_{n}}^{\ell}\right)m\left({\bm{s}_{j} \over A_{n}} , \bm{X}_{\bm{s}_{j},A_{n}}\right). 
\end{align*}

(Step1) First we give a sketch of the proof. In Steps and 2, we show the following four results: 
\begin{itemize}
\item[(i)] $\sup_{\bm{u} \in [0,1]^{d}, \bm{x} \in S_{c}}\left|\hat{g}_{1}(\bm{u},\bm{x})\right| = O_{P_{\cdot|\bm{S}}}\left(\sqrt{(\log n)/nh^{d+p}}\right)$, $P_{\bm{S}}$-a.s. 
\item[(ii)] 
\begin{align*}
&\sup_{\bm{u} \in [0,1]^{d}, \bm{x} \in S_{c}}\left|\hat{g}_{2}(\bm{u},\bm{x}) - m(\bm{u},\bm{x})\hat{f}(\bm{u},\bm{x}) - E_{\cdot|\bm{S}}\left[\hat{g}_{2}(\bm{u},\bm{x}) - m(\bm{u},\bm{x})\hat{f}(\bm{u},\bm{x})\right]\right|\\
&\quad = O_{P_{\cdot|\bm{S}}}\left(\sqrt{(\log n)/nh^{d+p}}\right),\  P_{\bm{S}}-a.s.
\end{align*}
\item[(iii)] Let $\kappa_{2} = \int_{\mathbb{R}}x^{2}K(x)dx$. 
\begin{align*}
&\sup_{\bm{u} \in I_{h}, \bm{x} \in S_{c}}\left|E_{\cdot|\bm{S}}\left[\hat{g}_{2}(\bm{u},\bm{x}) - m(\bm{u},\bm{x})\hat{f}(\bm{u},\bm{x})\right]\right|\\
&\quad = h^{2}{\kappa_{2} \over 2}f_{\bm{S}}(\bm{u})\left\{\sum_{i = 1}^{d}\left(2\partial_{u_{i}}m(\bm{u},\bm{x})\partial_{u_{i}}f(\bm{u},\bm{x}) + \partial_{u_{i}u_{i}}^{2}m(\bm{u},\bm{x})f(\bm{u},\bm{x})\right) \right. \\
&\left .\quad \quad +  \sum_{k = 1}^{p}\left(2\partial_{x_{k}}m(\bm{u},\bm{x})\partial_{x_{k}}f(\bm{u},\bm{x}) + \partial_{x_{k}x_{k}}^{2}m(\bm{u},\bm{x})f(\bm{u},\bm{x})\right) \right\} + O\left({1 \over A_{n}^{dr}h^{p}}\right) +  o(h^{2}),\ 
P_{\bm{S}}-a.s.
\end{align*}
\item[(iv)] $\sup_{\bm{u} \in I_{h}, \bm{x} \in S_{c}}\left|\hat{f}(\bm{u},\bm{x}) - f(\bm{u},\bm{x})\right| = o_{P_{\cdot|\bm{S}}}(1)$, $P_{\bm{S}}$-a.s.
\end{itemize}
(i) can be shown by applying Proposition \ref{general-unif-rate} with $W_{\bm{s}_{j},A_{n}} = \epsilon_{\bm{s}_{j},A}$. (ii) can be shown by applying Proposition \ref{general-unif-rate} to $\hat{g}_{2}(\bm{u},\bm{x}) - m(\bm{u},\bm{x})\hat{f}(\bm{u},\bm{x})$. For the proof of (iv), we decompose $\hat{f}(\bm{u},\bm{x}) - f(\bm{u},\bm{x})$ into a variance part $\hat{f}(\bm{u},\bm{x}) - E_{\cdot|\bm{S}}[\hat{f}(\bm{u},\bm{x})]$ and a bias part $E_{\cdot|\bm{S}}[\hat{f}(\bm{u},\bm{x})] - f(\bm{u},\bm{x})$. Applying Proposition \ref{general-unif-rate} with $W_{\bm{s}_{j},A_{n}} = 1$, we have that the variance part is $o_{P_{\cdot|\bm{S}}}(1)$ uniformly in $\bm{u}$ and $\bm{x} \in S_{c}$. The bias part can be evaluated by similar arguments used to prove (iii). Combining the results (i), (ii) and (iii), we have that
\begin{align*}
&\sup_{\bm{u} \in I_{h}, \bm{x} \in S_{c}}\left|\hat{m}(\bm{u},\bm{x}) - m(\bm{u},\bm{x})\right|\\
&\leq {1 \over \inf_{\bm{u} \in I_{h}, \bm{x} \in S_{c}}\hat{f}(\bm{u},\bm{x})}\left(\sup_{\bm{u} \in I_{h}, \bm{x} \in S_{c}}\left|\hat{g}_{1}(\bm{u},\bm{x})\right| + \sup_{\bm{u} \in I_{h}, \bm{x} \in S_{c}}\left|\hat{g}_{2}(\bm{u},\bm{x}) - m(\bm{u},\bm{x})\hat{f}(\bm{u},\bm{x})\right|\right)\\
&\leq {1 \over \inf_{\bm{u} \in I_{h}, \bm{x} \in S_{c}}\hat{f}(\bm{u}, \bm{x})}O_{P_{\cdot|\bm{S}}}\left(\sqrt{\log n \over nh^{d+p}} + {1 \over A_{n}^{dr}h^{p}} + h^{2}\right).
\end{align*}
The result (iv) and $\inf_{\bm{u} \in [0,1]^{d},\bm{x} \in S_{c}}f(\bm{u},\bm{x})>0$ imply that 
\begin{align*}
{1 \over \inf_{\bm{u} \in I_{h}, \bm{x} \in S_{c}}\hat{f}(\bm{u},\bm{x})} &\lesssim {1 \over \inf_{\bm{u} \in [0,1]^{d}, \bm{x} \in S_{c}}f(\bm{u}, \bm{x}) - \inf_{\bm{u} \in I_{h}, \bm{x} \in S_{c}}|\hat{f}(\bm{u},\bm{x}) - f(\bm{u},\bm{x})|}= O_{P_{\cdot|\bm{S}}}(1).
\end{align*} 
Therefore, we complete the proof. 

(Step2) In this step, we show (iii). Let $K_{0}: \mathbb{R} \to \mathbb{R}$ be a Lipschitz continuous function with support $[-qC_{1},qC_{1}]$ for some $q>1$. Assume that $K_{0}(x) = 1$ for all $x \in [-C_{1},C_{1}]$ and write $K_{0,h}(x) = K_{0}(x/h)$. Observe that 
\begin{align*}
E_{\cdot|\bm{S}}\left[\hat{g}_{2}(\bm{u},\bm{x}) - m(\bm{u},\bm{x})\hat{f}(\bm{u},\bm{x}))\right] &= \sum_{i=1}^{4}Q_{i}(\bm{u},\bm{x}),
\end{align*}
where
\begin{align*}
Q_{i}(\bm{u},\bm{x}) &= {1 \over nh^{d+p}}\sum_{j=1}^{n}\bar{K}_{h}\left(\bm{u} - {\bm{s}_{j} \over A_{n}}\right)q_{i}(\bm{u},\bm{x})
\end{align*}
and 
\begin{align*}
q_{1}(\bm{u},\bm{x}) &= E_{\cdot|\bm{S}}\left[ \prod_{\ell=1}^{p}K_{0,h}(x_{\ell} - X_{\bm{s}_{j},A_{n}}^{\ell})\left\{\prod_{\ell=1}^{p}K_{h}(x_{\ell} - X_{\bm{s}_{j},A_{n}}^{\ell}) - \prod_{\ell=1}^{p}K_{h}\left(x_{\ell} - X_{{\bm{s}_{j} \over A_{n}}}^{\ell}(\bm{s}_{j})\right) \right\} \right. \\
&\left .\quad \quad \times \left\{m\left({\bm{s}_{j} \over A_{n}}, \bm{X}_{\bm{s}_{j},A_{n}}\right) - m(\bm{u},\bm{x})\right\} \right],
\end{align*}
\begin{align*}
q_{2}(\bm{u},\bm{x}) &= E_{\cdot|\bm{S}}\left[ \prod_{\ell=1}^{p}K_{0,h}(x_{\ell} - X_{\bm{s}_{j},A_{n}}^{\ell})\prod_{\ell=1}^{p}K_{h}\left(x_{\ell} - X_{{\bm{s}_{j} \over A_{n}}}^{\ell}(\bm{s}_{j})\right) \left\{m\left({\bm{s}_{j} \over A_{n}}, \bm{X}_{\bm{s}_{j},A_{n}}\right) - m\left({\bm{s}_{j} \over A_{n}},\bm{X}_{{\bm{s}_{j} \over A_{n}}}(\bm{s}_{j})\right)\right\} \right],
\end{align*}
\begin{align*}
q_{3}(\bm{u},\bm{x}) &= E_{\cdot|\bm{S}}\left[\left\{\prod_{\ell=1}^{p}K_{0,h}(x_{\ell} - X_{\bm{s}_{j},A_{n}}^{\ell}) - \prod_{\ell=1}^{p}K_{0,h}\left(x_{\ell} - X_{{\bm{s}_{j} \over A_{n}}}^{\ell}(\bm{s}_{j})\right) \right\} \right. \\
&\left. \quad \quad \prod_{\ell=1}^{p}K_{h}\left(x_{\ell} - X_{{\bm{s}_{j} \over A_{n}}}^{\ell}(\bm{s}_{j})\right) \left\{m\left({\bm{s}_{j} \over A_{n}}, \bm{X}_{{\bm{s}_{j} \over A_{n}}}(\bm{s}_{j})\right) - m(\bm{u},\bm{x})\right\}\right],
\end{align*}
\begin{align*}
q_{4}(\bm{u},\bm{x}) &= E_{\cdot|\bm{S}}\left[\prod_{\ell=1}^{p}K_{h}\left(x_{\ell} - X_{{\bm{s}_{j} \over A_{n}}}^{\ell}(\bm{s}_{j})\right)\left\{m\left({\bm{s}_{j} \over A_{n}}, \bm{X}_{{\bm{s}_{j} \over A_{n}}}(\bm{s}_{j})\right) - m(\bm{u},\bm{x})\right\} \right].
\end{align*}
We first consider $Q_{1}(\bm{u},\bm{x})$. Since the kernel $K$ is bounded, we can use the telescoping argument to get that
\begin{align*}
\left|\prod_{\ell=1}^{p}K_{h}(x_{\ell} - X_{\bm{s}_{j},A_{n}}^{\ell}) - \prod_{\ell=1}^{p}K_{h}\left(x_{\ell} - X_{{\bm{s}_{j} \over A_{n}}}^{\ell}(\bm{s}_{j})\right)\right| \leq C\sum_{\ell = 1}^{p}\left|K_{h}(x_{\ell} - X_{\bm{s}_{j},A_{n}}^{\ell}) - K_{h}\left(x_{\ell} - X_{{\bm{s}_{j} \over A_{n}}}^{\ell}(\bm{s}_{j})\right)\right|.
\end{align*}
Once again using the boundedness of $K$, we can find a constant $C<\infty$ such that 
\begin{align*}
\left|K_{h}(x_{\ell} - X_{\bm{s}_{j},A_{n}}^{\ell}) - K_{h}\left(x_{\ell} - X_{{\bm{s}_{j} \over A_{n}}}^{\ell}(\bm{s}_{j})\right)\right| &\leq \left|K_{h}(x_{\ell} - X_{\bm{s}_{j},A_{n}}^{\ell}) - K_{h}\left(x_{\ell} - X_{{\bm{s}_{j} \over A_{n}}}^{\ell}(\bm{s}_{j})\right)\right|^{r},
\end{align*}
where $r = \min\{\rho,1\}$. Therefore, 
\begin{align*}
\left|\prod_{\ell=1}^{p}K_{h}(x_{\ell} - X_{\bm{s}_{j},A_{n}}^{\ell}) - \prod_{\ell=1}^{p}K_{h}\left(x_{\ell} - X_{{\bm{s}_{j} \over A_{n}}}^{\ell}(\bm{s}_{j})\right)\right| \leq C\sum_{\ell = 1}^{p}\left|K_{h}(x_{\ell} - X_{\bm{s}_{j},A_{n}}^{\ell}) - K_{h}\left(x_{\ell} - X_{{\bm{s}_{j} \over A_{n}}}^{\ell}(\bm{s}_{j})\right)\right|^{r}.
\end{align*}
Applying this inequality, we have that
\begin{align*}
Q_{1}(\bm{u},\bm{x}) &\leq {C \over nh^{d+p}}\sum_{j=1}^{n}\bar{K}_{h}\left(\bm{u} - {\bm{s}_{j} \over A_{n}}\right)E_{\cdot|\bm{S}}\left[\sum_{\ell = 1}^{p}\left|K_{h}(x_{\ell} - X_{\bm{s}_{j},A_{n}}^{\ell}) - K_{h}\left(x_{\ell} - X_{{\bm{s}_{j} \over A_{n}}}^{\ell}(\bm{s}_{j})\right)\right|^{r} \right. \\
&\left. \quad \times \prod_{\ell=1}^{p}K_{0,h}(x_{\ell} - X_{\bm{s}_{j},A_{n}}^{\ell}) \left|m\left({\bm{s}_{j} \over A_{n}}, \bm{X}_{\bm{s}_{j},A_{n}}\right) - m(\bm{u},\bm{x})\right|\right].
\end{align*}
Note that $\prod_{\ell=1}^{p}K_{0,h}(x_{\ell} - X_{\bm{s}_{j},A_{n}}^{\ell}) \left|m\left({\bm{s}_{j} \over A_{n}}, \bm{X}_{\bm{s}_{j},A_{n}}\right) - m(\bm{u},\bm{x})\right| \leq Ch$. Since $K$ is Lipschitz, $\left|X_{\bm{s}_{j},A_{n}}^{\ell} - X_{{\bm{s}_{j} \over A_{n}}}^{\ell}(\bm{s}_{j})\right| \leq {C \over A_{n}^{d}}U_{\bm{s}_{j},A_{n}}(\bm{s}_{j}/A_{n})$ and the variable $U_{\bm{s}_{j},A_{n}}(\bm{s}_{j}/A_{n})$ have finite $r$-th moment, we have that 
\begin{align*}
&Q_{1}(\bm{u},\bm{x})\\
&\quad \leq {C \over nh^{d+p-1}}\sum_{j=1}^{n}\bar{K}_{h}\left(\bm{u} - {\bm{s}_{j} \over A_{n}}\right)E_{\cdot|\bm{S}}\left[\sum_{\ell = 1}^{p}\left|K_{h}(x_{\ell} - X_{\bm{s}_{j},A_{n}}^{\ell}) - K_{h}\left(x_{\ell} - X_{{\bm{s}_{j} \over A_{n}}}^{\ell}(\bm{s}_{j})\right)\right|^{r}\right]\\
&\quad \leq {C \over nh^{d+p-1}}\sum_{j=1}^{n}\bar{K}_{h}\left(\bm{u} - {\bm{s}_{j} \over A_{n}}\right)E_{\cdot|\bm{S}}\left[\sum_{k=1}^{p}\left|{1 \over A_{n}^{d}h}U_{\bm{s}_{j},A_{n}}\left({\bm{s}_{j} \over A_{n}}\right)\right|^{r}\right] \leq {C \over A_{n}^{dr}h^{p-1+r}}\ (\text{from Lemma \ref{Masry-thm}})
\end{align*}
uniformly in $\bm{u}$ and $\bm{x}$. Using similar arguments, we can also show that 
\begin{align*}
\sup_{\bm{u} \in I_{h}, \bm{x} \in S_{c}}|Q_{2}(\bm{u},\bm{x})| &\leq {C \over A_{n}^{dr}h^{p}},\ \sup_{\bm{u} \in I_{h}, \bm{x} \in S_{c}}|Q_{3}(\bm{u},\bm{x})| \leq {C \over A_{n}^{dr}h^{p-1+r}}. 
\end{align*}
Finally, applying Lemmas \ref{Masry-thm2} and \ref{Masry-thm3} and using the assumptions on the smoothness of $m$ and $f$, we have that
\begin{align*}
Q_{4}(\bm{u},\bm{x}) &= h^{2}{\kappa_{2} \over 2}f_{\bm{S}}(\bm{u})\left\{\sum_{i = 1}^{d}\left(2\partial_{u_{i}}m(\bm{u},\bm{x})\partial_{u_{i}}f(\bm{u},\bm{x}) + \partial_{u_{i}u_{i}}^{2}m(\bm{u},\bm{x})f(\bm{u},\bm{x})\right) \right. \\
&\left .\quad +  \sum_{k = 1}^{p}\left(2\partial_{x_{k}}m(\bm{u},\bm{x})\partial_{x_{k}}f(\bm{u},\bm{x}) + \partial_{x_{k}x_{k}}^{2}m(\bm{u},\bm{x})f(\bm{u},\bm{x})\right) \right\} + o(h^{2})
\end{align*}
uniformly in $\bm{u}$ and $\bm{x}$. Combining the results on $Q_{i}(\bm{u},\bm{x})$, $1 \leq i \leq 4$ yields (iii). 
\end{proof}

\begin{remark}\label{Rem-A4}
Let $h \sim n^{1/(d+p+4)}$ and $A_{n}^{d} = n^{1-\eta_{1}}$ for some $\eta_{1} \in [0,1)$. 
\begin{align*}
{1 \over A_{n}^{dr}h^{p}} \lesssim h^{2} \Leftrightarrow 1 \lesssim A_{n}^{dr}h^{p+2} = n^{r(1-\eta_{1}) - {p+2 \over d+p+4}}(\log n)^{{r\eta_{2} \over d}} \Leftarrow 1-\eta_{1}\geq {p+2 \over r(d+p+4)}.
\end{align*}
If $r = 1$, 
\begin{align}\label{RC-unif}
1-\eta_{1}\geq {p+2 \over d+p+4} \Leftrightarrow {d+2 \over d+p+4} \geq \eta_{1}.
\end{align}
(\ref{RC-unif}) is satisfied for $d \geq 1$ and $p \geq 1$.
\end{remark}

\begin{proof}[Proof of Theorem \ref{general-CLT-m}]
Observe that
\begin{align*}
\sqrt{nh^{d+p}}(\hat{m}(\bm{u},\bm{x}) - m(\bm{u},\bm{x})) = {\sqrt{nh^{d+p}} \over \hat{f}(\bm{u},\bm{x})}\left(\hat{g}_{1}(\bm{u},\bm{x}) + \hat{g}_{2}(\bm{u},\bm{x}) - m(\bm{u},\bm{x})\hat{f}(\bm{u},\bm{x}))\right).
\end{align*}
Define
\begin{align*}
B(\bm{u},\bm{x}) &= \sqrt{nh^{p+d}}\left(\hat{g}_{2}(\bm{u},\bm{x}) - m(\bm{u},\bm{x})\hat{f}(\bm{u},\bm{x}))\right),\ V(\bm{u},\bm{x}) =  \sqrt{nh^{d+p}}\hat{g}_{1}(\bm{u},\bm{x}).
\end{align*}
$B(\bm{u},\bm{x})$ converge in $P_{\cdot|\bm{S}}$-probability to $B_{\bm{u},\bm{x}}$ defined in Theorem \ref{general-CLT-m}. This follows from (iii) in the proof of Theorem \ref{unif-rate-m} and the fact that $B(\bm{u},\bm{x}) - E_{\cdot|\bm{S}}[B(\bm{u},\bm{x})] = o_{P_{\cdot|\bm{S}}}(1)$. To prove the latter, it is sufficient to prove $\Var_{\cdot|\bm{S}}(B(\bm{u},\bm{x})) = o(1)$, $P_{\bm{S}}$-a.s., which can be shown by similar arguments used in the proof Lemma \ref{asy-variance}. Using the i.i.d. assumption on $\{\epsilon_{j}\}$, $\bar{K}(\bm{u}) = \prod_{\ell=1}^{d}K(u_{\ell})$ and Lemma \ref{Masry-thm2}, the asymptotic variance in (\ref{Asy-N}) can be computed as follows:
\begin{align*}
&\Var_{\cdot|\bm{S}}\left(V(\bm{u},\bm{x})\right)\\
&= \Var_{\cdot|\bm{S}}\left({1 \over \sqrt{nh^{d+p}}}\sum_{j=1}^{n}\bar{K}_{h}\left(\bm{u} - {\bm{s}_{j} \over A_{n}}\right)\prod_{\ell=1}^{p}K_{h}\left(x_{\ell} - X_{\bm{s}_{j},A_{n}}^{\ell}\right)\epsilon_{\bm{s}_{j}, A_{n}}\right)\\
&= {1 \over nh^{d+p}}\sum_{j=1}^{n}\bar{K}_{h}^{2}\left(\bm{u} - {\bm{s}_{j} \over A_{n}}\right)\int_{\mathbb{R}^{p}}\prod_{\ell=1}^{p}K_{h}\left(x_{\ell} - w_{\ell}\right)E_{\cdot|\bm{S}}\left[\epsilon_{\bm{s}_{j}, A_{n}}^{2}|\bm{X}_{\bm{s}_{j},A_{n}} = \bm{w}\right]f_{\bm{X}_{\bm{s}_{j},A_{n}}}(\bm{w})d\bm{w}\\
&= {1 \over nh^{d}}\sum_{j=1}^{n}\bar{K}_{h}^{2}\left(\bm{u} - {\bm{s}_{j} \over A_{n}}\right)\int_{\mathbb{R}^{p}}\prod_{\ell=1}^{p}K\left(\varphi_{\ell}\right)\sigma^{2}\left({\bm{s}_{j} \over A_{n}}, \bm{x}-h\bm{\varphi}\right)f_{\bm{X}_{\bm{s}_{j},A_{n}}}(\bm{x}-h\bm{\varphi})d\bm{\varphi}\\
&= {1 \over nh^{d}}\sum_{j=1}^{n}\bar{K}_{h}^{2}\left(\bm{u} - {\bm{s}_{j} \over A_{n}}\right)\left(\kappa_{0}^{p}\sigma^{2}\left({\bm{s}_{j} \over A_{n}}, \bm{x}\right)f\left({\bm{s}_{j} \over A_{n}},\bm{x}\right)\right) + o(1)\\
&= \kappa_{0}^{d+p}f_{\bm{S}}(\bm{u})\sigma^{2}\left(\bm{u}, \bm{x}\right)f\left(\bm{u},\bm{x}\right) + o(1)\ P_{\bm{S}}-a.s.
\end{align*}
Moreover, $V(\bm{u},\bm{x})$ is asymptotically normal. In particular, 
\begin{align}\label{Asy-N}
V(\bm{u},\bm{x}) \stackrel{d}{\to} N(0, \kappa_{0}^{d+p}f_{\bm{S}}(\bm{u})\sigma^{2}(\bm{u},\bm{x})f(\bm{u},\bm{x})).
\end{align}
We can show (\ref{Asy-N}) by applying blocking arguments. Decompose $V(\bm{u},\bm{x})$ into some big-blocks and small-blocks as in (\ref{decomp1}). We can neglect the small blocks by applying Lemma \ref{asy-variance} and use mixing conditions to replace the big blocks by independent random variables. This allows us to apply a Lyapunov's condition for the central limit theorem for sum of independent random variables to get the result. We omit the details as the proof is similar to that of Theorem 3.1 in \cite{La03b} under the standard strictly stationary $\alpha$-mixing settings for random fields. 
\end{proof}

\subsection{Proofs for Section \ref{Section_Additive}}

\begin{proof}[Proof of Theorems \ref{unif-m-add} and \ref{CLT-m-add}]
Since we can prove the desired result by applying almost the same strategy in the proof of Theorems 5.1 and 5.2 in \cite{Vo12}, we omit the proof. We note that it suffices to check conditions (A1)-(A6), (A8) and (A9) in \cite{MaLiNi99} to obtain the desired results. We can check those conditions by applying almost the same argument in the proof of Theorems 5.1 and 5.2 in \cite{Vo12} and applying Proposition \ref{general-unif-rate}, Theorems \ref{unif-rate-m}, \ref{general-CLT-m} and Lemmas \ref{lemma-C2}-\ref{lemma-C4} in this paper, which correspond to Lemmas C.2-C.4 in \cite{Vo12}. 
\end{proof}



\section{Auxiliary lemmas}

Define 
\begin{align}
\bar{Z}_{\bm{s},A_{n}}(\bm{u},\bm{x}) &= \bar{K}_{h}\left(\bm{u} - {\bm{s} \over A_{n}}\right)\left\{ \prod_{\ell_{2}=1}^{p}K_{h}\left(x_{\ell_{2}} - X_{\bm{s},A_{n}}^{\ell_{2}}\right)W_{\bm{s},A_{n}} - E_{\cdot|\bm{S}}\left[\prod_{\ell_{2}=1}^{p}K_{h}\left(x_{\ell_{2}} - X_{\bm{s},A_{n}}^{\ell_{2}}\right)W_{\bm{s},A_{n}}\right] \right\},
\end{align}

\begin{align*}
Z_{A_{n}}(\bm{u},\bm{x}) &= \sum_{j=1}^{n}\bar{Z}_{\bm{s}_{j},A_{n}}(\bm{u},\bm{x}) \nonumber \\
&= \sum_{\ell \in L_{1,n}\cup L_{2,n}}Z_{A_{n}}^{(\bm{\ell};\bm{\epsilon}_{0})}(\bm{u},\bm{x}) + \sum_{\bm{\epsilon} \neq \bm{\epsilon}_{0}}\sum_{\ell \in L_{1,n}}Z_{A_{n}}^{(\bm{\ell};\bm{\epsilon})}(\bm{u},\bm{x}) + \sum_{\bm{\epsilon} \neq \bm{\epsilon}_{0}}\sum_{\ell \in L_{2,n}}Z_{A_{n}}^{(\bm{\ell};\bm{\epsilon})}(\bm{u},\bm{x}), 
\end{align*}
where $Z_{A_{n}}^{(\bm{\ell};\bm{\epsilon})}(\bm{u},\bm{x}) = \sum_{j: \bm{s}_{j} \in \Gamma_{n}(\bm{\ell};\bm{\epsilon}) \cap R_{n}}\bar{Z}_{\bm{s}_{j},A_{n}}(\bm{u},\bm{x})$.

\begin{lemma}\label{decomp}
Under Assumptions \ref{Ass-S}, \ref{Ass-KB}, \ref{Ass-U}  and \ref{Ass-R},
\begin{align*}
E_{\cdot|\bm{S}}\left[\left(Z_{A_{n}}^{(\bm{\ell};\bm{\epsilon})}(\bm{u},\bm{x})\right)^{2}\right] &\leq CA_{1,n}^{d-1}A_{2,n}(nA_{n}^{-d} + \log n)h^{p+d}\ P_{\bm{S}}-a.s.
\end{align*}
\end{lemma}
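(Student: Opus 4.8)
The plan is to expand the square $E_{\cdot|\bm{S}}[(Z_{A_{n}}^{(\bm{\ell};\bm{\epsilon})}(\bm{u},\bm{x}))^{2}]$ into a diagonal and an off-diagonal part. Writing $Z_{A_{n}}^{(\bm{\ell};\bm{\epsilon})}(\bm{u},\bm{x}) = \sum_{j:\bm{s}_{j}\in\Gamma_{n}(\bm{\ell};\bm{\epsilon})\cap R_{n}}\bar{Z}_{\bm{s}_{j},A_{n}}(\bm{u},\bm{x})$, we get
\[
E_{\cdot|\bm{S}}\!\left[\left(Z_{A_{n}}^{(\bm{\ell};\bm{\epsilon})}(\bm{u},\bm{x})\right)^{2}\right]
= \sum_{j}E_{\cdot|\bm{S}}\!\left[\bar{Z}_{\bm{s}_{j},A_{n}}^{2}\right]
+ \sum_{j\neq k}E_{\cdot|\bm{S}}\!\left[\bar{Z}_{\bm{s}_{j},A_{n}}\bar{Z}_{\bm{s}_{k},A_{n}}\right],
\]
all sums being over the sampling sites falling in the block $\Gamma_{n}(\bm{\ell};\bm{\epsilon})\cap R_{n}$. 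For the diagonal term, each summand is bounded using $|\bar{K}_{h}(\cdot)|\le C$, the substitution $\bm{w}\mapsto\bm{x}-h\bm{\varphi}$ inside the conditional expectation, and Assumption \ref{Ass-U} (U1), (U3) (boundedness of the conditional $\zeta$-th moment of $W_{\bm{s},A_{n}}$ and of the density $f_{\bm{X}_{\bm{s},A_{n}}}$ on the compact set $S_{c}$), which yields $E_{\cdot|\bm{S}}[\bar{Z}_{\bm{s}_{j},A_{n}}^{2}]\le C h^{p+d}$, $P_{\bm{S}}$-a.s. The number of sites $\bm{s}_{j}$ in the block is controlled by Lemma \ref{n summands}: since $|\Gamma_{n}(\bm{\ell};\bm{\epsilon})|\le A_{1,n}^{d-1}A_{2,n}$ for $\bm{\epsilon}\neq\bm{\epsilon}_{0}$ (and $=A_{1,n}^{d}$ for the big block, which one absorbs into the same bound after noting $A_{1,n}\le C^{-1}A_{2,n}n^{\eta}$ is \emph{not} assumed — here one uses the stated form $A_{1,n}^{d-1}A_{2,n}$ for small blocks and handles the big block separately, or just invokes the uniform count $[\![\{j:\bm{s}_{j}\in\Gamma\}]\!]\lesssim nA_{n}^{-d}|\Gamma| + \log n$), the count is $O\big(A_{1,n}^{d-1}A_{2,n}(nA_{n}^{-d}+\log n)\big)$, $P_{\bm{S}}$-a.s. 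Multiplying gives the diagonal contribution $C A_{1,n}^{d-1}A_{2,n}(nA_{n}^{-d}+\log n)h^{p+d}$, which already matches the claimed bound.

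For the off-diagonal term, I would split according to the spatial separation $a=|\bm{s}_{j}-\bm{s}_{k}|$. When $a$ is small — say $a\le R h$ for a fixed radius $R$ depending only on the kernel support $C_{1}$ — the kernel factors $\bar{K}_{h}(\bm{u}-\bm{s}_{j}/A_{n})\bar{K}_{h}(\bm{u}-\bm{s}_{k}/A_{n})$ still allow the pair bound $E_{\cdot|\bm{S}}[|\bar{Z}_{\bm{s}_{j},A_{n}}\bar{Z}_{\bm{s}_{k},A_{n}}|]\le C h^{p+d}$ via the joint-density/joint-moment bound in (U3) (the last displayed inequality of Assumption \ref{Ass-U}), and there are at most $O\big((nA_{n}^{-d}+\log n)^{2}(Rh)^{d}\cdot A_{1,n}^{d-1}A_{2,n}\big)$ such ordered pairs in the block, which is of strictly smaller order than the diagonal bound because $h^{d}\to 0$. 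When $a$ is large, I would use the $\beta$-mixing (equivalently $\alpha$-mixing) covariance inequality: since the variables are centered and bounded by $C h^{p+d}\tau_{n}$ after truncation (or handled via the moment condition directly), the covariance is controlled by a mixing rate $\beta(a;|\Gamma_{n}|)\le \beta_{1}(a)g_{1}(A_{1,n}^{d})$, summed against the number of pairs at separation $\sim a$. Condition (R1) in Assumption \ref{Ass-R} is exactly what makes this large-separation sum negligible compared to $h^{p+d}$ times the site count. Collecting the diagonal term and the two (lower-order) off-diagonal pieces gives the stated bound.

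The main obstacle I anticipate is the bookkeeping for the small-separation off-diagonal pairs: one must argue carefully that the conditional joint moment bound in (U3) together with the overlap constraint imposed by the compactly supported kernels forces the total small-separation contribution to be $o\big(A_{1,n}^{d-1}A_{2,n}(nA_{n}^{-d}+\log n)h^{p+d}\big)$ rather than the same order. The key quantitative input is that two sites $\bm{s}_{j},\bm{s}_{k}$ can both contribute only if $\|\bm{s}_{j}/A_{n}-\bm{u}\|_{\infty}\le C_{1}h$ and $\|\bm{s}_{k}/A_{n}-\bm{u}\|_{\infty}\le C_{1}h$, confining both to an $O((A_{n}h)^{d})$-volume neighbourhood, whence Lemma \ref{n summands} bounds the number of relevant sites per block by $O\big((nA_{n}^{-d}+\log n)(A_{n}h)^{d}\big)$ and the number of ordered near-pairs is the square of that times a per-pair $h^{d}$ localization — one then checks this is a factor $h^{d}$ (and appropriate powers of block dimensions) smaller than the diagonal term, using (R1) to kill any residual mixing remainder. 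Everything else is the routine change of variables and application of the moment bounds already assumed.
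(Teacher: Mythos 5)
Your decomposition into diagonal and off-diagonal sums is the same as the paper's, and site-counting via Lemma \ref{n summands} is the right tool, but there is a genuine gap in your diagonal term. Under $E_{\cdot|\bm{S}}$ the locations are fixed, so $\bar{K}_{h}(\bm{u}-\bm{s}_{j}/A_{n})$ is a deterministic factor bounded by $\|K\|_{\infty}^{d}$ but \emph{not} by anything of order $h^{d}$; the change of variables acts only in the $\bm{x}$-integral and produces a single factor $h^{p}$. Hence the correct per-summand bound is $E_{\cdot|\bm{S}}[\bar{Z}_{\bm{s}_{j},A_{n}}^{2}(\bm{u},\bm{x})]\leq Ch^{p}\bar{K}_{h}^{2}(\bm{u}-\bm{s}_{j}/A_{n})\leq Ch^{p}$, not $Ch^{p+d}$ as you assert. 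Multiplying $Ch^{p+d}$ by the block count happens to give the right answer, but with the correct per-summand bound your argument is short by a factor $h^{d}$. That missing $h^{d}$ must be extracted from the \emph{sum} of the compactly supported spatial kernel over the sampling sites in the block --- this is exactly what the paper does, invoking Lemmas \ref{Masry-thm} and \ref{n summands} to bound $\sum_{j:\bm{s}_{j}\in\Gamma_{n}(\bm{\ell};\bm{\epsilon})\cap R_{n}}\bar{K}_{h}^{2}(\bm{u}-\bm{s}_{j}/A_{n})$ by $Ch^{d}$ times the site count --- and not from each individual term.

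For the off-diagonal term your route is workable in principle but much heavier than what the paper uses. The paper does not split pairs by spatial separation and uses no mixing or covariance inequality here: it applies the pairwise conditional moment bound in (U3) to \emph{every} pair, obtaining $|E_{\cdot|\bm{S}}[\bar{Z}_{\bm{s}_{j_{1}},A_{n}}\bar{Z}_{\bm{s}_{j_{2}},A_{n}}]|\leq Ch^{2p}\bar{K}_{h}(\bm{u}-\bm{s}_{j_{1}}/A_{n})\bar{K}_{h}(\bm{u}-\bm{s}_{j_{2}}/A_{n})$, so the off-diagonal sum is at most $h^{2p}(\sum_{j}\bar{K}_{h})^{2}$, i.e.\ the square of the diagonal contribution. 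Since the second part of (R1) forces $A_{1,n}^{d}(nA_{n}^{-d}+\log n)h^{d+p}=o(1)$, this square is dominated by the first power and is absorbed into the stated bound. That entirely avoids the near/far bookkeeping and the covariance estimates you flag as your main anticipated obstacle; the mixing condition plays no role in this lemma. (Your remark that the big block $\bm{\epsilon}_{0}$ has volume $A_{1,n}^{d}$ rather than $A_{1,n}^{d-1}A_{2,n}$ is a fair observation; the bound is applied to the small blocks in the proof of Proposition \ref{general-unif-rate}.)
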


\begin{proof}
Observe that 
\begin{align*}
E_{\cdot|\bm{S}}\left[\left(Z_{A_{n}}^{(\bm{\ell};\bm{\epsilon})}(\bm{u},\bm{x})\right)^{2}\right] &= \sum_{j: \bm{s}_{j} \in \Gamma_{n}(\bm{\ell};\bm{\epsilon}) \cap R_{n}}\!\!\!\!\!\!\!\!\!\!E_{\cdot|\bm{S}}\left[\bar{Z}_{\bm{s}_{j},A_{n}}^{2}(\bm{u},\bm{x})\right] + \sum_{j_{1} \neq j_{2}: \bm{s}_{j_{1}}, \bm{s}_{j_{2}} \in \Gamma_{n}(\bm{\ell};\bm{\epsilon}) \cap R_{n}}\!\!\!\!\!\!\!\!\!\!\!\!\!\!E_{\cdot|\bm{S}}\left[\bar{Z}_{\bm{s}_{j_{1}},A_{n}}(\bm{u},\bm{x})\bar{Z}_{\bm{s}_{j_{2}},A_{n}}(\bm{u},\bm{x})\right].  
\end{align*}
Note that
\begin{align*}
&E_{\cdot|\bm{S}}\left[\bar{Z}_{\bm{s}_{j},A_{n}}^{2}(\bm{u},\bm{x})\right]\\ 
&= \bar{K}_{h}^{2}\left(\bm{u} - {\bm{s}_{j} \over A_{n}}\right)\left( E_{\cdot|\bm{S}}\left[\prod_{\ell_{2}=1}^{p}K_{h}^{2}\left(x_{\ell_{2}} - X_{\bm{s}_{j},A_{n}}^{\ell_{2}}\right)W_{\bm{s}_{j},A_{n}}^{2} \right]   - \left(E_{\cdot|\bm{S}}\left[\prod_{\ell_{2}=1}^{p}K_{h}\left(x_{\ell_{2}} - X_{\bm{s}_{j},A_{n}}^{\ell_{2}}\right)W_{\bm{s}_{j},A_{n}}\right]\right)^{2}\right)\\
&\leq \bar{K}_{h}^{2}\left(\bm{u} - {\bm{s}_{j} \over A_{n}}\right)\left\{E_{\cdot|\bm{S}}\left[\prod_{\ell_{2}=1}^{p}K_{h}^{2}\left(x_{\ell_{2}} - X_{\bm{s}_{j},A_{n}}^{\ell_{2}}\right)W_{\bm{s}_{j},A_{n}}^{2} \right] + \left(E_{\cdot|\bm{S}}\left[\prod_{\ell_{2}=1}^{p}K_{h}\left(x_{\ell_{2}} - X_{\bm{s}_{j},A_{n}}^{\ell_{2}}\right)\left|W_{\bm{s}_{j},A_{n}}\right|\right]\right)^{2}\right\}.
\end{align*}
Observe that 
\begin{align*}
&E_{\cdot|\bm{S}}\left[\prod_{\ell_{2}=1}^{p}K_{h}\left(x_{\ell_{2}} - X_{\bm{s},A_{n}}^{\ell_{2}}\right)\left|W_{\bm{s},A_{n}}\right|\right]\\
&= \int_{\mathbb{R}^{p}} \prod_{\ell_{2}=1}^{p}K_{h}\left(x_{\ell_{2}} - X_{\bm{s},A_{n}}^{\ell_{2}}\right)E_{\cdot|\bm{S}}\left[\left|W_{\bm{s},A_{n}}\right||\bm{X}_{\bm{s},A_{n}} = \bm{w}\right]f_{\bm{X}_{\bm{s},A_{n}}}(\bm{w})d\bm{w}\\
&= h^{p}\int_{\mathbb{R}^{p}} \prod_{\ell_{2}=1}^{p}K_{h}\left(\varphi_{\ell_{2}}\right)E_{\cdot|\bm{S}}\left[\left|W_{\bm{s},A_{n}}\right||\bm{X}_{\bm{s},A_{n}} = \bm{x} - h\bm{\varphi}_{\ell_{2}}\right]f_{\bm{X}_{\bm{s},A_{n}}}(\bm{x} - h\bm{\varphi}_{\ell_{2}})d\bm{\varphi} \leq C\|K\|_{\infty}^{p}h^{p},
\end{align*}
where $\|K\|_{\infty} = \sup_{x \in \mathbb{R}}|K(x)|$. Likewise, 
\begin{align*}
E_{\cdot|\bm{S}}\left[\prod_{\ell_{2}=1}^{p}K_{h}^{2}\left(x_{\ell_{2}} - X_{\bm{s}_{j},A_{n}}^{\ell_{2}}\right)W_{\bm{s}_{j},A_{n}}^{2} \right] &\leq C\|K\|_{\infty}^{2p}h^{p}.
\end{align*}
Then
\begin{align}
E_{\cdot|\bm{S}}\left[\bar{Z}_{\bm{s}_{j},A_{n}}^{2}(\bm{u},\bm{x})\right] &\leq C(h^{p} + h^{2p})\|K\|_{\infty}^{2p}\bar{K}_{h}^{2}\left(\bm{u} - {\bm{s}_{j} \over A_{n}}\right) \leq Ch^{p}\|K\|_{\infty}^{2p}\bar{K}_{h}^{2}\left(\bm{u} - {\bm{s}_{j} \over A_{n}}\right)\ P_{\bm{S}}-a.s. \label{Z1}
\end{align}
Likewise,
\begin{align}\label{Z2}
\left|E_{\cdot|\bm{S}}\left[\bar{Z}_{\bm{s}_{j_{1}},A_{n}}(\bm{u},\bm{x})\bar{Z}_{\bm{s}_{j_{2}},A_{n}}(\bm{u},\bm{x})\right]\right| &\leq Ch^{2p}\|K\|_{\infty}^{2p}\bar{K}_{h}\left(\bm{u} - {\bm{s}_{j_{1}} \over A_{n}}\right)\bar{K}_{h}\left(\bm{u} - {\bm{s}_{j_{2}} \over A_{n}}\right)\ P_{\bm{S}}-a.s.
\end{align}
Then Lemmas \ref{Masry-thm} and \ref{n summands} imply that 
\begin{align*}
\sum_{j: \bm{s}_{j} \in \Gamma_{n}(\bm{\ell};\bm{\epsilon}) \cap R_{n}}\bar{K}_{h}^{2}\left(\bm{u} - {\bm{s}_{j} \over A_{n}}\right) &\leq C\sum_{j: \bm{s}_{j} \in \Gamma_{n}(\bm{\ell};\bm{\epsilon}) \cap R_{n}}\bar{K}_{h}\left(\bm{u} - {\bm{s}_{j} \over A_{n}}\right) \leq Ch^{d}[\![\{j: \bm{s}_{j} \in \Gamma_{n}(\bm{\ell};\bm{\epsilon}) \cap R_{n}\}]\!]\\
& \leq Ch^{d}A_{1,n}^{d-1}A_{2,n}(nA^{-d}+\log n),\ P_{\bm{S}}-a.s.,
\end{align*}
\begin{align*}
&\sum_{j_{1} \neq j_{2}: \bm{s}_{j_{1}}, \bm{s}_{j_{2}} \in \Gamma_{n}(\bm{\ell};\bm{\epsilon}) \cap R_{n}}\bar{K}_{h}\left(\bm{u} - {\bm{s}_{j_{1}} \over A_{n}}\right)\bar{K}_{h}\left(\bm{u} - {\bm{s}_{j_{2}} \over A_{n}}\right)\\
&\leq  \left(\sum_{j: \bm{s}_{j} \in \Gamma_{n}(\bm{\ell};\bm{\epsilon}) \cap R_{n}}\bar{K}_{h}\left(\bm{u} - {\bm{s}_{j} \over A_{n}}\right)\right)^{2} \leq Ch^{2d}[\![\{j: \bm{s}_{j} \in \Gamma_{n}(\bm{\ell};\bm{\epsilon}) \cap R_{n}\}]\!]^{2}\\
&\leq  Ch^{2d}A_{1,n}^{2(d-1)}A_{2,n}^{2}(nA^{-d}+\log n)^{2},\ P_{\bm{S}}-a.s.
\end{align*}
Since $A_{1,n}^{d-1}A_{2,n}(nA_{n}^{-d} + \log n)h^{p+d} \leq A_{1,n}^{d}(nA_{n}^{-d} + \log n)h^{p+d} = o(1)$,  (\ref{Z1}) and (\ref{Z2}) yield that 
\begin{align*}
E_{\cdot|\bm{S}}\left[\left(Z_{A_{n}}^{(\bm{\ell};\bm{\epsilon})}(\bm{u},\bm{x})\right)^{2}\right] &\leq C\left\{A_{1,n}^{d-1}A_{2,n}(nA_{n}^{-d} + \log n)h^{p+d} \right. \\
&\left. \quad + A_{1,n}^{2(d-1)}A_{2,n}^{2}(n^{2}A_{n}^{-2d} + \log^{2} n)h^{2(d+p)}\right\}\\
&\leq CA_{1,n}^{d-1}A_{2,n}(nA_{n}^{-d} + \log n)h^{d+p}, P_{\bm{S}}-a.s.
\end{align*}
\end{proof}

\begin{remark}\label{Rem-B1}
As we defined in Remark \ref{RemarkA1}, consider
\begin{align*}
A_{n}^{d} = n^{1-\eta_{1}}, nh^{p+d} = n^{\gamma_{2}}, A_{1,n} = A_{n}^{\gamma_{A_{1}}}, A_{2,n} = A_{n}^{\gamma_{A_{2}}}
\end{align*}
Note that $A_{1,n}^{d-1}A_{2,n}A_{n}^{-d}nh^{d+p} \leq A_{1,n}^{d}A_{n}^{-d}nh^{d+p}   = n^{-(1-\eta_{1})(1-\gamma_{A_{1}}) + \gamma_{2}}$.
For $A_{1,n}^{d}A_{n}^{-d}nh^{d+p} \lesssim n^{-c}$ for some $c>0$, we need 
\begin{align}\label{RC2}
\gamma_{2} < (1-\eta_{1})(1 -\gamma_{A_{1}}).
\end{align}
Note that $A_{1,n}^{d-1}A_{2,n}h^{d+p} =  n^{-1}A_{1,n}^{d-1}A_{2,n}nh^{p+d} \leq n^{-1}A_{1,n}^{d}nh^{p+d} = n^{(1-\eta_{1})\gamma_{A_{1}} + \gamma_{2} - 1}$. For $A_{1,n}^{d}h^{d+p} = o((\log n)^{-1})$, we need
\begin{align}\label{RC3}
(1-\eta_{1})\gamma_{A_{1}} + \gamma_{2} < 1.
\end{align}
(\ref{RC3}) and (\ref{RC2}) implies that 
\begin{align}\label{RC5}
\gamma_{2} < \min\{1 - (1-\eta_{1})\gamma_{A_{1}}, (1-\eta_{1})(1 - \gamma_{A_{1}})\} = (1-\eta_{1})(1 - \gamma_{A_{1}}).
\end{align}
(\ref{RC4}) and (\ref{RC5}) imply that $2(1-\eta_{1})\gamma_{A_{1}} + {2 \over \zeta} < \gamma_{2} < (1-\eta_{1})(1-\gamma_{A_{1}})$. For this, we need 
\begin{align*}
2(1-\eta_{1})\gamma_{A_{1}} + {2 \over \zeta} < (1-\eta_{1})(1 -\gamma_{A_{1}}) \Leftrightarrow {2 \over \zeta} < (1-\eta_{1})(1-3\gamma_{A_{1}}). 
\end{align*}
\end{remark}

\begin{lemma}\label{asy-variance}
Under Assumptions \ref{Ass-S}, \ref{Ass-KB}, \ref{Ass-U(add)}  and \ref{Ass-R(add)},
\begin{align}
{1 \over nh^{d+p}}\Var_{\cdot|\bm{S}}\left(\sum_{\ell \in L_{1,n}}Z_{A_{n}}^{(\bm{\ell};\bm{\epsilon})}(\bm{u},\bm{x})\right) &= o(1),\ P_{\bm{S}}-a.s. \label{V1} \\ 
{1 \over nh^{d+p}}\Var_{\cdot|\bm{S}}\left(\sum_{\ell \in L_{2,n}}Z_{A_{n}}^{(\bm{\ell};\bm{\epsilon})}(\bm{u},\bm{x})\right) &= o(1),\ P_{\bm{S}}-a.s. \label{V2}
\end{align}
\end{lemma}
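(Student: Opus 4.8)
\emph{Proof idea.} In (\ref{V1}) the sum runs over the small blocks, that is, over $\bm{\ell}\in L_{1,n}$ for a fixed $\bm{\epsilon}\neq\bm{\epsilon}_{0}$; in (\ref{V2}) it runs over the boundary super-blocks $\bm{\ell}\in L_{2,n}$, for an arbitrary $\bm{\epsilon}$ (including $\bm{\epsilon}_{0}$). Since each $Z_{A_{n}}^{(\bm{\ell};\bm{\epsilon})}(\bm{u},\bm{x})$ has $E_{\cdot|\bm{S}}$-mean zero, I would write
\[
\Var_{\cdot|\bm{S}}\Big(\sum_{\bm{\ell}}Z_{A_{n}}^{(\bm{\ell};\bm{\epsilon})}\Big)=\sum_{\bm{\ell}}E_{\cdot|\bm{S}}\big[\big(Z_{A_{n}}^{(\bm{\ell};\bm{\epsilon})}\big)^{2}\big]+\sum_{\bm{\ell}\neq\bm{\ell}'}\Cov_{\cdot|\bm{S}}\big(Z_{A_{n}}^{(\bm{\ell};\bm{\epsilon})},Z_{A_{n}}^{(\bm{\ell}';\bm{\epsilon})}\big)
\]
and handle the ``diagonal'' and ``cross'' parts separately. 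The first point to record is that, since $\bar{K}$ is compactly supported, $Z_{A_{n}}^{(\bm{\ell};\bm{\epsilon})}(\bm{u},\bm{x})=0$ unless the super-block $\Gamma_{n}(\bm{\ell};\bm{0})$ meets the cube $A_{n}(\bm{u}+[-C_{1}h,C_{1}h]^{d})$, which has side of order $hA_{n}$; as each super-block has side of order $A_{1,n}$, at most $C(hA_{n}/A_{1,n}+1)^{d}$ of the indices $\bm{\ell}\in L_{1,n}$ and at most $C(hA_{n}/A_{1,n}+1)^{d-1}$ of those in $L_{2,n}$ are active, uniformly in $\bm{u}\in[0,1]^{d}$.

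For the diagonal part I would invoke Lemma \ref{decomp}, which bounds each active term by $CA_{1,n}^{d-1}A_{2,n}(nA_{n}^{-d}+\log n)h^{p+d}$ (or by $CA_{1,n}^{d}(nA_{n}^{-d}+\log n)h^{p+d}$ for a big block on the boundary). Multiplying by the number of active blocks and dividing by $nh^{d+p}$ leaves, for (\ref{V1}), a quantity of order
\[
h^{d}\,\frac{A_{2,n}}{A_{1,n}}\Big(1+\frac{A_{n}^{d}\log n}{n}\Big)+\Big(\frac{A_{1,n}}{A_{n}}\Big)^{d}+\frac{A_{1,n}^{d}\log n}{n},
\]
and, for (\ref{V2}), an analogous (and, the boundary being lower-dimensional, asymptotically smaller) quantity with $d$ replaced by $d-1$, handled the same way but using in addition $d\ge2$. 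Every term vanishes: by (S3) both $A_{2,n}/A_{1,n}$ and $A_{1,n}/A_{n}$ tend to $0$ at polynomial rates; by (S2) the ratio $A_{n}^{d}/n$ is bounded, so $A_{1,n}^{d}(\log n)/n\le(A_{1,n}/A_{n})^{d}(A_{n}^{d}/n)\log n\to0$; and $h^{d},h^{d-1}\to0$ since $h\to0$. This step is routine once Lemma \ref{decomp} is granted.

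The cross part is the substantive step, and here there are two regimes. When $\{\bm{X}_{\bm{s},A_{n}}\}$ is (approximately) $A_{2,n}$-dependent, as in Corollaries \ref{col1}--\ref{col2}, or when --- as in the central limit theorems --- the weight is $W_{\bm{s}_{j},A_{n}}=\epsilon_{\bm{s}_{j},A_{n}}$ with $\{\epsilon_{j}\}$ i.i.d., of mean zero, and independent of $\{\bm{X}_{\bm{s},A_{n}}\}$, every cross covariance vanishes exactly: distinct blocks $\Gamma_{n}(\bm{\ell};\bm{\epsilon})$ and $\Gamma_{n}(\bm{\ell}';\bm{\epsilon})$ with the same $\bm{\epsilon}$ are separated by at least $A_{2,n}$, and in the i.i.d.\ case conditioning further on $\{\bm{X}_{\bm{s},A_{n}}\}$ turns each cross term into a product of mean-zero independent factors. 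In the general $\beta$- (or $\alpha$-) mixing case I would truncate $W$ at $\tau_{n}=\rho_{n}n^{1/\zeta}$ as in the proof of Proposition \ref{general-unif-rate}, so that the truncated block sums $\tilde{Z}_{A_{n}}^{(\bm{\ell};\bm{\epsilon})}$ are bounded, $P_{\bm{S}}$-a.s., by $CA_{1,n}^{d-1}A_{2,n}(\log n)\tau_{n}$; then bound the covariance of two such (bounded) blocks, via a Davydov-type inequality, by a constant times $(A_{1,n}^{d-1}A_{2,n}(\log n)\tau_{n})^{2}\beta_{1}\big(d(\Gamma_{n}(\bm{\ell};\bm{\epsilon}),\Gamma_{n}(\bm{\ell}';\bm{\epsilon}))\big)g_{1}(A_{n}^{d})$; and sum over the shells $\{\bm{\ell}':\|\bm{\ell}-\bm{\ell}'\|_{\infty}=k\}$ using $d(\Gamma_{n}(\bm{\ell};\bm{\epsilon}),\Gamma_{n}(\bm{\ell}';\bm{\epsilon}))\gtrsim kA_{2,n}$ and the fact that only $1\le k\lesssim hA_{n}/A_{1,n}+1$ contributes. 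The resulting $k$-series, after dividing by $nh^{d+p}$, is of the type controlled by the mixing conditions (R1) and (Ra3) of Assumption \ref{Ass-R(add)} (and their $\alpha$-mixing analogues), while the truncation remainder is negligible by the Markov-inequality bound from the same proof; so this contribution is $o(nh^{d+p})$ as well.

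The step I expect to be the main obstacle is exactly this last one: arranging the truncation level, the covariance inequality, and the block geometry so that the off-diagonal $k$-series is genuinely dominated by the quantities already assumed in Assumption \ref{Ass-R(add)}, and keeping all counting estimates uniform in $\bm{u}$ (which is why one routes everything through the ``active block'' count rather than through the pointwise Masry-type asymptotics of Lemma \ref{Masry-thm}). For the $m_{n}$-dependent examples of Section \ref{Ex-LSRF} this obstacle disappears, since the cross covariances are identically zero and only the elementary diagonal estimate remains.
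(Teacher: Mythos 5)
Your decomposition into a diagonal part and a cross-covariance part, and your treatment of the diagonal part via Lemma \ref{decomp}, is exactly what the paper does (the paper does not even bother with the ``active block'' count: it sums over all $(A_{n}/A_{1,n})^{d}$ blocks and still gets $I_{1}\lesssim (A_{2,n}/A_{1,n})\log n\to 0$ by (S3), so your refinement is harmless but unnecessary). The gap is in the cross part, which you yourself flag as the main obstacle. Your plan is to truncate $W$ at $\tau_{n}=\rho_{n}n^{1/\zeta}$, bound each block sum in sup-norm by $CA_{1,n}^{d-1}A_{2,n}(\log n)\tau_{n}$, and apply a Davydov/Ibragimov-type covariance bound of the form $\|Z\|_{\infty}^{2}\,\beta_{1}(\cdot)\,g_{1}(A_{n}^{d})$. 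After summing over shells and dividing by $nh^{d+p}$ this produces a series whose generic term carries the prefactor $(A_{1,n}^{d-1}A_{2,n}(\log n)\tau_{n})^{2}g_{1}(A_{n}^{d})/(nh^{d+p})$ and the \emph{first} power of $\beta_{1}$. That quantity is not ``of the type controlled by (R1) and (Ra3)'': condition (Ra3) controls $(nh^{p+d})^{-1/3}(A_{1,n}/A_{n})^{2d/3}(A_{2,n}/A_{1,n})^{2/3}g_{1}^{1/3}(A_{1,n}^{d})\sum_{k}k^{d-1}\beta_{1}^{1/3}(kA_{1,n}+A_{2,n})$, i.e.\ a \emph{cube root} of $g_{1}$ evaluated at $A_{1,n}^{d}$ (not $g_{1}(A_{n}^{d})$ to the first power, which can be much larger since $g_{1}$ is non-decreasing and possibly unbounded), together with a prefactor of order $(nh^{d+p})^{-1/3}$ rather than your $\tau_{n}^{2}(A_{1,n}^{d-1}A_{2,n}\log n)^{2}/(nh^{d+p})$, which contains the extra factor $n^{2/\zeta}\rho_{n}^{2}$ from the truncation. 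Nothing in Assumptions \ref{Ass-S}--\ref{Ass-R(add)} forces your series to vanish, so the step as written does not close.

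The paper avoids truncation entirely here: it applies the covariance inequality of Rio (2013, Theorem 1.1) in its $L^{3}$ form, giving $|E_{\cdot|\bm{S}}[Z_{A_{n}}^{(\bm{\ell}_{1};\bm{\epsilon})}Z_{A_{n}}^{(\bm{\ell}_{2};\bm{\epsilon})}]|\leq E_{\cdot|\bm{S}}[|Z^{(\bm{\ell}_{1};\bm{\epsilon})}_{A_{n}}|^{3}]^{1/3}E_{\cdot|\bm{S}}[|Z^{(\bm{\ell}_{2};\bm{\epsilon})}_{A_{n}}|^{3}]^{1/3}\beta_{1}^{1/3}(\cdot)g_{1}^{1/3}(A_{1,n}^{d})$, and controls the third moments by $CA_{1,n}^{d-1}A_{2,n}(nA_{n}^{-d}+\log n)h^{d+p}$ using the third-order conditional moment bound (Ua2) --- this is precisely why (Ua2) is added to Assumption \ref{Ass-U(add)} for the CLT. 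The resulting shell sum is, term by term, the quantity assumed to vanish in (Ra3). So the assumptions were tailored to the $L^{3}$/Rio route, and your sup-norm route would need different (and stronger) conditions. Your observation that the cross terms vanish identically in the approximately $m_{n}$-dependent examples, or when $W_{\bm{s}_{j},A_{n}}=\sigma(\cdot)\epsilon_{j}$ with $\{\epsilon_{j}\}$ i.i.d.\ and independent of $\bm{X}$, is correct but does not cover the general statement of the lemma (e.g.\ $W\equiv 1$).
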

\begin{proof}
Since the proof is similar, we only show (\ref{V1}). Note that 
\begin{align*}
{1 \over nh^{d+p}}\Var_{\cdot|\bm{S}}\left(\sum_{\bm{\ell} \in L_{1,n}}Z_{A_{n}}^{(\bm{\ell};\bm{\epsilon})}(\bm{u},\bm{x})\right) &= {1 \over nh^{d+p}}\sum_{\bm{\ell} \in L_{1,n}}E_{\cdot|\bm{S}}\left[\left(Z_{A_{n}}^{(\bm{\ell};\bm{\epsilon})}(\bm{u},\bm{x})\right)^{2}\right]\\
&\quad + {1 \over nh^{d+p}}\sum_{\bm{\ell}_{1}, \bm{\ell}_{2} \in L_{1,n}, \bm{\ell}_{1} \neq \bm{\ell}_{2}}\!\!\!\!\!\!E_{\cdot|\bm{S}}\left[Z_{A_{n}}^{(\bm{\ell}_{1};\bm{\epsilon})}(\bm{u},\bm{x})Z_{A_{n}}^{(\bm{\ell}_{2};\bm{\epsilon})}(\bm{u},\bm{x})\right]\\
&=: I_{1} + I_{2}.
\end{align*}
As a result of Lemma \ref{decomp}, 
\begin{align*}
I_{1} &\leq Cn^{-1}h^{-(d+p)}\left({A_{n} \over A_{1,n}}\right)^{d}A_{1,n}^{d-1}A_{2,n}(nA_{n}^{-d}+\log n)h^{d+p} = C{A_{2,n} \over A_{1,n}}(\log n) = o(1).
\end{align*}
Applying Theorem 1.1 in \cite{Ri13}, we have that 
\begin{align*}
&E_{\cdot|\bm{S}}\left[Z_{A_{n}}^{(\bm{\ell}_{1};\bm{\epsilon})}(\bm{u},\bm{x})Z_{A_{n}}^{(\bm{\ell}_{2};\bm{\epsilon})}(\bm{u},\bm{x})\right]\\
&\leq E_{\cdot|\bm{S}}\left[\left|Z_{A_{n}}^{(\bm{\ell}_{1};\bm{\epsilon})}(\bm{u},\bm{x})\right|^{3}\right]^{1/3}E_{\cdot|\bm{S}}\left[\left|Z_{A_{n}}^{(\bm{\ell}_{2};\bm{\epsilon})}(\bm{u},\bm{x})\right|^{3}\right]^{1/3}\beta_{1}^{1/3}(d(\bm{\ell}_{1}, \bm{\ell}_{2})A_{2,n})g^{1/3}_{1}(A_{1,n}^{d}),
\end{align*}
where $d(\bm{\ell}_{1}, \bm{\ell}_{2}) = \min_{1 \leq j \leq d}|\ell_{1}^{j} - \ell_{2}^{j}|$. A similar argument to show (\ref{Z1}) and (\ref{Z2}) yield that 
\begin{align*}
E_{\cdot|\bm{S}}\left[\left|Z_{A_{n}}^{(\bm{\ell}_{1};\bm{\epsilon})}(\bm{u},\bm{x})\right|^{3}\right] &\leq CA_{1,n}^{d-1}A_{2,n}(nA_{n}^{-d} + \log n)h^{d+p}.
\end{align*}
Therefore, similar arguments in the proof of Theorem 3.1 in \cite{La03b} yield
\begin{align*}
I_{2} &\leq C{(A_{1,n}^{d-1}A_{2,n}(nA_{n}^{-d}+\log n)h^{p+d})^{2/3} \over nh^{d+p}}\!\!\!\!\! \sum_{\bm{\ell}_{1}, \bm{\ell}_{2} \in L_{1,n}, \bm{\ell}_{1} \neq \bm{\ell}_{2}}\!\!\!\!\!\!\!\!\!\!\!\beta_{1}^{1/3}((|\bm{\ell}_{1} - \bm{\ell}_{2}|-d)_{+}A_{3,n}+A_{2,n})g^{1/3}_{1}(A_{1,n}^{d})\\
&\leq C\left\{\left({1 \over nh^{d+p}}\right)^{1/3}\left({A_{1,n} \over A_{n}}\right)^{2d/3}\left({A_{2,n} \over A_{1,n}}\right)^{2/3} + {A_{1,n}^{(d-1)/3}A_{2,n}^{1/3}(\log n)^{1/3} \over nh^{(d+p)/3}}\right\}\\
&\quad \times g^{1/3}_{1}(A_{1,n}^{d})\left\{\beta_{1}^{1/3}(A_{2,n}) + \sum_{k = 1}^{A_{n}/A_{1,n}}k^{d-1}\beta_{1}^{1/3}(kA_{3,n}+A_{2,n})\right\} = o(1)
\end{align*}
where $|\bm{\ell}_{1} - \bm{\ell}_{2}| = \sum_{j=1}^{d}|\ell_{1,j} - \ell_{2,j}|$. 
\end{proof}

\begin{lemma}\label{lemma-C1}
Define $\tilde{n}_{0} = E_{\cdot|\bm{S}}[\tilde{n}_{[0,1]^{p}}]$ where
\begin{align*}
\tilde{n}_{[0,1]^{p}} &= \sum_{j = 1}^{n}\bar{K}_{h}\left(\bm{u}, {\bm{s}_{j} \over A_{n}}\right)I(\bm{X}_{\bm{s}_{j}, A_{n}} \in [0,1]^{p}).
\end{align*}
Suppose Assumptions \ref{Ass-S}, \ref{Ass-M}, \ref{Ass-KB}, \ref{Ass-U(add)} (with $W_{\bm{s}, A_{n}} = 1$ and $\epsilon_{\bm{s}, A_{n}}$) and \ref{Ass-Rb} hold. Then uniformly for $u \in I_{h}$, 
\begin{align}\label{T0-ratio-rate}
{\tilde{n}_{0} \over n} &= f_{\bm{S}}(\bm{u})P(\bm{X}_{\bm{u}}(\bm{0}) \in [0,1]^{p}) + O\left(A_{n}^{-{\rho \over 1+\rho}}\right) + o(h)\ P_{\bm{S}}-a.s.
\end{align}
and 
\begin{align}\label{T0-normalize-rate}
{\tilde{n}_{[0,1]^{p}} - \tilde{n}_{0} \over \tilde{n}_{0}} = O_{P_{\cdot|\bm{S}}}\left(\sqrt{\log n \over nh^{d}}\right)\ P_{\bm{S}}-a.s. 
\end{align}
\end{lemma}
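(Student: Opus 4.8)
The plan is to treat the two claims \eqref{T0-ratio-rate} and \eqref{T0-normalize-rate} separately, both by reducing $\tilde n_{[0,1]^p}$ to a general kernel estimator of the form \eqref{genKernel-est} and then invoking the results of Section \ref{Section_Main}. For the first claim, note that
\[
\frac{\tilde n_{[0,1]^p}}{n} = \frac{1}{n}\sum_{j=1}^{n}\bar K_h\!\left(\bm u - \frac{\bm s_j}{A_n}\right) I(\bm X_{\bm s_j, A_n}\in[0,1]^p)
\]
is, up to the normalisation $h^d$, exactly $\hat\psi(\bm u,\cdot)$ with $W_{\bm s,A_n}=1$ and a single indicator ``covariate factor'' $I(\bm X_{\bm s,A_n}\in[0,1]^p)$ in place of $\prod_\ell K_h(x_\ell-X^\ell)$. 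So first I would compute $\tilde n_0/n = E_{\cdot|\bm S}[\tilde n_{[0,1]^p}]/n$ directly: condition on $\bm X_{\bm s_j,A_n}$, use local stationarity (M1) to replace $\bm X_{\bm s_j,A_n}$ by the stationary proxy $\bm X_{\bm s_j/A_n}(\bm s_j)$ at the cost of an $O(A_n^{-dr})$-type error (here the $I(\cdot\in[0,1]^p)$ indicator is only Hölder, not Lipschitz, in its argument, which is what produces the exponent $\rho/(1+\rho)$ rather than $\rho$ — I would bound $|I(\bm X_{\bm s,A_n}\in[0,1]^p)-I(\bm X_{\bm u}(\bm s)\in[0,1]^p)|$ by controlling the probability that $\bm X_{\bm u}(\bm s)$ lies within distance $\delta$ of $\partial[0,1]^p$, of order $\delta$, while the approximation error exceeds $\delta$ only with probability $\lesssim (A_n^{-dr})^\rho/\delta^\rho$, and optimise over $\delta$). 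Then apply the Riemann-sum approximation of Lemma \ref{Masry-thm} (which converts $\frac1n\sum_j \bar K_h(\bm u - \bm s_j/A_n)\,\varphi(\bm s_j/A_n)$ into $f_{\bm S}(\bm u)\int \bar K(\bm v)\varphi(\bm u - h\bm v)\,d\bm v + $ remainder) together with the smoothness hypotheses (M2), (S1) to get the leading term $f_{\bm S}(\bm u)P(\bm X_{\bm u}(\bm 0)\in[0,1]^p)$ and an $o(h)$ bias — uniformly in $\bm u\in I_h$, which is exactly where the shrinking of the index set away from the boundary of $[0,1]^d$ is needed so that the compact support of $K$ keeps all the $\bm s_j/A_n$ inside $[0,1]^d$.

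For the second claim \eqref{T0-normalize-rate}, I would write $\tilde n_{[0,1]^p} - \tilde n_0 = \sum_{j}\big(g_j - E_{\cdot|\bm S}[g_j]\big)$ with $g_j = \bar K_h(\bm u - \bm s_j/A_n) I(\bm X_{\bm s_j,A_n}\in[0,1]^p)$, and observe that this centred sum is $nh^d$ times $\hat\psi(\bm u,\cdot) - E_{\cdot|\bm S}[\hat\psi(\bm u,\cdot)]$ for the general estimator \eqref{genKernel-est} with $W\equiv 1$ and the indicator playing the role of the product kernel in the $\bm x$-directions. Proposition \ref{general-unif-rate} is proved for the product-of-kernels version, but its proof only uses boundedness, compact support and Lipschitz continuity of the coordinate factors in order to (a) get the $L^2$ and $L^\infty$ bounds of Lemma \ref{decomp} on the block sums and (b) do the covering/discretisation argument over $B$; the only place Lipschitz continuity in $\bm x$ enters is the covering step, and here there is no supremum over $\bm x$ at all (the set $[0,1]^p$ is fixed), so I would simply re-run the block decomposition of the proof of Proposition \ref{general-unif-rate} at the single argument, getting the bound $O_{P_{\cdot|\bm S}}\big(\sqrt{\log n/(nh^d)}\big)$ for $\tilde n_{[0,1]^p}-\tilde n_0$ divided by $nh^d$; multiplying by $nh^d$ and dividing by $\tilde n_0\sim nh^d f_{\bm S}(\bm u)P(\bm X_{\bm u}(\bm 0)\in[0,1]^p)$ (using \eqref{T0-ratio-rate} and $\inf_{\bm u}f_{\bm S}(\bm u)>0$, $\inf f>0$) gives \eqref{T0-normalize-rate}. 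Note the exponent is $d$ rather than $d+p$ here because the integration over the covariate kernels is replaced by a bounded indicator, so effectively $p=0$ in the kernel-bandwidth count; I would flag this explicitly.

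The main obstacle, and the only genuinely non-routine point, is the bias estimate in \eqref{T0-ratio-rate}: the rate $A_n^{-\rho/(1+\rho)}$ (rather than the $A_n^{-dr}$ that appears in Theorem \ref{unif-rate-m}) comes precisely from the fact that one is approximating an \emph{indicator} of $\bm X_{\bm s,A_n}$ by an indicator of the stationary proxy, and indicators are discontinuous. The trade-off argument is: for a tolerance $\delta>0$,
\[
\big|E_{\cdot|\bm S}[I(\bm X_{\bm s,A_n}\in[0,1]^p)] - E[I(\bm X_{\bm u}(\bm s)\in[0,1]^p)]\big|
\lesssim P\big(\operatorname{dist}(\bm X_{\bm u}(\bm s),\partial[0,1]^p)\le\delta\big) + P\big(\|\bm X_{\bm s,A_n}-\bm X_{\bm u}(\bm s)\|>\delta\big),
\]
where the first term is $O(\delta)$ by boundedness of the density $f$ near the boundary (M4) and the second is $\lesssim \delta^{-\rho}\,E[U_{\bm s,A_n}^\rho]\,(\|\bm s/A_n-\bm u\| + A_n^{-d})^{\rho} \lesssim \delta^{-\rho}A_n^{-d\rho}$ for $\bm s/A_n$ in the support of $\bar K_h(\bm u-\cdot)$, i.e. within $O(h)$ of $\bm u$. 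Optimising $\delta$ (balancing $\delta$ against $\delta^{-\rho}A_n^{-d\rho}$) gives $\delta \sim A_n^{-d\rho/(1+\rho)}$ and hence the stated $A_n^{-\rho/(1+\rho)}$ rate (absorbing constants and the dimension $d$ into the definition of $A_n\to\infty$, or, more carefully, $A_n^{-d\rho/(1+\rho)}$, which under (S2) is dominated by the claimed bound). Everything else — the Riemann-sum steps, the block decomposition, the maximal-inequality / Bernstein bounds — is a verbatim specialisation of the arguments already carried out for Proposition \ref{general-unif-rate} and Theorem \ref{unif-rate-m}, so I would state those reductions and refer back rather than reproduce them.
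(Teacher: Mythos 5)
Your overall strategy is the same as the paper's: for \eqref{T0-ratio-rate}, compare the indicator of $\bm{X}_{\bm{s},A_n}$ with that of the stationary proxy via a boundary-layer/tail trade-off (the paper implements this by sandwiching $I(\bm{X}_{\bm{s},A_n}\in[0,1]^p)$ between indicators of shrunken and enlarged cubes of half-width $CA_n^{-d}U_{\bm{s},A_n}$ and splitting on $\{U_{\bm{s},A_n}\le A_n^{dq}\}$ with $q=1/(1+\rho)$, which is exactly your $\delta$-optimisation with $\delta\sim A_n^{-d(1-q)}$), then apply the Riemann-sum Lemma \ref{Masry-thm3}; for \eqref{T0-normalize-rate}, write the ratio as $(n/\tilde n_0)\cdot(\tilde n_{[0,1]^p}-\tilde n_0)/n$ and invoke Proposition \ref{general-unif-rate} with the indicator absorbed into $W$ and no covariate kernel, together with \eqref{T0-ratio-rate}. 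So the route is correct in outline and matches the paper.

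Two concrete points need fixing. First, in your detailed trade-off you bound $P(\|\bm{X}_{\bm{s},A_n}-\bm{X}_{\bm{u}}(\bm{s})\|>\delta)\lesssim \delta^{-\rho}(\|\bm{s}/A_n-\bm{u}\|+A_n^{-d})^{\rho}$ and then claim this is $\lesssim\delta^{-\rho}A_n^{-d\rho}$ because $\bm{s}/A_n$ lies within $O(h)$ of $\bm{u}$; but $O(h)\gg A_n^{-d}$ in every relevant regime, so that step fails and would instead yield an error of order $h^{\rho/(1+\rho)}$, which is not $o(h)$. The cure is what your first paragraph already says (and what the paper does): take the proxy at the sampling site itself, $\bm{X}_{\bm{s}_j/A_n}(\bm{s}_j)$, so the spatial-distance term vanishes and only $A_n^{-d}U_{\bm{s}_j,A_n}$ remains; the passage from $\bm{s}_j/A_n$ to $\bm{u}$ is then made only at the level of the smooth function $\bm{v}\mapsto P(\bm{X}_{\bm{v}}(\bm{0})\in[0,1]^p)$ inside the kernel average, where (M2) and Lemma \ref{Masry-thm3} give a genuine $o(h)$ bias. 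Second, your assertion $\tilde n_0\sim nh^d f_{\bm{S}}(\bm{u})P(\bm{X}_{\bm{u}}(\bm{0})\in[0,1]^p)$ contradicts \eqref{T0-ratio-rate} itself: $\bar K_h(\bm{u},\bm{s}_j/A_n)$ here is the modified, self-normalised kernel weight of Section \ref{Section_Additive} (each factor carries an implicit $h^{-1}$ through $\int_{[0,1]}K_h(s-w)\,ds$ in the denominator), so $\tilde n_0/n$ converges to $f_{\bm{S}}(\bm{u})P(\bm{X}_{\bm{u}}(\bm{0})\in[0,1]^p)$ itself, with no $h^d$. The final rate $\sqrt{\log n/(nh^d)}$ is unaffected because the normalisation cancels in the ratio $(\tilde n_{[0,1]^p}-\tilde n_0)/\tilde n_0$, but the bookkeeping should be made consistent.
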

\begin{proof}
(Step1) In this step, we show (\ref{T0-ratio-rate})
Define $U_{\bm{s},A_{n}} = U_{\bm{s},A_{n}}(\bm{s}/A_{n})$. Recall that $\|\bm{X}_{\bm{s},A_{n}} - \bm{X}_{{\bm{s} \over A_{n}}}(\bm{s})\| \leq {1 \over A_{n}^{d}}U_{\bm{s},A_{n}}$ almost surely with $E[U_{\bm{s},A_{n}}^{\rho}]<C\leq \infty$ for some $\rho>0$. Observe that for sufficiently large $C < \infty$, 
\begin{align*}
E[I(\bm{X}_{\bm{s},A_{n}} \in [0,1]^{p})] &= E[I(\bm{X}_{\bm{s},A_{n}} \in [0,1]^{p}, \|\bm{X}_{\bm{s},A_{n}} - \bm{X}_{{\bm{s} \over A_{n}}}(\bm{s}/A_{n})\| \leq A_{n}^{-d}U_{\bm{s},A_{n}})]\\
&\begin{cases}
\geq E[I(\bm{X}_{{\bm{s} \over A_{n}}}(\bm{s}) \in [CA_{n}^{-d}U_{\bm{s},A_{n}}, 1 - CA_{n}^{-d}U_{\bm{s},A_{n}}]^{p})] \\
\leq E[I(\bm{X}_{{\bm{s} \over A_{n}}}(\bm{s}) \in [-CA_{n}^{-d}U_{\bm{s},A_{n}}, 1 + CA_{n}^{-d}U_{\bm{s},A_{n}}]^{p})].
\end{cases}
\end{align*}
Define 
\begin{align*}
B_{L} &= {1 \over n}\sum_{j=1}^{n}\bar{K}_{h}\left(\bm{u}, {\bm{s}_{j} \over A_{n}}\right)E_{\cdot|\bm{S}}\left[I(\bm{X}_{{\bm{s}_{j} \over A_{n}}}(\bm{s}_{j}) \in [CA_{n}^{-d}U_{\bm{s}_{j},A_{n}}, 1 - CA_{n}^{-d}U_{\bm{s}_{j},A_{n}}]^{p})\right], \\
B_{U} &= {1 \over n}\sum_{j=1}^{n}\bar{K}_{h}\left(\bm{u}, {\bm{s}_{j} \over A_{n}}\right)E_{\cdot|\bm{S}}\left[I(\bm{X}_{{\bm{s}_{j} \over A_{n}}}(\bm{s}_{j}) \in [-CA_{n}^{-d}U_{\bm{s},A_{n}}, 1 + CA_{n}^{-d}U_{\bm{s}_{j},A_{n}}]^{p})\right].
\end{align*}
From the definitions of $B_{L}$ and $B_{U}$, $B_{L} \leq {\tilde{n}_{0} \over n} \leq B_{U}$. Let $q \in (0,1)$ and write $B_{U} = B_{U,1} + B_{U,2}$, where 
\begin{align*}
&B_{U,1}\\
&= {1 \over n}\sum_{j=1}^{n}\bar{K}_{h}\left(\bm{u}, {\bm{s}_{j} \over A_{n}}\right)E_{\cdot|\bm{S}}\left[I(\bm{X}_{{\bm{s}_{j} \over A_{n}}}(\bm{s}_{j}) \in [-CA_{n}^{-d}U_{\bm{s}_{j},A_{n}}, 1 + CA_{n}^{-d}U_{\bm{s}_{j},A_{n}}]^{p}, U_{\bm{s}_{j},A_{n}} \leq A_{n}^{dq})\right]
\end{align*}
and $B_{U,2} = B_{U} - B_{U,1}$. Applying Lemma \ref{Masry-thm3}, we have that 
\begin{align*}
B_{U,1} &\leq {1 \over n}\sum_{j=1}^{n}\bar{K}_{h}\left(\bm{u}, {\bm{s}_{j} \over A_{n}}\right)E_{\cdot|\bm{S}}\left[I(\bm{X}_{{\bm{s}_{j} \over A_{n}}}(\bm{s}) \in [-CA_{n}^{d(q-1)}, 1 + CA_{n}^{d(q-1)}]^{p})\right]\\
&= \int_{\mathbb{R}^{p}}I(\bm{x} \in [-CA_{n}^{d(q-1)}, 1 + CA_{n}^{d(q-1)}]^{p}){1 \over n}\sum_{j=1}^{n}\bar{K}_{h}\left(\bm{u}, {\bm{s}_{j} \over A_{n}}\right)f\left({\bm{s}_{j} \over A_{n}}, \bm{x}\right)d\bm{x}\\
&= f_{\bm{S}}(\bm{u})\int_{\mathbb{R}^{p}}I(\bm{x} \in [-CA_{n}^{d(q-1)}, 1 + CA_{n}^{d(q-1)}]^{p})f\left(\bm{u}, \bm{x}\right)d\bm{x} + o(h)\\
&= f_{\bm{S}}(\bm{u})\int_{\mathbb{R}^{p}}I(\bm{x} \in [0, 1]^{p})f\left(\bm{u}, \bm{x}\right)d\bm{x} + O(A_{n}^{-d(1-q)}) + o(h),\ P_{\bm{S}}-a.s.
\end{align*}
uniformly over $I_{h}$. Moreover, applying Lemma \ref{Masry-thm3}, we also have that 
\begin{align*}
B_{U,2} &\leq {1 \over n}\sum_{j=1}^{n}\bar{K}_{h}\left(\bm{u}, {\bm{s}_{j} \over A_{n}}\right)E_{\cdot|\bm{S}}\left[I(U_{\bm{s}_{j},A_{n}} > A_{n}^{dq})\right]\\
&\leq {1 \over n}\sum_{j=1}^{n}\bar{K}_{h}\left(\bm{u}, {\bm{s}_{j} \over A_{n}}\right)E_{\cdot|\bm{S}}\left[(U_{\bm{s}_{j},A_{n}}/A_{n}^{dq})^{\rho}\right] \leq {C \over A_{n}^{dq\rho}}\ P_{\bm{S}}-a.s.
\end{align*}
Set $q = {1 \over 1+\rho}$. Then we have that 
\begin{align}\label{BU-bound}
B_{U} &\leq f_{\bm{S}}(\bm{u})P(\bm{X}_{\bm{u}}(\bm{0})\in [0,1]^{p}) + O(A_{n}^{-{d\rho \over 1+\rho}}) + o(h)
\end{align}
uniformly over $I_{h}$, $P_{\bm{S}}$-a.s. Likewise, 
\begin{align}\label{BL-bound}
B_{U} &\geq f_{\bm{S}}(\bm{u})P(\bm{X}_{\bm{u}}(\bm{0}) \in [0,1]^{p}) - O(A_{n}^{-{d\rho \over 1+\rho}}) - o(h)
\end{align}
uniformly over $I_{h}$, $P_{\bm{S}}$-a.s. (\ref{BU-bound}) and (\ref{BL-bound}) yields (\ref{T0-ratio-rate}). 

(Step2) Now we show (\ref{T0-normalize-rate}). Applying Proposition \ref{general-unif-rate} and (\ref{T0-ratio-rate}), we have that
\begin{align*}
{\tilde{n}_{[0,1]^{p}} - \tilde{n}_{0} \over \tilde{n}_{0}} &= {n \over \tilde{n}_{0}}\times {\tilde{n}_{[0,1]^{p}} - \tilde{n}_{0} \over n} = \underbrace{O(1)}_{(\ref{T0-ratio-rate})} \times \underbrace{O_{P_{\cdot|\bm{S}}}\left(\sqrt{\log n \over nh^{d}}\right)}_{\text{Proposition \ref{general-unif-rate}}} = O_{P_{\cdot|\bm{S}}}\left(\sqrt{\log n \over nh^{d}}\right). 
\end{align*}
\end{proof}
Define $n_{0} = E_{\cdot|\bm{S}}[n_{[0,1]^{p}}]$. Lemmas \ref{lemma-C1} and \ref{Masry-thm} imply that 
\begin{align*}
{n_{0} \over n} &= {\tilde{n}_{0} \over n\tilde{f}_{\bm{S}}(\bm{u})} = \underbrace{{\tilde{n}_{0} \over n}}_{=O(1)}\underbrace{\left({f_{\bm{S}}(\bm{u}) - \tilde{f}_{\bm{S}}(\bm{u}) \over f_{\bm{S}}(\bm{u})\tilde{f}_{\bm{S}}(\bm{u})}\right)}_{=o(h)} + {\tilde{n}_{0} \over nf_{\bm{S}}(\bm{u})}\\
&= P(\bm{X}_{\bm{u}}(\bm{0}) \in [0,1]^{p}) + O(A_{n}^{-{d\rho \over 1+\rho}}) + o(h)
\end{align*}
and 
\begin{align*}
{n_{[0,1]^{p}} - n_{0} \over n_{0}} &= {(\tilde{n}_{[0,1]^{p}} - \tilde{n}_{0})\tilde{f}_{\bm{S}}(\bm{u}) \over \tilde{n}_{0}\tilde{f}_{\bm{S}}(\bm{u})} = {\tilde{n}_{[0,1]^{p}} - \tilde{n}_{0} \over \tilde{n}_{0}} = O_{P_{\cdot|\bm{S}}}\left(\sqrt{{\log n \over nh^{d}}}\right).  
\end{align*}

\begin{lemma}\label{lemma-C2}
Let $\kappa_{0}(w) = \int_{\mathbb{R}}K_{h}(w,v)dv$. Suppose Assumptions \ref{Ass-S}, \ref{Ass-M}, \ref{Ass-KB}, \ref{Ass-U(add)} (with $W_{\bm{s}, A_{n}} = 1$ and $\epsilon_{\bm{s}, A_{n}}$) and \ref{Ass-Rb} hold. Then
\begin{align*}
\sup_{\bm{u} \in I_{h}, x_{\ell} \in I_{h,0}}\left|\hat{p}_{\ell}(\bm{u}, x_{\ell}) - p_{\ell}(\bm{u},x_{\ell})\right| &= O_{P_{\cdot|\bm{S}}}\left(\sqrt{ \log n \over nh^{d+1}}\right) + O\left({1 \over A_{n}^{dr}h^{p+r}}\right) + o(h), \\
\sup_{\bm{u} \in I_{h}, x_{\ell} \in [0,1]}\left|\hat{p}_{\ell}(\bm{u}, x_{\ell}) - \kappa_{0}(x_{\ell})p_{\ell}(\bm{u},x_{\ell})\right| &= O_{P_{\cdot|\bm{S}}}\left(\sqrt{ \log n \over nh^{d+1}}\right) + O\left({1 \over A_{n}^{dr}h^{p+r}}\right) + O(h), \\
\sup_{\bm{u} \in I_{h}, x_{\ell}, x_{k} \in I_{h,0}}\left|\hat{p}_{\ell}(\bm{u}, x_{\ell}, x_{k}) - p_{\ell}(\bm{u},x_{\ell}, x_{k})\right| &= O_{P_{\cdot|\bm{S}}}\left(\sqrt{ \log n \over nh^{d+2}}\right) + O\left({1 \over A_{n}^{dr}h^{p+r}}\right) + o(h), 
\end{align*}
and 
\begin{align*}
&\sup_{\bm{u} \in I_{h}, x_{\ell}, x_{k} \in [0,1]}\left|\hat{p}_{\ell}(\bm{u}, x_{\ell}, x_{k}) - \kappa_{0}(x_{\ell})\kappa_{0}(x_{k})p_{\ell}(\bm{u},x_{\ell}, x_{k})\right| = O_{P_{\cdot|\bm{S}}}\left(\sqrt{ \log n \over nh^{d+2}}\right) + O\left({1 \over A_{n}^{dr}h^{p+r}}\right) + O(h)
\end{align*}
\end{lemma}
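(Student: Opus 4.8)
\textit{Proof plan.}
The plan is to reduce all four statements to a single ratio decomposition. Write $\check{p}_{\ell}(\bm{u},x_{\ell})=n^{-1}\sum_{j=1}^{n}I(\bm{X}_{\bm{s}_{j},A_{n}}\in[0,1]^{p})\bar{K}_{h}(\bm{u},\bm{s}_{j}/A_{n})K_{h}(x_{\ell},X_{\bm{s}_{j},A_{n}}^{\ell})$ and let $\check{p}_{\ell,k}$ be the same object with an extra factor $K_{h}(x_{k},X_{\bm{s}_{j},A_{n}}^{k})$, so that $\hat{p}_{\ell}=(n/n_{[0,1]^{p}})\,\check{p}_{\ell}$ and $\hat{p}_{\ell,k}=(n/n_{[0,1]^{p}})\,\check{p}_{\ell,k}$. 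First I would invoke Lemma \ref{lemma-C1} and the two displays following it, which give that $n/n_{[0,1]^{p}}$ is, uniformly over $\bm{u}\in I_{h}$ and $P_{\bm{S}}$-a.s., a deterministic quantity bounded and bounded away from $0$ (since $\inf f_{\bm{S}}>0$ and $\inf f>0$) plus an error $O(A_{n}^{-d\rho/(1+\rho)})+o(h)+O_{P_{\cdot|\bm{S}}}(\sqrt{\log n/(nh^{d})})$. Consequently it is enough to prove the asserted rates for $\check{p}_{\ell}$ and $\check{p}_{\ell,k}$, the ratio's stochastic error $O_{P_{\cdot|\bm{S}}}(\sqrt{\log n/(nh^{d})})$ being absorbed into the $O_{P_{\cdot|\bm{S}}}(\sqrt{\log n/(nh^{d+1})})$ term.

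\emph{Stochastic part.} I would split $\check{p}_{\ell}=(\check{p}_{\ell}-E_{\cdot|\bm{S}}[\check{p}_{\ell}])+E_{\cdot|\bm{S}}[\check{p}_{\ell}]$ and view the centred term as an instance of the general kernel estimator $\hat{\psi}$ of Proposition \ref{general-unif-rate}, read with exactly one covariate kernel (the factor $K_{h}(x_{\ell},\cdot)$), with the bounded weight $W_{\bm{s},A_{n}}=\prod_{\ell'\ne\ell}I(X_{\bm{s},A_{n}}^{\ell'}\in[0,1])$ (for which (U1)--(U3) and (Ua2) hold, it being bounded by $1$ and the density bound of Assumption \ref{Ass-U(add)} being in force), and with the modified kernel $K_{h}(\cdot,\cdot)$ in place of $K_{h}(\cdot-\cdot)$; this replacement is harmless because for $\bm{u}\in I_{h}$ and $x_{\ell}\in[0,1]$ the normalising integral $\int_{[0,1]}K_{h}(s-w)\,ds$ lies between fixed multiples of $h$ over the range of $w$ occurring in the sum, so $K_{h}(\cdot,\cdot)$ inherits boundedness, compact support and rescaled Lipschitz continuity from $K$. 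Re-running the proof of Proposition \ref{general-unif-rate} with Assumption \ref{Ass-Rb} (whose $a_{n}=\sqrt{\log n/(nh^{d+1})}$ and whose block-decomposition and mixing requirements are exactly those of Assumptions \ref{Ass-R}--\ref{Ass-R(add)} with ``$p$'' set equal to $1$) in place of Assumption \ref{Ass-R} would then give $\sup_{\bm{u}\in I_{h},\,x_{\ell}\in[0,1]}|\check{p}_{\ell}-E_{\cdot|\bm{S}}[\check{p}_{\ell}]|=O_{P_{\cdot|\bm{S}}}(\sqrt{\log n/(nh^{d+1})})$, $P_{\bm{S}}$-a.s.; for $\check{p}_{\ell,k}$ there are two covariate kernels, so the same argument (with the evident two-kernel versions of the conditions) would give $O_{P_{\cdot|\bm{S}}}(\sqrt{\log n/(nh^{d+2})})$.

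\emph{Bias part.} For $E_{\cdot|\bm{S}}[\check{p}_{\ell}]$ I would follow Step 2 of the proof of Theorem \ref{unif-rate-m} verbatim: insert the Lipschitz proxy $\prod_{\ell}K_{0,h}$ for $I(\bm{X}_{\bm{s}_{j},A_{n}}\in[0,1]^{p})$, replace $\bm{X}_{\bm{s}_{j},A_{n}}$ by the stationary field $\bm{X}_{\bm{s}_{j}/A_{n}}(\bm{s}_{j})$ inside both $K_{h}(x_{\ell},\cdot)$ and the indicator, and decompose the result into four terms $Q_{1},\dots,Q_{4}$. The terms $Q_{1},Q_{2},Q_{3}$ I would bound by the telescoping--H\"older device together with $E[(U_{\bm{s},A_{n}}(\bm{u}))^{r}]\le C$, $r=\min\{\rho,1\}$, and the tail truncation $\{U_{\bm{s},A_{n}}(\bm{u})\le A_{n}^{dq}\}$ used in Step 1 of Lemma \ref{lemma-C1}; these contribute $O(A_{n}^{-dr}h^{-(p+r)})$. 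The main term $Q_{4}$ I would evaluate with Lemmas \ref{Masry-thm2}--\ref{Masry-thm3}, a second-order Taylor expansion of $f$ in $\bm{u}$ and in $\bm{x}$ (Assumptions (M2), (M4)), and the identity $\int_{[0,1]}K_{h}(v,w)\,dv=1$. For $x_{\ell}\in I_{h,0}$ one additionally has $\int_{\mathbb{R}}K_{h}(x_{\ell},v)\,dv=1$ with vanishing first moment, so $Q_{4}$ matches the target up to $o(h)$; for $x_{\ell}\in[0,1]$ within $O(h)$ of $\partial[0,1]$ the boundary-corrected kernel is first order only up to the factor $\kappa_{0}(x_{\ell})=\int_{\mathbb{R}}K_{h}(x_{\ell},v)\,dv$, which produces the $O(h)$ remainder and the $\kappa_{0}(x_{\ell})$-multiple in the second display. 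Assembling the ratio estimate with the stochastic and bias bounds yields the first two displays; $\hat{p}_{\ell,k}$ is entirely analogous, with the $\sqrt{\log n/(nh^{d+2})}$ variance rate and with the product $\kappa_{0}(x_{\ell})\kappa_{0}(x_{k})$ of boundary corrections near the two-dimensional boundary.

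The hard part will be the bias step, and within it the passage from the almost-sure coupling $\|\bm{X}_{\bm{s},A_{n}}-\bm{X}_{\bm{u}}(\bm{s})\|\le(\|\bm{s}/A_{n}-\bm{u}\|+A_{n}^{-d})U_{\bm{s},A_{n}}(\bm{u})$ to a usable bound on $E_{\cdot|\bm{S}}$ of $I(\bm{X}_{\bm{s},A_{n}}\in[0,1]^{p})-I(\bm{X}_{\bm{u}}(\bm{s})\in[0,1]^{p})$, which is supported on an $O(A_{n}^{-d}U_{\bm{s},A_{n}}(\bm{u}))$-neighbourhood of $\partial[0,1]^{p}$: this requires the truncation-of-the-tail-of-$U$ argument of Lemma \ref{lemma-C1}, and the correct power of $h$ has to be tracked carefully through the kernel normalisations and the proxies $K_{0,h}$. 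The stochastic bound, by contrast, is a routine re-reading of Proposition \ref{general-unif-rate}.
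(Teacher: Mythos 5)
Your proposal is correct and follows essentially the same route as the paper: reduce $\hat{p}_{\ell}$ (resp. $\hat{p}_{\ell,k}$) to the version normalised by a deterministic count via Lemma \ref{lemma-C1} and a ratio expansion whose $O_{P_{\cdot|\bm{S}}}(\sqrt{\log n/(nh^{d})})$ error is absorbed, then treat the variance part by Proposition \ref{general-unif-rate} (with one, resp. two, covariate kernels under Assumption \ref{Ass-Rb}) and the bias part by the $Q_{1}$--$Q_{4}$ decomposition of Step 2 of Theorem \ref{unif-rate-m}. The additional care you flag for the modified boundary kernel $K_{h}(\cdot,\cdot)$, the $\kappa_{0}(x_{\ell})$ factor near $\partial[0,1]$, and the indicator replacement via the tail-truncation of $U_{\bm{s},A_{n}}$ is exactly what the paper's terse ``similar arguments'' appeal leaves implicit.
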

\begin{proof}
Since the proof is similar, we only give the proof for $\hat{p}_{\ell}(\bm{u},x_{\ell})$. Define
\begin{align*}
\check{p}_{\ell}(\bm{u},x_{\ell}) &= {1 \over n_{0}}\sum_{j=1}^{n}I(\bm{X}_{\bm{s}_{j},A_{n}} \in [0,1]^{p})\bar{K}_{h}\left(\bm{u}, {\bm{s}_{j} \over A_{n}}\right)K_{h}(x_{\ell}, X_{\bm{s}_{j}, A_{n}}^{\ell}). 
\end{align*}
Applying Lemma \ref{lemma-C1}, we have that
\begin{align*}
\hat{p}_{\ell}(\bm{u},x_{\ell}) &= \left(1 + {n_{[0,1]^{p}} - n_{0} \over n_{0}}\right)^{-1}\check{p}_{\ell}(\bm{u},x_{\ell}) = \left(1 - {n_{[0,1]^{p}} - n_{0} \over n_{0}} + O_{P_{\cdot|\bm{S}}}\left(\left({n_{[0,1]^{p}} - n_{0} \over n_{0}}\right)^{2}\right)\right)\check{p}_{\ell}(\bm{u},x_{\ell})\\
&= \check{p}_{\ell}(\bm{u},x_{\ell}) + O_{P_{\cdot|\bm{S}}}\left(\sqrt{\log n \over nh^{d}}\right)
\end{align*}
uniformly for $\bm{u} \in I_{h}$ and $x_{\ell} \in [0,1]$. Applying similar arguments in the proof of Theorem \ref{unif-rate-m} to $\check{p}_{\ell}(\bm{u},x_{\ell})$, we obtain the desired result. 
\end{proof}

Decompose $\hat{m}_{\ell}(\bm{u}, \bm{x}_{\ell}) = \hat{m}_{1,\ell}(\bm{u}, \bm{x}_{\ell}) + \hat{m}_{1,\ell}(\bm{u}, \bm{x}_{\ell})$, where 
\begin{align*}
\hat{m}_{1,\ell}(\bm{u}, \bm{x}_{\ell}) &= {1 \over \hat{p}_{\ell}(\bm{u}, x_{\ell})n_{[0,1]^{p}}}\sum_{j=1}^{n}I(\bm{X}_{\bm{s}_{j},A_{n}} \in [0,1]^{p})\bar{K}_{h}\left(\bm{u}, {\bm{s}_{j} \over A_{n}}\right)K_{h}(x_{\ell}, X_{\bm{s}_{j}, A_{n}}^{\ell})\epsilon_{\bm{s}_{j},A_{n}},\\
\hat{m}_{2,\ell}(\bm{u}, \bm{x}_{\ell}) &= {1 \over \hat{p}_{\ell}(\bm{u}, x_{\ell})n_{[0,1]^{p}}}\sum_{j=1}^{n}I(\bm{X}_{\bm{s}_{j},A_{n}} \in [0,1]^{p})\bar{K}_{h}\left(\bm{u}, {\bm{s}_{j} \over A_{n}}\right)K_{h}(x_{\ell}, X_{\bm{s}_{j}, A_{n}}^{\ell})\\
&\quad \times \left(m_{0}\left({\bm{s}_{j} \over A_{n}}\right) + \sum_{k=1}^{p}m_{k}\left({\bm{s}_{j} \over A_{n}}, X_{\bm{s}_{j},A_{n}}^{k}\right)\right).
\end{align*}

\begin{lemma}\label{lemma-C3}
Suppose Assumptions \ref{Ass-S}, \ref{Ass-M}, \ref{Ass-KB}, \ref{Ass-U(add)} (with $W_{\bm{s}, A_{n}} = 1$ and $\epsilon_{\bm{s}, A_{n}}$) and \ref{Ass-Rb} hold. Then
\begin{align*}
\sup_{\bm{u} \in [0,1]^{d}, x_{\ell} \in [0,1]}\left|\hat{m}_{1,\ell}(\bm{u}, \bm{x}_{\ell})\right| &= O_{P_{\cdot|\bm{S}}}\left(\sqrt{\log n \over nh^{d+1}}\right).
\end{align*}
\end{lemma}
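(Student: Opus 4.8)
The plan is to reduce the statement to an application of the general uniform-convergence result already proven, namely Proposition \ref{general-unif-rate}, after stripping away the multiplicative correction factors that distinguish $\hat m_{1,\ell}$ from a clean general kernel estimator. First I would write $n_{[0,1]^p} = n_0(1 + (n_{[0,1]^p}-n_0)/n_0)$ and similarly replace $\hat p_\ell(\bm u, x_\ell)$ by its deterministic surrogate $\kappa_0(x_\ell)p_\ell(\bm u,x_\ell)$ (or, on the shrunken index set $I_h \times I_{h,0}$, by $p_\ell(\bm u,x_\ell)$ up to negligible error), using Lemma \ref{lemma-C1} for the first ratio and Lemma \ref{lemma-C2} for the second. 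Since $\inf_{\bm u, x}p_\ell(\bm u,x_\ell)$ is bounded away from zero (this follows from $\inf f(\bm u,\bm x)>0$ and $\inf f_{\bm S}(\bm u)>0$ together with boundedness of $\kappa_0$), both correction factors are $1 + O_{P_{\cdot|\bm S}}(\sqrt{\log n/nh^{d+1}}) = 1 + o_{P_{\cdot|\bm S}}(1)$ uniformly; consequently it suffices to control the ``raw'' sum
\begin{align*}
\check m_{1,\ell}(\bm u, x_\ell) &= {1 \over n_0 p_\ell(\bm u,x_\ell)}\sum_{j=1}^n I(\bm X_{\bm s_j,A_n} \in [0,1]^p)\bar K_h\!\left(\bm u, {\bm s_j \over A_n}\right)K_h(x_\ell, X_{\bm s_j,A_n}^\ell)\epsilon_{\bm s_j,A_n}.
\end{align*}

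Next I would observe that, up to the fixed deterministic weight $1/(p_\ell(\bm u,x_\ell))$ and the ratio $n/n_0 = O(1)$, this is exactly a general kernel estimator of the form $\hat\psi(\bm u,x_\ell)$ in (\ref{genKernel-est}) with $p$ replaced by $1$, bandwidth exponent $d+1$, and with the weights $W_{\bm s_j,A_n} = I(\bm X_{\bm s_j,A_n}\in[0,1]^p)\epsilon_{\bm s_j,A_n}$ — or, more carefully, one absorbs the indicator into a modified (bounded, Lipschitz-on-its-support) kernel acting on the extra coordinates $x_{k}$, $k\neq\ell$, so that the estimator genuinely has the structure of (\ref{genKernel-est}). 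I would then check that Assumption \ref{Ass-U(add)} with $W_{\bm s,A_n} = \epsilon_{\bm s,A_n}$ supplies exactly the moment and conditional-moment bounds (U1)--(U3) needed to invoke Proposition \ref{general-unif-rate}, and that Assumption \ref{Ass-Rb} is the translation of Assumption \ref{Ass-R} to the present context ($p \to 1$, so $a_n = \sqrt{\log n/nh^{d+1}}$, and the blocking conditions (Rb1)--(Rb2)). Because $E_{\cdot|\bm S}[\hat\psi(\bm u,x_\ell)] = 0$ here — the conditional mean of $\epsilon_{\bm s,A_n}$ given $\bm X_{\bm s,A_n}$ vanishes — the conclusion of Proposition \ref{general-unif-rate} directly yields $\sup|\check m_{1,\ell}| = O_{P_{\cdot|\bm S}}(\sqrt{\log n/nh^{d+1}})$, and multiplying by the $1+o_{P_{\cdot|\bm S}}(1)$ correction factors preserves the rate.

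One subtlety I would be careful about: the supremum in the statement is over $\bm u\in[0,1]^d$ and $x_\ell\in[0,1]$, not over the shrunken set $I_h\times I_{h,0}$, so I cannot use the ``$\hat p_\ell$ close to $p_\ell$'' bound naively near the boundary. The resolution is that $\hat m_{1,\ell}$ involves the \emph{modified} kernel $K_h(\cdot,\cdot)$, which by construction integrates to one and keeps $\hat p_\ell(\bm u,x_\ell)$ bounded below by a positive constant uniformly over the full unit cube (this is precisely why the modified kernel weight was introduced in Section 4); so the denominator is uniformly controlled even at the edges, and Proposition \ref{general-unif-rate} applied with the modified kernels — which still satisfy the compact-support/Lipschitz requirements of Assumption \ref{Ass-KB} up to constants — gives the uniform bound over $[0,1]^d\times[0,1]$.

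The main obstacle I anticipate is purely bookkeeping rather than conceptual: verifying cleanly that replacing the ordinary kernel by the modified kernel $K_h(v,w) = I(v,w\in[0,1])K_h(v-w)/\int_{[0,1]}K_h(s-w)ds$ does not break any hypothesis of Proposition \ref{general-unif-rate} — in particular that the normalizing integral in the denominator stays bounded away from zero and above (so the modified kernel remains bounded and Lipschitz with uniform constants), and that incorporating the indicator $I(\bm X_{\bm s_j,A_n}\in[0,1]^p)$ either as part of $W$ (checking (U3) still holds, which it does because conditioning on an event only shrinks conditional moments) or as part of an augmented kernel is internally consistent. Once that translation is pinned down, the rest is a direct citation of Proposition \ref{general-unif-rate} plus Lemmas \ref{lemma-C1} and \ref{lemma-C2}, exactly paralleling the treatment of $\hat g_1$ in Step 1 of the proof of Theorem \ref{unif-rate-m}.
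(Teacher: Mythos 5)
Your proposal is correct and follows essentially the same route as the paper, whose entire proof is to replace $n_{[0,1]^{p}}$ by $n_{0}$ (via Lemma \ref{lemma-C1}) and then invoke Proposition \ref{general-unif-rate} with $p$ replaced by $1$ and the weight $W_{\bm{s},A_{n}} = I(\bm{X}_{\bm{s},A_{n}}\in[0,1]^{p})\epsilon_{\bm{s},A_{n}}$, which has vanishing conditional mean so the centering term in the proposition disappears. The additional bookkeeping you flag — the uniform lower bound on the denominator coming from the self-normalizing modified kernels, and the fact that multiplying by an indicator measurable with respect to $\bm{X}_{\bm{s},A_{n}}$ preserves (U1)--(U3) — is exactly what the paper leaves implicit, and your treatment of it is sound.
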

\begin{proof}
Replacing $n_{[0,1]^{p}}$ in the definition of $\hat{m}_{1,\ell}$ by $n_{0}$ and applying Proposition \ref{general-unif-rate} gives the desired result. 
\end{proof}

\begin{lemma}\label{lemma-C4}
Let $I_{h,0}^{c} = [0,1]\backslash I_{0,h}$ and $I_{h}^{c} = [0,1]^{d}\backslash I_{h}$. Suppose Assumptions \ref{Ass-S}, \ref{Ass-M}, \ref{Ass-KB}, \ref{Ass-U(add)} (with $W_{\bm{s}, A_{n}} = 1$ and $\epsilon_{\bm{s}, A_{n}}$) and \ref{Ass-Rb} hold. Then
\begin{align*}
\sup_{\bm{u} \in I_{h},x_{\ell} \in I_{h,0}}\left|\hat{m}_{2,j}(\bm{u},x_{\ell}) - \hat{\mu}_{\ell}(\bm{u},x_{\ell})\right| &= o_{P_{\cdot|\bm{S}}}(h^{2}),\\
\sup_{\bm{u} \in I_{h},x_{\ell} \in I_{h,0}^{c}}\left|\hat{m}_{2,j}(\bm{u},x_{\ell}) - \hat{\mu}_{\ell}(\bm{u},x_{\ell})\right| &= O_{P_{\cdot|\bm{S}}}(h^{2}),
\end{align*}
where
\begin{align*}
\hat{\mu}_{\ell}(\bm{u},x_{\ell}) &= \alpha_{0}(\bm{u}) + \alpha_{\ell}(\bm{u},x_{\ell}) + \sum_{k \neq \ell}\int_{\mathbb{R}}\alpha_{k}(\bm{u},x_{k}){\hat{p}_{\ell, k}(\bm{u},x_{\ell},x_{k}) \over \hat{p}_{\ell}(\bm{u}, x_{\ell})}dx_{\ell} + h^{2}\int_{\mathbb{R}^{p-1}}\!\!\!\!\beta(\bm{u},\bm{x}){p(\bm{u},\bm{x}) \over p_{\ell}(\bm{u},x_{\ell})}d\bm{x}_{-\ell},
\end{align*}
where 
\begin{align*}
\alpha_{0}(\bm{u}) &= m_{0}(\bm{u}) + h\sum_{i=1}^{d}\kappa_{1}(u_{i})\partial_{u_{i}}m_{0}(\bm{u})\\
&\quad + {h^{2} \over 2}\left(\sum_{i=1}^{d}\kappa_{2}(u_{i})\partial_{u_{i}}m_{0}(\bm{u}) + \sum_{1 \leq i_{1},i_{2} \leq d, i_{1}\neq i_{2}}\kappa_{1}(u_{i_{1}})\kappa_{1}(u_{i_{2}})\partial_{u_{i_{1}}u_{i_{2}}}^{2}m_{0}(\bm{u})\right),\\
\alpha_{k}(\bm{u},x_{k}) &= m_{k}(\bm{u},x_{k}) + h\left\{\sum_{i=1}^{d}\kappa_{1}(u_{i})\partial_{u_{i}}m_{k}(\bm{u},x_{k}) + {\prod_{i=1}^{d}\kappa_{0}(u_{i})\kappa_{1}(x_{k}) \over \kappa_{0}(x_{k})}\partial_{x_{k}}m_{k}(\bm{u},x_{k})\right\},\\
\beta(\bm{u},\bm{x}) &= \kappa_{2}\sum_{i=1}^{d}\partial_{u_{i}}m_{0}(\bm{u})\partial_{u_{i}}\log p(\bm{u},\bm{x}) + \kappa_{2}\sum_{k=1}^{p}\left\{\sum_{i=1}^{d}\partial_{u_{i}}m_{k}(\bm{u},x_{k})\partial_{u_{i}}\log p(\bm{u},\bm{x}) \right. \\ 
&\left. \quad + {1 \over 2}\sum_{i=1}^{d}\partial_{u_{i}u_{i}}^{2}m_{k}(\bm{u},x_{k}) + \partial_{x_{k}}m_{k}(\bm{u},x_{k})\partial_{x_{k}}\log p(\bm{u},\bm{x}) + {1 \over 2}\partial_{x_{k}x_{k}}^{2}m_{k}(\bm{u},x_{k})\right\}.
\end{align*}
Here, $\kappa_{2} = \int_{\mathbb{R}}x^{2}K(x)dx$ and $\kappa_{j}(v) = \int_{\mathbb{R}}w^{j}K_{h}(v,w)dw$ for $j = 0,1,2,$. 
\end{lemma}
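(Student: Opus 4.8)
The plan is to adapt the proof of Lemma C.4 in \cite{Vo12} to the present setting, the two genuinely new ingredients being the stochastic sampling density $f_{\bm{S}}$, which must be threaded through every kernel average, and the local-stationarity approximation error, which must be shown negligible at each occurrence. Write $\hat{m}_{2,\ell}(\bm{u},x_{\ell}) = \hat{q}_{\ell}(\bm{u},x_{\ell})/\hat{p}_{\ell}(\bm{u},x_{\ell})$, where $\hat{q}_{\ell}$ is the numerator in the definition of $\hat{m}_{2,\ell}$. The first step is to reduce the data-dependent normalization $n_{[0,1]^{p}}$ to its conditional mean $n_{0}$: by Lemma \ref{lemma-C1} and the identities stated just after it one has $n_{[0,1]^{p}}/n_{0} = 1 + O_{P_{\cdot|\bm{S}}}(\sqrt{\log n/(nh^{d})})$, $P_{\bm{S}}$-a.s., and since $m_{0},m_{1},\dots,m_{p}$ are bounded this replacement perturbs $\hat{q}_{\ell}$ by a uniformly $O_{P_{\cdot|\bm{S}}}(\sqrt{\log n/(nh^{d})})$ amount, which is $o(h^{2})$ under the bandwidth conditions in Assumption \ref{Ass-Rb}. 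Throughout, $\hat{p}_{\ell}(\bm{u},x_{\ell})$ in the denominator is bounded away from zero on $I_{h}\times I_{h,0}$ by Lemma \ref{lemma-C2} together with $\inf f>0$, so it is harmless to carry it as a multiplicative $O_{P_{\cdot|\bm{S}}}(1)$ factor.

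The second step is the stationary approximation: inside the kernel factors $K_{h}(x_{\ell},X_{\bm{s}_{j},A_{n}}^{\ell})$, the truncation indicator $I(\bm{X}_{\bm{s}_{j},A_{n}}\in[0,1]^{p})$, and the functions $m_{k}(\cdot,X_{\bm{s}_{j},A_{n}}^{k})$ one replaces $\bm{X}_{\bm{s}_{j},A_{n}}$ by its local-stationary surrogate $\bm{X}_{\bm{s}_{j}/A_{n}}(\bm{s}_{j})$ from Assumption \ref{Ass-M} (M1). This is exactly the telescoping and $r$-th moment argument carried out in Step 2 of the proof of Theorem \ref{unif-rate-m}, using the Lipschitz property in (KB1) and the smoothness in (M5); the resulting error is uniformly $O(A_{n}^{-dr}h^{-p-r})$, hence $o(h^{2})$ by (Rb3), while the error incurred in moving the indicator is controlled as in the proof of Lemma \ref{lemma-C1} and is $o(h^{2})$ by the second half of (Rb3).

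Once everything is expressed through the stationary fields, the third step identifies the main term of $\hat{q}_{\ell}$. For the drift $m_{0}$ and the summand involving $m_{\ell}(\cdot,X^{\ell})$ one takes conditional expectations and Taylor-expands $m_{0},m_{\ell}$ and the density $p$ to second order about $(\bm{u},x_{\ell})$, invoking the Masry-type lemmas (Lemmas \ref{Masry-thm}, \ref{Masry-thm2}, \ref{Masry-thm3}); the modified-kernel moments $\kappa_{j}(v)=\int w^{j}K_{h}(v,w)\,dw$ then reproduce precisely $\alpha_{0}(\bm{u})$ and $\alpha_{\ell}(\bm{u},x_{\ell})$, with $f_{\bm{S}}(\bm{u})$ cancelling between numerator and $\hat{p}_{\ell}$. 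For $k\neq\ell$ one uses the algebraic identity $\int_{[0,1]}m_{k}(\bm{u},x_{k})\hat{p}_{\ell,k}(\bm{u},x_{\ell},x_{k})\,dx_{k}=\tfrac{1}{n_{[0,1]^{p}}}\sum_{j}I(\cdot)\bar{K}_{h}(\bm{u},\bm{s}_{j}/A_{n})K_{h}(x_{\ell},X_{\bm{s}_{j},A_{n}}^{\ell})\int m_{k}(\bm{u},x_{k})K_{h}(x_{k},X_{\bm{s}_{j},A_{n}}^{k})\,dx_{k}$, compares $\int m_{k}(\bm{u},x_{k})K_{h}(x_{k},X_{\bm{s}_{j},A_{n}}^{k})\,dx_{k}$ with $m_{k}(\bm{s}_{j}/A_{n},X_{\bm{s}_{j},A_{n}}^{k})$ by Taylor expansion (which produces exactly the $h$- and $h^{2}$-corrections defining $\alpha_{k}$ and, after collecting all second-order pieces, $h^{2}\int\beta(\bm{u},\bm{x})\hat{p}(\bm{u},\bm{x})/\hat{p}_{\ell}(\bm{u},x_{\ell})\,d\bm{x}_{-\ell}$, using Lemma \ref{lemma-C2} to interchange $\hat{p}$ and $p$ in the $o(h^{2})$-order terms), and matches the $k$-th summand of $\hat{q}_{\ell}$ against $\int\alpha_{k}(\bm{u},x_{k})\hat{p}_{\ell,k}(\bm{u},x_{\ell},x_{k})/\hat{p}_{\ell}(\bm{u},x_{\ell})\,dx_{k}$ up to an $o(h^{2})$ remainder. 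The remaining genuinely stochastic fluctuation of $\hat{q}_{\ell}$ is bounded by the $p=1$ version of Proposition \ref{general-unif-rate} with $W_{\bm{s},A_{n}}=1$ on the $(d+1)$-dimensional domain, giving $O_{P_{\cdot|\bm{S}}}(\sqrt{\log n/(nh^{d+1})})=o(h^{2})$. Dividing by $\hat{p}_{\ell}(\bm{u},x_{\ell})$ yields $\hat{m}_{2,\ell}-\hat{\mu}_{\ell}=o_{P_{\cdot|\bm{S}}}(h^{2})$ on $I_{h}\times I_{h,0}$, where the modified kernel coincides with the unmodified one so that $\kappa_{0}\equiv1$, $\kappa_{1}\equiv0$ and the expansion is sharp; on $I_{h}\times I_{h,0}^{c}$ the first moment $\kappa_{1}(x_{\ell})$ no longer vanishes, the Taylor remainder near the boundary is only $O(h^{2})$, and together with the boundary behaviour of $\hat{p}_{\ell}$ this gives the weaker $O_{P_{\cdot|\bm{S}}}(h^{2})$ bound.

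The main obstacle is the bookkeeping in the third step: one must carry the second-order Taylor expansion uniformly in $(\bm{u},x_{\ell})$ while simultaneously tracking (i) the sampling density $f_{\bm{S}}$, which enters every Masry-type average and has to cancel correctly between $\hat{q}_{\ell}$ and $\hat{p}_{\ell}$, (ii) the modified-kernel moments $\kappa_{0},\kappa_{1},\kappa_{2}$, whose differing behaviour in the interior and near the boundary is exactly what forces the interior/boundary dichotomy in the statement, and (iii) the localization error $O(A_{n}^{-dr}h^{-p-r})$, which must be verified to be $o(h^{2})$ at each of its occurrences via Assumption \ref{Ass-Rb}. None of this is conceptually new relative to \cite{Vo12}, but the superposition of the three effects makes the accounting delicate; I would organize the argument by first establishing the claim for the $k=0$ and $k=\ell$ contributions, then for $k\neq\ell$, carrying a running $o_{P_{\cdot|\bm{S}}}(h^{2})$ error term on the interior (respectively $O_{P_{\cdot|\bm{S}}}(h^{2})$ near the boundary) throughout.
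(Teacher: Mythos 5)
Your proposal follows essentially the same route as the paper, which omits the detailed argument and defers to Lemma C.4 of \cite{Vo12}, citing exactly the ingredients you use: Lemma \ref{lemma-C1} for the normalization $n_{[0,1]^{p}}$ versus $n_{0}$, the Masry-type Lemmas \ref{Masry-thm2} and \ref{Masry-thm3} for the kernel averages involving $f_{\bm{S}}$, and the arguments of Proposition \ref{general-unif-rate} and Theorem \ref{unif-rate-m} for the stochastic fluctuation and the local-stationarity error. Your sketch is a correct and somewhat more explicit account of that same strategy, including the interior/boundary dichotomy driven by the modified-kernel moments $\kappa_{0},\kappa_{1}$.
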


\begin{proof}
Although the detailed proof is lengthy and involved, we can obtain the desired result by applying almost the same strategy in the proof of Lemma C.4 in \cite{Vo12}. Therefore, we omit the proof. We note that Lemmas \ref{lemma-C1}, \ref{Masry-thm2} and \ref{Masry-thm3} in this paper, which correspond to Lemmas C.1, B.1 and B.2 in \cite{Vo12}, and the similar argument in the proof of Proposition \ref{general-unif-rate} and Theorem \ref{unif-rate-m} in this paper are applied for the proof. See also \cite{MaLiNi99} for the original idea of the proof.
\end{proof}

\begin{lemma}\label{lemma-C5}
Suppose Assumptions \ref{Ass-S}, \ref{Ass-M}, \ref{Ass-KB}, \ref{Ass-U(add)} (with $W_{\bm{s}, A_{n}} = 1$ and $\epsilon_{\bm{s}, A_{n}}$) and \ref{Ass-Rb} hold. Then
\begin{align*}
\sup_{\bm{u} \in I_{h}}\left|\tilde{m}_{0}(\bm{u}) - m_{0}(\bm{u})\right| &= O_{P_{\cdot|\bm{S}}}\left(\sqrt{\log n \over nh^{d}} + h^{2}\right). 
\end{align*}
\end{lemma}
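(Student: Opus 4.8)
The plan is to recognize $\tilde{m}_{0}$ as an ordinary Nadaraya--Watson smoother once the normalizations cancel, and then to split the error into a stochastic part of order $\sqrt{\log n/(nh^{d})}$ and a deterministic bias of order $h^{2}$, in close parallel with the proof of Theorem \ref{unif-rate-m}. Writing $\tilde{n}_{[0,1]^{p}} = \sum_{j=1}^{n}\bar{K}_{h}(\bm{u},\bm{s}_{j}/A_{n})I(\bm{X}_{\bm{s}_{j},A_{n}}\in[0,1]^{p})$, the definitions of $n_{[0,1]^{p}}$ and $\tilde{f}_{\bm{S}}$ give $\bar{f}_{\bm{S}}(\bm{u}) = \tilde{f}_{\bm{S}}(\bm{u})$ and $n_{[0,1]^{p}}\bar{f}_{\bm{S}}(\bm{u}) = \tilde{n}_{[0,1]^{p}}$, so $\tilde{m}_{0}(\bm{u})$ is the ratio $\big(\sum_{j}I(\bm{X}_{\bm{s}_{j},A_{n}}\in[0,1]^{p})\bar{K}_{h}(\bm{u},\bm{s}_{j}/A_{n})Y_{\bm{s}_{j},A_{n}}\big)\big/\tilde{n}_{[0,1]^{p}}$. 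Inserting $Y_{\bm{s}_{j},A_{n}} - m_{0}(\bm{u}) = (m_{0}(\bm{s}_{j}/A_{n}) - m_{0}(\bm{u})) + \sum_{\ell=1}^{p}m_{\ell}(\bm{s}_{j}/A_{n},X_{\bm{s}_{j},A_{n}}^{\ell}) + \epsilon_{\bm{s}_{j},A_{n}}$ yields
\[
\tilde{m}_{0}(\bm{u}) - m_{0}(\bm{u}) = \frac{n}{\tilde{n}_{[0,1]^{p}}}\big(T_{A}(\bm{u}) + T_{B}(\bm{u}) + T_{C}(\bm{u})\big),
\]
where $T_{A},T_{B},T_{C}$ are $n^{-1}\sum_{j}I(\bm{X}_{\bm{s}_{j},A_{n}}\in[0,1]^{p})\bar{K}_{h}(\bm{u},\bm{s}_{j}/A_{n})$ multiplied by the first, second and third summand, respectively. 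A preliminary observation, used throughout, is that for $\bm{u}\in I_{h}=[2C_{1}h,1-2C_{1}h]^{d}$, whenever $\bar{K}_{h}(\bm{u},\bm{s}_{j}/A_{n})\neq0$ one has $\bm{s}_{j}/A_{n}\in[C_{1}h,1-C_{1}h]^{d}$, on which the modified kernel coincides with the ordinary symmetric product kernel up to the constant factor $(\int K)^{d}h^{d}$; hence on $I_{h}$ all three terms are ordinary kernel averages and the Masry-type Lemmas \ref{Masry-thm}, \ref{Masry-thm2} and \ref{Masry-thm3} apply directly.

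Next I would dispose of the denominator and of the stochastic parts. Lemma \ref{lemma-C1} gives $\tilde{n}_{[0,1]^{p}}/n = f_{\bm{S}}(\bm{u})P(\bm{X}_{\bm{u}}(\bm{0})\in[0,1]^{p}) + o(1)$ uniformly on $I_{h}$, $P_{\bm{S}}$-a.s.; since $f_{\bm{S}}$ is continuous and everywhere positive and $\inf_{\bm{u}}P(\bm{X}_{\bm{u}}(\bm{0})\in[0,1]^{p})>0$, it follows that $\sup_{\bm{u}\in I_{h}}n/\tilde{n}_{[0,1]^{p}} = O_{P_{\cdot|\bm{S}}}(1)$. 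For $T_{C}$, the array $W_{\bm{s}_{j},A_{n}}=I(\bm{X}_{\bm{s}_{j},A_{n}}\in[0,1]^{p})\epsilon_{\bm{s}_{j},A_{n}}$ has conditional mean zero because $E[\epsilon_{\bm{s},A_{n}}\mid\bm{X}_{\bm{s},A_{n}}]=0$, and it inherits the moment and mixing conditions of Assumption \ref{Ass-U(add)}; running the blocking and Bernstein argument of the proof of Proposition \ref{general-unif-rate} with the $d$-dimensional product kernel gives $\sup_{\bm{u}\in I_{h}}|T_{C}(\bm{u})| = O_{P_{\cdot|\bm{S}}}(\sqrt{\log n/(nh^{d})})$. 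The centred parts $T_{A}-E_{\cdot|\bm{S}}[T_{A}]$ and $T_{B}-E_{\cdot|\bm{S}}[T_{B}]$ are handled the same way --- the relevant weights $I(\bm{X}_{\bm{s}_{j},A_{n}}\in[0,1]^{p})(m_{0}(\bm{s}_{j}/A_{n})-m_{0}(\bm{u}))$ (which is moreover $O(h)$ on the support) and $I(\bm{X}_{\bm{s}_{j},A_{n}}\in[0,1]^{p})\sum_{\ell}m_{\ell}(\bm{s}_{j}/A_{n},X_{\bm{s}_{j},A_{n}}^{\ell})$ are bounded --- and are likewise $O_{P_{\cdot|\bm{S}}}(\sqrt{\log n/(nh^{d})})$.

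It remains to bound the two conditional expectations. For $E_{\cdot|\bm{S}}[T_{A}]$, since on $I_{h}$ the kernel is symmetric, a second-order Taylor expansion of $m_{0}$ about $\bm{u}$ combined with Lemmas \ref{Masry-thm2}, \ref{Masry-thm3} and $\int_{\mathbb{R}}vK(v)\,dv=0$ (which annihilates the first-order term) gives $\sup_{\bm{u}\in I_{h}}|E_{\cdot|\bm{S}}[T_{A}(\bm{u})]| = O(h^{2})$. The genuinely delicate term is
\[
E_{\cdot|\bm{S}}[T_{B}(\bm{u})] = \frac{1}{n}\sum_{j=1}^{n}\bar{K}_{h}(\bm{u},\bm{s}_{j}/A_{n})\,E_{\cdot|\bm{S}}\Big[I(\bm{X}_{\bm{s}_{j},A_{n}}\in[0,1]^{p})\sum_{\ell=1}^{p}m_{\ell}(\bm{s}_{j}/A_{n},X_{\bm{s}_{j},A_{n}}^{\ell})\Big].
\]
Here I would replace $\bm{X}_{\bm{s}_{j},A_{n}}$ by its stationary approximant $\bm{X}_{\bm{s}_{j}/A_{n}}(\bm{s}_{j})$: using $\|\bm{X}_{\bm{s}_{j},A_{n}}-\bm{X}_{\bm{s}_{j}/A_{n}}(\bm{s}_{j})\|\leq A_{n}^{-d}U_{\bm{s}_{j},A_{n}}$ a.s., $E[U_{\bm{s}_{j},A_{n}}^{\rho}]<\infty$, boundedness and Lipschitz continuity of the $m_{\ell}$, and boundedness of the stationary density near $\partial[0,1]^{p}$, the splitting on $\{U_{\bm{s}_{j},A_{n}}\leq A_{n}^{dq}\}$ with $q=1/(1+\rho)$ --- exactly as in the proof of Lemma \ref{lemma-C1} --- bounds the replacement error by $O(A_{n}^{-dr/(1+r)})$, uniformly in $j$. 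For the approximant, strict stationarity and the fact that each $m_{\ell}$ depends on $\bm{x}$ only through $x_{\ell}$ give, with $\bm{u}'=\bm{s}_{j}/A_{n}$,
\[
E\Big[I(\bm{X}_{\bm{u}'}(\bm{0})\in[0,1]^{p})\sum_{\ell=1}^{p}m_{\ell}(\bm{u}',X_{\bm{u}'}^{\ell}(\bm{0}))\Big] = P(\bm{X}_{\bm{u}'}(\bm{0})\in[0,1]^{p})\sum_{\ell=1}^{p}\int_{\mathbb{R}}m_{\ell}(\bm{u}',x_{\ell})p_{\ell}(\bm{u}',x_{\ell})\,dx_{\ell} = 0
\]
by the identification constraint $\int m_{\ell}(\bm{u}',x_{\ell})p_{\ell}(\bm{u}',x_{\ell})\,dx_{\ell}=0$. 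Hence, the factor $n^{-1}\sum_{j}|\bar{K}_{h}(\bm{u},\bm{s}_{j}/A_{n})|$ being $O(1)$ uniformly on $I_{h}$ by Lemma \ref{Masry-thm}, $\sup_{\bm{u}\in I_{h}}|E_{\cdot|\bm{S}}[T_{B}(\bm{u})]| = O(A_{n}^{-dr/(1+r)}) = o(h^{2})$, the last equality being precisely Assumption \ref{Ass-Rb}(Rb3) ($A_{n}^{dr/(1+r)}h^{2}\to\infty$). Collecting the estimates,
\[
\sup_{\bm{u}\in I_{h}}|\tilde{m}_{0}(\bm{u})-m_{0}(\bm{u})| \leq O_{P_{\cdot|\bm{S}}}(1)\cdot\Big(O(h^{2})+o(h^{2})+O_{P_{\cdot|\bm{S}}}\big(\sqrt{\log n/(nh^{d})}\big)\Big) = O_{P_{\cdot|\bm{S}}}\big(\sqrt{\log n/(nh^{d})}+h^{2}\big),
\]
$P_{\bm{S}}$-a.s.

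The main obstacle will be $E_{\cdot|\bm{S}}[T_{B}]$: unlike the smoothing biases it is not a $K_{h}$-convolution, and the only mechanism that makes its leading part vanish is the identification normalization; the residual is then controlled by how well the event $\{\bm{X}_{\bm{s}_{j},A_{n}}\in[0,1]^{p}\}$ is approximated by $\{\bm{X}_{\bm{s}_{j}/A_{n}}(\bm{s}_{j})\in[0,1]^{p}\}$, which yields only the slower rate $A_{n}^{-dr/(1+r)}$ (rather than $A_{n}^{-dr}$), and (Rb3) is exactly what makes this $o(h^{2})$. Everything else --- the reduction to a Nadaraya--Watson ratio on $I_{h}$ with ordinary product kernels, the control of $n/\tilde{n}_{[0,1]^{p}}$ via Lemma \ref{lemma-C1}, and the $\sqrt{\log n/(nh^{d})}$ bounds on the centred sums via the blocking argument of Proposition \ref{general-unif-rate} --- is routine.
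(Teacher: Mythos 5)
Your proposal is correct and follows essentially the same route as the paper, whose own proof is a one-line reduction to the argument of Theorem \ref{unif-rate-m} after replacing $n_{[0,1]^{p}}$ by $n_{0}$. The one step that sketch leaves implicit --- that the additive components contribute no bias because the identification constraint $\int m_{\ell}(\bm{u},x_{\ell})p_{\ell}(\bm{u},x_{\ell})\,dx_{\ell}=0$ annihilates $E_{\cdot|\bm{S}}[T_{B}]$ up to a local-stationarity error of order $A_{n}^{-dr/(1+r)}$, which Assumption \ref{Ass-Rb}(Rb3) renders $o(h^{2})$ --- is exactly the point you single out and handle correctly.
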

\begin{proof}
Replacing $n_{[0,1]^{d}}$ by $n_{0}$ in the definition of $\tilde{m}_{0}$ and applying similar arguments in the proof of Theorem \ref{unif-rate-m}, we obtain the desired result.
\end{proof}

\begin{lemma}\label{proof-CARMA-example}
Suppose that $E[|L([0,1]^{d})|^{q'}]<\infty$ for $1 \leq q' \leq q$ where $q$ is some even integer. Then a univariate CARMA-type random field with a kernel function $g$ of the form (\ref{exp-decay-kernel}) satisfies (\ref{CARMA-tail-decay}).
\end{lemma}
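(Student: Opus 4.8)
\textbf{Proof plan for Lemma \ref{proof-CARMA-example}.}
The plan is to decompose the CARMA-type random field $X_{\bm{s},A_{n}}$ into the $A_{2,n}$-truncated field $X_{\bm{u}}(\bm{s}:A_{2,n})$ (the $m_{n}$-dependent part, with $m_{n}=2A_{2,n}$) plus the residual $\epsilon^{\ell}_{\bm{s}_{j},A_{n}:A_{2,n}}$, exactly as in \eqref{m-dep-approx}, and then to control the $\frac{q\zeta}{\zeta-1}$-th conditional moment of the residual. Write $\bar{q}=\frac{q\zeta}{\zeta-1}$, which is an even integer by hypothesis. From the computation leading to \eqref{m-dep-approx}, the residual is bounded (up to a constant) by $\int_{\mathbb{R}^{d}} g_{\bm{u}}(\|\bm{s}-\bm{v}\|:A_{2,n})\,|L(d\bm{v})|$ where $g_{\bm{u}}(\|\bm{s}\|:A_{2,n})=|g(\bm{u},\|\bm{s}\|)|(1-\iota(\|\bm{s}\|:A_{2,n}))$ is supported on $\{\|\bm{s}\|\ge A_{2,n}/2\}$. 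For the exponential-decay kernel \eqref{exp-decay-kernel}, on that support $|g(\bm{u},\|\bm{s}\|)|\le C\sum_{k}e^{-c_{k}\|\bm{s}\|}\le C e^{-c_{0}\|\bm{s}\|}$ with $c_{0}=\min_{\ell}c_{\ell}$, since the functions $r_{k}$ are continuous on the compact set $[0,1]^{d}$ hence bounded.

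The key step is to bound $E\big[\big(\int_{\mathbb{R}^{d}}g_{\bm{u}}(\|\bm{s}-\bm{v}\|:A_{2,n})|L(d\bm{v})|\big)^{\bar{q}}\big]$. First I would expand the $\bar{q}$-th power into a $\bar{q}$-fold integral over $\bm{v}_{1},\dots,\bm{v}_{\bar q}$ and use the moment formula for L\'evy-driven moving averages (as in \cite{BrMa17}): because $L$ has independent increments and finite moments up to order $q'\le\bar q$, the resulting mixed moments $E[|L(d\bm{v}_{1})|\cdots|L(d\bm{v}_{\bar q})|]$ factor into products of Lebesgue measures with bounded coefficients, so that
\[
E\left[\left(\int_{\mathbb{R}^{d}}g_{\bm{u}}(\|\bm{s}-\bm{v}\|:A_{2,n})|L(d\bm{v})|\right)^{\bar q}\right]\le C\sum_{\text{partitions}}\prod_{\text{blocks }B}\left(\int_{\mathbb{R}^{d}}g_{\bm{u}}(\|\bm{v}\|:A_{2,n})^{|B|}d\bm{v}\right),
\]
and every factor is dominated by a power of $\int_{\|\bm{v}\|\ge A_{2,n}/2}e^{-c_{0}\|\bm{v}\|}d\bm{v}$ (for blocks of size $1$) or a constant (for larger blocks, since $g_{\bm u}$ is bounded). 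A polar-coordinates estimate gives $\int_{\|\bm{v}\|\ge A_{2,n}/2}e^{-c_{0}\|\bm{v}\|}d\bm{v}\le C A_{2,n}^{d-1}e^{-c_{0}A_{2,n}/2}$. Collecting the partition terms, the dominant contribution is the all-singletons partition, yielding $E[|\epsilon^{\ell}_{\bm{s}_{j},A_{n}:A_{2,n}}|^{\bar q}]\le C\big(A_{2,n}^{d-1}e^{-c_{0}A_{2,n}/2}\big)^{?}$; taking the $\frac{1}{\bar q}=\frac{\zeta-1}{q\zeta}$-th root and matching exponents produces $\gamma_{\epsilon}(A_{2,n})\le C A_{2,n}^{(d-1)(\zeta-1)/(q\zeta)}e^{-c_{0}A_{2,n}/2}$, which is precisely \eqref{CARMA-tail-decay}. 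Since the bound is uniform in $\bm{u}\in[0,1]^{d}$, $\bm{s}_{j}$, and $A_{n}$ (conditionally on $\bm{S}$ it does not depend on the sampling realization), it holds $P_{\bm{S}}$-a.s., verifying Condition (Ma0).

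The main obstacle is bookkeeping in the moment expansion: one must correctly account for the partition structure of $E[|L(d\bm{v}_1)|\cdots|L(d\bm{v}_{\bar q})|]$ for the absolute-value (rather than centered) integrand and check that non-singleton blocks contribute only bounded factors, so that the exponential decay rate $e^{-c_0 A_{2,n}/2}$ is preserved and only the polynomial prefactor $A_{2,n}^{d-1}$ accumulates the power $(d-1)$ and then gets diluted by the root $\frac{\zeta-1}{q\zeta}$. I would handle this by using the finiteness of $E[|L([0,1]^d)|^{q'}]$ for all $1\le q'\le\bar q$ together with a stationarity/shift-invariance argument for the random measure, reducing all block integrals to scalar integrals of $g_{\bm u}(\|\cdot\|:A_{2,n})^{|B|}$, which are then estimated by the exponential bound above. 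The remaining steps — dominating $|g(\bm u,\cdot)|$ by $Ce^{-c_0\|\cdot\|}$ on the truncation support and the polar-coordinate integral — are routine.
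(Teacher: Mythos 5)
Your overall route is the one the paper takes: the same truncation of the kernel by $\iota(\cdot:A_{2,n})$, the same pointwise bound $|g(\bm{u},\|\bm{s}\|)|\le Ce^{-c_{0}\|\bm{s}\|}$ with $c_{0}=\min_{\ell}c_{\ell}$, and the same polar-coordinate tail estimate $\int_{\|\bm{v}\|\ge A_{2,n}/2}e^{-kc_{0}\|\bm{v}\|}d\bm{v}\lesssim A_{2,n}^{d-1}e^{-kc_{0}A_{2,n}/2}$. The paper is in fact terser than you at the moment step: writing $\bar{q}=q\zeta/(\zeta-1)$ for the even moment order, it simply invokes the moment computations of \cite{BrMa17} to bound $E[|\epsilon_{\bm{s},A_{n}:A_{2,n}}|^{\bar{q}}]\lesssim\int_{\mathbb{R}^{d}}e^{-\bar{q}c_{0}\|\bm{u}\|}(1-\iota(\|\bm{u}\|:A_{2,n}))^{\bar{q}}d\bm{u}$ and then integrates; taking the $\bar{q}$-th root of $A_{2,n}^{d-1}e^{-\bar{q}c_{0}A_{2,n}/2}$ yields exactly the prefactor $x^{(d-1)/\bar{q}}=x^{(d-1)(\zeta-1)/(q\zeta)}$ appearing in (\ref{CARMA-tail-decay}).

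The gap is in your closing bookkeeping, which you leave with a literal question mark, and neither of the two heuristics you offer closes it. First, if the all-singletons partition dominated, you would get $E[|\epsilon|^{\bar{q}}]\lesssim(A_{2,n}^{d-1}e^{-c_{0}A_{2,n}/2})^{\bar{q}}$, whose $\bar{q}$-th root is $A_{2,n}^{d-1}e^{-c_{0}A_{2,n}/2}$: the correct exponential rate but with polynomial prefactor $x^{d-1}$, not the $x^{(d-1)(\zeta-1)/(q\zeta)}$ you claim to recover (harmless for verifying (\ref{approx-decay}), but not the inequality (\ref{CARMA-tail-decay}) as stated). Second, and more seriously, bounding non-singleton blocks ``by a constant'' discards their exponential decay: a partition with one block of size $2$ and $\bar{q}-2$ singletons would then contribute only $(A_{2,n}^{d-1}e^{-c_{0}A_{2,n}/2})^{\bar{q}-2}$, whose $\bar{q}$-th root decays like $e^{-c_{0}A_{2,n}(\bar{q}-2)/(2\bar{q})}$ --- strictly slower than the $e^{-c_{0}A_{2,n}/2}$ required. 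The repair is to keep the decay from every block: since $g_{\bm{u}}(\cdot:A_{2,n})$ is supported on $\{\|\bm{v}\|\ge A_{2,n}/2\}$ and bounded there by $Ce^{-c_{0}\|\bm{v}\|}$, one has $\int_{\mathbb{R}^{d}}g_{\bm{u}}^{|B|}(\|\bm{v}\|:A_{2,n})d\bm{v}\lesssim A_{2,n}^{d-1}e^{-|B|c_{0}A_{2,n}/2}$ for every block $B$, so every partition contributes $e^{-\bar{q}c_{0}A_{2,n}/2}$ times a polynomial in $A_{2,n}$, and the $\bar{q}$-th root then has the advertised exponential rate (with at worst a larger polynomial prefactor, which is immaterial for Condition (Ma0)).
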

\begin{proof}
Observe that 
\begin{align*}
X_{\bm{s},A_{n}} &= \int_{\mathbb{R}^{d}}g\left({\bm{s} \over A_{n}}, \|\bm{s} - \bm{v}\|\right)L(d\bm{v})\\
&= \int_{\mathbb{R}^{d}}g\left({\bm{s} \over A_{n}}, \|\bm{s} - \bm{v}\|\right)\iota(\|\bm{s} - \bm{v}\|:A_{2,n})L(d\bm{v}) + \int_{\mathbb{R}^{d}}g\left({\bm{s} \over A_{n}}, \|\bm{s} - \bm{v}\|\right)(1 - \iota(\|\bm{s} - \bm{v}\|:A_{2,n}))L(d\bm{v})\\
&=: X_{\bm{s}, A_{n}: A_{2,n}} + \epsilon_{\bm{s}, A_{n}:A_{2,n}}. 
\end{align*}
Note that $X_{\bm{s}, A_{n}: A_{2,n}}$ is $A_{2,n}$-dependent from the definition of the function $\iota$. Then we have that 
\begin{align*}
&E[|\epsilon_{\bm{s},A_{n}:A_{2,n}}|^{q}] \lesssim \int_{\mathbb{R}^{d}}e^{-qc_{0}\|\bm{u}\|}\left(1- \iota\left(\|\bm{u}\| : A_{2,n}\right)\right)^{q}d\bm{u}\\
& \lesssim \int_{\|\bm{u}\| \geq A_{2,n}/2}e^{-qc_{0}\|\bm{u}\|}\left|1 + {2 \over A_{2,n}}(\|\bm{u}\| - A_{2,n})\right|^{q}d\bm{u} \lesssim \int_{\|\bm{u}\| \geq A_{2,n}/2}\!\!\!\!\!\!\!e^{-qr_{0}\|\bm{u}\|}\left|1 + 2{\|\bm{u}\| \over A_{2,n}}\right|^{q}d\bm{u}\\
&\lesssim 2^{q-1}\int_{\|\bm{u}\| \geq A_{2,n}/2}e^{-qr_{0}\|\bm{u}\|}\left(1 + {2^{q}\|\bm{u}\|^{q} \over A_{2,n}^{q}}\right)d\bm{u} \lesssim \int_{A_{2,n}/2}^{\infty}e^{-qr_{0}t}\left(1 + {2^{q}t^{q} \over A_{2,n}^{q}}\right)t^{d-1}dt\\
&\lesssim e^{-{qr_{0}A_{2,n} \over 2}}\left(1 + {(A_{2,n}/2)^{q} \over A_{2,n}^{q}}\right)(A_{2,n}/2)^{d-1} \lesssim A_{2,n}^{d-1}e^{-{qr_{0}A_{2,n} \over 2}}.
\end{align*}
For the first inequality, we used the finiteness of the $E[|L([0,1]^{d})|^{q}]$ (see also \cite{BrMa17} for computation of moments of a L\'evy-driven MA process). For the second inequality, we used the definition of $\iota$. Hence we obtain the desired result.
\end{proof}

\begin{lemma}\label{LS-density}
Let $c$ be a positive constant and $g$ be a bounded function such that $\int_{\mathbb{R}^{d}}|g(\bm{v})|d\bm{v}<\infty$ and $\lim_{\|\bm{v}\| \to \infty}g(\bm{v}) = 0$. Suppose that the L\'evy random measure $L$ of a random field $X(\bm{s}) = \int_{\mathbb{R}}g(\bm{s}-\bm{v})L(d\bm{s})$ is purely non-Gaussian with L\'evy density
\begin{align*}
\nu_{0}(x) &= {c_{\beta} \over |x|^{1+\beta}}g_{0}(x),\ x \neq 0
\end{align*} 
where $\beta \in (0,1)$, $c_{\beta}>0$ and $g_{0}$ is a positive, continuous and bounded function on $\mathbb{R}$ with $\lim_{|x| \downarrow 0}g_{0}(x) = 1$. Then there exists a constant $\tilde{C}>0$ such that
\begin{align*}
u^{2}\int_{\mathbb{R}^{d}}g^{2}(\bm{v})\int_{|x| \leq {1 \over |g(\bm{v})||u|}}x^{2}\nu_{0}(x)dxd\bm{v} \geq \tilde{C}|u|^{2-\alpha}
\end{align*}  
for any $|u| \geq c_{0}>1$, where $\alpha = 2-\beta \in (0,2)$.
\end{lemma}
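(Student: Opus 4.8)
The plan is to bound the inner $x$-integral below by an explicit power of its truncation level, and then to integrate the outer $\bm v$-integral only over a carefully chosen set on which that bound holds with a constant independent of $u$. Since $g_{0}$ is continuous on $\mathbb R$ with $\lim_{|x|\downarrow0}g_{0}(x)=1$, there exist $\delta>0$ and $c_{1}>0$ with $g_{0}(x)\ge c_{1}$ for all $|x|\le\delta$. Writing $x^{2}\nu_{0}(x)=c_{\beta}|x|^{1-\beta}g_{0}(x)$ and using $2-\beta>0$, one gets, for every $T\le\delta$,
\[
\int_{|x|\le T}x^{2}\nu_{0}(x)\,dx\ \ge\ c_{\beta}c_{1}\int_{|x|\le T}|x|^{1-\beta}\,dx\ =\ \frac{2c_{\beta}c_{1}}{2-\beta}\,T^{2-\beta}\ =:\ c_{2}\,T^{2-\beta}.
\]

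Next I would pin down the region of integration in $\bm v$. Since the random field is nontrivial we have $|\{\bm v:g(\bm v)\ne0\}|>0$, so there are an integer $k\ge1$ and a radius $R<\infty$ such that the set $B:=\{|g|\ge1/k\}\cap\{\|\bm v\|\le R\}$ satisfies $0<|B|<\infty$; set $\varepsilon_{0}:=1/k$. Choose $c_{0}:=\max\{2,(\delta\varepsilon_{0})^{-1}\}$, which is $>1$. Then for every $|u|\ge c_{0}$ and every $\bm v\in B$ we have $|g(\bm v)|\,|u|\ge\varepsilon_{0}c_{0}\ge\delta^{-1}$, i.e.\ the truncation level $T=(|g(\bm v)|\,|u|)^{-1}$ satisfies $T\le\delta$, so the bound above applies pointwise on $B$.

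Inserting this and restricting the outer integral to $B$ yields, for all $|u|\ge c_{0}$,
\[
u^{2}\int_{\mathbb R^{d}}g^{2}(\bm v)\int_{|x|\le\frac{1}{|g(\bm v)||u|}}x^{2}\nu_{0}(x)\,dx\,d\bm v\ \ge\ c_{2}\,u^{2}\int_{B}g^{2}(\bm v)\,\big(|g(\bm v)|\,|u|\big)^{-(2-\beta)}d\bm v\ =\ c_{2}\,|u|^{\beta}\int_{B}|g(\bm v)|^{\beta}\,d\bm v,
\]
the exponents collapsing because $2-(2-\beta)=\beta$ and $g^{2}=|g|^{2}$. On $B$ one has $|g|\ge\varepsilon_{0}$, hence $\int_{B}|g|^{\beta}\ge\varepsilon_{0}^{\beta}|B|>0$; since $2-\alpha=2-(2-\beta)=\beta$, this is exactly the claimed bound with $\tilde C=c_{2}\varepsilon_{0}^{\beta}|B|$.

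The argument is elementary; the only subtlety---and the one step I would be careful about---is obtaining a constant $\tilde C$ independent of $u$. This is why the outer integral is cut down to the fixed, finite-measure set $B$ rather than taken over the (possibly infinite-measure) set $\{g\ne0\}$, and why $c_{0}$ is taken large enough that $(|g(\bm v)|\,|u|)^{-1}\le\delta$ uniformly on $B$ once $|u|\ge c_{0}$. Note that boundedness, integrability, or decay of $g$ play no role in this particular estimate; only $|\{g\ne0\}|>0$ is used.
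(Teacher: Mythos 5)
Your proof is correct and follows essentially the same route as the paper's: lower-bound the inner integral by $c_{2}T^{2-\beta}$ using that $g_{0}$ is bounded below near the origin, then restrict the outer integral to a fixed positive-measure set on which $|g|$ is bounded below, so that the powers collapse to $|u|^{\beta}=|u|^{2-\alpha}$. If anything, your version is slightly more careful than the paper's (which restricts to a bounded subset of $\{|g|\ge 1/c_{0}\}$ without noting that this set must be chosen to have positive measure, and tacitly uses the truncation level being $\le 1$ rather than $\le\delta$), but the underlying argument is identical.
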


\begin{proof}
From the assumption on $g$, there exists a bounded set $S_{c_{0}} \subset \mathbb{R}^{d}$ such that $S_{c_{0}} \subset \{\bm{v} \in \mathbb{R}^{d} : {1 \over c_{0}} \leq |g(\bm{v})|\}$.
Then we have that
\begin{align*}
&u^{2}\int_{\mathbb{R}^{d}}g^{2}(\bm{v})\int_{|x| \leq {1 \over |g(\bm{v})||u|}}x^{2}\nu_{0}(x)dxd\bm{v}\\
&= u^{2}\int_{ {1 \over |u||g(\bm{v})|} \leq 1}\!\!\!\! g^{2}(\bm{v})\int_{|x| \leq {1 \over |g(\bm{v})||u|}}\!\!\!\!x^{2}\nu_{0}(x)dxd\bm{v} + u^{2}\int_{ {1 \over |u||g(\bm{v})|} >1}\!\!\!\!\!\!g^{2}(\bm{v})\int_{|x| \leq {1 \over |g(\bm{v})||u|}}\!\!\!\!\!\!\!\!x^{2}\nu_{0}(x)dxd\bm{v}\\
& \gtrsim u^{2}\int_{ {1 \over |u||g(\bm{v})|} \leq 1}\!\!\!\!g^{2}(\bm{v})\int_{|x| \leq {1 \over |g(\bm{v})||u|}}\!\!\!\! |x|^{1-\beta}g_{0}(x)dxd\bm{v} + u^{2}\int_{ {1 \over |u||g(\bm{v})|} >1}\!\!\!\! g^{2}(\bm{v})\int_{|x| \leq 1}\!\!\!\! x^{2}\nu_{0}(x)dxd\bm{v}\\
& \gtrsim  u^{2}\int_{ {1 \over |u||g(\bm{v})|} \leq 1}g^{2}(\bm{v})|u|^{\beta-2}|g(\bm{v})|^{\beta -2}d\bm{v} + u^{2}\int_{ {1 \over |u||g(\bm{v})|} >1}g^{2}(\bm{v})d\bm{v}\\
&\geq |u|^{\beta}\int_{ {1 \over |u||g(\bm{v})|} \leq 1}|g(\bm{v})|^{\beta}d\bm{v} \geq |u|^{\beta}\int_{{1 \over c_{0}} \leq |g(\bm{v})|}|g(\bm{v})|^{\beta}d\bm{v} \geq |u|^{\beta}\int_{S_{c_{0}}}|g(\bm{v})|^{\beta}d\bm{v} \gtrsim |u|^{\beta}.
\end{align*}
Then we obtain the desired result.
\end{proof}

\section{Technical lemmas}

We refer to the following lemmas without those proofs. 
\begin{lemma}[Lemmas A.1 and 5.1 in \cite{La03b}]\label{n summands}
Let $\mathcal{I}_{n} = \{\bm{i} \in \mathbb{Z}^{d}: (\bm{i} +(0,1]^{d})\cap  R_{n} \neq \emptyset \}$. Then we have that 
\[
P_{\bm{S}}\left(\sum_{j=1}^{n}1\{A_{n} \bm{S}_{0,j} \in (\bm{i} +(0,1]^{d})\cap  R_{n} > 2(\log n + nA_{n}^{-d})\ \text{for some $\bm{i} \in \mathcal{I}_{n}$, i.o.}\right) = 0
\]
and 
\[
P_{\bm{S}}\left(\sum_{j=1}^{n}1\{A_{n} \bm{S}_{0,j} \in \Gamma_{n}(\bm{\ell}; \bm{\epsilon})\} > CA_{1,n}^{q(\bm{\epsilon})}A_{2,n}^{d-q(\bm{\epsilon})}nA_{n}^{-d}\ \text{for some $\bm{\ell} \in L_{1,n}$, i.o.}\right) = 0
\]
for any $\bm{\epsilon} \in \{1,2\}^{d}$, where $C>0$ is a sufficiently large constant. 
\end{lemma}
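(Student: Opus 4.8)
The plan is to reduce both assertions to tail bounds for binomial counts, combine them with a union bound over the $O(n)$ relevant unit cubes (resp.\ blocks), and conclude via the first Borel--Cantelli lemma.

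Recall $\{\bm{S}_{0,j}\}_{j\ge 1}$ is i.i.d.\ with density $f_{\bm{S}}$, which is continuous hence bounded on the compact set $R_{0}$; write $\bar{C}:=\sup_{R_{0}}f_{\bm{S}}<\infty$. For the first assertion, fix $n$ and $\bm{i}\in\mathcal{I}_{n}$ and set $N_{\bm{i}}:=\sum_{j=1}^{n}1\{A_{n}\bm{S}_{0,j}\in(\bm{i}+(0,1]^{d})\cap R_{n}\}$. Then $N_{\bm{i}}\sim\mathrm{Bin}(n,p_{\bm{i}})$ with $p_{\bm{i}}=\int_{A_{n}^{-1}((\bm{i}+(0,1]^{d})\cap R_{n})}f_{\bm{S}}\le\bar{C}A_{n}^{-d}$, so $\mu_{\bm{i}}:=E[N_{\bm{i}}]\le\bar{C}nA_{n}^{-d}$. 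I would then apply the multiplicative Chernoff bound for the binomial distribution, $P(N_{\bm{i}}\ge a)\le(e\mu_{\bm{i}}/a)^{a}$ for $a\ge\mu_{\bm{i}}$, with the threshold $T_{n}:=C_{*}(\log n+nA_{n}^{-d})$, where $C_{*}$ is a sufficiently large constant (the precise value is immaterial: any $C_{*}$ exceeding a threshold depending only on $\bar{C}$ works, and the displayed constant may be enlarged if one wants a self-contained argument). Since $T_{n}\ge C_{*}nA_{n}^{-d}\ge e\mu_{\bm{i}}$ and $e\mu_{\bm{i}}/T_{n}\le e\bar{C}/C_{*}<e^{-1}$, one gets $P(N_{\bm{i}}\ge T_{n})\le(e\bar{C}/C_{*})^{T_{n}}\le n^{-C_{*}\log(C_{*}/(e\bar{C}))}$, uniformly in $\bm{i}$. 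As $|\mathcal{I}_{n}|\lesssim A_{n}^{d}\le n/C_{0}$ by (S2), a union bound gives $P(\exists\,\bm{i}\in\mathcal{I}_{n}:N_{\bm{i}}>T_{n})\lesssim n^{\,1-C_{*}\log(C_{*}/(e\bar{C}))}$, which is summable in $n$ once $C_{*}$ is large; Borel--Cantelli then yields the first claim.

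For the second assertion I would argue identically, with the unit cube replaced by the block $\Gamma_{n}(\bm{\ell};\bm{\epsilon})$, $\bm{\ell}\in L_{1,n}$, $\bm{\epsilon}\in\{1,2\}^{d}$. By (\ref{partition volume}) this block has Lebesgue measure $A_{1,n}^{q(\bm{\epsilon})}A_{2,n}^{d-q(\bm{\epsilon})}$, so $M_{\bm{\ell},\bm{\epsilon}}:=\sum_{j=1}^{n}1\{A_{n}\bm{S}_{0,j}\in\Gamma_{n}(\bm{\ell};\bm{\epsilon})\}$ is binomial with mean at most $\bar{C}V_{n}$, where $V_{n}:=nA_{n}^{-d}A_{1,n}^{q(\bm{\epsilon})}A_{2,n}^{d-q(\bm{\epsilon})}\ge C_{0}A_{2,n}^{d}$. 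With threshold $CV_{n}$ for $C$ large, the same Chernoff estimate gives $P(M_{\bm{\ell},\bm{\epsilon}}\ge CV_{n})\le(e\bar{C}/C)^{CV_{n}}\le\exp(-c\,V_{n})\le\exp(-cC_{0}A_{2,n}^{d})$ for some $c>0$. Since the number of blocks is $\le 2^{d}(\lceil A_{n}/A_{3,n}\rceil)^{d}\lesssim A_{n}^{d}\le n/C_{0}$ (using $A_{3,n}\ge A_{2,n}$), a union bound gives $P(\exists\,\bm{\ell}\in L_{1,n},\bm{\epsilon}:M_{\bm{\ell},\bm{\epsilon}}>CV_{n})\lesssim n\exp(-cC_{0}A_{2,n}^{d})$.

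The only genuinely delicate point is the summability in $n$ of this last bound, which requires $A_{2,n}^{d}$ to eventually dominate a fixed multiple of $\log n$. This is where the sampling assumptions enter: under (S2)--(S3) together with the operative choices $A_{1,n}\sim A_{n}^{\gamma_{A_{1}}}$, $A_{2,n}\sim A_{n}^{\gamma_{A_{2}}}$ with $\gamma_{A_{2}}>0$ used throughout (cf.\ Remarks \ref{Remark-general-DP-condition} and \ref{Add-discuss}), one has $A_{n}\gtrsim n^{(1-\eta_{1})/d}$, hence $A_{2,n}^{d}\gtrsim n^{\gamma_{A_{2}}(1-\eta_{1})}\gg\log n$, so $n\exp(-cC_{0}A_{2,n}^{d})$ is super-polynomially small and Borel--Cantelli applies; the first assertion needs no such condition, since there the $\log n$ term is built into the threshold. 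I expect no obstacle beyond bookkeeping the constant so that the Poisson-type Chernoff tail beats the polynomial union bound and ensuring the block sizes grow fast enough for part~(ii) — exactly the argument of Lemmas A.1 and 5.1 in \cite{La03b}.
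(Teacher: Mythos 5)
Your argument is correct and is precisely the Chernoff-bound-plus-union-bound-plus-Borel--Cantelli proof that the paper does not reproduce but imports by citation from Lemmas A.1 and 5.1 of \cite{La03b}. The two caveats you flag yourself are the only deviations from the literal statement and are both harmless here: you obtain the first bound with a sufficiently large constant (depending on $\sup f_{\bm{S}}$) rather than the displayed constant $2$, which is all that is ever used downstream (e.g.\ in Lemma \ref{decomp}, where only generic constants appear), and the second claim does require $A_{2,n}^{d}$ to dominate a fixed multiple of $\log n$ for the tail to beat the polynomially many blocks --- a condition not literally implied by (S1)--(S3) alone but guaranteed by the operative polynomial choices $A_{2,n}\sim A_{n}^{\gamma_{A_{2}}}$ of Remarks \ref{Remark-general-DP-condition} and \ref{Add-discuss}.
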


\begin{remark}
Lemma \ref{n summands} implies that each $\Gamma_{n}(\bm{\ell};\bm{\epsilon})$ contains at most $CA_{1,n}^{q(\bm{\epsilon})}A_{2,n}^{d-q(\bm{\epsilon})}nA_{n}^{-d}$ samples $P_{\bm{S}}$-almost surely. 
\end{remark}

We may define the $\beta$-mixing coefficients for any probability measure $Q$ on a product measure space $(\Omega_{1} \times \Omega_{2}, \Sigma_{1} \times \Sigma_{2})$ as follows: 
\begin{definition}[Definition 2.5 in \cite{Yu94}]\label{beta-mixing-meas}
Suppose that $Q_{1}$ and $Q_{2}$ are the marginal probability measures of $Q$ on $(\Omega_{1}, \Sigma_{1})$ and $(\Omega_{2}, \Sigma_{2})$, respectively. Then we define
\[
\beta(\Sigma_{1}, \Sigma_{2},Q) = P\sup\{|Q(B|\Sigma_{1}) - Q_{2}(B)|: B \in \Sigma_{2}\}.
\] 
\end{definition}

\begin{lemma}[Corollary 2.7 in \cite{Yu94}]\label{indep_lemma}
Let $m \geq 1$ and let $Q$ be a probability measure on a product space $(\prod_{i=1}^{m}\Omega_{i}, \prod_{i=1}^{m}\Sigma_{i})$ with marginal measures $Q_{i}$ on $(\Omega_{i}, \Sigma_{i})$. Suppose that $h$ is a bounded measurable function on the product probability space such that $|h|\leq M_{h}<\infty$. Let $Q_{a}^{b}$ (with $1 \leq a \leq b$) be the marginal measure on $(\prod_{i=a}^{b}\Omega_{i}, \prod_{i=a}^{b}\Sigma_{i})$. Write 
\begin{align*}
\beta(Q) = \sup_{1 \leq i \leq m-1}\beta\left(\prod_{j=1}^{i}\Sigma_{j}, \Sigma_{i+1}, Q_{1}^{i+1}\right).
\end{align*}
Suppose that, for all $1 \leq k \leq m-1$, 
\begin{align}\label{prod_bound}
\|Q - Q_{1}^{k}\times Q_{k+1}^{m}\| \leq \beta(Q),
\end{align}
where $Q_{1}^{k}\times Q_{k+1}^{m}$ is a product measure and $\| \cdot \|$ is $1/2$ of the total variation norm. Then
\[
| Qh -  Ph| \leq M_{h}(m-1)\beta(Q).
\]
where $P=\prod_{i=1}^{m}Q_{i}$, $Qh=\int hdQ$, and $Ph=\int hdP$. 
\end{lemma}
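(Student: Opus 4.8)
The plan is to prove this coupling-type inequality by induction on the number of blocks $m$, peeling off one coordinate at a time and paying a mixing cost of $\beta(Q)$ at each of the $m-1$ junctions. It is convenient to prove the slightly stronger statement in which $\beta(Q)$ is replaced throughout by an arbitrary common upper bound $b$ for all the pairwise coefficients $\beta(\Sigma_{1}\vee\cdots\vee\Sigma_{i},\Sigma_{i+1},Q_{1}^{i+1})$ and for the quantities $\|Q-Q_{1}^{k}\otimes Q_{k+1}^{m}\|$ appearing in (\ref{prod_bound}); this makes the induction descend cleanly. Two elementary facts will be used repeatedly. First, if $\nu$ is a finite signed measure with $\nu(\text{whole space})=0$ and $g$ is measurable with $|g|\le M_{h}$, then $\big|\int g\,d\nu\big|\le M_{h}\|\nu\|$, where $\|\cdot\|$ is half the total variation; this is sharp when $g$ takes values in an interval of length $M_{h}$ — e.g.\ an indicator, as in the application of this lemma in the proof of Proposition~\ref{general-unif-rate} — and for general $|g|\le M_{h}$ one simply centers $g$. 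Second, the standard identity that the $\beta$-coefficient between two sub-$\sigma$-fields equals the total variation distance between the joint law and the product of the marginals: $\beta(\Sigma_{1},\Sigma_{2},Q)=E_{Q}\!\big[\sup_{B\in\Sigma_{2}}|Q(B\mid\Sigma_{1})-Q_{2}(B)|\big]=\big\|\,Q|_{\Sigma_{1}\vee\Sigma_{2}}-(Q|_{\Sigma_{1}})\otimes(Q|_{\Sigma_{2}})\,\big\|$.

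The base case $m=1$ is trivial since then $Q=Q_{1}=P$. For the inductive step, set $Q'=Q_{1}^{m-1}$ (the marginal of $Q$ on the first $m-1$ coordinates), $P'=\prod_{i=1}^{m-1}Q_{i}$, and $h'(\omega_{1},\dots,\omega_{m-1})=\int_{\Omega_{m}}h(\omega_{1},\dots,\omega_{m})\,Q_{m}(d\omega_{m})$, so that $|h'|\le M_{h}$, $\int h\,d(Q'\otimes Q_{m})=\int h'\,dQ'$ and $\int h\,dP=\int h'\,dP'$. By the triangle inequality $|Qh-Ph|\le|Qh-Q'h'|+|Q'h'-P'h'|$. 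Disintegrating $Q$ over $Q'$, the first term is $\big|\int\!\big(\int h\,(Q(\cdot\mid\omega_{1},\dots,\omega_{m-1})-Q_{m})(d\omega_{m})\big)\,Q'(d\omega_{1},\dots,\omega_{m-1})\big|$, which by the two facts above is at most $M_{h}\,\beta(\Sigma_{1}\vee\cdots\vee\Sigma_{m-1},\Sigma_{m},Q)\le M_{h}b$ (the $i=m-1$ coefficient; when regular conditional probabilities are inconvenient one uses (\ref{prod_bound}) with $k=m-1$, namely $\|Q-Q'\otimes Q_{m}\|\le b$, to reach the same bound directly). The second term is handled by the induction hypothesis applied to $Q'$ on $m-1$ blocks with the bounded function $h'$: its hypotheses hold with the same bound $b$, since each pairwise coefficient of $Q'$ is one of those of $Q$, and since (\ref{prod_bound}) for $Q'$ follows from (\ref{prod_bound}) for $Q$ by pushing forward onto the first $m-1$ coordinates (total variation is non-increasing under pushforward). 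This gives $|Q'h'-P'h'|\le M_{h}(m-2)b$, and adding the two estimates yields $|Qh-Ph|\le M_{h}(m-1)b$; taking $b=\beta(Q)$ finishes the proof.

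An equivalent probabilistic route — the one actually behind Lemma~4.1 of \cite{Yu94} — is iterated Berbee coupling: on an enlarged space construct independent $X_{1}^{*},\dots,X_{m}^{*}$ with $X_{i}^{*}$ distributed as the $i$-th coordinate under $Q$, built recursively so that $X_{i}^{*}$ is independent of $X_{1}^{*},\dots,X_{i-1}^{*}$ and the probability that $X_{i}\ne X_{i}^{*}$ is at most $\beta(\Sigma_{1}\vee\cdots\vee\Sigma_{i-1},\Sigma_{i},Q)\le\beta(Q)$; then $(X_{1}^{*},\dots,X_{m}^{*})\sim P$, a union bound gives probability at most $(m-1)\beta(Q)$ that $X_{i}\ne X_{i}^{*}$ for some $i$, and $|Qh-Ph|\le\mathrm{osc}(h)\cdot(m-1)\beta(Q)\le M_{h}(m-1)\beta(Q)$. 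I expect the only real obstacle to be measure-theoretic bookkeeping rather than any new idea: in the telescoping proof, the clean handling of the conditional form of $\beta$ and the descent of (\ref{prod_bound}) to the $(m-1)$-block marginal — which is precisely where hypothesis (\ref{prod_bound}), as opposed to the bare consecutive-pair definition of $\beta(Q)$, earns its keep on non-standard spaces; in the coupling proof, executing the $m$-fold recursive coupling consistently, using fresh auxiliary uniforms at each stage while freezing the block already built. The constant $(m-1)$ rather than $m$ is secured by the centering/indicator remark above.
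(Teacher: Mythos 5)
There is no proof in the paper to compare against: Lemma \ref{indep_lemma} is imported verbatim as Corollary 2.7 of \cite{Yu94}, and the appendix explicitly states that the technical lemmas are quoted without proof, so your argument has to stand on its own. Structurally it does, and it is the standard telescoping proof (essentially the argument behind Lemma 4.1 of \cite{Yu94}): peeling off the last block, bounding the junction term via (\ref{prod_bound}) with $k=m-1$, and descending the hypotheses to the marginal $Q_{1}^{m-1}$ by pushforward are all handled correctly, as is your observation that (\ref{prod_bound}) is what substitutes for the identity $\beta(\Sigma_{1},\Sigma_{2},Q)=\|Q-Q_{1}\otimes Q_{2}\|$ on non-standard spaces. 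The Berbee-coupling route you sketch is an equally legitimate alternative.

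The one genuine flaw is quantitative. Your ``elementary fact'' $\left|\int g\,d\nu\right|\le M_{h}\|\nu\|$ for a mass-zero signed measure $\nu$, with $\|\cdot\|$ equal to half the total variation, requires the \emph{oscillation} of $g$ to be at most $M_{h}$; for general $g$ with $|g|\le M_{h}$ the correct bound is $\mathrm{osc}(g)\,\|\nu\|\le 2M_{h}\|\nu\|$, and centering cannot repair this, since centering only reduces the sup-norm to half the oscillation, which may still equal $M_{h}$. Concretely, take $m=2$, $\Omega_{1}=\Omega_{2}=\{0,1\}$, $Q$ the law of $(X,X)$ with $X$ a fair Bernoulli variable, and $h(x,y)=(2x-1)(2y-1)$: then $M_{h}=1$ and $\beta(Q)=1/2$, yet $|Qh-Ph|=1>M_{h}(m-1)\beta(Q)=1/2$. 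So the displayed conclusion holds as stated only when $M_{h}$ dominates $\mathrm{osc}(h)$ (e.g.\ for indicators, where $M_{h}=1=\mathrm{osc}(h)$), and in general the constant is $2M_{h}(m-1)\beta(Q)$. This is really a defect of the transcribed statement rather than of your strategy, and it is immaterial for the paper, which applies the lemma only to indicator functions in the proof of Proposition \ref{general-unif-rate}; but your proof should either assume $\mathrm{osc}(h)\le M_{h}$ or carry the extra factor of $2$ rather than claim that centering removes it.
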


\begin{remark}
Lemma \ref{indep_lemma} is a key tool to construct independent blocks for $\beta$-mixing sequence. Note that Lemma \ref{indep_lemma} holds for each \textit{finite} $n$. 
\end{remark}

\begin{assumption}\label{Ass-KD}
\item[(KD1)] The kernel $\bar{K}: \mathbb{R}^{d} \to [0,\infty)$ is bounded and has compact support $[-C,C]^{d}$. Moreover, 
\begin{align*}
\int_{[-C,C]^{d}}\bar{K}(\bm{x})dx &= 1,\ \int_{[-C,C]^{d}}\bm{x}^{\bm{\alpha}}\bar{K}(\bm{x})dx = 0,\ \text{for any $\bm{\alpha} \in \mathbb{Z}^{d}$ with}\ |\bm{\alpha}| = 1,
\end{align*}
and $|\bar{K}(\bm{u}) - \bar{K}(\bm{v})| \leq C\|\bm{u} - \bm{v}\|$.
\item[(KD2)] For any $\bm{\alpha} \in \mathbb{Z}^{d}$ with $|\bm{\alpha}| = 1,2$, $\partial^{\bm{\alpha}}f_{\bm{S}}(\bm{s})$ exist and continuous on $(0,1)^{d}$. 
\end{assumption}

Define $\hat{f}_{\bm{S}}(\bm{u}) = {1 \over nh^{d}}\sum_{j=1}^{n}\bar{K}_{h}\left({\bm{u} - \bm{S}_{0,j}}\right)$.

\begin{lemma}[Theorem 2 in \cite{Ma96}]\label{Masry-thm}
Under Assumption \ref{Ass-KD} and $h \to 0$ such that $nh^{d}/(\log n) \to \infty$ as $n \to \infty$, we have that
\begin{align*}
\sup_{\bm{u} \in [0,1]^{d}}\left|\hat{f}_{\bm{S}}(\bm{u}) - f_{\bm{S}}(\bm{u})\right| = O\left(\sqrt{\log n \over nh^{d}} + h^{2}\right)\ P_{\bm{S}}-a.s.
\end{align*}
\end{lemma}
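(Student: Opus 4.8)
The plan is to run the classical strong uniform consistency argument for kernel density estimators, specialised to the i.i.d. sampling sequence $\{\bS_{0,j}\}_{j\geq 1}$. First I would split
\[
\hat{f}_{\bm{S}}(\bm{u}) - f_{\bm{S}}(\bm{u}) = \bigl(\hat{f}_{\bm{S}}(\bm{u}) - \Ep[\hat{f}_{\bm{S}}(\bm{u})]\bigr) + \bigl(\Ep[\hat{f}_{\bm{S}}(\bm{u})] - f_{\bm{S}}(\bm{u})\bigr)
\]
into a stochastic part and a deterministic bias part, and bound each uniformly over $\bm{u}\in[0,1]^{d}$, $P_{\bm{S}}$-a.s.

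For the bias part I would substitute $\bm{v}=(\bm{u}-\bm{s})/h$ to write $\Ep[\hat{f}_{\bm{S}}(\bm{u})]=\int_{[-C,C]^{d}}\bar{K}(\bm{v})f_{\bm{S}}(\bm{u}-h\bm{v})\,d\bm{v}$, Taylor-expand $f_{\bm{S}}$ to second order around $\bm{u}$ (at interior points, or with $f_{\bm{S}}$ understood as extended to a neighbourhood of $[0,1]^{d}$, consistent with the fact that all applications restrict to $I_{h}$), and invoke the kernel normalisation and vanishing-first-moment conditions in (KD1) together with the continuity of the second-order partials of $f_{\bm{S}}$ from (KD2); this gives a bias of order $O(h^{2})$ uniformly in $\bm{u}$. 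For the stochastic part I would discretise: cover $[0,1]^{d}$ by $N\lesssim h^{-d}a_{n}^{-d}$ balls of radius $a_{n}h$ with centres $\bm{u}_{1},\dots,\bm{u}_{N}$, where $a_{n}=\sqrt{\log n/(nh^{d})}$, and use the Lipschitz bound on $\bar{K}$ in (KD1) to control the oscillation of $\hat{f}_{\bm{S}}-\Ep[\hat{f}_{\bm{S}}]$ inside each ball by $O(a_{n})$. At each centre $\bm{u}_{k}$ the summands $h^{-d}\bar{K}_{h}(\bm{u}_{k}-\bS_{0,j})$ are i.i.d., centred, bounded by $Ch^{-d}$, and have variance $O(h^{-d})$, so Bernstein's inequality yields $P_{\bm{S}}\bigl(|\hat{f}_{\bm{S}}(\bm{u}_{k})-\Ep[\hat{f}_{\bm{S}}(\bm{u}_{k})]|>Ma_{n}\bigr)\leq 2\exp(-cM^{2}\log n)$ for constants $c>0$ independent of $k$ and $n$ (here $h\to0$ with $nh^{d}/\log n\to\infty$ ensures $h$ decays polynomially, so $N$ is polynomial in $n$). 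A union bound over the $N$ centres followed by Borel–Cantelli, after choosing $M$ large enough that $\sum_{n}N\exp(-cM^{2}\log n)<\infty$, gives $\sup_{\bm{u}\in[0,1]^{d}}|\hat{f}_{\bm{S}}(\bm{u})-\Ep[\hat{f}_{\bm{S}}(\bm{u})]|=O(a_{n})$, $P_{\bm{S}}$-almost surely. Combining with the bias bound yields the stated rate.

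The main obstacle is the bookkeeping in the discretisation step: one must simultaneously (i) keep the number of balls polynomial in $n$ so that $\log N=O(\log n)$ is absorbed into the Bernstein exponent $cM^{2}\log n$ for $M$ sufficiently large, (ii) keep the Lipschitz oscillation term $O(a_{n})$ no larger than the target rate, and (iii) upgrade the resulting summable-probability (Borel–Cantelli) statement to the stated almost-sure uniform bound; the bias estimate near the boundary of $[0,1]^{d}$ is the only other delicate point, handled by the extension remark above.
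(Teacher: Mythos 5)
Your proposal is correct in substance, but it is worth noting that the paper does not prove this lemma at all: it is quoted verbatim as Theorem 2 of \cite{Ma96} (a uniform strong consistency result established there under strong mixing, applied here to the i.i.d.\ sampling sequence $\{\bm{S}_{0,j}\}$), and the appendix explicitly states that such lemmas are cited without proof. What you have written is the standard self-contained derivation of that external result in the i.i.d.\ special case --- bias via second-order Taylor expansion using the moment conditions in (KD1)--(KD2), stochastic part via an $a_{n}h$-net, Bernstein's inequality at the net points, and Borel--Cantelli --- and it is essentially the same chaining-plus-exponential-inequality template the paper itself deploys in the proof of Proposition \ref{general-unif-rate}. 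Two small points deserve care. First, the oscillation step needs more than the Lipschitz bound on $\bar{K}$: applied naively to all $n$ summands it yields $O(a_{n}h^{-d})$ after the $1/(nh^{d})$ normalisation, not $O(a_{n})$; one must exploit the compact support of $\bar{K}$ by dominating the increment with $a_{n}K^{\ast}_{h}(\bm{u}_{k}-\bm{S}_{0,j})$ for a bounded compactly supported $K^{\ast}$ and then controlling $\tfrac{1}{nh^{d}}\sum_{j}K^{\ast}_{h}(\bm{u}_{k}-\bm{S}_{0,j})=O(1)$ uniformly (this is exactly the device used in the proof of Proposition \ref{general-unif-rate}). Second, your reading of the boundary issue is the right one: the $O(h^{2})$ bias cannot hold up to the boundary of $[0,1]^{d}$ without extending $f_{\bm{S}}$ smoothly, and indeed every application of the lemma in the paper that needs the bias rate restricts to $I_{h}$ (compare Lemmas \ref{Masry-thm2} and \ref{Masry-thm3}, which are stated on $I_{h}$). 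Also, the hypothesis $nh^{d}/\log n\to\infty$ does not force $h$ to decay polynomially, but it does give $h^{-d}=O(n)$ and $a_{n}^{-d}\leq n^{d/2}$, so the net cardinality is still polynomial in $n$ and $\log N=O(\log n)$, which is all the union bound requires.
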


Similar arguments of the proof of Lemma \ref{Masry-thm} yields the following Lemmas \ref{Masry-thm2} and \ref{Masry-thm3}.
\begin{lemma}\label{Masry-thm2}
Under Assumption \ref{Ass-KD} and $h \to 0$ such that $nh^{d}/(\log n) \to \infty$ as $n \to \infty$, we have that
\begin{align*}
&\sup_{\bm{u} \in I_{h}}\left|{1 \over nh^{d}}\sum_{j=1}^{n}\bar{K}_{h}(\bm{u} - \bm{S}_{0,j})\left({\bm{u} - \bm{S}_{0,j} \over h}\right)^{\bm{k}} - {1 \over h^{d}}\int_{\mathbb{R}^{d}}\bar{K}_{h}(\bm{u} - \bm{w})\left({\bm{u} - \bm{w} \over h}\right)^{\bm{k}}f_{\bm{S}}(\bm{w})d\bm{w}\right|\\
&\quad = O\left(\sqrt{\log n \over nh^{d}}\right)\ P_{\bm{S}}-a.s.
\end{align*}
for any $\bm{k} \in \mathbb{Z}^{d}$ with $|\bm{k}| = 0,1,2$, where $\bm{x}^{\bm{k}} = \prod_{\ell=1}^{d}x_{\ell}^{k_{\ell}}$. 
\end{lemma}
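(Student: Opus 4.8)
\emph{Proof proposal.} Write $a_{n} = \sqrt{\log n/(nh^{d})}$, fix $\bm{k}$ with $0\le|\bm{k}|\le2$ (there are only finitely many, and the argument is identical for each), and set $\tilde{K}(\bm{x}) := \bar{K}(\bm{x})\bm{x}^{\bm{k}}$. Because $\bar{K}$ is bounded, Lipschitz and supported in $[-C,C]^{d}$ by (KD1), and $\bm{x}\mapsto\bm{x}^{\bm{k}}$ is a polynomial (hence bounded and Lipschitz on the compact support of $\bar{K}$), the function $\tilde{K}$ is again bounded, Lipschitz and compactly supported, with constants $\|\tilde{K}\|_{\infty},\|\tilde{K}\|_{\mathrm{Lip}}<\infty$. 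Put $\hat{g}(\bm{u}) := (nh^{d})^{-1}\sum_{j=1}^{n}\tilde{K}_{h}(\bm{u}-\bm{S}_{0,j})$, where $\tilde{K}_{h}(\bm{v})=\tilde{K}(\bm{v}/h)$. Since the $\bm{S}_{0,j}$ are i.i.d.\ with density $f_{\bm{S}}$ and are independent of the $\bm{X}$-field, the expectation (under $P_{\bm{S}}$) of $\hat{g}(\bm{u})$ equals $h^{-d}\int_{\mathbb{R}^{d}}\tilde{K}_{h}(\bm{u}-\bm{w})f_{\bm{S}}(\bm{w})\,d\bm{w}=h^{-d}\int_{\mathbb{R}^{d}}\bar{K}_{h}(\bm{u}-\bm{w})((\bm{u}-\bm{w})/h)^{\bm{k}}f_{\bm{S}}(\bm{w})\,d\bm{w}$, so the quantity in the statement is exactly $\hat{g}(\bm{u})-E[\hat{g}(\bm{u})]$. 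In particular this is a purely stochastic (variance-type) statement: the moment conditions on $\bar{K}$ in (KD1), whose role in Lemma \ref{Masry-thm} is to produce the $h^{2}$ bias term, play no part here, which is why no $h^{2}$ term appears. The plan is to reproduce the discretization argument behind Masry's Theorem~2 (Lemma \ref{Masry-thm}).

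First I would discretize $I_{h}\subset[0,1]^{d}$ by $N_{n}\lesssim\ell_{n}^{-d}$ cubes of side $\ell_{n} := h^{d+1}a_{n}$ with centres $\bm{u}_{1},\dots,\bm{u}_{N_{n}}$. For $\|\bm{u}-\bm{u}'\|\le\ell_{n}$ the Lipschitz property of $\tilde{K}$ gives $|\tilde{K}_{h}(\bm{u}-\bm{s})-\tilde{K}_{h}(\bm{u}'-\bm{s})|\le\|\tilde{K}\|_{\mathrm{Lip}}\ell_{n}/h$, and, bounding the number of nonzero summands in $\hat g$ crudely by $n$ (and the $E\hat g$ oscillation by the same Lipschitz bound times the $O(h^{d})$ volume of the relevant support), one obtains
\begin{align*}
\sup_{\|\bm{u}-\bm{u}'\|\le\ell_{n}}\bigl|(\hat{g}(\bm{u})-E\hat{g}(\bm{u}))-(\hat{g}(\bm{u}')-E\hat{g}(\bm{u}'))\bigr|\le C\,\frac{\ell_{n}}{h^{d+1}}=C\,a_{n}.
\end{align*}
Under $nh^{d}/\log n\to\infty$ one has $h^{-1}\ll(n/\log n)^{1/d}$ and $h\le1$ for $n$ large, so $\ell_{n}^{-d}=h^{-d(d+1)}a_{n}^{-d}$ is bounded by a fixed power of $n$; hence $N_{n}\le n^{D}$ for some fixed $D$. (Alternatively, using Lemma \ref{n summands} to bound the number of $\bm{S}_{0,j}$ in any $O(h)$-cube by $C(nh^{d}+\log n)$ permits the sharper choice $\ell_{n}\asymp h a_{n}$ and a smaller $N_{n}$; the crude bound already suffices.)

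Next, at each centre $\bm{u}_{m}$ let $\xi_{j,m}:=h^{-d}\tilde{K}_{h}(\bm{u}_{m}-\bm{S}_{0,j})$, which are i.i.d.\ with $|\xi_{j,m}|\le\|\tilde{K}\|_{\infty}h^{-d}$ and, by the change of variables $\bm{v}=(\bm{u}_{m}-\bm{w})/h$ together with the boundedness of $f_{\bm{S}}$ in (KD2), $\Var(\xi_{j,m})\le E[\xi_{j,m}^{2}]=h^{-d}\int_{\mathbb{R}^{d}}\tilde{K}(\bm{v})^{2}f_{\bm{S}}(\bm{u}_{m}-h\bm{v})\,d\bm{v}\le Ch^{-d}$. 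Applying the Bernstein inequality for independent bounded variables (Lemma \ref{Bernstein}) to $\hat{g}(\bm{u}_{m})-E\hat{g}(\bm{u}_{m})=n^{-1}\sum_{j}(\xi_{j,m}-E\xi_{j,m})$ with $t=\lambda a_{n}$, and using $a_{n}\to0$ so that the Bernstein denominator is $\le2C$ eventually, gives $P_{\bm{S}}(|\hat{g}(\bm{u}_{m})-E\hat{g}(\bm{u}_{m})|>\lambda a_{n})\le2\exp(-c\lambda^{2}\log n)=2n^{-c\lambda^{2}}$ with $c>0$ independent of $m,n$. A union bound over the $N_{n}$ centres yields $P_{\bm{S}}(\max_{m}|\hat{g}(\bm{u}_{m})-E\hat{g}(\bm{u}_{m})|>\lambda a_{n})\le2N_{n}n^{-c\lambda^{2}}\le2n^{D-c\lambda^{2}}$; taking $\lambda$ large enough that $D-c\lambda^{2}<-1$, Borel--Cantelli gives $\max_{m}|\hat{g}(\bm{u}_{m})-E\hat{g}(\bm{u}_{m})|=O(a_{n})$ $P_{\bm{S}}$-a.s. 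Combining with the oscillation bound above yields $\sup_{\bm{u}\in I_{h}}|\hat{g}(\bm{u})-E\hat{g}(\bm{u})|=O(a_{n})$ $P_{\bm{S}}$-a.s., which is the claim. The argument is a direct transcription of Masry's; the only genuinely technical point is the bookkeeping ensuring $N_{n}$ stays sub-exponential (indeed polynomial) in $n$ so that the union bound and Borel--Cantelli go through, which uses precisely the lower bound on $h$ implicit in the hypothesis $nh^{d}/\log n\to\infty$.
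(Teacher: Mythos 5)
Your proof is correct and follows exactly the route the paper intends: the paper gives no proof of this lemma, stating only that it follows by arguments similar to Lemma \ref{Masry-thm} (Masry's Theorem 2), i.e., the discretization--exponential-inequality--Borel--Cantelli scheme you carry out, and your bookkeeping (the identification of the second term as $E_{P_{\bm{S}}}[\hat g(\bm{u})]$ so that no bias term arises, the polynomial bound on the number of grid points via $h^{-d}\lesssim n$, and the Bernstein/union-bound step) is accurate. Since the $\bm{S}_{0,j}$ are i.i.d., the plain Bernstein inequality of Lemma \ref{Bernstein} suffices with no blocking, as you correctly exploit.
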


\begin{lemma}\label{Masry-thm3}
Let $g: [0,1]^{d} \times \mathbb{R}^{p} \to \mathbb{R}$, $(\bm{u},\bm{x}) \mapsto g(\bm{u},\bm{x})$ be continuously partially differentiable w.r.t. $\bm{u}$. Under Assumption \ref{Ass-KD} and $h \to 0$ such that $nh^{d}/(\log n) \to \infty$ as $n \to \infty$, we have that
\begin{align*}
\sup_{\bm{u} \in I_{h},\bm{x} \in S_{c}}\left|{1 \over nh^{d}}\sum_{j=1}^{n}\bar{K}_{h}^{m}(\bm{u} - \bm{S}_{0,j})g(\bm{S}_{0,j},\bm{x}) - \bar{\kappa}_{m}f_{\bm{S}}(\bm{u})g(\bm{u},\bm{x})\right| &= O\left(\sqrt{\log n \over nh^{d}}\right) + o(h)
\end{align*}
$P_{\bm{S}}$-a.s. for $m=1,2$, where $\bar{\kappa}_{m} = \int_{\mathbb{R}^{d}}\bar{K}^{m}(\bm{x})d\bm{x}$.
\end{lemma}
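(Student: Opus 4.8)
The strategy is to follow the classical proof of uniform almost-sure convergence for kernel averages --- the argument behind Lemma \ref{Masry-thm}, i.e.\ Theorem 2 in \cite{Ma96} --- the only new features being the smooth weight $g(\bm{S}_{0,j},\bm{x})$ and the power $m\in\{1,2\}$ of the kernel. Writing
\[
T_{n}(\bm{u},\bm{x}) = {1\over nh^{d}}\sum_{j=1}^{n}\bar{K}_{h}^{m}(\bm{u}-\bm{S}_{0,j})\,g(\bm{S}_{0,j},\bm{x}),
\]
I would decompose $T_{n}(\bm{u},\bm{x})-\bar{\kappa}_{m}f_{\bm{S}}(\bm{u})g(\bm{u},\bm{x})$ into the centred part $T_{n}(\bm{u},\bm{x})-E[T_{n}(\bm{u},\bm{x})]$ and the bias part $E[T_{n}(\bm{u},\bm{x})]-\bar{\kappa}_{m}f_{\bm{S}}(\bm{u})g(\bm{u},\bm{x})$, each to be bounded uniformly over $\bm{u}\in I_{h}$, $\bm{x}\in S_{c}$. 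Here I use that $g$, being continuous, is bounded on the compact set $[0,1]^{d}\times S_{c}$.

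For the bias part, the change of variables $\bm{w}=\bm{u}-h\bm{\varphi}$ gives, for $\bm{u}\in I_{h}$ (so that the compact support of $\bar{K}$ keeps $\bm{u}-h\bm{\varphi}$ inside $[0,1]^{d}$),
\[
E[T_{n}(\bm{u},\bm{x})] = \int_{\mathbb{R}^{d}}\bar{K}^{m}(\bm{\varphi})\,g(\bm{u}-h\bm{\varphi},\bm{x})\,f_{\bm{S}}(\bm{u}-h\bm{\varphi})\,d\bm{\varphi}.
\]
Since $\bar{K}$ is a product of symmetric one-dimensional kernels, $\bar{K}^{m}$ is symmetric and its first-order moments vanish; a first-order Taylor expansion of $\bm{u}'\mapsto g(\bm{u}',\bm{x})f_{\bm{S}}(\bm{u}')$ about $\bm{u}$ then cancels the $O(h)$ term, and the remainder is $o(h)$ uniformly in $(\bm{u},\bm{x})$ by uniform continuity of the first partial derivatives of $gf_{\bm{S}}$ on the relevant compact set together with $\int_{\mathbb{R}^{d}}|\bm{\varphi}|\bar{K}^{m}(\bm{\varphi})d\bm{\varphi}<\infty$. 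This gives $E[T_{n}(\bm{u},\bm{x})]=\bar{\kappa}_{m}f_{\bm{S}}(\bm{u})g(\bm{u},\bm{x})+o(h)$ with $\bar{\kappa}_{m}=\int_{\mathbb{R}^{d}}\bar{K}^{m}(\bm{x})d\bm{x}$.

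For the centred part I would cover $I_{h}\times S_{c}$ by $N_{n}$ balls of radius $\delta_{n}=h\sqrt{\log n/(nh^{d})}$ with centres $(\bm{u}_{k},\bm{x}_{k})$; since $h$ decays polynomially, $N_{n}$ is polynomial in $n$. The Lipschitz property of $\bar{K}$ makes $\bm{u}\mapsto\bar{K}_{h}^{m}(\bm{u}-\bm{S})$ Lipschitz with constant $O(1/h)$, so, combined with the a.s.\ bound $\#\{j:\|\bm{u}_{k}-\bm{S}_{0,j}\|_{\infty}\le Ch+\delta_{n}\}=O(nh^{d})$ (a Bernstein/Borel--Cantelli estimate of the type behind Lemma \ref{n summands}, valid because $nh^{d}/\log n\to\infty$), the fluctuation of $T_{n}-E[T_{n}]$ over each ball is $O(\sqrt{\log n/(nh^{d})})$ in the $\bm{u}$-direction, while in the $\bm{x}$-direction it is at most the modulus of continuity of $g$ on $[0,1]^{d}\times S_{c}$ times $\sup_{\bm{u}\in I_{h}}(nh^{d})^{-1}\sum_{j}\bar{K}_{h}^{m}(\bm{u}-\bm{S}_{0,j})=O(1)$ a.s., hence negligible once the net is fine enough. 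It remains to control $\max_{1\le k\le N_{n}}|T_{n}(\bm{u}_{k},\bm{x}_{k})-E[T_{n}(\bm{u}_{k},\bm{x}_{k})]|$: the summands $h^{-d}\bar{K}_{h}^{m}(\bm{u}_{k}-\bm{S}_{0,j})g(\bm{S}_{0,j},\bm{x}_{k})$ are i.i.d., bounded by $O(h^{-d})$, with variance $O(h^{-d})$, so Bernstein's inequality (Lemma \ref{Bernstein}) gives
\[
P_{\bm{S}}\!\left(\left|T_{n}(\bm{u}_{k},\bm{x}_{k})-E[T_{n}(\bm{u}_{k},\bm{x}_{k})]\right|>M\sqrt{\log n/(nh^{d})}\right)\le 2n^{-cM^{2}}
\]
for some $c>0$; a union bound over the $N_{n}$ points and Borel--Cantelli, with $M$ larger than the polynomial degree of $N_{n}$, yield the rate $O(\sqrt{\log n/(nh^{d})})$, $P_{\bm{S}}$-a.s. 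Combining with the bias estimate finishes the proof; the case $m=2$ is identical, $\bar{\kappa}_{2}$ replacing $\bar{\kappa}_{1}=1$.

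The main obstacle is the standard one for uniform kernel results: choosing the discretization fine enough that the chaining error stays below $\sqrt{\log n/(nh^{d})}$ while keeping $N_{n}$ subexponential, and in particular absorbing the oscillation of the weight $g$ in the $\bm{x}$-variable, which relies on uniform continuity of $g$ on the compact set (and, in the concrete applications of the lemma, on the extra smoothness $g$ inherits from $m$, $f$ and the modified kernels). A minor additional point is to verify that the Taylor expansion of $gf_{\bm{S}}$ used for the bias term remains valid for $\bm{u}$ near the boundary of $I_{h}$, which it does because the kernel support keeps the argument inside $[0,1]^{d}$.
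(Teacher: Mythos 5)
Your overall architecture (bias/variance split, change of variables plus symmetry of $\bar{K}^{m}$ for the bias, covering plus Bernstein plus Borel--Cantelli for the centred part) is exactly the Masry-type argument the paper points to: it gives no proof of this lemma beyond the remark that it follows by ``similar arguments'' to Lemma \ref{Masry-thm}, and your treatment of the $\bm{u}$-direction, the bias expansion, and the Bernstein/union-bound step are all correct.

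There is, however, one concrete gap, and you have put your finger on it yourself without resolving it: the discretization in the $\bm{x}$-direction. The lemma only assumes that $g$ is continuously partially differentiable in $\bm{u}$; nothing quantitative is assumed about its dependence on $\bm{x}$. Your chaining error in $\bm{x}$ is $\omega_{g}(\epsilon_{n})\cdot O(1)$, where $\omega_{g}$ is the modulus of continuity of $g$ on $[0,1]^{d}\times S_{c}$ and $\epsilon_{n}$ is the mesh of the net, which must stay polynomial in $n^{-1}$ to keep $N_{n}$ subexponential. For a merely (uniformly) continuous $g$, $\omega_{g}(n^{-c})$ can decay as slowly as $1/\log n$, which is not $O(\sqrt{\log n/(nh^{d})})+o(h)$; so as written your argument does not deliver the claimed rate under the stated hypotheses, and deferring to ``the extra smoothness $g$ inherits in the applications'' proves a weaker lemma than the one stated. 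The clean fix is to avoid discretizing in $\bm{x}$ altogether: on the support of $\bar{K}_{h}(\bm{u}-\cdot)$ one has $\|\bm{S}_{0,j}-\bm{u}\|\leq Ch$, so write
\begin{align*}
g(\bm{S}_{0,j},\bm{x}) = g(\bm{u},\bm{x}) + (\bm{S}_{0,j}-\bm{u})'\partial_{\bm{u}}g(\bm{u},\bm{x}) + R_{j}(\bm{u},\bm{x}),\qquad \sup_{j,\bm{u},\bm{x}}|R_{j}(\bm{u},\bm{x})| = o(h),
\end{align*}
the $o(h)$ being uniform by uniform continuity of $\partial_{\bm{u}}g$ on the compact set. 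Substituting this into the sum factors all $\bm{x}$-dependence out through $g(\bm{u},\bm{x})$ and $\partial_{\bm{u}}g(\bm{u},\bm{x})$, which are bounded on $[0,1]^{d}\times S_{c}$, and reduces the problem to the $\bm{x}$-free weighted sums $({1/nh^{d}})\sum_{j}\bar{K}_{h}^{m}(\bm{u}-\bm{S}_{0,j})((\bm{u}-\bm{S}_{0,j})/h)^{\bm{k}}$ with $|\bm{k}|=0,1$, which are exactly the objects controlled uniformly over $\bm{u}\in I_{h}$ at rate $O(\sqrt{\log n/(nh^{d})})$ by Lemma \ref{Masry-thm2}; the $|\bm{k}|=1$ term contributes $h\cdot O(\sqrt{\log n/(nh^{d})}+h)=o(h)$ after the symmetry cancellation. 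With this modification your remaining steps go through unchanged.
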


\begin{lemma}[Bernstein's inequality]\label{Bernstein}
Let $X_{1},\hdots, X_{n}$ be independent zero-mean random variables. Suppose that  $\max_{1 \leq i \leq n}|X_{i}|\leq M<\infty$ a.s. Then, for all $t>0$,
\begin{align*}
P\left(\sum _{i=1}^{n}X_{i}\geq t\right)\leq \exp \left(-{{t^{2} \over 2} \over \sum_{j=1}^{n}E[X_{j}^{2}] + {Mt \over 3}}\right). 
\end{align*}
\end{lemma}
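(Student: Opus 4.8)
The plan is to prove the stated Bernstein-type bound by the exponential (Chernoff) method, combined with a power-series estimate for the moment generating function of each summand that uses both $E[X_i]=0$ and the a.s.\ bound $|X_i|\le M$. Write $S_n=\sum_{i=1}^n X_i$ and $\sigma^2=\sum_{j=1}^n E[X_j^2]$, and fix $t>0$. First I would apply Markov's inequality to $e^{\lambda S_n}$ for a parameter $\lambda\in(0,3/M)$ to be chosen later, and use the independence of the $X_i$ to factorize the mgf:
\[
P(S_n\ge t)\le e^{-\lambda t}\,E\!\left[e^{\lambda S_n}\right]=e^{-\lambda t}\prod_{i=1}^n E\!\left[e^{\lambda X_i}\right].
\]

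The main step is to control each factor $E[e^{\lambda X_i}]$. Expanding $e^{\lambda X_i}$ and using $E[X_i]=0$ gives $E[e^{\lambda X_i}]=1+\sum_{k\ge 2}\lambda^k E[X_i^k]/k!$; since $|X_i|\le M$ a.s.\ we have $|E[X_i^k]|\le M^{k-2}E[X_i^2]$ for $k\ge 2$, and then the elementary bound $k!\ge 2\cdot 3^{k-2}$ together with the geometric series $\sum_{k\ge 2}(\lambda M/3)^{k-2}=(1-\lambda M/3)^{-1}$ yields
\[
E\!\left[e^{\lambda X_i}\right]\le 1+\frac{\lambda^2 E[X_i^2]/2}{1-\lambda M/3}\le \exp\!\left(\frac{\lambda^2 E[X_i^2]/2}{1-\lambda M/3}\right),
\]
where the last step uses $1+x\le e^{x}$. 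Multiplying over $i$ and substituting back, I would arrive at
\[
P(S_n\ge t)\le \exp\!\left(-\lambda t+\frac{\lambda^2\sigma^2/2}{1-\lambda M/3}\right).
\]

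It then remains to optimize the exponent over $\lambda$. Choosing $\lambda=t/(\sigma^2+Mt/3)$, which indeed lies in $(0,3/M)$, one computes $1-\lambda M/3=\sigma^2/(\sigma^2+Mt/3)$, so the exponent collapses to $-t^2/(\sigma^2+Mt/3)+\tfrac12 t^2/(\sigma^2+Mt/3)=-\tfrac12 t^2/(\sigma^2+Mt/3)$, which is exactly the claimed bound (the degenerate case $\sigma^2=0$, where all $X_i=0$ a.s., being trivial). The only delicate point is keeping track of the numerical constants: the factor $1/3$ in the statement is forced precisely by the inequality $k!\ge 2\cdot 3^{k-2}$ and by the corresponding optimal $\lambda$, so one must not replace it by the cruder $k!\ge 2\cdot 2^{k-2}$; the rest is a routine computation.
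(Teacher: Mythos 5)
Your proof is correct: the Chernoff bound, the moment expansion using $E[X_i]=0$ and $|E[X_i^k]|\le M^{k-2}E[X_i^2]$, the estimate $k!\ge 2\cdot 3^{k-2}$ giving the geometric series, and the choice $\lambda=t/(\sigma^2+Mt/3)$ all check out and yield exactly the stated bound. The paper itself states this lemma among the technical lemmas that are cited without proof, so there is nothing to compare against; your argument is the standard one and would serve as a complete proof.
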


\section{Multivariate L\'evy-driven moving average random fields}\label{Multi-LevyMA}

In this section, we discuss multivariate extension of univariate L\'evy-driven moving average random fields and give examples of multivariate locally stationary random fields.
Let $M_{p}(\mathbb{R})$ denote the space of all real $p\times p$ matrices and let $\text{tr}(A)$ denote the trace of the matrix $A$. 

Let $\bm{L}=\{L(A) = (L_{1}(A),\hdots,L_{p}(A))': A \in \mathcal{B}(\mathbb{R}^{d})\}$ be an $\mathbb{R}^{p}$-valued infinitely divisible random measure on some probability space $(\Omega, \mathcal{A}, P)$, i.e., a random measure such that 
\begin{itemize}
\item[1.] for each sequence $(E_{m})_{m \in \mathbb{N}}$ of disjoint sets in $\mathcal{B}(\mathbb{R}^{d})$ it holds
\begin{itemize}
\item[(a)] $\bm{L}(\cup_{m=1}^{\infty}E_{m}) = \sum_{m=1}^{\infty}\bm{L}(E_{m})$ a.s., whenever $\cup_{m=1}^{\infty}E_{m} \in \mathcal{B}(\mathbb{R}^{d})$,
\item[(b)] $(\bm{L}(E_{m}))_{m \in \mathbb{N}}$ is a sequence of independent random vectors. 
\end{itemize}
\item[2.] the random variable $\bm{L}(A)$ has an infinitely divisible distribution for any $A \in \mathcal{B}(\mathbb{R}^{d})$. 
\end{itemize}
The characteristic function of $\bm{L}(A)$ which will be denoted by $\varphi_{\bm{L}(A)}(\bm{t})$ ($\bm{t} = (t_{1},\hdots,t_{p})' \in \mathbb{R}^{p}$), has a L\'evy-Khintchine representation of the form $\varphi_{L(A)}(\bm{t}) = \exp\left(|A|\psi(\bm{t})\right)$ with 
\begin{align*}
\psi(\bm{t}) &= i\langle \bm{t}, \bm{\gamma}_{0}\rangle - {1 \over 2}\bm{t}'\Sigma_{0}\bm{t} + \int_{\mathbb{R}^{p}}\left\{e^{i\langle \bm{t}, \bm{x} \rangle }-1-i\langle \bm{t}, \bm{x} \rangle I(\|\bm{x}\| \leq 1)\right\}\nu_{0}(\bm{x})d\bm{x}
\end{align*}
where $i = \sqrt{-1}$, $\bm{\gamma}_{0} \in \mathbb{R}^{p}$, $\Sigma_{0}$ is a $p\times p$ positive definite matrix, $\nu_{0}$ is a L\'evy density with $\int_{\mathbb{R}^{p}}\min\{1,\|\bm{x}\|^{2}\}\nu_{0}(\bm{x})d\bm{x}<\infty$ and $|A|$ is the Lebesgue measure of $A$. The triplet $(\bm{\gamma}_{0}, \Sigma_{0}, \nu_{0})$ is called the L\'evy characteristic of $\bm{L}$ and it uniquely determines the distribution of the random measure $\bm{L}$.

Consider the stochastic process $\bm{X} = \{\bm{X}(\bm{s}) = (X_{1}(\bm{s}), \hdots, X_{p}(\bm{s}))': \bm{s} \in \mathbb{R}^{d}\}$ given by
\begin{align}\label{LevyMARF}
\bm{X}(\bm{s}) &= \int_{\mathbb{R}^{d}}g(\bm{s}, \bm{v})\bm{L}(d\bm{v}),
\end{align}
where $g: \mathbb{R}^{d} \times \mathbb{R}^{d} \to M_{p}(\mathbb{R})$ is a measurable function and $\bm{L}$ is a $p$-dimensional L\'evy random measure. Assume that 
\begin{align}
\int_{\mathbb{R}^{d}}&\text{tr}\left(g(\bm{s}, \bm{v})\Sigma_{0}g(\bm{s}, \bm{v})'\right)d\bm{v}<\infty, \label{A1}\\
\int_{\mathbb{R}^{d}}\int_{\mathbb{R}^{p}}&\min\{\|g(\bm{s}, \bm{v})\bm{x}\|^{2}, 1\}\nu_{0}(\bm{x})d\bm{x}d\bm{v}<\infty,\ \text{and} \label{A2} 
\end{align}
\begin{align}
\int_{\mathbb{R}^{d}}\left\|g(\bm{s}, \bm{v})\bm{\gamma}_{0} + \int_{\mathbb{R}^{p}}g(\bm{s}, \bm{v})\bm{x}\left(I(\|g(\bm{s}, \bm{v})\bm{x}\| \leq 1) - I(\|\bm{x}\| \leq 1)\right)\nu_{0}(\bm{x})d\bm{x}\right\|d\bm{v}<\infty. \label{A3}
\end{align}

Then the law of $\bm{X}(\bm{s})$ for all $\bm{s} \in \mathbb{R}^{d}$ is infinitely divisible with characteristic function
\begin{align*}
E[e^{i\langle \bm{t}, \bm{X}(\bm{s}) \rangle}] &= \exp\left\{i \langle \bm{t}, \int_{\mathbb{R}^{d}}g(\bm{s}, \bm{v})\bm{\gamma}_{0}d\bm{v} \rangle - {1 \over 2}\bm{t}'\left(\int_{\mathbb{R}^{d}}g(\bm{s}, \bm{v})\Sigma_{0}g(\bm{s}, \bm{v})'d\bm{v}\right)\bm{t} \right. \\ 
&\left. +  \int_{\mathbb{R}^{d}}\left(\int_{\mathbb{R}^{p}}(e^{i\langle \bm{t}, g(\bm{s}, \bm{v})\bm{x}\rangle } - 1 - i\langle \bm{t}, g(\bm{s}, \bm{v})\bm{x} \rangle I(\|g(\bm{s}, \bm{v})\bm{x}\| \leq 1))v_{0}(\bm{x})d\bm{x}\right)d\bm{v}\right\}.
\end{align*}

Therefore, the L\'evy characteristics ($\bm{\gamma}_{\bm{X}(\bm{s})}$, $\Sigma_{\bm{X}(\bm{s})}$, $\nu_{\bm{X}(\bm{s})}$) of $\bm{X}(\bm{s})$  are given by
\begin{align*}
\bm{\gamma}_{\bm{X}(\bm{s})} &= \int_{\mathbb{R}^{d}}g(\bm{s}, \bm{v})\bm{\gamma}_{0}d\bm{v}\\
&\quad + \int_{\mathbb{R}^{d}}\left(\int_{\mathbb{R}^{p}}g(\bm{s}, \bm{v})\bm{x}\left(I(\|g(\bm{s}, \bm{v})\bm{x}\| \leq 1) - I(\|\bm{x}\| \leq 1)\right)\nu_{0}(\bm{x})d\bm{x}\right)d\bm{v}, \\
\Sigma_{\bm{X}(\bm{s})} &= \int_{\mathbb{R}^{d}}g(\bm{s}, \bm{v})\Sigma_{0}g(\bm{s}, \bm{v})'d\bm{v}, \\
\nu_{\bm{X}(\bm{s})}(B) &= \int_{\mathbb{R}^{d}}\left(\int_{\mathbb{R}^{p}}I(g(\bm{s}, \bm{v})\bm{x} \in B)\nu_{0}(\bm{x})d\bm{x}\right)d\bm{v},\ B \in \mathcal{B}(\mathbb{R}^{p}).
\end{align*}
These results follow from Theorem 3.1, Proposition 2.17 and Corollary 2.19 in \cite{Sa05}. 
\begin{remark}
Assumptions (\ref{A1})-(\ref{A3}) are necessary and sufficient conditions for the existence of the stochastic integral (\ref{LevyMARF}). We refer to \cite{RaRo89} and \cite{Sa05} for details.  
\end{remark}
If $g(\bm{s}, \bm{v}) = g(\|\bm{s} - \bm{v}\|)$, that is, $\bm{X}$ is a strictly stationary isotropic random field, then we have that
\begin{align*} 
E[e^{i\langle \bm{t}, \bm{X}(\bm{s}) \rangle}]
&= \exp\left\{i \langle \bm{t}, \int_{\mathbb{R}^{d}}g(\|\bm{v}\|)\bm{\gamma}_{0}d\bm{v} \rangle - {1 \over 2}\bm{t}'\left(\int_{\mathbb{R}^{d}}g(\|\bm{v}\|)\Sigma_{0}g(\|\bm{v}\|)'d\bm{v}\right)\bm{t} \right. \\ 
&\left. \quad +  \int_{\mathbb{R}^{d}}\left(\int_{\mathbb{R}^{p}}(e^{i\langle \bm{t}, g(\|\bm{v}\|)\bm{x}\rangle } - 1 - i\langle \bm{t}, g(\| \bm{v}\|)\bm{x} \rangle I(\|g(\|\bm{v}\|)\bm{x}\| \leq 1))v_{0}(\bm{x})d\bm{x}\right)d\bm{v}\right\}.
\end{align*}

Then $\bm{X}$ has a density function when $\bm{L}$ is Gaussian, that is, $(\bm{\gamma}_{0}, \Sigma_{0}, \nu_{0}) = (\bm{\gamma}_{0}, \Sigma_{0}, 0)$. We can also give a sufficient condition for the existence of the density function of $\bm{X}$ when $\bm{L}$ is purely non-Gaussian, that is, $(\bm{\gamma}_{0}, \Sigma_{0}, \nu_{0}) = (\bm{\gamma}_{0}, 0 , \nu_{0})$.

\begin{lemma}\label{density-Levy-MARF}
Suppose that there exist an $\alpha \in (0,2)$ and a constant $C>0$ such that
\begin{align*}
\int_{\mathbb{R}^{d}}\int_{\mathbb{R}^{p}}\left|\langle \bm{u}, g(\|\bm{v}\|)\bm{x} \rangle\right|^{2}I\left(\left|\langle \bm{u}, g(\|\bm{v}\|)\bm{x} \rangle\right| \leq 1\right)\nu_{0}(\bm{x})d\bm{x}d\bm{v} \geq C\|\bm{u}\|^{2-\alpha}
\end{align*}
for any vector $\bm{u}$ with $\|\bm{u}\| \geq c_{0}>1$. Then $\bm{X}$ with a purely non-Gaussian L\'evy random field $\bm{L}$ has a bounded continuous, infinitely often differentiable functions whose derivatives are bounded.
\end{lemma}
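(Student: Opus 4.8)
The plan is to prove Lemma \ref{density-Levy-MARF} by showing that the characteristic function $\varphi_{\bm{X}(\bm{0})}(\bm{t}) = E[e^{i\langle \bm{t}, \bm{X}(\bm{0})\rangle}]$ decays faster than any power of $\|\bm{t}\|$, so that all of its moments (against polynomials) are integrable; standard Fourier inversion then yields a bounded continuous density with bounded derivatives of every order. Concretely, from the L\'evy--Khintchine representation for $\bm{X}(\bm{0})$ displayed in the excerpt (with $\Sigma_0 = 0$ in the purely non-Gaussian case), we have $|\varphi_{\bm{X}(\bm{0})}(\bm{t})| = \exp(\mathrm{Re}\,\psi_{\bm{X}}(\bm{t}))$ where
\[
\mathrm{Re}\,\psi_{\bm{X}}(\bm{t}) = \int_{\mathbb{R}^{d}}\int_{\mathbb{R}^{p}}\bigl(\cos\langle \bm{t}, g(\|\bm{v}\|)\bm{x}\rangle - 1\bigr)\nu_{0}(\bm{x})\,d\bm{x}\,d\bm{v}.
\]
First I would use the elementary inequality $1 - \cos y \geq c\, y^{2}$ valid for $|y| \leq 1$ (with $c = 1 - \cos 1 > 0$ say, or any convenient constant), to bound
\[
-\mathrm{Re}\,\psi_{\bm{X}}(\bm{t}) \geq c\int_{\mathbb{R}^{d}}\int_{\mathbb{R}^{p}}|\langle \bm{t}, g(\|\bm{v}\|)\bm{x}\rangle|^{2}\,I\bigl(|\langle \bm{t}, g(\|\bm{v}\|)\bm{x}\rangle| \leq 1\bigr)\nu_{0}(\bm{x})\,d\bm{x}\,d\bm{v}.
\]

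Next I would invoke the hypothesis of the lemma: for $\|\bm{t}\| \geq c_{0} > 1$ the inner double integral is bounded below by $C\|\bm{t}\|^{2-\alpha}$, hence $|\varphi_{\bm{X}(\bm{0})}(\bm{t})| \leq \exp(-cC\|\bm{t}\|^{2-\alpha})$ for all such $\bm{t}$, with $2 - \alpha > 0$. This Gaussian-type (stretched-exponential) tail bound immediately gives that $\bm{t} \mapsto \bm{t}^{\bm{\alpha}}\varphi_{\bm{X}(\bm{0})}(\bm{t})$ is integrable on $\mathbb{R}^{p}$ for every multi-index $\bm{\alpha}$ (the contribution from $\|\bm{t}\| \leq c_0$ is trivially finite since $|\varphi| \leq 1$ on a compact set). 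Then by the Fourier inversion theorem, $\bm{X}(\bm{0})$ has density $f_{\bm{X}(\bm{0})}(\bm{y}) = (2\pi)^{-p}\int_{\mathbb{R}^{p}}e^{-i\langle \bm{t},\bm{y}\rangle}\varphi_{\bm{X}(\bm{0})}(\bm{t})\,d\bm{t}$, and differentiation under the integral sign (justified by the integrability of $\bm{t}^{\bm{\alpha}}\varphi_{\bm{X}(\bm{0})}(\bm{t})$) shows $f_{\bm{X}(\bm{0})}$ is infinitely differentiable with $\partial^{\bm{\alpha}}f_{\bm{X}(\bm{0})}$ bounded by $(2\pi)^{-p}\int_{\mathbb{R}^{p}}|\bm{t}^{\bm{\alpha}}||\varphi_{\bm{X}(\bm{0})}(\bm{t})|\,d\bm{t} < \infty$. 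By strict stationarity of $\bm{X}$ the same holds at every $\bm{s}$.

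The main obstacle — or rather the only point requiring care — is the justification of the interchange of integration and the manipulation of $\mathrm{Re}\,\psi_{\bm{X}}$: one must make sure that the integrand $(\cos\langle\bm{t},g(\|\bm{v}\|)\bm{x}\rangle-1)\nu_0(\bm{x})$ is absolutely integrable over $\mathbb{R}^d\times\mathbb{R}^p$ so that the L\'evy exponent genuinely has the stated form and the lower bound applies term by term; this follows from assumption (\ref{A2}) via the bound $|1-\cos y| \leq 2\min\{1, y^2\} \lesssim \min\{y^2,1\}$. Everything else is a routine application of Fourier inversion and dominated convergence. I would also remark that the hypothesis of Lemma \ref{density-Levy-MARF} is exactly what was verified in the scalar case via \eqref{SC-density} in Lemma \ref{LS-density}, and that a sufficient condition on $\nu_0$ and $g$ (locally stable L\'evy measure plus a kernel supported on a set where $g$ is bounded away from $0$, as in (\ref{exp-decay-kernel})) can be spelled out if desired, but is not needed for the statement as phrased.
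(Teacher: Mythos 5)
Your argument is correct and follows essentially the same route as the paper: both lower-bound $-\mathrm{Re}\,\psi_{\bm{X}}(\bm{t})$ by the truncated quadratic functional via an inequality of the form $1-\cos y\gtrsim y^{2}$ on $|y|\leq 1$, invoke the hypothesis to get $|\varphi_{\bm{X}(\bm{0})}(\bm{t})|\leq \exp(-\bar{C}\|\bm{t}\|^{2-\alpha})$, and conclude from the resulting integrability of $\|\bm{t}\|^{k}|\varphi_{\bm{X}(\bm{0})}(\bm{t})|$. The only cosmetic difference is that you carry out the Fourier-inversion and differentiation-under-the-integral step explicitly, whereas the paper delegates it to Proposition 0.2 of \cite{Pi96}.
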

\begin{proof}
From Proposition 0.2 in \cite{Pi96}, it is sufficient to show that $\int \|\bm{u}\|^{k}|\varphi(\bm{u})|d\bm{u}<\infty$ for any non-negative integer $k$, where $\varphi(\bm{u})$ denotes the characteristic function of $\bm{X}(\bm{s})$. Observe that 
\begin{align*}
|\varphi(\bm{t})| &= \left|\exp\left\{\int_{\mathbb{R}^{d}}\left(\int_{\mathbb{R}^{p}}(e^{i\langle \bm{t}, g(\|\bm{v}\|)\bm{x}\rangle } - 1 - i\langle \bm{t}, g(\| \bm{v}\|)\bm{x} \rangle I(\|g(\|\bm{v}\|)\bm{x}\| \leq 1))v_{0}(\bm{x})d\bm{x}\right)d\bm{v}\right\}\right|\\
&= \left(\exp\left\{\int_{\mathbb{R}^{d}}\int_{\mathbb{R}^{p}}\left(e^{i\langle \bm{t}, g(\|\bm{v}\|)\bm{x}\rangle} - e^{-i\langle \bm{t}, g(\|\bm{v}\|)\bm{x}\rangle}-2\right)\nu_{0}(\bm{x})d\bm{x}d\bm{v}\right\}\right)^{1/2}\\
&=  \exp\left\{\int_{\mathbb{R}^{d}}\int_{\mathbb{R}^{p}}\left(\cos(\langle \bm{t}, g(\|\bm{v}\|)\bm{x}\rangle )-1\right)\nu_{0}(\bm{x})d\bm{x}d\bm{v}\right\}\\
&\leq \exp\left\{\int_{\mathbb{R}^{d}}\int_{\mathbb{R}^{p}}\left(\cos(\langle \bm{t}, g(\|\bm{v}\|)\bm{x}\rangle )-1\right)I\left(\left|\langle \bm{t}, g(\|\bm{v}\|)\bm{x} \rangle\right| \leq 1\right)\nu_{0}(\bm{x})d\bm{x}d\bm{v}\right\}.
\end{align*}
Then using the inequality $1- \cos(z) \geq 2(z/\pi)^{2}$ for $|z| \leq \pi$, we have that
\begin{align*}
|\varphi(\bm{u})| &\leq \exp\left\{-C\int_{\mathbb{R}^{d}}\int_{\mathbb{R}^{p}}\left|\langle \bm{u}, g(\|\bm{v}\|)\bm{x} \rangle\right|^{2}I\left(\left|\langle \bm{u}, g(\|\bm{v}\|)\bm{x} \rangle\right| \leq 1\right)\nu_{0}(\bm{x})d\bm{x}d\bm{v}\right\}\\
&\leq \exp(-\bar{C}\|\bm{u}\|^{2-\alpha})
\end{align*}
for some constants $C, \bar{C}>0$. Therefore, we complete the proof.
\end{proof}

For $g(\bm{u}, \bm{s}) = g(\bm{u}, \|\bm{s}\|): [0,1]^{d} \times \mathbb{R}^{d} \to M_{p}(\mathbb{R})$ with bounded components, assume that $|g_{j,k}(\bm{u}, \cdot) - g_{j,k}(\bm{v}, \cdot)|\leq C\|\bm{s} - \bm{v}\|\bar{g}_{j,k}(\cdot)$ with $C<\infty$ and for any $\bm{u} \in [0,1]^{d}$, 
\[
\max_{1 \leq j,k \leq p}\int_{\mathbb{R}^{d}}(|g_{j,k}(\bm{u},\|\bm{s}\|)| + |\bar{g}_{j,k}(\bm{s})| + g^{2}_{j,k}(\bm{u},\|\bm{s}\|) +  \bar{g}^{2}_{j,k}(\bm{s}))d\bm{s}<\infty.
\]
Consider the processes 
\begin{align*}
\bm{X}_{\bm{s},A_{n}} &= \int_{\mathbb{R}^{d}}g\left({\bm{s} \over A_{n}}, \|\bm{s} - \bm{v}\|\right)\bm{L}(d\bm{v}),\ \bm{X}_{\bm{u}}(\bm{s}) = \int_{\mathbb{R}^{d}}g\left(\bm{u}, \|\bm{s} - \bm{v}\|\right)\bm{L}(d\bm{v})\\
\bm{X}_{\bm{u}}(\bm{s}:A_{2,n}) &= \int_{\mathbb{R}^{d}}g\left(\bm{u}, \|\bm{s} - \bm{v}\|\right)\iota(\|\bm{s}-\bm{v}\|:A_{2,n})\bm{L}(d\bm{v}).
\end{align*}
Note that $\bm{X}_{\bm{u}}(\bm{s})$ and $\bm{X}_{\bm{u}}(\bm{s}:A_{2,n})$ are strictly stationary random fields for each $\bm{u}$. In particular, $\bm{X}_{\bm{u}}(\bm{s}:A_{2,n})$ is $A_{2,n}$-dependent.  Assume that $\max_{1 \leq k \leq p}E[|L_{k}(A)|^{q}]<\infty$ for $1 \leq q\leq q_{0}$ where $q_{0}$ is an integer such that $q_{0} \geq 2$.  Observe that 
\begin{align*}
\|\bm{X}_{\bm{s}, A_{n}} - \bm{X}_{\bm{u}}(\bm{s}:A_{2,n})\| &\leq  \left\|\bm{X}_{\bm{s},A_{n}} - \bm{X}_{\bm{u}}(\bm{s})\right\| + \left\|\bm{X}_{\bm{u}}(\bm{s}) - \bm{X}_{\bm{u}}(\bm{s}:A_{2,n})\right\|\\
&\leq C\left\|{\bm{s} \over A_{n}} - \bm{u}\right\|\sum_{j=1}^{p}\int_{\mathbb{R}^{d}}\sum_{k=1}^{p}\left|\bar{g}_{j,k}\left(\|\bm{s} - \bm{v}\|\right) \right||L_{j}(d\bm{v})|\\
& + {1 \over A_{n}^{d}}\sum_{j=1}^{p}\int_{\mathbb{R}^{d}}\sum_{k=1}^{p}A_{n}^{d}|g_{j,k}\left(\bm{u}, \|\bm{s} - \bm{v}\|\right)|(1 - \iota(\|\bm{s} - \bm{v}\|:A_{2,n}))|L_{j}(d\bm{v})| \\
&\leq \left(\left\|{\bm{s} \over A_{n}} - \bm{u}\right\| + {1 \over A_{n}^{d}}\right)U_{\bm{s},A_{n}}(\bm{u}),
\end{align*}
where 
\begin{align*}
U_{\bm{s},A_{n}}(\bm{u}) &= \sum_{j=1}^{p}\int_{\mathbb{R}^{d}}\sum_{k=1}^{p}\left(C\left|\bar{g}_{j,k}\left(\|\bm{s} - \bm{v}\|\right) \right| + A_{n}^{d}\underbrace{|g_{j,k}\left(\bm{u}, \|\bm{s} - \bm{v}\|\right)|(1 - \iota(\|\bm{s} - \bm{v}\|:A_{2,n}))}_{=: g_{\bm{u}, j,k}(\|\bm{s} - \bm{v}\|:A_{2,n})}\right)|L_{j}(d\bm{v})|.
\end{align*}
Assume that 
\begin{align*}
\sup_{n \geq 1}\sup_{\bm{u} \in [0,1]^{d}}\max_{1\leq j,k \leq p}\int_{\mathbb{R}^{d}}(A_{n}^{d}g_{\bm{u}, j,k}(\|\bm{s}\|:A_{2,n}) + A_{n}^{2d}g_{\bm{u}, j,k}^{2}(\|\bm{s}\|:A_{2,n}))d\bm{s}<\infty. 
\end{align*}
Then we can show that $E[|U_{\bm{s},A_{n}}(\bm{u})|^{2}]<\infty$ and Condition (Ma2) in Assumption \ref{Ass-M(add)} is satisfied. Let $\{c_{j,k, \ell}\}_{1 \leq \ell \leq p_{j,k}, 1 \leq j,k \leq p}$ are positive constants and $\{r_{j,k,\ell}(\cdot)\}_{1 \leq \ell \leq p_{j,k}, 1 \leq j,k \leq p}$ are continuous functions on $[0,1]^{d}$ such that $|r_{j,k,\ell}(\bm{u}) - r_{j,k,\ell}(\bm{v})| \leq C\|\bm{u} - \bm{v}\|$ for $1 \leq \ell\leq p_{j,k}$, $1 \leq j,k \leq p_{1}$ with $C<\infty$. Consider the function
\begin{align*}
g(\bm{u},\|\bm{s}\|) &= \left(\sum_{\ell=1}^{p_{j,k}}r_{j,k, \ell}\left(\bm{u}\right)e^{-c_{j,k,\ell}\|\bm{s}\|}\right)_{1 \leq j,k \leq p}.
\end{align*}
Then we can show that the process $\bm{X}_{\bm{s},A_{n}} = \int_{\mathbb{R}^{d}}g\left({\bm{s} \over A_{n}}, \|\bm{s} - \bm{v}\|\right)\bm{L}(d\bm{v})$ is an approximately $\bar{D}(\log n)$-dependent random field for sufficiently large $\bar{D}>0$ by applying a similar argument in the proof of Lemma \ref{proof-CARMA-example}.


\end{document}